\documentclass[12pt]{report}
\usepackage{amsfonts}
\usepackage{amsmath}
\usepackage{amssymb}
\usepackage{graphics}
\usepackage[pagebackref]{hyperref}
\usepackage{amsmath,amscd}
\usepackage[demo]{graphicx}

%%%%%%%%%%%%%%%%%%%%%%%%%%%%%%%%%%%%%%%%%%
\textwidth=15cm \textheight=20cm
\oddsidemargin=1cm
%%%%%%%%%%%%%%%%%%%%%%%%%%%%%%%%%%%%%%%%%%
\newtheorem{theorem}{Theorem}[chapter]
\newtheorem{corollary}[theorem]{Corollary}
\newtheorem{lemma}[theorem]{Lemma}

\newtheorem{lemma and definition}[theorem]{Lemma and Definition}
\newtheorem{proposition}[theorem]{Proposition}
\newtheorem{definition}[theorem]{Definition}
\newtheorem{notation}[theorem]{Notation}
\newtheorem{remark}[theorem]{Remark}

\newtheorem{discussion}[theorem]{Discussion}

\newtheorem{the construction}[theorem]{THE CONSTRUCTION}
%%%%%%%%%%%%%%%%%%%%%%%%%%%%%%%%%%%%%%%%%%

%%%%%%%%%%%%%%%%%%%%%%%%%%%%%%%%%%%%%%%%%%
 
\DeclareMathOperator{\Ann}{Ann} 
\DeclareMathOperator{\Ker}{Ker} \DeclareMathOperator{\Ima}{Im}
\DeclareMathOperator{\Spec}{Spec}

\DeclareMathOperator{\Sup}{sup} 
\DeclareMathOperator{\Cok}{coker} \DeclareMathOperator{\depth}{depth}
\DeclareMathOperator{\pd}{pd}\DeclareMathOperator{\height}{ht}
\DeclareMathOperator{\charact}{char} \DeclareMathOperator{\Hom}{Hom}
\DeclareMathOperator{\Mat}{Mat}
%%%%%%%%%%%%%%%%%%%%%%%%%%%%%%%%%%%%%%%%%%

%%%%%%%%%%%%%%%%%%%%%%%%%%%%%%%%%%%%%%%%%%

%%%%%%%%%%%%%%%%%%%%%%%%
\def\1{{\rm (1)}}
\def\2{{\rm (2)}}
\def\3{{\rm (3)}}
\def\4{{\rm (4)}}
\def\5{{\rm (5)}}
\def\6{{\rm (6)}}
%%%%%%%%%%%%%%%%%%%%%%%%

%%%BEGIN DOCUMENT%%%%%%%%%%%%%%%%%%%%%%%%%%%%%%%%%%%%%%%%%%%
%%%%%%%%%%%%%%%%%%%%%%%%%%%%%%%%%%%%%%%%%%%%%%%%%%%%%%%%%%%%
%%%%%%%%%%%%%%%%%%%%%%%%%%%%%%%%%%%%%%%%%%%%%%%%%%%%%%%%%%%%
%%%%%%%%%%%%%%%%%%%%%%%%%%%%%%%%%%%%%%%%%%%%%%%%%%%%%%%%%%%%
\begin{document}
\pagestyle{headings}
\baselineskip=16pt
%%%%%%%%%%%%%%%%%%%%%%%%%%%%%%%%%%%%%%%%%%%%%%%%%%%%%%%%%%%%
\thispagestyle{empty}
%%%%%%%%%%%%%%%%%%%%%%%%%%%%%%%%%%%%%%%%%%%%%%%%%%%%%%%%%%%%

\begin{center}

\hrule\bigskip

{\Large\sc On the Finite F-representation type and F-signature of  hypersurfaces }

\bigskip\hrule\vspace{2.5cm}

{\bf By}\bigskip

{\Large Khaled Alhazmy }\vspace{2cm}

{\Large\tt\ A thesis submitted in partial fulfilment of the requirements for the degree of
Doctor of Philosophy
}\vspace{2.5cm}

{\Large The University of Sheffield}
\bigskip\bigskip

{\Large School of Mathematics and Statistics} \vspace{2cm}

%{\tt Supervised by}\bigskip

%{\large Dr. Moty Katzman}\\
%{\large School of Mathematics and Statistics } \vspace{2cm}
July
 2017
\end{center}
\newpage

\chapter*{Abstract}

Let  $S=K[x_1,...,x_n]$  or $S=K[\![x_1,...,x_n]\!]$  be either a
polynomial or a formal power series ring in a finite number of variables
over a field $K$ of characteristic $p > 0$ with $[K:K^p] < \infty$. Let $R$ be the hypersurface $S/fS$ where  $f$ is a nonzero nonunit element of $S$.  If  $e$ is a positive integer, $F_*^e(R)$ denotes the $R$-algebra structure induced on $R$ via the $e$-times iterated  Frobenius map ( $r\rightarrow r^{p^e}$ ). We describe a matrix factorizations  of $f$  whose cokernel is isomorphic to  $F_*^e(R)$  as $R$-module. The presentation of $F_*^e(R)$ as the cokernel of a matrix factorization of $f$ enables us to find a characterization from which we can decide when  the ring $S[\![u,v]\!]/(f+uv)$  has finite F-representation type (FFRT) where $S=K[\![x_1,...,x_n]\!]$. This allows us to create  a class of rings that have finite F-representation type but  not  finite CM type. For $S=K[\![x_1,...,x_n]\!]$, we use this presentation to show that the ring $S[\![y]\!]/(y^{p^d} +f)$ has finite F-representation type for any $f$ in $S$. Furthermore,  we prove that $S/I$ has finite F-representation type when $I$ is a monomial ideal in either $S=K[x_1,...,x_n]$  or $S=K[\![x_1,...,x_n]\!]$. Finally, this presentation enables us to  compute the F-signature of  the rings $S[\![u,v]\!]/(f+uv)$ and $S[\![z]\!]/(f+z^2)$ where $S=K[\![x_1,...,x_n]\!]$ and  $f$ is a monomial in the ring  $S$. When $R$ is a Noetherian ring of prime characteristic that has FFRT, we prove that $R[x_1,...,x_n]$ and $R[\![x_1,...,x_n]\!]$ have FFRT.
We prove also that over local ring of prime characteristic  a module has FFRT if and only it has FFRT by a FFRT system. This enables us to show that if $M$ is a finitely generated module over  Noetherian ring $R$ of prime characteristic $p$,  then the  set of all prime  ideals $Q$ such that $M_Q$ has FFRT over $R_Q$ is an open set in the Zariski topology on $\Spec(R)$.

\newpage

\chapter*{Acknowledgments}

First of all, I praise Allah (God), the almighty, merciful and passionate, for granting me the capability and  providing me this opportunity to complete my PhD studies.

I owe my deepest gratitude to my PhD supervisor, Dr Moty Katzman.  Without
his inspiring guidance, his unselfish help, his encouragements, his valuable advice,   this thesis would
not have been possible. I have been  extremely  blessed to have
a supervisor who has tended to my work greatly  and responded to my questions and enquiries so
promptly. I would like to express my gratefulness and appreciation for his patience and kindness during my research which has made my PhD experience an enjoyable one.  He is an example  that I will strive to follow in my
career.

I would like to thank  the members of my viva examination  Prof. Holger Brenner and Prof. Tom Bridgland for offering their time reading and reviewing this thesis and for their suggestions and helpful discussions during and after my viva.

 It is beyond words to express my deep sense of gratitude to my father  Abdullah, and my mother  Obaidah, for their immeasurable love, continuous support  and countless prayers that stand behind me in  my life throughout. I am also so thankful for my brothers and sisters for their encouragements and help.  Their  love, assistance  and confidence in me  have sustained and inspired me throughout.

I would like to express my sincerest gratitude to my  beloved wife, Hanan Alqaisi, for her  gracious sacrifices and unconditional love. Her strong encouragement and unselfish support have been the essential reasons and the powerful  source of energy  and inspiration   behind most of the successful achievements  I have had so far. I truly believe that without her, I would not be able to achieve  or even  plan for my goals. There is no way to express properly how much I appreciate what she has done for me and I will forever be indebted to her for her  support and effort. I thank God for her presence in my life. I also thank my wonderful children: Abdullah, Abdulaziz, Faisal, and Renad, for always making me smile and for understanding on those days when I was studying instead of playing  with them. Thanks for putting up with me.

A special thanks goes to Prof. Salah-Eddine Kabbaj, Prof. Abdeslam Mimouni and Prof. Jawad Abuihlail for the courses in commutative algebra I took with them at King Fahd University of Petroleum and Minerals of Saudi Arabia. Thanks for their encouragement for me to pursue my PhD. I would like  to also thank Prof. Craig Huneke for the other  courses in commutative algebra that I took with him at the University of Kansas while I was studying my master degree.

I am very grateful to King Khalid University of  Saudi Arabia for funding my studies abroad. Without their financial support, this would not have been accomplished.

Last but not least, I am so grateful for my best friend Nabil Alhakamy for his constant support and encouragement throughout my journey. I would always remember his supportive friendship. I must also   thank my classmate  Mehmet Yesil for being a nice friend. I would like to thank my friends in Sheffield for the treasuring moments we shared together. I find myself lucky of having them during my stay in Sheffield.

%%%%%%%%%%%%%%%%%%%%%%%%%%%%%%%%%%%%%%%%%%%%%%%%%%%%%%%%%%%%
\markright{}
\tableofcontents
\newpage

%%%%%%%%%%%%%%%%%%%%%%%%%%%%%%%%%%%%%%%%%%%%%%%%%%%%%%%%%%%%
%%%%%%%%%%%%%%%%%%%%%%%%%%%%%%%%%%%%%%%%%%%%%%%%%%%%%%%%%%%%
%%%%%%%%%%%%%%%%%%%%%%%%%%%%%%%%%%%%%%%%%%%%%%%%%%%%%%%%%%%%
%\noindent {\huge\bf Acknowledgments} \markright{}
%\addcontentsline{toc}{section}{\protect\numberline{}{Acknowledgments}}

%\newpage
%%%%%%%%%%%%%%%%%%%%%%%%%%%%%%%%%%%%%%%%%%%%%%%%%%%%%%%%%%%%
%%%%%%%%%%%%%%%%%%%%%%%%%%%%%%%%%%%%%%%%%%%%%%%%%%%%%%%%%%%%
%%%%%%%%%%%%%%%%%%%%%%%%%%%%%%%%%%%%%%%%%%%%%%%%%%%%%%%%%%%%
%\noindent {\huge\bf Summary} \markright{}
%\addcontentsline{toc}{section}{\protect\numberline{}{Summary}}
%\bigskip\bigskip

%%%CHAPTER1%%%%%%%%%%%%%%%%%%%%%%%%%%%%%%%%%%%%%%%%%%%%%%%%%
%%%%%%%%%%%%%%%%%%%%%%%%%%%%%%%%%%%%%%%%%%%%%%%%%%%%%%%%%%%%
%%%%%%%%%%%%%%%%%%%%%%%%%%%%%%%%%%%%%%%%%%%%%%%%%%%%%%%%%%%%
\chapter{Introduction}

In this chapter, we give an overview of the results in this thesis. The next three chapters will introduce the necessary concepts and any unexplained terminology in this chapter will be explained there.

\section{Notation}
\label{section:Notation}

Throughout this thesis, we shall assume all rings are commutative with a unit   unless otherwise is stated.  $\mathbb{N}$ denotes the set of the positive integers and $\mathbb{Z}_{+}$ denotes the set of non-negative integers. Let $R$ be a ring. If $I$ is an ideal of  $R$ generated by the elements $r_1,...,r_n$, we write $I=(r_1,...,r_n)$. Similarly, if $M$ is a finitely generated $R$-module generated by $m_1,...,m_n$, we write $M=(m_1,...,m_n)$.  $R$ is called a  Noetherian ring if every ideal of $R$ is finitely generated. The set of all prime ideals of a  $R$ is denoted by $\Spec(R)$. $R$ is a local ring if $R$ has only one  maximal ideal, and we write $(R,\mathfrak{m})$ to mean that $R$ is a local ring with the unique maximal ideal $\mathfrak{m}$.   If $I$ is an ideal of $R$, we define $V(I)=\{ Q \in \Spec(R) \; | \; I \subseteq Q \}$ and the radical of $I$, denoted  $\sqrt{I}$, is intersection of all prime ideals in $V(I)$. If $\sqrt{I}=Q$ for some prime ideal $Q$, $I$ is said to be $Q$-primary. The collection $\{ V(I) \; | \; I \text{   is an ideal of } R \}$ defines a topology on $\Spec(R)$ that is called the Zariski topology on $\Spec(R)$ in which $V(I)$ is a closed set for any ideal $I$ of $R$.   A chain in $R$ or $\Spec(R)$ is a sequence $Q_0 \subseteq Q_1 \subseteq ... \subseteq Q_n$  of
prime ideals of $R$. If $Q$ is a prime ideal, the height of $Q$ denoted $\height(Q)$ is defined as
  $$\height(Q)= \Sup \{ n | \text{  there exists a chain  } Q_0 \subsetneqq Q_1 \subsetneqq ... \subsetneqq Q_n=Q \text{  in  } \Spec(R)\} $$
  and consequently the Krull dimension  or simply the dimension of $R$,  denoted   $\dim(R)$, is given by
  $$\dim(R)= \Sup\{ \height(\mathfrak{m}) | \mathfrak{m} \text{ is a maximal ideal in } R \} $$
  Furthermore,the dimension of an $R$-module $M$,  denoted   $\dim_RM$ or $\dim M$, is the dimension of the ring $R/\Ann M$ where $\Ann M$ is the ideal $\Ann M = \{r \in R | rM=0 \} $.
The characteristic of $R$, denoted $\charact(R)$, is the smallest positive integer $n$ such that $\sum_{i=1}^n 1 = 0$; if no such $n$ exists, then the characteristic is zero.

\section{Outline of Thesis}
\label{section:Outline of Thesis}
In Chapter \ref{chapter:Preliminaries}, we gather in section \ref{section:General Background} the necessary concepts making the general  background of this thesis. This includes the definitions and the theorems that we need in the subsequent chapters. We state in this section the theorems, propositions and lemmas that have references without proof.  In section \ref{section:Technical Lemmas} of this chapter, we prove technical lemmas that are essential for  some of the   main results in the chapters \ref{chapter:The finite F-representation type over hypersurfaces} and \ref{chapter:F-signature of specific hypersurfaces}.

Chapter \ref{chapter:Matrix Factorization} is devoted to the concept of matrix factorization. In section \ref{section:Definitions and Properties}, we provide the definition and the main properties of this concept and fix notations for specific hypersurfaces. Section \ref{section:Matrix factorization and Maximal Cohen Maculay modules} explains how the concept of matrix factorization can be related to the subject of maximal Cohen Maculay (MCM) modules over specific hypersurfaces.

In 1997, K. Smith and M. Van den Bergh \cite{SV} introduced the notion of finite F-representation type (FFRT) over a class of rings for which the Krull-Remak-Schmidit Theorem holds as a characteristic $p$ extension of the notion of finite Cohen-Macaulay representation
type.  They then showed that rings with finite
F-representation type play a role in  developing the theory of differential operators
on the rings. However, Y.Yao  in his paper \cite{Y} studied the notion of FFRT in more general settings.
 T.Shibuta summarized in \cite{TS} several nice properties  satisfied for rings of finite F-representation type.
For example,  the Hilbert-Kunz multiplicities of
such rings are proved to be  rational numbers by G.Seibert \cite{S}. Y.Yao in his paper \cite{Y} proved that tight closure commutes
with localization in such rings.

 Chapter \ref{Chapter: Modules of finite Frepresentation type} provides in section \ref{section:Definition and examples} the definition and examples of the notion of finite
 F-representation type (FFRT). Y.Yao in \cite{Y} observed that the localization and the completion of modules with FFRT have also FFRT. In section \ref{section:Several FFRT extensions of FFRT rings} we prove this observation and that both of $R[x]$ and $R[\![x]\!]$ have FFRT whenever $R$ has FFRT (Theorem \ref{C4.6}). The rings that have FFRT by FFRT system were introduced by  Y.Yao in \cite{Y}. We prove in section \ref{section: Rings of FFRT by FFRT system} that over local ring of prime characteristic  a module has FFRT if and only if it has FFRT by a FFRT system (Theorem \ref{P1}). This result enables us in section \ref{section: FFRT locus of a module is an open set} to prove  that the FFRT locus of a module is an open set in the Zariski topology on $\Spec(R)$ (Theorem \ref{P4.11}).

We fix in chapter \ref{chapter:The finite F-representation type over hypersurfaces} the notation that  $S=K[x_1, \ldots , x_n]$ or $S=K[\![x_1, \ldots , x_n]\!]$ where $K$ is a  filed of prime characteristic $p$ with $[K:K^p] < \infty $, and $f$ is a nonzero nonunit element in $S$. In  section \ref{section:The presentation of  Fe as a cokernel of a Matrix Factorization of f}, we describe  a presentation of  $F_*^e(S/fS)$ as a cokernel of a matrix factorization of $f$ (Theorem \ref{P5.11}).  This presentation of  $F_*^e(S/fS)$  as a cokernel of a matrix factorization of $f$  allows us  in section \ref{section:When does the ring fuv have finite F-representation type?} to find a characterization of when the ring $S[\![u,v]\!]/(f+uv)$, where $K$ is an algebraically closed field of prime characteristic $p>2$,  has FFRT (Theorem \ref{P30}) and hence in section \ref{section:Class of rings that have FFRT but not finite CM type} we provide  a class of rings that have finite F- representation type but it does not have finite Cohen-Macaulay type (Theorem \ref{P7.10}). In section \ref{section:Qutient by   a monomial ideal}, we use this presentation to prove that $S/I$ has FFRT for any monomial ideal $I$ in $S$ (Theorem \ref{P5.25}). Furthermore, if  $S=K[\![x_1,...,x_n]\!]$, we use this presentation to show in section \ref{section:The ring pd has finite Frepresentation type} that the ring $S[\![y]\!]/(y^{p^d} +f)$ has finite F-representation type for any $f$ in $S$ (Theorem \ref{P5.1}).

%If $M$ is a finitely generated module over a local ring $(R, \mathfrak{m})$, then  by \cite[Proposition 2.1]{NW} $M$ can be decomposed as $ M \cong R^a \oplus N$  where $a$ is a nonnegative integer and $N$ is an  $R$-module that has no free direct summand. The number $a$ is unique and independent of the decomposition as when we take the $\mathfrak{m}$-adic completion, by the Krull-Remak-Schmidt theorem \cite[Theorem 2.13.]{NW} and the theorem \cite[Theorem 3.6.]{NW} $a$ is uniquely determined. The number $a$ is the maximal rank of a free direct summands of $M$ and is denoted by $\sharp (M,R)$.

%As a result,

 The notion of the $F$-signature (Definition \ref{D6.1})  was   introduced and defined  by C. Huneke and G. Leuschke in \cite{HL} on an F-finite local ring of prime
characteristic with a perfect residue field.  Y. Yao in his work \cite{Y2} has defined the F-signature to arbitrary local rings $R$ without the assumptions that the residue field is perfect. K.Tucker in his paper \cite{KT} proved that this limit exists. The $F$-signature  seems to give subtle information on the singularities of  $R$. For example I. Aberbach and G.
Leuschke \cite{AL} have proved  that the $F$-signature is positive if and only if $R$ is strongly F-regular. Furthermore, We have $\mathbb{S}(R) \leq 1$ with equality if and only if R is regular \cite[Theorem 4.16.]{KT} or \cite[Corollary 16]{HL}.
%, the $F$-signature of any of the two-dimensional
%quotient singularities ($A_n$), ($D_n$), ($E_6$), ($E_7$), ($E_8$) is the reciprocal of the order of the group
%$G$ defining the singularity \cite[Example 18]{HL}.

 In Chapter \ref{chapter:F-signature of specific hypersurfaces}, we compute the $F$-signature of specific hypersurfaces. Indeed,  if  $K$ is a  field of prime characteristic $p$ with $[K:K^p] < \infty$ and  $S=K[\![x_1,...,x_n]\!]$, we compute in the sections \ref{section:The F-signature of uv is a monomial} and \ref{section:The Fsignature of z when  is a monomial} the $F$-signatures of the rings $S[\![u,v]\!]/(f+uv)$ and $S[\![z]\!]/(f+z^2)$ where $f$ is a monomial and  $u,v,z$ are variables over $S$. The presentation of $F_*^e(S/fS)$ as a cokernel of a matrix factorization of $f$ playes a role in those computations.   We proved also in  section \ref{section:The Fsignatures are the same} that,  for any $f\in S=K[\![x_1,...,x_n]\!]$, the rings $S/fS$ and $S[\![y_1,...,y_n]\!]/fS[\![y_1,...,y_n]\!]$ have the same $F$-signature and we give in section \ref{section:The Fsignature of pd} a characterization of  the F-signature of the ring $S[\![y]\!]/(y^{p^d}+f)$ for any $f \in S$ and $d \in \mathbb{N}$ .

%%%CHAPTER3%%%%%%%%%%%%%%%%%%%%%%%%%%%%%%%%%%%%%%%%%%%%%%%%%
%%%%%%%%%%%%%%%%%%%%%%%%%%%%%%%%%%%%%%%%%%%%%%%%%%%%%%%%%%%%
%%%%%%%%%%%%%%%%%%%%%%%%%%%%%%%%%%%%%%%%%%%%%%%%%%%%%%%%%%%%
\chapter{Preliminaries}
\label{chapter:Preliminaries}
%%%%%%%%%%%%%%%%%%%%%%%%%%%%%%%%%%%%%%%%%%%%%%%%%%%%%%%%%%%%

  In the first section of this chapter,  we review and provide the basic background from commutative algebra that is needed for this thesis and the theorems, propositions and lemmas that have references are stated in this section   without proof. The second section contains  some technical lemmas that are essential for obtaining the main results of this thesis.
\section{General Background}
\label{section:General Background}

\subsection{Graded modules and Rings}
\label{subsection:Graded modules and Rings}
Let $G$ be an abelian group with an additive operation $+$ and identity element $e \in G$ and let $R$ be a ring.
\begin{definition}\cite[Section 1.1]{NV}
 \emph{ $R$ is a $G$-graded ring if  there exists  a family $\{ R_{\alpha} \, | \, \alpha \in G \}$ of additive subgroups $R_{\alpha}$  of $R$ such that $R=\bigoplus_{\alpha \in G}R_{\alpha}$ (as groups)  and $R_{\alpha}R_{\beta} \subseteq R_{\alpha + \beta }$ for all $\alpha , \beta \in G$. A nonzero element $x \in R$ is a homogeneous of degree $\alpha\in G$ and we write $\deg(x)=\alpha$ if $x \in R_{\alpha}$}.
\end{definition}
\begin{definition}\cite[Section 2.1]{NV}
  \emph{Let $M$ be an $R$-module. If $R=\bigoplus_{\alpha \in G}R_{\alpha}$ is a $G$-graded ring  , then $M$ is a $G$-graded module  if  there exists  a family $\{ M_{\alpha} \, | \, \alpha \in G \}$ of additive subgroups $M_{\alpha}$  of $M$ such that $M=\bigoplus_{\alpha \in G}M_{\alpha}$ (as groups)  and $R_{\alpha}M_{\beta} \subseteq M_{\alpha + \beta }$ for all $\alpha , \beta \in G$. A nonzero element $x \in M$ is a homogeneous of degree $\alpha\in G$ and we write $\deg(x)=\alpha$ if $x \in M_{\alpha}$}.
\end{definition}
\begin{remark}\cite[Section 2.1]{NV}\label{R2.3}
  Let $R=\bigoplus_{\alpha \in G}R_{\alpha}$ be a $G$-graded ring and let $M=\bigoplus_{\alpha \in G}M_{\alpha}$ be a $G$-graded $R$-module. If $N$ is a submodule of $M$ and $ N_{\alpha}=N \cap M_{\alpha}$ for each $\alpha \in G$, then $N$ is  a graded submodule of $M$ if $N=\bigoplus_{\alpha \in G}N_{\alpha}$. Furthermore, if $N$ is  a graded submodule of the $G$-graded module $M=\bigoplus_{\alpha \in G}M_{\alpha}$, then the quotient module $M/N$ is also a $G$-graded $R$-module as follows:  $M/N=\bigoplus_{\alpha \in G}(M/N)_{\alpha} $ where $(M/N)_{\alpha}=(M_{\alpha}+N)/N= M_{\alpha}/N_{\alpha}$ for all $\alpha \in G$.
\end{remark}
\subsection{Projective and flat Modules}
\label{subsection:Projective Modules}
Let $R$ be a ring  and  $M$ be an $R$-module throughout this subsection.
\begin{definition}
  \emph{$M$ is said to be projective if  for every surjective module homomorphism  $g: L \twoheadrightarrow N$ and every module homomorphism $h: M\rightarrow N$, there exists a module  homomorphism $f:M\rightarrow L $ such that $ gf=h $}
\end{definition}
\begin{definition}\cite[Section 2.4]{BCA}\label{Def2.5}
 \emph{ A system $\mathfrak{B}=\{m_i\}_{i\in I}$ of elements of $M$ is said to be linearly independent
(over $R$) if the condition $\sum_{i\in I}r_im_i=0$  with $r_i \in R$ for every $i$ and $r_i=0$ for
almost all $i$ implies that $r_i=0$ for every $i$. We say that $\mathfrak{B}$ is a basis of $M$ (over
$R$) if $\mathfrak{B}$ generates $M$ as an $R$-module and is linearly independent over $R$. An
$R$-module is said to be free (or $R$-free) if it has a basis. Furthermore, if an $R$-module $M$ has a  finite free basis, $M$ is said to be a free module of finite rank}.
\end{definition}

\begin{remark}\cite[Proposition 4.7.6]{BCA}
 Every free module is a projective module.
\end{remark}

\begin{definition}\cite[Section 17.4]{BCA}

 \emph{ A projective resolution of $M$ is an exact sequence
$$ ... \rightarrow P_{n+1}\xrightarrow{d_{n+1}}P_n \rightarrow \cdots \rightarrow   P_1 \xrightarrow{d_1}P_0 \xrightarrow{\epsilon}M \rightarrow 0   $$
where $P_j$ is projective $R$-module for every $j$. If there exists $n$ such that $P_j=0$ for all $j \geq n+1 $, then we say that $M$ has a finite resolution of length $\leq n$ }.

\end{definition}

It is well known \cite[Proposition 17.4.4]{BCA} that every module has a  projective resolution not necessarily finite.

\begin{definition}\cite[Section 18.2]{BCA}
 \emph{The projective dimension of $M$, denoted $\pd M$  or $\pd_RM$, is defined by
\begin{equation*}
  \pd M=\inf \{ n \, |\, M \text{ has a projective resolution of length } \leq n \}
\end{equation*}
when  $M$ has
no finite projective resolution, we write $\pd M= \infty$}.
\end{definition}
\begin{definition}\cite[Section 4.7]{BCA}\label{D2.7}
  \emph{ $M$ is said to be flat if  for every injective  module homomorphism  $g: L \longrightarrow N$, the induced module homomorphism  $1\otimes g: M\otimes_RL \longrightarrow M\otimes_RN$ is injective. If $M$ is a flat $R$-module, $M$ is said to be faithfully flat if for every
nonzero $R$-module $N$, $ M\otimes_RN \neq 0$. Furthermore,  If $S$ is an $R$-algebra,i.e. $S$ is a ring and there exists a ring homomorphism $\phi : R \rightarrow S$, then $S$ is flat (respectively faithfully flat) algebra over $R$ if $S$ is flat (respectively faithfully flat) as $R$-module}.
\end{definition}

\subsection{ Cokernel of  Matrix}
\label{subsection:The Cokernl of a Matrix}
\begin{notation}
 \emph{If  $m,n \in \mathbb{N}$, then  $M_{m \times n}(R)$ (and $M_{ n}(R)$)   denote the set  of all $m \times n$  (and $n \times n$)  matrices over a ring $R$ (where $R$ is not necessarily commutative in this notation). If $A \in M_{n\times m}(R)$ is the matrix representing the $R$-linear map  $\phi : R^{n}\longrightarrow R^{m}$  given by $ \phi (X)= AX$ for all $ X \in R^{ ^n}$, then we write $A:R^{n}\longrightarrow R^{m}$ to denote the $R$-linear map $\phi$ and $\Cok_R(A)$ denotes the cokernl of $\phi$ while   $ \Ima_R(A)$ denotes the image of $\phi $. We write $\Cok(A)$ and $\Ima(A)$ if $R$ is known from the context}.
\end{notation}

\begin{definition}
 \emph{If $A,B \in M_n(R)$ where $R$ is a commutative ring, we say that $A$ is equivalent to $B$ and we write $A\sim B$ if there exist invertible matrices $U,V \in M_n(R)$ such that $A =UBV$}.
\end{definition}
We can observe the following remark.
\begin{remark}\label{R18}
\begin{enumerate}
  \item [(a)] If $A \in M_{n\times m}(R)$ and $B \in M_{s \times t}(R)$, then  we define $A\oplus B$ to be  the matrix in $M_{(n+s) \times (m+t)}(R)$ that is given by $A\oplus B = \left[
                                                                                                                     \begin{array}{cc}
                                                                                                                       A &  \\
                                                                                                                         & B \\
                                                                                                                     \end{array}
                                                                                                                   \right]$. In this case,  $ \Cok_R(A\oplus B)=\Cok_R(A)\oplus \Cok_R(B).$
  \item [(b)] If $A, B \in M_n(R)$ are equivalent matrices, then $\Cok_R(A)$ is isomorphic to  $\Cok_R(B)$ as $R$-modules.
\end{enumerate}
\end{remark}
If $A$ and $B$ are matrices in $M_n(R)$ such that $\Cok_R(A)$ is isomorphic to  $\Cok_R(B)$, it is not true in general that this implies that $A$ is equivalent to $B$ \cite[Section 4]{LR}. However, the following Proposition gives a partial converse of \ref{R18}(b).
\begin{proposition}\label{L3.5}
Let $(R,\mathfrak{m})$ be a Noetherian local ring and  let $A\in M_s(R)$ and $B\in M_t(R)$  be two matrices determining the $R$-linear maps $A:R^s \rightarrow R^s$ and  $B:R^t \rightarrow R^t$ such that  $A$ and $B$ are injective and all entries of $A$ and $B$ are in $\mathfrak{m}$. If $\Cok_R(A)$ is isomorphic to  $\Cok_R(B)$ as $R$-modules, then $s=t$ and $A$ is equivalent to $B$.
\end{proposition}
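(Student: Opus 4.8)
The plan is to reduce the statement to the classical fact that a minimal free presentation of a finitely generated module over a Noetherian local ring is unique up to isomorphism of complexes. The hypotheses that $A$ and $B$ are injective with all entries in $\mathfrak{m}$ are exactly what is needed to say that the presentations
\[
R^s \xrightarrow{\ A\ } R^s \longrightarrow \Cok_R(A) \longrightarrow 0,
\qquad
R^t \xrightarrow{\ B\ } R^t \longrightarrow \Cok_R(B) \longrightarrow 0
\]
are \emph{minimal}: since every entry of $A$ lies in $\mathfrak{m}$, we have $\Ima_R(A) \subseteq \mathfrak{m} R^s$, so the induced map $R^s \to \Cok_R(A)$ carries a basis of $R^s$ to a minimal generating set of $\Cok_R(A)$ (by Nakayama, applied to $\Cok_R(A)/\mathfrak{m}\Cok_R(A) \cong R^s/\mathfrak{m}R^s$). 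The same applies to $B$.

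First I would invoke this minimality to conclude that the minimal number of generators $\mu(\Cok_R(A)) = s$ and $\mu(\Cok_R(B)) = t$; since $\Cok_R(A) \cong \Cok_R(B)$, this already gives $s = t$. Next, fixing an isomorphism $\varphi \colon \Cok_R(A) \xrightarrow{\sim} \Cok_R(B)$, I would lift it to a map of the two presentations: because $R^t$ is free (hence projective) and $R^t \to \Cok_R(B)$ is surjective, $\varphi$ composed with $R^t \to \Cok_R(A) \cong \Cok_R(B)$ lifts through $R^t \twoheadrightarrow \Cok_R(B)$ to give $V \colon R^t \to R^t$; similarly lift $\varphi^{-1}$ to get $V' \colon R^t \to R^t$. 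Chasing the diagram, $V$ induces an automorphism of $\Cok_R(A)$ followed by $\varphi$, and $V'V$ induces the identity on $\Cok_R(A)$, so $V'V - \mathrm{id}$ factors through $\Ima_R(A) \subseteq \mathfrak{m}R^t$; hence $V'V \equiv \mathrm{id} \pmod{\mathfrak{m}}$, so $\det(V'V)$ is a unit, so $V$ is invertible. Then, since $V$ maps the presentation of $\Cok_R(A)$ onto that of $\Cok_R(B)$, one gets a commuting square $B = V A W^{-1}$ for a suitable lift $W \colon R^t \to R^t$ of the map on the first free modules; the same Nakayama argument (now using injectivity of $A$ and $B$ to identify $\Ker(R^t \to \Cok)$ with $\Ima(A)$, $\Ima(B)$ respectively, and minimality again) shows $W$ is invertible. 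This yields $A = V^{-1} B W$, i.e. $A \sim B$.

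The one genuine subtlety — and the step I would be most careful about — is the invertibility of $W$ on the first free module: one must check that the lift of the chain map restricts to a map $\Ima_R(A) \to \Ima_R(B)$ which, modulo $\mathfrak{m}$, is an isomorphism. Here injectivity of $A$ and $B$ is essential, since it lets us write $\Ker(R^t \to \Cok_R(A)) = \Ima_R(A) \cong R^t$ and likewise for $B$, so that $W$ is literally a map $R^t \to R^t$ (not merely a map onto a syzygy module), and the mod-$\mathfrak{m}$ reduction argument used for $V$ applies verbatim to $W$. Everything else is diagram-chasing and the determinant trick for detecting units in a local ring, so no further obstacle is expected.
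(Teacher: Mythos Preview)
Your proposal is correct and follows essentially the same route as the paper: lift the cokernel isomorphism to the minimal free presentations via projectivity, then use Nakayama (through the hypothesis that all entries lie in $\mathfrak{m}$) to conclude the lifts are invertible. The paper streamlines the step you flag as the genuine subtlety: once $V$ is known to be an isomorphism, invertibility of $W$ follows immediately from the Five Lemma applied to the short exact sequences $0 \to R^t \xrightarrow{A} R^t \to \Cok_R(A) \to 0$ and $0 \to R^t \xrightarrow{B} R^t \to \Cok_R(B) \to 0$, so no second Nakayama/determinant argument is needed.
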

\begin{proof}
Consider the following diagram with exact rows\\
\begin{center}
$\begin{CD}
0 @>>> R^t @>B>> R^t @>\delta>> \Cok_R(B) @>>> 0 \\
 @.         @V \beta VV     @V\alpha VV          @V\mu V \approx V \\
0 @>>> R^s@>A>> R^s @> \pi >> \Cok_R(A) @>>> 0
\end{CD} $
\end{center}

 The projectivity  of  $R^t$ induces the $R$-linear map $\alpha$ that makes the right square of the diagram commute. Since $\Ima( \alpha B ) \subseteq \Ima(A)$, the projectivity of $R^t$ induces $\beta$ that makes the diagram commutative. The fact that all entries of $A$  are in $\mathfrak{m}$ yields that  $\Ima(A) \subseteq \mathfrak{m}R^s $. Now, if $y \in R^s$, there exists $x \in R^t$ such that $\pi(y)= \mu \delta (x) =\pi \alpha (x)$. Therefore, $y- \alpha(x) \in \Ker (\pi) = \Ima( A )$ and then $y \in   \Ima (\alpha) +\Ima( A) \subseteq \Ima (\alpha) + \mathfrak{m}R^{s} $. This implies that $R^s= \Ima (\alpha) + \mathfrak{m}R^{s} $ and hence by Nakayamma lemma \cite[Lemma 2.1.7]{BCA} it follows that $\alpha$ is  surjective. The surjectivity of $\alpha$ shows that  $t \geq s$. By similar argument we show that $s \geq t$ and hence $s=t$. Since $\alpha:R^s \rightarrow R^s $ is surjective, $\alpha$ is isomorphism \cite[Theorem 2.4]{Mat}. The five lemma \cite[Proposition 2.72]{JR} confirms that $\beta$ is isomorphism too. If $U$ and $V$ are the invertible matrices defining $\alpha$ and $\beta$  (see Remark \ref{Rem 2.9}), then $UB=AV$ as desired.
\end{proof}

\subsection{Presentation of finitely generated  modules}
\label{subsection:Presentation of a module}
In this subsection,  $R$ is a  ring and $M$ is an $R$-module.
\begin{definition}\cite[Section 4.2]{BCA}
 \emph{$M$ is said to be finitely presented if there exists an exact sequence
$G \xrightarrow{\psi} F \xrightarrow{\phi} M \xrightarrow{} 0$ where $G$ and $F$ are free modules of finite rank}.
\end{definition}
\begin{remark}\label{Rem 2.9}
If $R$ is Noetherain and $M$ is a finitely generated $R$-module, there exists an exact sequence
$G \xrightarrow{\psi} F \xrightarrow{\phi} M \xrightarrow{} 0$ where $G$ and $F$ are free modules of finite rank. If $\{g_1,...,g_m\}$ and $\{f_1,...,f_n\}$  are bases for $G$ and $F$ respectively and $ \psi(g_j)= \sum_{i=1}^n a_{ij}f_i$ for all $1 \leq j \leq m$, then  the matrix $A=[a_{ij}]$ is said to represent $\psi$ with $\Cok A = M$ and we write $\Mat(\psi)=A$. If $G \xrightarrow{\theta} G $ and $G \xrightarrow{\tau} G $ are $R$-linear maps where $G$ is a free module of finite rank, then $\Mat(\theta + \tau)= \Mat(\theta) + \Mat(\tau)$ and $\Mat(\theta  \tau)= \Mat(\theta)  \Mat(\tau).$
\end{remark}

\subsection{Localization of modules and rings}
\label{subsection:Localization of modules and rings}
Let $R$ be a ring and $M$ be an $R$-module throughout this subsection.
\begin{definition}\cite[Section 2.7]{BCA}
 \emph{If $W$ is a multiplicative closed set, i.e  $1\in W$ and $st \in W$ for all $s,t \in W$,   define the relation $\sim $ on $W \times M$ by $(s,m) \sim (t,n)$, where $(s,m) , (t,n) \in W \times M$, if and only if there exists $w \in W$ such that $wtm=wsn$. The relation $\sim$ is an equivalence relation and for every $(s,m) \in W \times M$ let $ \frac{m}{s}$ denote the  equivalence class of $(s,m)$. Let $W^{-1}M$ denote the set of all equivalence classes  $ \frac{m}{s}$ for all $(s,m) \in W \times M$}.
\end{definition}
It is straightforward to verify the following well known fact.
\begin{proposition}
  If $W$ is a multiplicative closed set, then:

  \begin{enumerate}

    \item [(a)]$W^{-1}R$ is a commutative ring for which the addition and the  multiplication  are given by $\frac{a}{s} + \frac{b}{t}=\frac{ta+sb}{st}$ and $\frac{a}{s} . \frac{b}{t}=\frac{ab}{st}$ for all $\frac{a}{s} , \frac{b}{t} \in W^{-1}R$.
    \item [(b)]$W^{-1}M$ is an $W^{-1}R$-module for which  the addition and the scalar multiplication  are given by $\frac{m}{s} + \frac{n}{t}=\frac{tm+sn}{st}$ and $\frac{a}{u} \frac{m}{s}=\frac{am}{us}$ for all $\frac{a}{u}\in W^{-1}R$ and $\frac{m}{s} , \frac{n}{t} \in W^{-1}M$.
  \end{enumerate}
\end{proposition}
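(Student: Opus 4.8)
The plan is to verify, in the order listed, first that the displayed operations are well defined and then that they satisfy the ring axioms in (a) and the module axioms in (b). The only substantive point is well-definedness; once that is in place, every axiom follows by clearing denominators and invoking the corresponding identity in $R$ (respectively in $M$).

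First I would treat (a). To see that addition is well defined, suppose $\frac{a}{s} = \frac{a'}{s'}$ and $\frac{b}{t} = \frac{b'}{t'}$, so by the definition of $\sim$ there are $u, v \in W$ with $u s' a = u s a'$ and $v t' b = v t b'$. I claim $w = uv \in W$ witnesses $\frac{ta+sb}{st} = \frac{t'a'+s'b'}{s't'}$: multiplying the first relation by $vt't$ and the second by $us's$ and adding yields $uv s' t'(ta+sb) = uv st(t'a'+s'b')$, which is exactly the required equality in $R$. An analogous computation — multiply $u s' a = u s a'$ by $v t' b$ and then replace $v t' b$ by $v t b'$ — shows that the product is well defined. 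Granting this, commutativity, associativity, and distributivity of $+$ and $\cdot$ on $W^{-1}R$ follow from the same laws in $R$ after clearing denominators; the zero element is $\frac{0}{1}=\frac{0}{s}$ (any $s\in W$), the additive inverse of $\frac{a}{s}$ is $\frac{-a}{s}$, and the identity is $\frac{1}{1}=\frac{s}{s}$. Hence $W^{-1}R$ is a commutative ring.

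Next I would do (b) in the same spirit. Well-definedness of $\frac{m}{s}+\frac{n}{t}=\frac{tm+sn}{st}$ on $W^{-1}M$ is proved verbatim as for $W^{-1}R$, using only that $\sim$ on $W\times M$ is an equivalence relation. For the scalar action $\frac{a}{u}\cdot\frac{m}{s}=\frac{am}{us}$ one must check independence of the representative in \emph{both} arguments: if $\frac{a}{u}=\frac{a'}{u'}$ in $W^{-1}R$ and $\frac{m}{s}=\frac{m'}{s'}$ in $W^{-1}M$, choose $w_1,w_2\in W$ with $w_1 u' a = w_1 u a'$ and $w_2 s' m = w_2 s m'$; then $w=w_1 w_2\in W$ satisfies $w u' s'(am) = w us(a'm')$, so $\frac{am}{us}=\frac{a'm'}{u's'}$. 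The module axioms — associativity $\frac{a}{u}(\frac{b}{v}\frac{m}{s}) = (\frac{a}{u}\frac{b}{v})\frac{m}{s}$, distributivity over sums in $W^{-1}R$ and in $W^{-1}M$, and $\frac{1}{1}\cdot\frac{m}{s}=\frac{m}{s}$ — then reduce, after clearing denominators, to the module axioms for $M$ over $R$.

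The main (and essentially only) obstacle is bookkeeping: at each equality one must exhibit explicitly the auxiliary element of $W$ that witnesses it, and take care never to cancel a factor not known to lie in $W$. There is no conceptual difficulty here; this is the standard construction of a ring of fractions together with its modules of fractions, which is why the text calls it "straightforward to verify."
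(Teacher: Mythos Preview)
Your proposal is correct and follows the standard verification. The paper does not actually give a proof of this proposition; it merely introduces it as ``straightforward to verify'' and ``well known,'' so your sketch is more detailed than what appears in the text.
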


\begin{remark}
  \begin{enumerate}
    \item [(a)] If $P$ is a prime ideal in $R$, then $W= R \setminus P$ is a multiplicative closed set and we write $M_P$ (or $R_P$) to denote $W^{-1}M$ (or $W^{-1}R$).
    \item [(b)] If $u \in R\setminus \{0\}$, then $W=\{1,u,u^2,...,u^n,...\}$ is a multiplicative closed set and we write $M_u$ (or $R_u$) to denote $W^{-1}M$ (or $W^{-1}R)$.
  \end{enumerate}
\end{remark}

Recall from Corollary4.20, Rule4.22, and Example 4.18 in ChapterIII of   \cite{Kunz} and \cite[Theorem 4.4]{Mat}   the following property of  localizations.

\begin{proposition}\label{LLL2}
Let $S$ and $T$ be a multiplicative closed sets of a ring $R$ and let $\hat{T}$ be the image of $T$ in $S^{-1}R$.  For every $R$-module $M$, it follows that:
\begin{enumerate}
  \item [(a)] If $S \subseteq T$, then $\hat{T}^{-1}(S^{-1}R) \cong T^{-1}R$ as rings and $\hat{T}^{-1}(S^{-1}M) \cong T^{-1}M$ as $T^{-1}R$-modules.
  \item [(b)] If $f,g \in R$ and $\frac{g}{1}$ is the image of $g$ in $R_f$, then $(R_f)_{\frac{g}{1}}\cong R_{fg}$ as rings and $(M_f)_{\frac{g}{1}}\cong M_{fg}$ as $R_{fg}$-modules.
  \item [(c)] Let $I$ be an ideal of $R$, let $P \in \Spec(R)$ contain $I$, and let $\bar{P}$ be the image of $P$ in $R/I$. If $\rho(\frac{r+I}{s+I})=\frac{r}{s}+I_P$ for all $\frac{r+I}{s+I} \in(R/I)_{\bar{P}}$, then the map $\rho:(R/I)_{\bar{P}}\rightarrow R_P/I_P$ is a ring isomorphism.
  \item [(d)] $S^{-1}M$ is isomorphic to $S^{-1}R\otimes_{R}M$ as $S^{-1}R$-modules.
\end{enumerate}
\end{proposition}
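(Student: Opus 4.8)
The plan is to deduce all four statements from the universal property of localization together with part (d), which identifies the localization functor $S^{-1}(-)$ with base change along $R\to S^{-1}R$. So I would prove (d) first. Define $\theta\colon S^{-1}R\otimes_R M\to S^{-1}M$ by $\theta(\frac{a}{s}\otimes m)=\frac{am}{s}$; this is well defined because $(\frac{a}{s},m)\mapsto\frac{am}{s}$ is $R$-bilinear on $S^{-1}R\times M$. Conversely define $\eta\colon S^{-1}M\to S^{-1}R\otimes_R M$ by $\eta(\frac{m}{s})=\frac{1}{s}\otimes m$; if $\frac{m}{s}=\frac{n}{t}$ then $wtm=wsn$ for some $w\in S$, whence $\frac{1}{s}\otimes m=\frac{wt}{wst}\otimes m=\frac{1}{wst}\otimes wtm=\frac{1}{wst}\otimes wsn=\frac{1}{t}\otimes n$, so $\eta$ is well defined, and $\theta,\eta$ are visibly mutually inverse $S^{-1}R$-linear maps.

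For (a): the composite ring map $R\to S^{-1}R\to\hat{T}^{-1}(S^{-1}R)$ carries every element of $S$ and of $T$ to a unit (elements of $S$ are already invertible in $S^{-1}R$, elements of $T$ become invertible after inverting $\hat{T}$), and $\hat{T}^{-1}(S^{-1}R)$ is initial among $R$-algebras with this property. Since $S\subseteq T$, an $R$-algebra in which $T$ is invertible automatically has $S$ invertible, so $T^{-1}R$ satisfies the same universal property; uniqueness of the initial object gives a canonical ring isomorphism $\hat{T}^{-1}(S^{-1}R)\cong T^{-1}R$. The module statement then follows by applying (d) twice together with associativity of tensor product: $\hat{T}^{-1}(S^{-1}M)\cong\hat{T}^{-1}(S^{-1}R)\otimes_{S^{-1}R}(S^{-1}R\otimes_R M)\cong\hat{T}^{-1}(S^{-1}R)\otimes_R M\cong T^{-1}R\otimes_R M\cong T^{-1}M$.

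Statement (b) is the special case of (a) with $S=\{1,f,f^2,\dots\}$ and with $T=\{f^ig^j\mid i,j\geq 0\}$ the multiplicative set generated by $f$ and $g$: then $S\subseteq T$, the image $\hat{T}$ in $R_f$ is the multiplicative set generated by $\frac{g}{1}$, so $\hat{T}^{-1}(R_f)=(R_f)_{g/1}$ and $\hat{T}^{-1}(M_f)=(M_f)_{g/1}$, while $T^{-1}R=R_{fg}$ and $T^{-1}M=M_{fg}$ since inverting the single element $fg$ inverts both $f$ and $g$ and conversely. For (c): write $W=R\setminus P$ and let $\overline{W}$ be its image in $R/I$; since $I\subseteq P$ we have $\overline{W}=(R/I)\setminus\bar{P}$. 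By (d), $R_P/I_P=R_P/IR_P\cong(W^{-1}R)\otimes_R(R/I)\cong W^{-1}(R/I)=\overline{W}^{-1}(R/I)=(R/I)_{\bar{P}}$, and chasing an element through these isomorphisms produces exactly the assignment $\frac{r+I}{s+I}\mapsto\frac{r}{s}+I_P$, so $\rho$ is a well-defined ring isomorphism. Alternatively one checks $\rho$ directly: it is clearly surjective, and $\rho(\frac{r+I}{s+I})=0$ means $tr\in I$ for some $t\notin P$, which is precisely the condition for $\frac{r+I}{s+I}$ to vanish in $(R/I)_{\bar{P}}$, giving injectivity.

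I do not expect a genuine obstacle here: every assertion is a formal consequence of the universal property of localization and of part (d). The only points demanding care are the well-definedness of the explicit inverse $\eta$ in (d) and of the explicit map $\rho$ in (c) — that is, compatibility with the equivalence relations defining the various localizations — and I would write those verifications out in full; the rest is bookkeeping with universal properties and tensor products.
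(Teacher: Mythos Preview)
Your proposal is correct. The paper does not actually give its own proof of this proposition: it is stated as a recollection of standard facts, with citations to Kunz (Chapter~III, Corollary~4.20, Rule~4.22, Example~4.18) and Matsumura (Theorem~4.4), and no argument is supplied. Your approach via the universal property of localization together with the tensor-product identification in~(d) is the standard one and is essentially what those references contain; the only minor remark is that in~(b) the image $\hat{T}$ of $\{f^ig^j\}$ in $R_f$ is not literally the multiplicative set $\{(g/1)^k\}$, but since $f/1$ is already a unit the two sets yield the same localization, which you implicitly use.
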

Let $W$ be a multiplicative closed set of a ring $R$. If $M$ and $N$ are $R$-modules, there exists an $W^{-1}R$-linear map $W^{-1}\Hom_R(M,N) \rightarrow \Hom_{W^{-1}R}(W^{-1}M,W^{-1}N)$ given by $\frac{f}{w}\mapsto \frac{1}{w}W^{-1}f$ for every $f \in \Hom_R(M,N)$ and $w \in W$ where $W^{-1}f \in \Hom_{W^{-1}R}(W^{-1}M,W^{-1}N)$ is given by $W^{-1}f(\frac{m}{w})= \frac{f(m)}{w}$ for all $\frac{m}{w} \in W^{-1}M$. However, this $W^{-1}R$-linear map is isomorphism in the following case.

\begin{lemma}\cite[Lemma 4.87.]{JR}\label{LL1}
Let $W$ be a multiplicative closed set of a ring $R$. If $M$ is finitely presented, then  $W^{-1}\Hom_R(M,N)$ is isomorphic to $\Hom_{W^{-1}R}(W^{-1}M,W^{-1}N)$ as $W^{-1}R$-module for every $R$-module $N$.  In particular case, if $R$ is Noetherian and $M$ is finitely generated, then $W^{-1}\Hom_R(M,N)$ is isomorphic to $\Hom_{W^{-1}R}(W^{-1}M,W^{-1}N)$ as $W^{-1}R$-module for every $R$-module $N$.
\end{lemma}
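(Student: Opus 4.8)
The plan is to write down the natural candidate map and show it is bijective by reducing to the case of a finite free module. Let $\theta_M\colon W^{-1}\Hom_R(M,N)\to \Hom_{W^{-1}R}(W^{-1}M,W^{-1}N)$ be the $W^{-1}R$-linear map described immediately before the statement, i.e. $\frac{f}{w}\mapsto \frac{1}{w}W^{-1}f$. One checks first that $\theta_R$ is an isomorphism: under the canonical identifications $\Hom_R(R,N)\cong N$ and $\Hom_{W^{-1}R}(W^{-1}R,W^{-1}N)\cong W^{-1}N$, the map $\theta_R$ becomes the identity on $W^{-1}N$. Since $\Hom_R(-,N)$, the localization functor $W^{-1}(-)$, and $\Hom_{W^{-1}R}(-,W^{-1}N)$ all carry finite direct sums in the relevant variable to finite direct products, and $\theta$ respects these decompositions, it follows that $\theta_{R^n}$ is an isomorphism for every $n\in\mathbb{N}$.

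Next I would use a finite presentation $R^m\xrightarrow{\psi}R^n\xrightarrow{\phi}M\to 0$. Applying the left exact functor $\Hom_R(-,N)$ gives an exact sequence $0\to \Hom_R(M,N)\to \Hom_R(R^n,N)\to \Hom_R(R^m,N)$, and since localization is exact this stays exact after applying $W^{-1}(-)$. On the other hand, localizing the presentation yields $W^{-1}R^m\to W^{-1}R^n\to W^{-1}M\to 0$, which is exact (localization is right exact) and is a genuine presentation of $W^{-1}M$ over $W^{-1}R$ because $W^{-1}(R^k)\cong (W^{-1}R)^k$; applying $\Hom_{W^{-1}R}(-,W^{-1}N)$ to it produces a left exact sequence. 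Naturality of $\theta$ in the variable $M$ makes the resulting ladder of two exact rows commute, with vertical maps $\theta_M$, $\theta_{R^n}$, $\theta_{R^m}$. As $\theta_{R^n}$ and $\theta_{R^m}$ are isomorphisms by the previous step, a short diagram chase (or the five lemma, after padding the rows with zeros) forces $\theta_M$ to be an isomorphism as well.

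For the \emph{in particular} clause, note that when $R$ is Noetherian and $M$ is finitely generated, $M$ is automatically finitely presented: choose a surjection $R^n\twoheadrightarrow M$; its kernel is a submodule of $R^n$, hence finitely generated since $R$ is Noetherian, which supplies the map $\psi\colon R^m\to R^n$ with image that kernel. Thus the first part applies verbatim. I do not expect a serious obstacle here; the only points requiring care are verifying that $\theta$ is genuinely functorial in $M$ (so the ladder commutes) and observing that one needs only the first two terms of each $\Hom$-sequence — that is, left exactness of $\Hom$ combined with exactness of localization is enough, so a finite presentation rather than a full free resolution suffices.
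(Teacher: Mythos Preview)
Your argument is correct and is essentially the standard proof (naturality of the comparison map, the case of a finite free module, then a diagram chase using a finite presentation and left exactness of $\Hom$). Note, however, that the paper does not supply its own proof of this lemma: it is one of the results stated with a reference and without proof (it is cited from \cite[Lemma~4.87]{JR}), so there is nothing in the paper to compare your approach against beyond the citation to Rotman, whose proof follows the same line you outline.
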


We need the following lemma later in section \ref{section: FFRT locus of a module is an open set}.
\begin{lemma}\label{LL2}
Let $M$ and $N$ be finitely generated modules over a Noetherian ring $R$. If $W$ is a multiplicative closed set of $R$ such that $W^{-1}M$ is isomorphic to $W^{-1}N$ as $W^{-1}R$-module, then there exists $u \in W$ such that $M_u$ is isomorphic to $N_u$ as $R_u$-module.
\end{lemma}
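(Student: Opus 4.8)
The plan is to descend the given isomorphism $\phi\colon W^{-1}M \xrightarrow{\ \sim\ } W^{-1}N$ to an honest $R$-module homomorphism $f\colon M\to N$ and then show that $f$ becomes an isomorphism after inverting a single element of $W$. Since $R$ is Noetherian and $M$ is finitely generated, $M$ is finitely presented, so Lemma \ref{LL1} applies: the canonical map $W^{-1}\Hom_R(M,N)\to\Hom_{W^{-1}R}(W^{-1}M,W^{-1}N)$, $\frac{g}{w}\mapsto\frac1w W^{-1}g$, is an isomorphism. Hence there exist $f\in\Hom_R(M,N)$ and $w\in W$ with $\phi=\frac1w W^{-1}f$. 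As $w$ is a unit of $W^{-1}R$ and $\phi$ is bijective, $W^{-1}f=w\phi$ is itself an isomorphism of $W^{-1}R$-modules.

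Next I would analyze the kernel and cokernel of $f$. Put $K=\Ker(f)$ and $C=\Cok(f)=N/\Ima(f)$. Both are finitely generated $R$-modules: $C$ is a quotient of the finitely generated module $N$, and $K$ is a submodule of the finitely generated module $M$ over the Noetherian ring $R$. Since localization is exact, $W^{-1}K=\Ker(W^{-1}f)=0$ and $W^{-1}C=\Cok(W^{-1}f)=0$. Now for any finitely generated module $L=(l_1,\dots,l_k)$ with $W^{-1}L=0$, each $l_i$ is annihilated by some $w_i\in W$, so $a:=w_1\cdots w_k\in W$ annihilates every $l_i$; hence $aL=0$ and therefore $L_a=0$. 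Applying this to $K$ and to $C$ yields elements $a,b\in W$ with $K_a=0$ and $C_b=0$.

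Finally I would set $u=ab\in W$. By Proposition \ref{LLL2}(b) we have $K_u\cong(K_a)_{b/1}=0$ and $C_u\cong(C_b)_{a/1}=0$. Applying the exact functor $(-)_u$ to the exact sequences $0\to K\to M\xrightarrow{f}N$ and $M\xrightarrow{f}N\to C\to 0$ gives $\Ker(f_u)=K_u=0$ and $\Cok(f_u)=C_u=0$, so $f_u\colon M_u\to N_u$ is an isomorphism of $R_u$-modules, which is exactly the assertion.

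There is no deep obstacle here; the only two points that require care are both in the first step. One must know that the isomorphism over $W^{-1}R$ is realized by a single fraction $\frac1w W^{-1}f$ with a common denominator $w\in W$ — this is precisely what finite presentability of $M$ (hence Lemma \ref{LL1}) delivers — and one must invoke the Noetherian hypothesis to guarantee that $\Ker(f)$ is finitely generated, so that it too is annihilated by a single element of $W$; without that, the argument for the cokernel would go through but the injectivity part would not.
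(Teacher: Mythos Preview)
Your proof is correct and follows essentially the same route as the paper's: both invoke Lemma~\ref{LL1} to descend the isomorphism to an $R$-linear map $f\colon M\to N$, observe that $W^{-1}(\Ker f)=W^{-1}(\Cok f)=0$, and then use finite generation (with the Noetherian hypothesis for $\Ker f$) to find a single $u\in W$ killing both, so that $f_u$ is an isomorphism. The only cosmetic difference is that you produce separate annihilators $a,b$ for $\Ker f$ and $\Cok f$ and then take $u=ab$, whereas the paper asserts directly that one $u$ works for both---which amounts to the same thing.
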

\begin{proof}
Let $ \phi \in \Hom_{W^{-1}R}(W^{-1}M,W^{-1}N)$ be an isomorphism. By lemma \ref{LL1}, there exists $f \in  \Hom_R(M,N)$ and $t\in W$ such that $\phi = \frac{1}{t}W^{-1}f$. Therefore, $f$ induces the following exact sequence resulted from the exactness of localizations

\begin{equation*}
 0 \rightarrow W^{-1}(\Ker f ) \rightarrow W^{-1}M \xrightarrow{W^{-1}f} W^{-1}N \rightarrow W^{-1}(\Cok f ) \rightarrow 0.
\end{equation*}

Since $W^{-1}(\Cok f ) = \Cok W^{-1}f = \Cok \frac{1}{t}W^{-1}f= \Cok \phi $  and  $W^{-1}(\Ker f ) = \Ker W^{-1}f = \Ker \frac{1}{t}W^{-1}f= \Ker \phi  $, it follows $ W^{-1}(\Cok f )  = 0 = W^{-1}( \Ker f )$ and hence there exists $u \in W$ such that $ u \Cok f = 0 = u \Ker f$. Such $u$ exists as $\Cok f$ and $\Ker f$ are finitely generated $R$-modules. As a result, we get that $ \Cok f_u= (\Cok f )_u  = 0$ and $ \Ker f_u = ( \Ker f )_u= 0 $ which proves that $M_u$ is isomorphic to $N_u$ as $R_u$-module.
\end{proof}
\subsection{Completion of modules and  rings}
\label{subsection:The Completion of modules and  rings}
In this subsection,  $R$ is a ring and $M$ is an $R$-module.

\begin{definition}\cite[Chapter 8]{BCA}
 \emph{A filtration on $R$ is a sequence $\{ I_n  \}_{n\geq 0}$ of ideals of $R$ such that $I_0=R$, $I_n \supseteq I_{n+1}$
 and $I_nI_m \subseteq I_{n+m}$ for all $m, n$. A ring with a filtration is
called a filtered ring.
If $R$ is a filtered ring with filtration $\{ I_n  \}_{n\geq 0}$, a filtration on the $R$-module $M$ is a sequence $F=\{M_n\}_{n \geq 0}$ of submodules of $M$ such that $M_0=M$, $M_n \supseteq M_{n+1}$
and $I_mM_n \subseteq M_{m+n}$ for all m, n. In this case $M$ is called a
filtered $R$-module. The condition $I_mM_n \subseteq M_{m+n}$ is sometimes expressed by
saying that the filtration $\{M_n\}_{n \geq 0}$ is compatible with the filtration $\{ I_n  \}_{n\geq 0}$.
An example of a filtration  is the $I$-adic filtration
corresponding to an ideal $I$ of $R$. This is the filtration given by $I_n = I^n$
and $M_n = I^nM$ for all $n\geq 1$  , $M_0= M$ and $I_0=R$}.
\end{definition}
\begin{definition}
   \emph{Let $R$ be a filtered ring with filtration $A=\{ I_n  \}_{n\geq 0}$ and  let $M$ be a
filtered $R$-module with (compatible) filtration $F=\{ M_n  \}_{n\geq 0}$.
\begin{enumerate}
  \item [(a)] We use $\widehat{R}_A$ (or $\widehat{R}$ if there is no ambiguity) to denote the completion of $R$ with respect to the filtration $A$  that  is defined as $$ \widehat{R}_A= \varprojlim R/I_n=\{(x_n+I_n)_{n\geq 1}\in \prod_{n\geq 1}R/I_n \mid x_{n+1}-x_n \in I_n, \forall n\geq 1 \}.$$
  \item [(b)] We use $\widehat{M}_F$ (or $\widehat{M}$ if there is no ambiguity) to denote the completion of $M$ with respect to  the filtration $F$  that  is defined as $$ \widehat{M}_F= \varprojlim M/M_n=\{(x_n+M_n)_{n\geq 1}\in \prod_{n\geq 1}M/M_n \mid x_{n+1}-x_n \in M_n, \forall n\geq 1 \}.$$
  \item [(c)]If $I$ is an ideal of $R$, the $I$-adic completion of $R$ (and $M$) is denoted by $\widehat{R}_I$ (and $\widehat{M}_I$) and consequently are defined as $$ \widehat{R}_I= \varprojlim R/I^n=\{(x_n+I^n)_{n\geq 1}\in \prod_{n\geq 1}R/I^n \mid x_{n+1}-x_n \in I^n, \forall n\geq 1 \}$$ and $$ \widehat{M}_I=\varprojlim M/I^nM=\{(x_n+I^nM)_{n\geq 1}\in \prod_{n\geq 1}M/I^nM \mid x_{n+1}-x_n \in I^nM \forall, n\geq 1 \}.$$
\end{enumerate}
}
\end{definition}
We know from \cite[Section 8]{Mat} , \cite[Chapter 10]{AM}, or \cite[Section 8.2]{BCA} that $\widehat{R}$ is a ring and $\widehat{M}$ is $\widehat{R}$-module as follows:

\begin{proposition}
  If $R$ is a filtered ring with filtration $A=\{ I_n  \}_{n\geq 0}$ and  $M$ is a
filtered $R$-module with (compatible) filtration $F=\{ M_n  \}_{n\geq 0}$, then:
\begin{enumerate}
  \item [(a)]$\widehat{R}_A$   is a ring  with addition and multiplication given by $(r_n+I_n)_{n\geq 1}+(s_n+I_n)_{n\geq 1}= ((r_n + s_n)+I_n)_{n\geq 1}$ and $(r_n+I_n)_{n\geq 1}(s_n+I_n)_{n\geq 1}= (r_n  s_n+I_n)_{n\geq 1}$ for all $(r_n+I_n)_{n\geq 1},(s_n+I_n)_{n\geq 1}\in \hat{R}$
  \item [(b)]$\widehat{M}_F$ is an $\widehat{R}_A$-module with addition and scalar multiplication given by $(x_n+M_n)_{n\geq 1}+(y_n+M_n)_{n\geq 1}= ((x_n + y_n)+M_n)_{n\geq 1}$ and $(r_n+I_n)_{n\geq 1}(y_n+M_n)_{n\geq 1}= (r_n  y_n + M_n)_{n\geq 1}$  for all $(x_n+M_n)_{n\geq 1},(y_n+M_n)_{n\geq 1}\in \widehat{M}_F$  and $(r_n+I_n)_{n\geq 1}\in \widehat{R}_A$
\end{enumerate}
\end{proposition}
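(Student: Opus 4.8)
The plan is to realize both $\widehat{R}_A$ and $\widehat{M}_F$ as substructures of the direct products $\prod_{n\geq 1}R/I_n$ and $\prod_{n\geq 1}M/M_n$, so that the ring and module axioms are inherited for free and only closure under the operations needs checking. First I would observe that $\prod_{n\geq 1}R/I_n$ is a commutative ring under componentwise addition and multiplication $(x_n+I_n)_n+(y_n+I_n)_n=((x_n+y_n)+I_n)_n$ and $(x_n+I_n)_n(y_n+I_n)_n=((x_ny_n)+I_n)_n$, with zero $(0+I_n)_n$ and identity $(1+I_n)_n$. Then I would check that the coherent sequences, i.e. those $(x_n+I_n)_n$ with $x_{n+1}-x_n\in I_n$ for all $n$, form a subring: if $x_{n+1}-x_n\in I_n$ and $y_{n+1}-y_n\in I_n$ then $(x_{n+1}+y_{n+1})-(x_n+y_n)\in I_n$, and $x_{n+1}y_{n+1}-x_ny_n=x_{n+1}(y_{n+1}-y_n)+(x_{n+1}-x_n)y_n\in I_n$ because $I_n$ is an ideal; the sequence is also closed under negation and contains $(0+I_n)_n$ and $(1+I_n)_n$. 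Since a subring of a commutative ring is a commutative ring, part (a) follows, with exactly the operations displayed in the statement.

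For part (b) I would argue analogously. The product $\prod_{n\geq 1}M/M_n$ is an abelian group under componentwise addition, and the coherent sequences $(x_n+M_n)_n$ with $x_{n+1}-x_n\in M_n$ form a subgroup by the same computation as above (using that each $M_n$ is a subgroup). It then remains to produce the $\widehat{R}_A$-action. The crucial point here, and the one place where the compatibility hypothesis $I_mM_n\subseteq M_{m+n}$ is really used, is that each $M/M_n$ is naturally an $R/I_n$-module: indeed $I_nM=I_nM_0\subseteq M_{n+0}=M_n$, so the $R$-action on $M/M_n$ factors through $R/I_n$. Consequently $\prod_{n\geq 1}M/M_n$ is a module over $\prod_{n\geq 1}R/I_n$ via $(r_n+I_n)_n\cdot(y_n+M_n)_n=((r_ny_n)+M_n)_n$, and this formula is well-defined on quotients precisely because of the above.

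Next I would verify that this action restricts to the coherent sequences: given $r_{n+1}-r_n\in I_n$ and $y_{n+1}-y_n\in M_n$, one has $r_{n+1}y_{n+1}-r_ny_n=r_{n+1}(y_{n+1}-y_n)+(r_{n+1}-r_n)y_n\in r_{n+1}M_n+I_nM\subseteq M_n+M_n=M_n$, using that $M_n$ is a submodule and again that $I_nM\subseteq M_n$; hence $(r_n+I_n)_n\cdot(y_n+M_n)_n$ lies in $\widehat{M}_F$ whenever both factors do. Since $\widehat{M}_F$ is thereby a subgroup of the $\bigl(\prod R/I_n\bigr)$-module $\prod M/M_n$ that is closed under multiplication by elements of the subring $\widehat{R}_A$, it is an $\widehat{R}_A$-module, with the operations displayed. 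I do not expect any genuine obstacle here: the argument is entirely formal, and the only subtlety worth isolating is the use of $I_mM_n\subseteq M_{m+n}$ in the special case $n=0$ to make the scalar multiplication land in the correct quotient; all the remaining ring and module axioms (associativity, commutativity, distributivity, unitality) are inherited from the ambient products and require no separate verification.
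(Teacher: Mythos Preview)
Your proposal is correct and complete: realizing $\widehat{R}_A$ and $\widehat{M}_F$ as a subring and submodule of the componentwise products, and checking closure of the coherence condition under the operations (with the key identity $r_{n+1}y_{n+1}-r_ny_n=r_{n+1}(y_{n+1}-y_n)+(r_{n+1}-r_n)y_n$ and the use of $I_nM_0\subseteq M_n$), is exactly the standard argument. The paper does not give its own proof of this proposition; it simply records the result and refers to \cite[Section 8]{Mat}, \cite[Chapter 10]{AM}, and \cite[Section 8.2]{BCA}, so there is nothing further to compare.
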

An example of the completion is the following:

\begin{proposition}\cite[Section 7.1]{E}\label{EX1}
Let $A=R[x_1,..,x_t]$ be a polynomial ring over the ring $A$. If $I=(x_1,...,x_t)$, then  $\widehat{A}_I=R[\![x_1,...,x_t]\!]$.
\end{proposition}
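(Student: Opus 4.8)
The plan is to construct an explicit ring isomorphism $R[\![x_1,\dots,x_t]\!] \xrightarrow{\sim} \widehat{A}_I$ matching a formal power series with the sequence of its truncations, where $A = R[x_1,\dots,x_t]$ and $I = (x_1,\dots,x_t)$. The first step is a precise description of the quotients $A/I^n$. Since $I = (x_1,\dots,x_t)$, the power $I^n$ is exactly the $R$-submodule of $A$ spanned by the monomials $x_1^{a_1}\cdots x_t^{a_t}$ with $a_1 + \cdots + a_t \geq n$. Writing $P_{<n}$ for the free $R$-submodule of $A$ spanned by the monomials of total degree $< n$, the standard grading of the polynomial ring yields a direct sum decomposition $A = P_{<n} \oplus I^n$ of $R$-modules; in particular each coset of $A/I^n$ has a unique representative in $P_{<n}$, and $P_{<n} \cap I^n = 0$. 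This decomposition is the only nontrivial ingredient, and I expect it to be the crux of the argument; everything after it is bookkeeping with coherent sequences.

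Next I would define $\Phi \colon R[\![x_1,\dots,x_t]\!] \to \widehat{A}_I = \varprojlim A/I^n$ by sending a power series $g$ to the sequence $(g_{<n} + I^n)_{n \geq 1}$, where $g_{<n} \in P_{<n} \subseteq A$ is the polynomial obtained by discarding all terms of $g$ of total degree $\geq n$. The difference $g_{<(n+1)} - g_{<n}$ is the degree-$n$ homogeneous part of $g$, hence lies in $I^n$, so $\Phi(g)$ is indeed an element of the inverse limit. The map $\Phi$ is clearly additive and unital; it is multiplicative because for power series $g,h$ the tails $g - g_{<n}$ and $h - h_{<n}$ lie in $I^n$, so $gh \equiv g_{<n}\,h_{<n} \pmod{I^n}$, while also $gh \equiv (gh)_{<n} \pmod{I^n}$, giving $\Phi(gh)_n = \Phi(g)_n\,\Phi(h)_n$ in $A/I^n$.

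Finally I would check bijectivity. For injectivity: if $\Phi(g) = 0$ then $g_{<n} \in I^n$ for all $n$, and since $g_{<n} \in P_{<n}$ and $P_{<n}\cap I^n = 0$, every coefficient of $g$ vanishes. For surjectivity: given $(f_n + I^n)_n \in \widehat{A}_I$, first replace each $f_n$ by its unique representative in $P_{<n}$. Decomposing $f_{n+1} \in P_{<(n+1)}$ as its degree-$<n$ part plus its degree-$n$ part, the coherence relation $f_{n+1} - f_n \in I^n$ together with $I^n \cap P_{<n} = 0$ forces the degree-$<n$ part of $f_{n+1}$ to equal $f_n$. Hence the coefficients of the $f_n$ are mutually consistent and assemble into a single power series $g$ with $g_{<n} = f_n$ for all $n$, so $\Phi(g) = (f_n + I^n)_n$. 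This establishes $\widehat{A}_I \cong R[\![x_1,\dots,x_t]\!]$ as rings; note that no hypothesis on $R$ (Noetherian or otherwise) is used. Alternatively one could quote that the associated graded ring $\operatorname{gr}_I(A)$ is again $R[x_1,\dots,x_t]$ and deduce the statement from general facts about completions, but the direct argument above is self-contained.
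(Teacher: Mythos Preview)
Your argument is correct and is the standard direct proof of this fact. Note, however, that the paper does not actually prove this proposition: it is stated with a citation to Eisenbud's \emph{Commutative Algebra} \cite[Section 7.1]{E} and, in keeping with the author's convention for the background section, is left without proof. So there is no ``paper's proof'' to compare against; your write-up simply supplies what the thesis omits. One small notational looseness: when you write ``the tails $g - g_{<n}$ and $h - h_{<n}$ lie in $I^n$'' you are implicitly working in the power series ring rather than in $A$, so $I^n$ should be read as the ideal of $R[\![x_1,\dots,x_t]\!]$ generated by degree-$n$ monomials (equivalently, what matters is that $(gh)_{<n} = (g_{<n}h_{<n})_{<n}$). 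This is harmless, but worth making explicit if you polish the write-up.
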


\begin{definition}
 \emph{Let $R$ be a filtered ring with filtration $A=\{ I_n  \}_{n\geq 0}$, and let $F=\{ M_n  \}_{n\geq 0}$ and $\tilde{F}=\{ \tilde{M}_n  \}_{n\geq 0}$
be two compatible filtrations  on $M$. The filterations $F=\{ M_n  \}_{n\geq 0}$ and $\tilde{F}=\{ \tilde{M}_n  \}_{n\geq 0}$ are equivalent  if the following holds: Given $r \geq 0$, there exist $n(r) \geq 0$ and
$\tilde{n}(r) \geq  0$ such that $M_{n(r)} \subseteq \tilde{M}_r$
and $\tilde{M}_{\tilde{n}(r)} \subseteq M_r$}.
\end{definition}
We need the following Proposition later in the subsection \ref{subsection:Modules over rings of prime characteristic p}.
\begin{proposition}\cite[Lemma 8.2.1]{BCA}\label{P2.20}
Let $R$ be a filtered ring with filtration $A=\{ I_n  \}_{n\geq 0}$. If  $F=\{ M_n  \}_{n\geq 0}$ and $\tilde{F}=\{ \tilde{M}_n  \}_{n\geq 0}$
are two compatible equivalent  filtrations  on $M$, then
$\varprojlim M/M_n$ is isomorphic  $\varprojlim M/\tilde{M}_n$ as $\hat{R}$-modules.
\end{proposition}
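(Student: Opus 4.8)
The plan is to exploit the elementary fact that the inverse limit of a tower of modules is canonically unchanged when the tower is replaced by a mutually cofinal one; equivalence of the two filtrations is precisely the assertion that the towers $\{M/M_n\}_n$ and $\{M/\tilde M_n\}_n$ interleave. First I would normalize the comparison functions supplied by the definition of equivalence: after replacing $n(r)$ by $\max\{r,n(1),\dots,n(r)\}$ and $\tilde n(r)$ similarly, we may assume $r\mapsto n(r)$ and $r\mapsto \tilde n(r)$ are nondecreasing with $n(r)\geq r$ and $\tilde n(r)\geq r$, while still having $M_{n(r)}\subseteq \tilde M_r$ and $\tilde M_{\tilde n(r)}\subseteq M_r$ (enlarging the index only improves these containments, the filtrations being decreasing).

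Next I would define $\phi:\varprojlim M/M_n\to\varprojlim M/\tilde M_r$ by $(x_n+M_n)_n\mapsto (x_{n(r)}+\tilde M_r)_r$ and, symmetrically, $\psi:\varprojlim M/\tilde M_r\to\varprojlim M/M_n$ by $(y_r+\tilde M_r)_r\mapsto (y_{\tilde n(n)}+M_n)_n$. Three points need checking for $\phi$ (and the mirror statements for $\psi$): (i) the output class is independent of the chosen representative $x_{n(r)}$ of $x_{n(r)}+M_{n(r)}$, which is immediate from $M_{n(r)}\subseteq \tilde M_r$; (ii) the output sequence is compatible, i.e.\ $x_{n(r+1)}-x_{n(r)}\in\tilde M_r$, which follows from the telescoping identity $x_m-x_\ell\in M_\ell$ for $m\geq \ell$ (a consequence of $x_{k+1}-x_k\in M_k$) together with $n(r+1)\geq n(r)$ and $M_{n(r)}\subseteq\tilde M_r$; and (iii) $\phi$ is $\widehat{R}$-linear. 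Additivity is componentwise and automatic; for compatibility with a scalar $(a_n+I_n)_n\in\widehat{R}$ one checks $(a_{n(r)}-a_r)x_{n(r)}\in\tilde M_r$, which holds because $a_{n(r)}-a_r\in I_r$ (telescoping in $\widehat{R}$) and $I_rM\subseteq\tilde M_r$ by compatibility of $\tilde F$ with $A$, using $\tilde M_0=M$.

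Finally I would verify that $\psi$ and $\phi$ are mutually inverse. For one composite, $\psi(\phi((x_n+M_n)_n))=(x_{n(\tilde n(n))}+M_n)_n$; since $n(\tilde n(n))\geq\tilde n(n)\geq n$, the telescoping identity gives $x_{n(\tilde n(n))}-x_n\in M_n$, so this equals $(x_n+M_n)_n$; the other composite is identical after swapping the roles of $F$ and $\tilde F$. This produces the desired isomorphism $\varprojlim M/M_n\cong\varprojlim M/\tilde M_n$ of $\widehat{R}$-modules. I do not expect a genuine obstacle here: the argument is entirely bookkeeping, and the only step demanding care is the initial normalization of the index functions to be monotone and bounded below by $r$, since this is what makes both the ``lands in the inverse limit'' step and the ``composites are the identity'' step go through without friction.
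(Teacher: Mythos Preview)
The paper does not supply its own proof of this proposition: it is one of the cited background results (from \cite[Lemma 8.2.1]{BCA}) that the author explicitly states without proof in Section~\ref{section:General Background}. So there is nothing to compare against, and the question reduces to whether your argument stands on its own.

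It does. Your proof is the standard cofinality argument, and all the bookkeeping is handled carefully. The normalization step replacing $n(r)$ by $\max\{r,n(1),\dots,n(r)\}$ is exactly right: it makes the index function monotone and bounded below by $r$ while preserving $M_{n(r)}\subseteq\tilde M_r$ (since enlarging the index only shrinks the submodule). The well-definedness and compatibility checks for $\phi$ and $\psi$ are correct, as is the verification that the composites are the identity via $n(\tilde n(n))\geq n$. The one place where a reader might pause is the $\widehat R$-linearity, and you handle it correctly: the point is that $(a_{n(r)}-a_r)\in I_r$ and $I_r\tilde M_0=I_rM\subseteq\tilde M_r$ by compatibility of $\tilde F$ with the ring filtration. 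Nothing is missing.
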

When $R$ is Noetherian and $M$ is a finitely generated $R$-module, we have the following useful theorem

\begin{theorem}\cite[Theorems 8.7, 8.8, and 8.14]{Mat}\label{thm 2.21}
 Let $R$ be a Noetherian ring and let $M$ be a finitely generated $R$-module.  If $I$ is an ideal of $R$, then

 \begin{enumerate}
   \item [(a)] $\widehat{M}_I$ is isomorphic to $M\otimes_R \widehat{R}_I$ as $\widehat{R}_I$-module.
   \item [(b)] $\widehat{R}_I$ is flat $R$-algebra. Furthermore, if $I$ is contained in the intersection of all maximal
ideals of $R$, then $\widehat{R}_I$ is faithfully flat $R$-algebra.
 \end{enumerate}

\end{theorem}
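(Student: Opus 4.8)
The plan is to deduce both parts from a single engine: the fact that $I$-adic completion is an \emph{exact} functor on the category of finitely generated modules over a Noetherian ring, which in turn rests on the Artin--Rees lemma. The supporting observations are that $-\otimes_R\widehat{R}_I$ is right exact and commutes with finite direct sums, and that $\widehat{(-)}_I$ commutes with finite direct sums.

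For part (a), I would first write down the natural $\widehat{R}_I$-linear comparison map $\theta_M\colon M\otimes_R\widehat{R}_I\to\widehat{M}_I$, $m\otimes(r_n+I^n)_n\mapsto(r_nm+I^nM)_n$, which is functorial in $M$. For $M=R$ it is visibly the identity $\widehat{R}_I\to\widehat{R}_I$; since both functors commute with finite direct sums, $\theta_{R^t}$ is then an isomorphism for every free module of finite rank. For a general finitely generated $M$, use that $R$ is Noetherian to pick a finite presentation $R^m\xrightarrow{\psi}R^n\to M\to0$ (cf. Remark \ref{Rem 2.9}), apply $-\otimes_R\widehat{R}_I$ (right exact) and $\widehat{(-)}_I$ (exact on finitely generated modules) to obtain two exact rows linked by the maps $\theta$, and conclude by the five lemma (or just by comparing cokernels) that $\theta_M$ is an isomorphism, using that $\theta_{R^m}$ and $\theta_{R^n}$ already are.

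For part (b), to get $R$-flatness of $\widehat{R}_I$ I would use the criterion that over a Noetherian ring it suffices to show $\mathfrak{a}\otimes_R\widehat{R}_I\to\widehat{R}_I$ is injective for every ideal $\mathfrak{a}$. Applying $\widehat{(-)}_I$ to $0\to\mathfrak{a}\to R\to R/\mathfrak{a}\to0$ (a sequence of finitely generated modules) gives a short exact sequence $0\to\widehat{\mathfrak{a}}_I\to\widehat{R}_I\to\widehat{R/\mathfrak{a}}_I\to0$, and part (a) identifies this first map with $\mathfrak{a}\otimes_R\widehat{R}_I\to\widehat{R}_I$; injectivity follows. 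For the faithful flatness under $I\subseteq\bigcap_{\mathfrak{m}}\mathfrak{m}$, I would invoke that a flat $R$-algebra is faithfully flat as soon as $\mathfrak{m}\widehat{R}_I\neq\widehat{R}_I$ for every maximal ideal $\mathfrak{m}$; and by part (a), $\widehat{R}_I/\mathfrak{m}\widehat{R}_I\cong(R/\mathfrak{m})\otimes_R\widehat{R}_I\cong\widehat{(R/\mathfrak{m})}_I$, which equals $R/\mathfrak{m}\neq0$ because $I\subseteq\mathfrak{m}$ makes $R/\mathfrak{m}$ already $I$-adically complete.

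The main obstacle --- the only genuinely non-formal step --- is justifying exactness of $\widehat{(-)}_I$ on finitely generated modules: for a submodule $N\subseteq M$ one must show, via Artin--Rees, that the $I$-adic topology on $N$ agrees with the topology induced from $M$, so that passing to $\varprojlim$ in $0\to N\to M\to M/N\to0$ preserves exactness. Once that is in hand, parts (a) and (b) are routine diagram chases and appeals to standard (faithful) flatness criteria.
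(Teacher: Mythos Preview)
Your proposal is correct and follows the standard route (essentially the argument in Matsumura's book). However, the paper does not give its own proof of this theorem: it is one of the background results quoted with a reference and, per the author's convention in Section~\ref{section:General Background}, stated without proof. So there is no in-paper argument to compare against; your write-up simply supplies the omitted proof.
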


If $M$ and $N$ are $R$-modules, recall that $M$ is said to be a direct summand of $N$ if there exists an $R$-module $L$ such that $N= M \oplus L$.
\begin{proposition}\cite[Theorem 3.6 and Corollary 3.7]{NW}\label{Pro2.22}
  Let $(R,\mathfrak{m})$ be a  Noetherian local ring. If $M$ and $N$ are finitely generated $R$-modules, then:

  \begin{enumerate}
    \item [(a)] $M$ is a direct summand of $N$ if and only if $\widehat{M}_{\mathfrak{m}}$ is a direct summand of $\widehat{N}_{\mathfrak{m}}$.
    \item [(b)] $M \cong N$ as $R$-modules if and only if $\widehat{M}_{\mathfrak{m}}\cong \widehat{N}_{\mathfrak{m}}$ as $\widehat{R}_{\mathfrak{m}}$-modules.
  \end{enumerate}
\end{proposition}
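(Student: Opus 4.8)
The plan is to deduce both equivalences from the fact that the completion homomorphism $R\to\widehat R:=\widehat R_{\mathfrak m}$ is (faithfully) flat, that $\widehat R$ is local with residue field $k:=R/\mathfrak m$, that $\widehat M:=\widehat M_{\mathfrak m}\cong M\otimes_R\widehat R$ (Theorem~\ref{thm 2.21}), together with Nakayama's lemma. The forward implications are immediate: applying the exact functor $-\otimes_R\widehat R$ to a decomposition $N\cong M\oplus L$ gives $\widehat N\cong\widehat M\oplus(L\otimes_R\widehat R)$, and an isomorphism $M\cong N$ tensors up to an isomorphism $\widehat M\cong\widehat N$.

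For the reverse implications I would first record two auxiliary facts. First, $\Hom$ commutes with the flat base change $R\to\widehat R$ on finitely presented modules: starting from a finite free presentation of $M$ (available since $R$ is Noetherian and $M$ finitely generated, cf.\ Remark~\ref{Rem 2.9}), applying $\Hom_R(-,N)$, and tensoring with the flat ring $\widehat R$, one obtains, after comparison with the analogous left-exact sequence computed over $\widehat R$, a natural isomorphism $\Hom_R(M,N)\otimes_R\widehat R\cong\Hom_{\widehat R}(\widehat M,\widehat N)$ (and the same with $M,N$ interchanged); reducing this modulo $\mathfrak m\widehat R$ shows that every $\widehat R$-linear map $\widehat M\to\widehat N$ agrees, modulo $\mathfrak m\widehat R$, with the image of some $R$-linear map $f\colon M\to N$, and then $f$ and the given map induce the same $k$-linear map $M/\mathfrak m M\to N/\mathfrak m N$. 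Second, a homomorphism of finitely generated $R$-modules that becomes surjective after $-\otimes_R k$ is already surjective (Nakayama~\cite[Lemma~2.1.7]{BCA}), and a surjective endomorphism of a finitely generated module over a Noetherian ring is an automorphism~\cite[Theorem~2.4]{Mat}.

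Granting these, suppose $\widehat M$ is a direct summand of $\widehat N$, so there are $\widehat R$-maps $\widehat f\colon\widehat M\to\widehat N$ and $\widehat g\colon\widehat N\to\widehat M$ with $\widehat g\widehat f=\mathrm{id}_{\widehat M}$. Lifting $\widehat f,\widehat g$ to $R$-maps $f\colon M\to N$ and $g\colon N\to M$ as above, the composite $g\circ f\colon M\to M$ induces the identity on $M/\mathfrak m M$, hence is an automorphism; then $(g\circ f)^{-1}\circ g$ is a left inverse of $f$, so $f$ is a split monomorphism and $M$ is a direct summand of $N$. For part~(b), lift an isomorphism $\widehat\phi\colon\widehat M\to\widehat N$ and its inverse $\widehat\psi$ to $R$-maps $\phi\colon M\to N$ and $\psi\colon N\to M$; then $\phi\otimes_R k$ and $\psi\otimes_R k$ are isomorphisms, so $\phi$ is surjective, while $\psi\circ\phi$ is a surjective and hence bijective endomorphism of $M$, forcing $\phi$ to be injective; thus $\phi$ is an isomorphism $M\cong N$.

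I expect the only genuine obstacle to be this descent in the reverse directions, and it is handled exactly by the two recorded facts: the first replaces $\widehat R$-linear maps between $\widehat M$ and $\widehat N$ by honest $R$-linear maps modulo the maximal ideal, and the second lets one read off from the reduction modulo $\mathfrak m$ whether such a map is a split monomorphism or an isomorphism. Faithful flatness of $\widehat R$ provides an alternative route but is not needed once this argument is set up.
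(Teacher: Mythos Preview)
The paper does not supply its own proof of this proposition; it is one of the cited background results stated without argument (see the opening of Section~\ref{section:General Background}), with the reference to \cite[Theorem~3.6 and Corollary~3.7]{NW}. Your proposal is therefore not competing against anything in the paper, and your argument is correct: the key point is precisely the flat base change isomorphism $\Hom_R(M,N)\otimes_R\widehat R\cong\Hom_{\widehat R}(\widehat M,\widehat N)$ for finitely presented $M$, which lets you replace $\widehat R$-linear splittings or isomorphisms by $R$-linear maps that agree with them modulo $\mathfrak m$, after which Nakayama and the surjective-endomorphism trick finish the job. This is the standard route and matches the spirit of the reference.
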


\subsection{Modules over rings of prime characteristic p }
\label{subsection:Modules over rings of prime characteristic p}

Throughout this section, all rings are  of prime characteristic $p$ unless otherwise  stated,  $e \in \mathbb{N}$, and $q=p^e$. Let  $R$ be  a ring of prime characteristic $p$. For every  $e\in \mathbb{N}$ , and $a,b \in R$, it follows that $(a+b)^{p^e}=a^{p^e}+ b^{p^e}$ and consequently the map $F^e: R \rightarrow R$ that is given by $F^e(a)= a^{p^e}$ is a ring homomorphism. This homomorphism is called the $e$-th Frobenius iterated map on the  ring $R$.
\begin{definition}
 \emph{If $M$ is an $R$ module, $F_*^e(M)$ denotes the  $R$-module obtained via the restriction under the Frobenius homomorphism $F^e:R\rightarrow R$. Thus,  $F_*^e(M)$ is the  $R$-module that is  the same as
$M$ as an abelian group but for every $m\in M$   we set  $F_*^e(m)$ to  represent the corresponding element  in $F_*^e(M)$  and the $R$-module structure of $F_*^e(M)$ is given by
\begin{center}
$rF_*^{e}(m)=F_*^{e}(r^{p^e}m)$ for all $m \in M$ and $r\in R$
\end{center}
In a particular case, $F_*^e(R)$ is the abelian group $R$ that has an $R$-module structure via
\begin{center}
$rF_*^{e}(a)=F_*^{e}(r^{p^e}a) $ for all $a ,r\in R$.
\end{center}
}
\end{definition}
If $I$ is an ideal of  $R$, then $I^{[q]}$ denotes the ideal generated by the set $\{r^q | r \in I \}$. As a result, if $I= (r_1,...,r_n)$, then $I^{[q]}= (r_1^q,...,r_n^q)$.

%\begin{definition}\cite[Section 1]{TT}
%If $R$ and $M$ are $\mathbb{Z}$-graded, then
%$F_*^e(M)$ carries a  $ \mathbb{Q}$-graded R-module structure: We grade $F_*^e(M)$ by setting $[F_*^e(M)]_{\alpha} = [M]_{p^e\alpha}$
%if $ \alpha \in \frac{1}{p^e}\mathbb{Z}$, and  $[F_*^e(M)]_{\alpha} = 0$ for all $\alpha \not\in \frac{1}{p^e}\mathbb{Z} $.
%\end{definition}

 We can  observe the following:
 \begin{remark}\label{R2.29}

 Let $M$ and $N$  be  modules over a ring $R$. If $e\in \mathbb{N}$, it follows that:
 \begin{enumerate}
   \item [(a)] $F_*^{e+d}(M)=F_*^e(F_*^d(M))$ for all $d \in \mathbb{N}$.
   \item [(b)] $F_*^e(R)$ is a ring itself,  isomorphic to $R$, with an addition  and a multiplication  given by $F_*^e(a)+F_*^e(b)= F_*^e(a+b)$ and $F_*^e(a) F_*^e(b)= F_*^e(a b)$ for all $a$ and $b$ in $R$.
   \item [(c)] $F_*^e(M)$ is also $F_*^e(R)$-module via $F_*^e(r)F_*^{e}(m)=F_*^{e}(rm)$ for all $m \in M$ and $r\in R$.
   \item [(d)] If $I$ is an ideal, then $IF_*^e(M)=F_*^e(I^{[q]}M)$.
   \item [(e)] If $N$ is a submodule of $M$, then $F_*^e(N)$ is a submodule of $F_*^e(M)$.
   \item [(f)] If $N$ is a submodule of $M$, then $F_*^e(M)/F_*^e(N)$ is isomorphic to   $F_*^e(M/N)$ as $R$-module.
   \item [(g)] If $\phi:N\rightarrow M$ is an $R$-module homomorphism, then so is the map $F_*^e(\phi):F_*^e(N)\rightarrow F_*^e(M)$ that is given by $F_*^e(\phi)(F_*^e(m))=F_*^e(\phi(m))$ for each $m\in M$.
   \item [(h)] $F_*^e(-)$ is an exact functor on the category of $R$-modules.
   \item [(i)] If $R^q=\{r^q \, | \, r \in R \}$, then $R^q$ is a subring of $R$ and consequently $R$ is an $R^q$-module.
   \item [(j)] If $\{M_i\}_{i \in I}$ is a family of $R$-modules, then $F_*^e(\prod_{i\in I}M_i)$ is isomorphic to $ \prod_{i\in I}F_*^e(M_i)$ and $F_*^e(\bigoplus_{i\in I}M_i)$ is isomorphic to $ \bigoplus_{i\in I}F_*^e(M_i)$ as $R$-modules.
 \end{enumerate}
 \end{remark}
 \begin{proposition}\label{L2.3}
 Let $M$ be an $R$-module. If $W$ is a multiplicative closed set of $R$, then $F_*^e(W^{-1}M)$  is isomorphic to  $W^{-1}F_*^e(M)$ as $W^{-1}R$-module.
 \end{proposition}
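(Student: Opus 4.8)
Write $q=p^{e}$. The first thing I would do is make explicit the two $W^{-1}R$-module structures that the statement compares. On one side, $W^{-1}F_*^e(M)$ is simply the localization of the $R$-module $F_*^e(M)$, hence a $W^{-1}R$-module in the usual way. On the other side, $W^{-1}M$ is a $W^{-1}R$-module and $W^{-1}R$ again has characteristic $p$, so $F_*^e(W^{-1}M)$ carries the $W^{-1}R$-module structure $\tfrac{r}{w}F_*^e(\xi)=F_*^e\!\big(\tfrac{r^{q}}{w^{q}}\xi\big)$ for $\xi\in W^{-1}M$; note that, restricted along $R\to W^{-1}R$, this is exactly the $R$-module $F_*^e$ of the $R$-module $W^{-1}M$. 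The plan is to produce a canonical $W^{-1}R$-linear map $\Phi\colon W^{-1}F_*^e(M)\to F_*^e(W^{-1}M)$ via the universal property of localization and then verify it is bijective.

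For the construction, let $\lambda\colon M\to W^{-1}M$ be the localization map and apply $F_*^e$ to get the $R$-linear map $F_*^e(\lambda)\colon F_*^e(M)\to F_*^e(W^{-1}M)$, $F_*^e(m)\mapsto F_*^e\!\big(\tfrac{m}{1}\big)$ (Remark \ref{R2.29}(g)). For each $w\in W$ the action of $w$ on the target is $F_*^e(\xi)\mapsto F_*^e\!\big(\tfrac{w^{q}}{1}\xi\big)$, which is invertible with inverse $F_*^e(\xi)\mapsto F_*^e\!\big(\tfrac{1}{w^{q}}\xi\big)$. Hence $W$ acts by automorphisms on $F_*^e(W^{-1}M)$, so $F_*^e(\lambda)$ factors uniquely through the localization, and the induced map is $W^{-1}R$-linear for the structure described above. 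This gives $\Phi$ with $\Phi\!\big(\tfrac{F_*^e(m)}{w}\big)=F_*^e\!\big(\tfrac{m}{w^{q}}\big)$.

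It then remains to check that $\Phi$ is bijective. For surjectivity, an arbitrary element of $F_*^e(W^{-1}M)$ is $F_*^e\!\big(\tfrac{m}{w}\big)$, and since $\tfrac{m}{w}=\tfrac{w^{q-1}m}{w^{q}}$ in $W^{-1}M$ we have $F_*^e\!\big(\tfrac{m}{w}\big)=\Phi\!\big(\tfrac{F_*^e(w^{q-1}m)}{w}\big)$. For injectivity, suppose $\Phi\!\big(\tfrac{F_*^e(m)}{w}\big)=0$, i.e. $F_*^e\!\big(\tfrac{m}{w^{q}}\big)=0$ in $F_*^e(W^{-1}M)$; since $F_*^e(W^{-1}M)$ has the same underlying abelian group as $W^{-1}M$ (equivalently, by exactness of $F_*^e$, Remark \ref{R2.29}(h)), this forces $\tfrac{m}{w^{q}}=0$ in $W^{-1}M$, so $vm=0$ in $M$ for some $v\in W$. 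Then $vF_*^e(m)=F_*^e(v^{q}m)=F_*^e(v^{q-1}\cdot vm)=0$ in $F_*^e(M)$, whence $\tfrac{F_*^e(m)}{w}=0$ in $W^{-1}F_*^e(M)$. Thus $\Phi$ is the required isomorphism of $W^{-1}R$-modules.

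The argument is essentially bookkeeping, and the only genuine point of care—where a slip would most easily occur—is keeping the a priori distinct $R$- and $W^{-1}R$-actions straight and correctly verifying the hypothesis of the universal property (that each $w\in W$ acts invertibly on $F_*^e(W^{-1}M)$, via the $q$-th power). An alternative, slightly slicker route would be to write $W^{-1}F_*^e(M)\cong W^{-1}R\otimes_{R}F_*^e(M)$ using Proposition \ref{LLL2}(d) and match the two sides through a base-change identity, but the direct construction above seems cleanest and avoids invoking flatness.
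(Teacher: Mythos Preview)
Your proof is correct and essentially the same as the paper's: both construct the canonical map between the two modules and verify bijectivity via the identity $\tfrac{m}{w}=\tfrac{w^{q-1}m}{w^{q}}$. The only cosmetic difference is that the paper writes down the inverse map $\phi\colon F_*^e(W^{-1}M)\to W^{-1}F_*^e(M)$, $\phi\big(F_*^e(\tfrac{m}{w})\big)=\tfrac{F_*^e(w^{q-1}m)}{w}$, and checks well-definedness by hand, whereas you go in the other direction and invoke the universal property of localization to get well-definedness and $W^{-1}R$-linearity for free.
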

 \begin{proof}
 For every $m \in M$ and $w \in W$, define $\phi(F_*^e(\frac{m}{w}))=\frac{F_*^e(w^{q-1}m)}{w}$. If $n \in M$ and $y \in W$ satisfy that $F_*^e(\frac{m}{w})=F_*^e(\frac{n}{y})$, then $\frac{m}{w}=\frac{n}{y}$ and hence $uym=uwn$ for some $u\in W$. Therefore, $(uwy)^{q-1}uym=(uwy)^{q-1}uwn$ implies that $F_*^e(u^qy^qw^{q-1}m)=F_*^e(u^qw^qy^{q-1}n)$ and hence $uyF_*^e(w^{q-1}m)= uwF_*^e(y^{q-1}n)$. This shows that $\frac{F_*^e(w^{q-1}m)}{w}=\frac{F_*^e(y^{q-1}n)}{y}$ and hence $\phi:F_*^e(W^{-1}M)\rightarrow W^{-1}F_*^e(M) $ is well defined. Notice for any $m\in M$ and $w\in W$ that
 \begin{equation*}
   \frac{F_*^e(m)}{w}=\frac{w^{q-1}F_*^e(m)}{w^q}=\frac{F_*^e((w^q)^{q-1}m)}{w^q}=\phi(F_*^e(\frac{m}{w^q}))
 \end{equation*}
 This shows that $\phi$ is surjective. One can verify that $\phi$ is also injective module  homomorphism over $W^{-1}R$  and hence
 $\phi:F_*^e(W^{-1}M)\rightarrow W^{-1}F_*^e(M) $ is an isomorphism as $W^{-1}R$-module isomorphism.
 \end{proof}
  \begin{proposition}\label{L2.4}
 Let $M$ be an $R$-module and let $I$ be a finitely generated  ideal of $R$. If $\widehat{M}_I$ is the $I$-adic completion of $M$, then $F_*^e(\widehat{M}_I)$ is isomorphic to $\widehat{F_*^e(M)}_I$ as $\hat{R}_I$-modules.
 \end{proposition}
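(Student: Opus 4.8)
The plan is to realize both $F_*^e(\widehat{M}_I)$ and $\widehat{F_*^e(M)}_I$ as the inverse limit of the single system $\{F_*^e(M/I^nM)\}_n$, the only wrinkle being that the $I$-adic filtration of $F_*^e(M)$ is not literally the $F_*^e$-image of the $I$-adic filtration of $M$; this mismatch is resolved by invoking Proposition~\ref{P2.20}.

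First I would pin down the $I$-adic filtration of the $R$-module $F_*^e(M)$. By Remark~\ref{R2.29}(d), $I^nF_*^e(M)=F_*^e\bigl((I^n)^{[q]}M\bigr)$, and since $(I^n)^{[q]}=(I^{[q]})^n$ (both are generated by the $q$-th powers of the degree-$n$ monomials in a fixed generating set of $I$), the $I$-adic filtration of $F_*^e(M)$ equals $F:=\{\,F_*^e((I^{[q]})^nM)\,\}_{n\ge 0}$. I would then introduce the competitor $\widetilde{F}:=\{\,F_*^e(I^nM)\,\}_{n\ge 0}$ and check: (i) $\widetilde{F}$ is a filtration of $F_*^e(M)$ compatible with the $I$-adic filtration of $R$, since $I^m\cdot F_*^e(I^nM)=F_*^e\bigl((I^{[q]})^mI^nM\bigr)\subseteq F_*^e(I^{m+n}M)$ by Remark~\ref{R2.29}(d); (ii) $F$ and $\widetilde{F}$ are equivalent in the sense of the definition preceding Proposition~\ref{P2.20}: the inclusion $(I^{[q]})^rM\subseteq I^rM$ gives $F_r\subseteq\widetilde{F}_r$, and---using that $I$ is finitely generated, say by $\mu$ elements, so that $I^{\mu(q-1)+1}\subseteq I^{[q]}$ by a pigeonhole argument---we get $I^{(\mu(q-1)+1)r}M\subseteq(I^{[q]})^rM$, i.e. $\widetilde{F}_{(\mu(q-1)+1)r}\subseteq F_r$. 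This is the one place the finiteness hypothesis on $I$ enters.

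Now Proposition~\ref{P2.20}, applied to $R$ with its $I$-adic filtration (so that the relevant completion ring is $\widehat{R}_I$), yields an isomorphism of $\widehat{R}_I$-modules
$$\widehat{F_*^e(M)}_I=\varprojlim_n \frac{F_*^e(M)}{I^nF_*^e(M)}=\varprojlim_n \frac{F_*^e(M)}{F_*^e((I^{[q]})^nM)}\;\cong\;\varprojlim_n \frac{F_*^e(M)}{F_*^e(I^nM)}.$$
By Remark~\ref{R2.29}(f) the quotient $F_*^e(M)/F_*^e(I^nM)$ is canonically $F_*^e(M/I^nM)$, compatibly with the transition maps, so the right-hand side is $\varprojlim_n F_*^e(M/I^nM)$. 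Finally I would produce the isomorphism $\varprojlim_n F_*^e(M/I^nM)\cong F_*^e(\widehat{M}_I)$: each $F_*^e(M/I^nM)$ is annihilated by $I^n$ (because $(I^n)^{[q]}\subseteq I^n$ in characteristic $p$), hence is an $R/I^n$-module and so carries an $\widehat{R}_I$-action through $\widehat{R}_I\to R/I^n$; the map $F_*^e(\widehat{M}_I)\to\varprojlim_n F_*^e(M/I^nM)$, $F_*^e\bigl((x_n+I^nM)_n\bigr)\mapsto\bigl(F_*^e(x_n+I^nM)\bigr)_n$, is a bijection on the nose, and it is $\widehat{R}_I$-linear because for $\rho=(r_n+I^n)_n$ one has $\rho^q=(r_n^q+I^n)_n$ and both module structures send the displayed element to $\bigl(F_*^e(r_n^qx_n+I^nM)\bigr)_n$---in other words, restriction of scalars along the Frobenius commutes with this inverse limit. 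Composing the three isomorphisms proves the proposition.

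I expect the main obstacle to be purely bookkeeping: keeping the $\widehat{R}_I$-module structures straight in the last step (matching the inverse-limit action on $\varprojlim_n F_*^e(M/I^nM)$ with the Frobenius-twisted action on $F_*^e(\widehat{M}_I)$), and correctly locating the use of the finiteness of $I$ in the filtration-equivalence step. There is no genuinely deep point once Remark~\ref{R2.29}(d) and Proposition~\ref{P2.20} are on the table.
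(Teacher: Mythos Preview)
Your proposal is correct and follows essentially the same route as the paper: identify $I^nF_*^e(M)=F_*^e\bigl((I^n)^{[q]}M\bigr)$ via Remark~\ref{R2.29}(d), compare the filtrations $\{F_*^e((I^n)^{[q]}M)\}$ and $\{F_*^e(I^nM)\}$, invoke Proposition~\ref{P2.20}, and identify $F_*^e(\widehat{M}_I)$ with $\varprojlim_n F_*^e(M)/F_*^e(I^nM)$. You are more explicit than the paper in two places---the pigeonhole bound $I^{\mu(q-1)+1}\subseteq I^{[q]}$ (where the paper just says ``for $m$ sufficiently larger than $n$'') and the verification of $\widehat{R}_I$-linearity in the last isomorphism---but the architecture is the same.
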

 \begin{proof}
 For each $n \in \mathbb{N} $, let $J_n=I^n $. It follows that
 \begin{eqnarray*}
 % \nonumber to remove numbering (before each equation)
  \widehat{F_*^e(M)}_I &=& \{ (F_*^e(x_n)+I^nF_*^e(M))_{n=1}^{\infty}\in \prod_{n=1}^{\infty}\frac{F_*^e(M)}{I^nF_*^e(M)}\, |\, F_*^e(x_{n+1})- F_*^e(x_n) \in I^nF_*^e(M) \} \\
                       &=& \{ (F_*^e(x_n)+F_*^e(J_n^{[q]}M))_{n=1}^{\infty}\in \prod_{n=1}^{\infty}\frac{F_*^e(M)}{F_*^e(J_n^{[q]}M)} \, | \, F_*^e(x_{n+1})- F_*^e(x_n) \in F_*^e(J_n^{[q]}M) \} \\
   &=& \varprojlim\frac{F_*^e(M)}{F_*^e(J_n^{[q]}M)}.
 \end{eqnarray*}
 On the other hand, we have
 \begin{eqnarray*}
 % \nonumber to remove numbering (before each equation)
  F_*^e(\widehat{M}_I) &=& \{ F_*^e((x_n +I^nM)_{n=1}^{\infty})\in F_*^e(\prod_{n=1}^{\infty}\frac{M}{I^nM})\, | \, x_{n+1} -  x_n  \in I^nM  \} \\
                       &\cong& \{ (F_*^e(x_n)+F_*^e(I^nM))_{n=1}^{\infty}\in \prod_{n=1}^{\infty}\frac{F_*^e(M)}{F_*^e(I^nM)}\,|\, F_*^e(x_{n+1})- F_*^e(x_n) \in F_*^e(I^nM) \} \\
   &=&\varprojlim \frac{F_*^e(M)}{F_*^e(I^nM)}.
 \end{eqnarray*}

 For $m$ sufficiently larger than $n$, we find that $F_*^e(I^mM) \subseteq F_*^e(J_n^{[q]}M)$ and it is obvious that $F_*^e(J_n^{[q]}M)\subseteq F_*^e(I^nM)$. This shows by Proposition \ref{P2.20} that  $\varprojlim \frac{F_*^e(M)}{F_*^e(I^nM)}$ is isomorphic to $\varprojlim \frac{F_*^e(M)}{F_*^e(J_n^{[q]}M)}$ and hence $F_*^e(\widehat{M}_I)$ is isomorphic to $\widehat{F_*^e(M)}_I$ as $\hat{R}_I$-modules.
 \end{proof}
\begin{definition}
 \emph{  Let $M$ be an $R$-module where $R$ is a ring not necessarily of prime characteristic in this definition.  $M[x]$ denotes the set of all polynomials in $x$ with coefficients in $M$, i.e. every element in $M[x]$ has the form $\sum_{j=0}^tm_jx^j $ where $t \in \mathbb{Z}_{+}$ and $m_j \in M$ for each $0\leq j \leq t$. The zero polynomial  and the addition between two polynomials in $M[x]$ can be defined similarly as in the case of polynomial rings and for $f= \sum_{i=0}^sr_ix^i \in R[x]$ and  $m=\sum_{j=0}^tm_j x^j \in M[x]$ we define
  $$fm=\sum_{i=0}^s\sum_{j=0}^tr_im_jx^{i+j}.$$}
\end{definition}
One can check the following remark.
\begin{remark}\label{R4.3}
If $M$ is an $R$-module where $R$ is a ring not necessarily of prime characteristic in this remak, then
\begin{enumerate}
  \item [(a)] $M[x]$ is an $R[x]$-module(\cite[Chapter 2]{AM}).
  \item [(b)] $M[x]$ is isomorphic to $M \otimes_RR[x]$ as $R[x]$-modules (\cite[Chapter 2]{AM}).
  \item [(c)] If $M$ is a finitely generated $R$-module that is generated by $\{m_1,...,m_n\}$, then $M[x]$ is a finitely generated $R[x]$ -module that is generated by $\{m_1\otimes_R1,...,m_n\otimes_R1\}$.
\end{enumerate}
\end{remark}

\begin{remark}
 If $R$ is a  ring,  $M$ is an $R$-module, $e \in \mathbb{N} $ and $q=p^e$, notice that:
\begin{enumerate}
  \item [(a)] $F_*^e(M)[x]$ is $R[x]$-module with scalar multiplication given as follows:\\
 If $f= \sum_{i=0}^sr_ix^i \in R[x]$and  $m=\sum_{j=0}^tF_*^e(m_j)x^j \in F_*^e(M)[x]$ and , then

 \begin{equation}\label{Equat1}
   fm=\sum_{i=0}^{s}\sum_{i=0}^tF^e(r_i^qm_j)x^{i+j}.
 \end{equation}

  \item [(b)]$F_*^e(M[x])$ is $R[x]$-module with scalar multiplication given as follows:
 If $f= \sum_{i=0}^sr_ix^i \in R[x]$ and $F_*^e(m)=\sum_{j=0}^tF_*^e(m_jx^j) \in F_*^e(M[x])$ and , then
 \begin{equation}\label{Equat2}
   fF_*^e(m)=\sum_{i=0}^{s}\sum_{j=0}^tF_*^e(r_i^qm_jx^{qi+j}).
 \end{equation}
 \end{enumerate}
\end{remark}

\begin{proposition}\label{P2.41}
 Let $R$ be a  ring  and let $M$ be an $R$-module. \\ If $M_k=\{\sum_{j=0}^{t}F_*^e(m_jx^{qj+k})\,|\,m_j \in M \text{ and } t\in \mathbb{Z}_{+}\}$ for each $0\leq k \leq q-1$, then:
\begin{enumerate}
  \item [(a)] $M_k$ is $R[x]$-submodule of $F_*^e(M[x])$.
  \item [(b)]  $F_*^e(M[x])=\bigoplus_{k=0}^{q-1}M_k$.
  \item [(c)] $F_*^e(M)[x]$ is isomorphic to $M_k$ as $R[x]$-modules.
  \item [(d)] $F_*^e(M[x])$ is isomorphic to $ (F_*^e(M)[x])^{\oplus q}$ as $R[x]$-modules.
\end{enumerate}
\end{proposition}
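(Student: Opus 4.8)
The plan is to set up explicit $R[x]$-linear maps and check they are mutually inverse, handling the four parts in order since each builds on the previous. First I would verify (a): for fixed $k$, an element of $M_k$ has the form $\sum_{j} F_*^e(m_j x^{qj+k})$, and multiplying by a monomial $r_i x^i \in R[x]$ produces, via \eqref{Equat2}, a sum of terms $F_*^e(r_i^q m_j x^{qi + qj + k}) = F_*^e\bigl((r_i^q m_j) x^{q(i+j)+k}\bigr)$, which again lies in $M_k$; closure under addition is clear, so $M_k$ is an $R[x]$-submodule. For (b), observe that every element $F_*^e(m)$ of $F_*^e(M[x])$ with $m = \sum_{\ell} m_\ell x^\ell \in M[x]$ decomposes by writing each exponent $\ell$ uniquely as $\ell = qj + k$ with $0 \le k \le q-1$ and $j \ge 0$ (division with remainder by $q$); grouping the terms according to the residue $k$ expresses $F_*^e(m)$ as a sum of elements of $M_0, \dots, M_{q-1}$. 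The sum is direct because the monomials $x^{qj+k}$ occurring in $M_k$ for distinct $k$ are distinct as elements of $M[x]$, so a relation $\sum_k \xi_k = 0$ with $\xi_k \in M_k$ forces each $\xi_k = 0$ by comparing coefficients in $M[x]$.

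For (c), I would define $\varphi_k : F_*^e(M)[x] \to M_k$ on a typical element $\sum_j F_*^e(m_j) x^j$ by
\[
\varphi_k\Bigl(\sum_j F_*^e(m_j) x^j\Bigr) = \sum_j F_*^e\bigl(m_j x^{qj+k}\bigr).
\]
This is visibly an additive bijection onto $M_k$ (the inverse strips off the $x^{qj+k}$ and records $F_*^e(m_j)$ in degree $j$). The point to check is $R[x]$-linearity: applying $r_i x^i$ to the left side uses \eqref{Equat1}, giving $\sum_j F_*^e(r_i^q m_j) x^{i+j}$, whose image under $\varphi_k$ is $\sum_j F_*^e\bigl(r_i^q m_j x^{q(i+j)+k}\bigr)$; applying $r_i x^i$ to $\varphi_k(\sum_j F_*^e(m_j)x^j)$ directly uses \eqref{Equat2} and yields $\sum_j F_*^e\bigl(r_i^q m_j x^{qi + qj + k}\bigr)$. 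Since $qi + qj + k = q(i+j) + k$, the two agree, so $\varphi_k$ is an $R[x]$-isomorphism. Finally (d) follows by combining (b) and (c): $F_*^e(M[x]) = \bigoplus_{k=0}^{q-1} M_k \cong \bigoplus_{k=0}^{q-1} F_*^e(M)[x] = (F_*^e(M)[x])^{\oplus q}$ as $R[x]$-modules.

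The only genuinely delicate point is the bookkeeping in (c): one must be careful that the two scalar-multiplication rules \eqref{Equat1} and \eqref{Equat2} differ in how the exponent of $x$ transforms (the shift $i \mapsto i$ versus $i \mapsto qi$ on the Frobenius side), and the definition of $\varphi_k$ with the exponent $qj+k$ is precisely what reconciles them — this is where the argument would break if the indexing were chosen carelessly. Everything else is routine verification of additivity and the direct-sum decomposition by residues mod $q$.
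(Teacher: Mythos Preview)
Your proposal is correct and follows essentially the same approach as the paper: the paper also checks (a) via \eqref{Equat2}, proves (b) by division-with-remainder on exponents and comparing coefficients for directness, defines the identical map $\phi$ in (c) and verifies $R[x]$-linearity by matching \eqref{Equat1} against \eqref{Equat2}, and deduces (d) from (b) and (c).
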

\begin{proof}
  (a) If $r=\sum_{i=0}^{s}r_ix^i \in R[x]$ , $m=\sum_{j=0}^{t}F_*^e(m_jx^{qj+k})\in M_k$ and $n=\sum_{j=0}^{t}F_*^e(n_jx^{qj+k})\in M_k$, then $m+n=\sum_{j=0}^{t}F_*^e((m_j+n_j)x^{qj+k})\in M_k$ and, by the scalar multiplication given by  equation \ref{Equat2}, $$rm=(\sum_{i=0}^{s}r_ix^i)(\sum_{j=0}^{t}F_*^e(m_jx^{qj+k}))=\sum_{i=0}^{s}\sum_{j=0}^{t}F_*^e(r_i^qm_jx^{q(i+j)+k}))\in M_k.$$
  This shows that $M_k$ is $R[x]$-submodule of $F_*^e(M[x])$.

  (b) If $m = \sum_{j=0}^t m_j x^j\in M[x]$, then $F_*^e(m) = \sum_{j=0}^t F_*^e(m_jx^j)$.
For each $j\in \mathbb{Z}_{+}$, there exist unique $ 0\leq c_j \leq q-1 $ and $b_j \in \mathbb{Z}_{+}$ such that $j=qb_j + c_j$. Therefore, $F_*^e(m) = \sum_{j=0}^t F_*^e(m_jx^{qb_j + c_j})$ and accordingly we can write $F_*^e(m)= \sum_{k=0}^{q-1}f_k $  where $f_k\in M_k$ for each $0\leq k \leq q-1 $. Now let $f_k=\sum_{j=0}^{u}F_*^e(m_{k,j}x^{qj+k})\in M_k$ for all $0\leq k \leq q-1$.  Notice that $\sum_{k=0}^{q-1}f_k=0$ if and only if $\sum_{k=0}^{q-1}\sum_{j=0}^{u}F_*^e(m_{k,j}x^{qj+k})=0$ if and only if $\sum_{k=0}^{q-1}\sum_{j=0}^{u}m_{k,j}x^{qj+k}=0$. For all $0 \leq k, l \leq q-1$ and $j,i\in \mathbb{Z}_{+}$ notice that $qj+k=qi+l$ if and only if $k=l$ and $j=i$. This makes $\sum_{k=0}^{q-1}\sum_{j=0}^{u}m_{k,j}x^{qj+k}=0$ if and only if $m_{k,j}=0$ for all $0 \leq k \leq q-1$ and $0 \leq j \leq u$. Therefore $\sum_{k=0}^{q-1}f_k=0$ if and only if $f_k=0$ for all $0 \leq k \leq q-1$. This shows that  $F_*^e(M[x])=\bigoplus_{k=0}^{q-1}M_k$.

  (c) If $\sum_{j=0}^{t}F_*^e(m_j)x^{j}  \in F_*^e(M)[x]$, define $\phi(\sum_{j=0}^{t}F_*^e(m_j)x^{j}) =\sum_{j=0}^{u}F_*^e(m_jx^{qj+k})$. One can check that   $\phi(m+n)=\phi(m)+\phi(n)$ for all $m,n\in F_*^e(M)[x]$ and  $\phi:F_*^e(M)[x]\rightarrow M_k$ is a group isomorphism. Furthermore, if $r=\sum_{i=0}^{s}r_ix^i \in R[x]$ and  $m=\sum_{j=0}^{t}F_*^e(m_j)x^{j}\in F_*^e(M)[x]$, we get by the scalar multiplications  given by equations  \ref{Equat1} and \ref{Equat2} that
  \begin{equation*}
    \phi(rf)=\phi(\sum_{i=0}^{s}\sum_{j=0}^{t}F_*^e(r_i^qm_j)x^{i+j})=\sum_{i=0}^{s}\sum_{j=0}^{t}F_*^e(r_i^qm_jx^{q(i+j)+k}) \text{ and }
  \end{equation*}
  \begin{equation*}
    r\phi(f)= (\sum_{i=0}^{s}r_ix^i)(\sum_{j=0}^{t}F_*^e(m_jx^{qj+k}))=\sum_{i=0}^{s}\sum_{j=0}^{t}F_*^e(r_i^qm_jx^{q(i+j)+k}).
  \end{equation*}
  This shows that $\phi:F_*^e(M)[x]\rightarrow M_k$ is an isomorphism of $R[x]$-modules.

  (d) follows from (b) and (c).
\end{proof}

\begin{proposition}\label{Le2.34}
 Let $R$ be a ring. If  $f=\sum_{n=0}^{\infty}f_n\in R[\![x_1,...,x_t]\!]$ where $f_n$ is a homogeneous polynomial in $R[x_1,...,x_t]$ of degree $n$ for all $n \geq 0$, then $f^q=\sum_{n=0}^{\infty}f_n^q$.
\end{proposition}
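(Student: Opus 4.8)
The plan is to reduce to the additivity of the $e$-th Frobenius on \emph{finite} sums, which is already available, and then pass to the limit in the $(x_1,\dots,x_t)$-adic topology on $R[\![x_1,\dots,x_t]\!]$.

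First I would check that $g:=\sum_{n=0}^{\infty}f_n^q$ is a well-defined element of $R[\![x_1,\dots,x_t]\!]$: since $f_n$ is homogeneous of degree $n$, $f_n^q$ is homogeneous of degree $nq$, and as $n$ ranges over $\mathbb{Z}_{+}$ these degrees are pairwise distinct, so no monomial receives contributions from infinitely many of the $f_n^q$. Next, fix $N\geq 0$, write $g_N=\sum_{n=0}^{N}f_n$ and $h_N=\sum_{n>N}f_n$, so that $f=g_N+h_N$ with $h_N\in I^{N+1}$, where $I=(x_1,\dots,x_t)R[\![x_1,\dots,x_t]\!]$. Because $g_N$ is a polynomial, finitely many applications of the identity $(a+b)^{p^e}=a^{p^e}+b^{p^e}$ recalled in Subsection~\ref{subsection:Modules over rings of prime characteristic p} give $g_N^q=\sum_{n=0}^{N}f_n^q$, and one more application to $f=g_N+h_N$ gives $f^q=g_N^q+h_N^q=\sum_{n=0}^{N}f_n^q+h_N^q$.

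Then I would bound the two tails. On one side, $h_N^q\in I^{q(N+1)}\subseteq I^{N+1}$. On the other side, $g-\sum_{n=0}^{N}f_n^q=\sum_{n>N}f_n^q$, and each term $f_n^q$ with $n>N$ is homogeneous of degree $nq\geq(N+1)q>N$, hence lies in $I^{N+1}$. Combining the two, $f^q-g\in I^{N+1}$ for every $N\geq 0$; since $\bigcap_{N\geq 0}I^N=0$ in the formal power series ring, we conclude $f^q=g=\sum_{n=0}^{\infty}f_n^q$.

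The whole argument is bookkeeping, and the only step needing a little care is the passage from the finite identity to the infinite one; this is precisely the separatedness of the $I$-adic topology. If one wishes to avoid topology altogether, one can instead compare homogeneous components degree by degree: for a fixed $d$, only the $f_n$ with $n\leq d$ contribute to the degree-$d$ part of $f^q$ and only the $f_n^q$ with $nq\leq d$ contribute to the degree-$d$ part of $g$, so the finite identity $(\sum_{n\leq d}f_n)^q=\sum_{n\leq d}f_n^q$ already forces these components to coincide, for every $d$.
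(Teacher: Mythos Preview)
Your proof is correct and follows essentially the same approach as the paper: both truncate $f$ modulo a power of the ideal $I=(x_1,\dots,x_t)$, apply Frobenius additivity to the resulting finite sum, and conclude by separatedness of the $I$-adic topology. The paper phrases this as checking $f^q+\mathfrak{m}^n=g+\mathfrak{m}^n$ for all $n$ (writing $\mathfrak{m}$ for $I$), while you equivalently show $f^q-g\in I^{N+1}$ for all $N$; your alternative degree-by-degree comparison is also sound and amounts to the same thing.
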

\begin{proof}
 let $\mathfrak{m}$ be the maximal ideal of $R[\![x_1,...,x_t]\!]$.  If $g=\sum_{n=0}^{\infty}g_n\in R[\![x_1,...,x_t]\!]$ where $g_n$ is a homogeneous polynomial in $R[x_1,...,x_t]$ of degree $n$, notice that   $f=g$ if and only if $f+\mathfrak{m}^n=g+\mathfrak{m}^n$ for all $n\geq 1$. For every $n\geq 1$, we have
 \begin{eqnarray*}
 % \nonumber to remove numbering (before each equation)
   f^q+\mathfrak{m}^n &=& (f+ \mathfrak{m}^n)^q \\
     &=& (\sum_{j=0}^{n-1}f_j+  \mathfrak{m}^n)^q\\
     &=& (\sum_{j=0}^{n-1}f_j)^q+  \mathfrak{m}^n\\
     &=&  \sum_{j=0}^{n-1}f^q_j+  \mathfrak{m}^n \\
     &=&  h_n+\mathfrak{m}^n
 \end{eqnarray*}
 where
 \begin{equation*}
    h_n= \left\{
           \begin{array}{ll}
             \sum_{j=0}^{r_n-1}f_j^q, & \hbox{if } n=r_nq \hbox{ for some  }r_n \in \mathbb{Z}_{+} \\
             \sum_{j=0}^{r_n}f_j^q, & \hbox{if } n=r_nq+s_n \hbox{ for some  }r_n \in \mathbb{Z}_{+} \hbox{ and } 1 \leq s_n \leq q-1.
           \end{array}
         \right.
  \end{equation*}
   On the other hand, if $g=\sum_{n=0}^{\infty}f_n^q$, then for all $n\geq 1$ we get
 \begin{equation*}
  g+\mathfrak{m}^n =  g_n+\mathfrak{m}^n
 \end{equation*}
where
 \begin{equation*}
    g_n= \left\{
           \begin{array}{ll}
             \sum_{j=0}^{r_n-1}f_j^q, & \hbox{if } n=r_nq \hbox{ for some  }r_n \in \mathbb{Z}_{+} \\
             \sum_{j=0}^{r_n}f_j^q, & \hbox{if } n=r_nq+s_n \hbox{ for some  }r_n \in \mathbb{Z}_{+} \hbox{ and } 1 \leq s_n \leq q-1.
           \end{array}
         \right.
  \end{equation*}

 This shows that $f^q=\sum_{n=0}^{\infty}f_n^q$.
 \end{proof}
\begin{proposition}\label{L2.35}
  Let $R$ be a ring. If $S=R[\![x_1,...,x_t]\!]$, then $F_*^e(S)$ is isomorphic to $\prod_{n=0}^{\infty}F_*^e(R_n)$ as $S$-modules where $R_n$ is the group of all homogeneous polynomials in  $R[x_1,...,x_t]$ of degree $n$ for all $n\in \mathbb{Z}_{+}$ with $R_0=R$. Furthermore, if $f=\sum_{j=0}^{\infty}f_n\in R[\![x_1,...,x_n]\!]$ where $f_n$ is a homogeneous polynomial in $R[x_1,...,x_n]$ of degree $n$ for all $n \geq 0$, we can write $F_*^e(f)= \sum_{n=0}^{\infty}F_*^e(f_n)$.
\end{proposition}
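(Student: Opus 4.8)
The plan is to combine two facts: that a formal power series is nothing but the formal sum of its homogeneous components, so $S=R[\![x_1,\dots,x_t]\!]\cong\prod_{n=0}^{\infty}R_n$ as abelian groups (in fact as $R$-modules), and that $F_*^e(-)$ commutes with arbitrary direct products, which is Remark \ref{R2.29}(j). Putting these together produces a group isomorphism
\[
\Phi\colon F_*^e(S)\longrightarrow\prod_{n=0}^{\infty}F_*^e(R_n),\qquad F_*^e\Big(\sum_{n\ge 0}g_n\Big)\longmapsto\big(F_*^e(g_n)\big)_{n\ge 0},
\]
where $g=\sum_n g_n$ is the decomposition of $g\in S$ into homogeneous components. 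We equip $\prod_{n\ge 0}F_*^e(R_n)$ with the $S$-module structure transported along $\Phi$, so that $\Phi$ becomes an $S$-module isomorphism by construction; what then remains is to record what that transported action is (so the statement is meaningful) and to deduce the ``furthermore'' clause.

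First I would make the group isomorphism $S\cong\prod_{n\ge 0}R_n$ explicit: each $R_n$ is the free $R$-module on the degree-$n$ monomials, and $S$ is by definition the set of formal sums $\sum_n g_n$ with $g_n\in R_n$, which is exactly the product. Applying $F_*^e$ and Remark \ref{R2.29}(j) then yields $\Phi$. Next I would identify the transported $S$-action: for $s=\sum_m s_m\in S$ and $\big(F_*^e(h_n)\big)_n\in\prod_n F_*^e(R_n)$, the definition of the module structure on $F_*^e(S)$ gives $s\cdot F_*^e(g)=F_*^e(s^q g)$, and Proposition \ref{Le2.34} lets us write $s^q=\sum_m s_m^q$ with $s_m^q$ homogeneous of degree $mq$. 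Expanding $s^q g=\sum_{m,n}s_m^q h_n$ and collecting the homogeneous piece of degree $\ell$ shows that the $\ell$-th coordinate of $s\cdot\big(F_*^e(h_n)\big)_n$ equals $\sum_{mq+n=\ell}F_*^e(s_m^q h_n)$, a finite sum. This both exhibits the $S$-module structure on $\prod_n F_*^e(R_n)$ explicitly and confirms that $\Phi$ intertwines it with the action on $F_*^e(S)$.

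For the final assertion, given $f=\sum_{n\ge 0}f_n$ with $f_n$ homogeneous of degree $n$, the very definition of $\Phi$ gives $\Phi(F_*^e(f))=\big(F_*^e(f_n)\big)_{n\ge 0}$; viewing each $F_*^e(R_n)$ as the $n$-th factor of the product and using the same formal-sum notation for elements of $\prod_n F_*^e(R_n)$ that one uses for power series, this says precisely $F_*^e(f)=\sum_{n\ge 0}F_*^e(f_n)$ (and, if one prefers, the partial sums converge to $F_*^e(f)$ in the $(x_1,\dots,x_t)$-adic topology transported to $F_*^e(S)$, via Remark \ref{R2.29}(d)). The only step with real content is the identification of the transported action: the bookkeeping of which homogeneous degrees occur in $s^q g$ — every summand sits in degree $mq+n$, not an arbitrary degree — is the place where one must be careful, but it is routine once Proposition \ref{Le2.34} is available.
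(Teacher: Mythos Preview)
Your proof is correct and follows essentially the same route as the paper: both define the map $\Phi$ on homogeneous components, invoke Remark~\ref{R2.29}(j) (implicitly in the paper's case) for the group isomorphism, and use Proposition~\ref{Le2.34} to identify the degree-$\ell$ component of $s^q g$ as $\sum_{mq+n=\ell}s_m^q g_n$, thereby verifying $S$-linearity. The only cosmetic difference is that the paper writes out the transported action by parametrizing $\ell=r_\ell q+s_\ell$ with $0\le s_\ell\le q-1$ and summing over $j$ with $n=jq+s_\ell$, $m=r_\ell-j$, whereas you leave the condition as $mq+n=\ell$; and the paper defines the action on the product first and then checks compatibility, while you transport first and then identify---but these amount to the same computation.
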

\begin{proof}
 Notice that $R[\![x_1,...,x_t]\!]=\prod_{n=0}^{\infty}R_n$ where $R_n$ is the group of all homogeneous polynomials in  $R[x_1,...,x_t]$ of degree $n$ for all $n\in \mathbb{Z}_{+}$ with $R_0=R$. If we define $\phi (F_*^e(\sum_{n=0}^{\infty}f_n))=\sum_{n=0}^{\infty}F_*^e(f_n) $,  for every $f= \sum_{n=0}^{\infty}f_n \in R[\![x_1,...,x_n]\!] $, then $\phi: F_*^e(R[\![x_1,...,x_t]\!])\rightarrow \prod_{n=0}^{\infty}F_*^e(R_n)$ is a group isomorphism. Let  $f=\sum_{n=0}^{\infty}f_n$ and $g=\sum_{n=0}^{\infty}g_n$ be elements in $R[\![x_1,...,x_n]\!]$. Recall from Proposition \ref{Le2.34} that $f^q=\sum_{n=0}^{\infty}f_n^q$ and hence
       \begin{equation*}
         fF_*^e(g)=F_*^e(f^qg)=F_*^e( \sum_{n=0}^{\infty}h_n)
       \end{equation*}
       where $h_n = \sum_{j=0}^{r_n}f_{r_n-j}^qg_{jq+s_n}$ whenever $n=r_nq+s_n$ for $r_n,s_n \in \mathbb{Z}_{+}$ with $0 \leq s_n \leq q-1$. Notice that $ \prod_{n=0}^{\infty}F_*^e(R_n)$ can be considered as $S$-module as follows:

       If $f=\sum_{n=0}^{\infty}f_n \in S$ and $ \sum_{n=0}^{\infty}F_*^e(g_n)\in \prod_{n=0}^{\infty}F_*^e(R_n)$, then
       \begin{equation*}
         f\sum_{n=0}^{\infty}F_*^e(g_n)= \sum_{n=0}^{\infty}w_n
       \end{equation*}
       where $w_n= \sum_{j=0}^{r_n}f_{r_n-j}F_*^e(g_{jq+s_n})$ whenever $n=r_nq+s_n$ for $r_n,s_n \in \mathbb{Z}_{+}$ with $0 \leq s_n \leq q-1$. Therefore
       \begin{equation*}
         \phi(fF_*^e(g))=\phi(F_*^e(f^qg)) = f\sum_{n=0}^{\infty}F_*^e(g_n)=f\phi(F_*^e(g)).
       \end{equation*}
       This proves that $\phi: F_*^e(S)\rightarrow \prod_{n=0}^{\infty}F_*^e(R_n)$ is an isomorphism as $S$-modules and hence we can write $F_*^e(\sum_{j=0}^{\infty}f_n)=\sum_{n=0}^{\infty}F_*^e(f_n)$.
\end{proof}

 \begin{proposition}\label{L2.34}
  If $\Lambda_e$ is a subset of the ring $R$, then
  \begin{enumerate}
    \item [(a)] $\Lambda_e$ is a free basis  of $R$ as a free $R^q$-module   if and only if $\{ F_*^e(\lambda) \, | \, \lambda \in \Lambda_e\}$ is a free basis of $F_*^e(R)$ as a free $R$-module.
    \item [(b)] If $\Lambda_e$ is a free basis  of $R$ as a free $R^q$-module and $x$ is a variable on $R$, then $\{ \lambda x^j \, | \, \lambda \in \Lambda_e \text{ and } 0 \leq j \leq q-1 \}$ is a free basis of $R[x]$ as a free $R^q[x^q]$-module.
   \item [(c)] If $\Lambda_e$ is a free basis  of $R$ as a free $R^q$-module and $x_1,...,x_n$ are variables on $R$, then
   \begin{equation*}
     \{ \lambda x_1^{k_1}x_2^{k_2}...x_{n}^{k_{n}} \, | \, \lambda \in \Lambda_e ,  0\leq k_j\leq q-1 ,  1\leq j\leq n  \}
   \end{equation*}
    is a free  basis of $R[x_1,...,x_n]$ as a free $R^q[x_1^q,...,x_n^q]$-module.
    \item [(d)] If $\Lambda_e$ is a free basis  of $R$ as a free $R^q$-module and $x_1,...,x_n$ are variables on $R$, then
   \begin{equation*}
     \{ F_*^e(\lambda x_1^{k_1}x_2^{k_2}...x_{n}^{k_{n}}) \, | \, \lambda \in \Lambda_e ,  0\leq k_j\leq q-1 ,  1\leq j\leq n  \}
   \end{equation*}
    is a free  basis of $F_*^e(R[x_1,...,x_n])$ as a free $R[x_1,...,x_n]$-module.
    \item [(e)] If $\Lambda_e$ is a finite free basis  of $R$ as a free $R^q$-module of finite rank and $x$ is a variable on $R$, then $\{ \lambda x^j \, | \, \lambda \in \Lambda_e \text{ and } 0 \leq j \leq q-1 \}$ is a free basis of $R[\![x]\!]$ as a free $R^q[\![x^q]\!]$-module.
    \item [(f)] If $\Lambda_e$ is a finite free basis  of $R$ as a free $R^q$-module of finite rank and $x_1,...,x_n$ are variables on $R$, then
     \begin{equation*}
     \{ \lambda x_1^{k_1}x_2^{k_2}...x_{n}^{k_{n}} \, | \, \lambda \in \Lambda_e ,  0\leq k_j\leq q-1 ,  1\leq j\leq n  \}
   \end{equation*}
     is a finite free  basis of $R[\![x_1,...,x_n]\!]$ as a free $R^q[\![x_1^q,...,x_n^q]\!]$-module.
    \item [(g)] If $\Lambda_e$ is a finite free basis  of $R$ as a free $R^q$-module of finite rank and  $x_1,...,x_n$ are variables on $R$, then
     \begin{equation*}
     \{ F_*^e(\lambda x_1^{k_1}x_2^{k_2}...x_{n}^{k_{n}}) \, | \, \lambda \in \Lambda_e ,  0\leq k_j\leq q-1 ,  1\leq j\leq n  \}
   \end{equation*}
    is a finite  free  basis of $F_*^e(R[\![x_1,...,x_n]\!])$ as a free $R[\![x_1,...,x_n]\!]$-module.

  \end{enumerate}

 \end{proposition}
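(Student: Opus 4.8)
The plan is to establish the seven assertions in the order $(a)$, then $(b)\Rightarrow(c)\Rightarrow(d)$, then $(e)\Rightarrow(f)\Rightarrow(g)$, invoking $(a)$ a second time to pass from the statements about the subring of $q$-th powers to the statements about $F_*^e$ of a polynomial or power-series ring. For $(a)$, recall that $F_*^e(R)$ is by definition the abelian group $R$ with the twisted action $r\cdot F_*^e(a)=F_*^e(r^q a)$, and that $r\mapsto r^q$ is a ring isomorphism $R\to R^q$. Thus a relation $\sum_{\lambda}r_\lambda F_*^e(\lambda)=0$ in $F_*^e(R)$ is the relation $F_*^e\bigl(\sum_\lambda r_\lambda^q\lambda\bigr)=0$, i.e. $\sum_\lambda r_\lambda^q\lambda=0$ in $R$; since $r\mapsto r^q$ is a bijection onto $R^q$, the nonzero-coefficient tuples $(r_\lambda^q)$ range over all finitely supported tuples in $R^q$. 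Hence $\{F_*^e(\lambda)\}_{\lambda}$ is $R$-linearly independent in $F_*^e(R)$ iff $\Lambda_e$ is $R^q$-linearly independent in $R$, and it generates $F_*^e(R)$ over $R$ iff $\Lambda_e$ generates $R$ over $R^q$; this is $(a)$.

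For $(b)$: in characteristic $p$ the $q$-th powers in $R[x]$ are closed under addition, so they form the subring $R^q[x^q]$. Given $g=\sum_i c_i x^i$, expand each exponent uniquely as $i=qb+j$ with $0\le j\le q-1$ and each coefficient as $c_i=\sum_{\lambda\in\Lambda_e}r_{i,\lambda}^q\lambda$; regrouping yields $g=\sum_{j,\lambda}\lambda x^j\bigl(\sum_b r_{qb+j,\lambda}x^b\bigr)^q$, a combination of the proposed set with coefficients in $R^q[x^q]$. Uniqueness is immediate because the monomials $x^{qb+j}$ are pairwise distinct, so comparing coefficients reduces a would-be dependence relation to dependence relations for $\Lambda_e$ over $R^q$, which are trivial. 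Statement $(c)$ then follows by induction on $n$ via $R[x_1,\dots,x_n]=R[x_1,\dots,x_{n-1}][x_n]$: the $q$-th-power subring of $R[x_1,\dots,x_{n-1}]$ is $R^q[x_1^q,\dots,x_{n-1}^q]$, so applying $(b)$ with $R[x_1,\dots,x_{n-1}]$ and the inductive basis $\{\lambda x_1^{k_1}\cdots x_{n-1}^{k_{n-1}}\}$ in place of $R$ and $\Lambda_e$ adjoins the factors $x_n^{k_n}$ and produces the claimed basis. For $(d)$, statement $(c)$ says the displayed monomials form a basis of $R[x_1,\dots,x_n]$ over its subring of $q$-th powers, so $(a)$ applied to the ring $R[x_1,\dots,x_n]$ gives $(d)$.

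For the power-series versions, $(e)$ is proved by the same exponent bookkeeping as $(b)$, using Proposition \ref{Le2.34} to see that $\bigl(\sum_i c_i x^i\bigr)^q=\sum_i c_i^q x^{qi}$, so that the $q$-th powers of $R[\![x]\!]$ fill out the subring $R^q[\![x^q]\!]$ and the series $h_{j,\lambda}(x^q):=\bigl(\sum_b r_{qb+j,\lambda}x^b\bigr)^q$ produced above lie there; uniqueness again comes from comparing coefficients of the distinct monomials $x^{qb+j}$. Then $(f)$ follows by induction on $n$ using $R[\![x_1,\dots,x_n]\!]\cong R[\![x_1,\dots,x_{n-1}]\!][\![x_n]\!]$, the identification of the $q$-th-power subring of $R[\![x_1,\dots,x_{n-1}]\!]$ with $R^q[\![x_1^q,\dots,x_{n-1}^q]\!]$, and the fact that the inductively constructed basis stays finite; and $(g)$ follows from $(f)$ together with $(a)$ applied to $R[\![x_1,\dots,x_n]\!]$, exactly as $(d)$ followed from $(c)$ and $(a)$.

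The routine parts are the algebraic identities (closure of $q$-th powers under addition, $\bigl(\sum c_\alpha x^\alpha\bigr)^q=\sum c_\alpha^q x^{q\alpha}$, and the ring identifications of iterated power-series rings). The one genuinely delicate point, and the reason $(e)$--$(g)$ require $\Lambda_e$ to be \emph{finite}, is the following: each $c_i$ is a finite $R^q$-combination of $\Lambda_e$, but as $i$ ranges over $\mathbb{Z}_{+}$ the set of basis elements occurring could a priori exhaust $\Lambda_e$, so it is precisely finiteness of $\Lambda_e$ that guarantees the expansion of a general $g\in R[\![x]\!]$ is a genuine \emph{finite} $R^q[\![x^q]\!]$-linear combination of the proposed basis (without finiteness $R[\![x]\!]$ need not even be free over $R^q[\![x^q]\!]$); one must also check that the reindexing and regrouping of the series are legitimate in the $(x)$-adic topology. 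I expect this convergence/finiteness bookkeeping in $(e)$ to be the main obstacle; everything else is formal.
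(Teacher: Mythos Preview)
Your proposal is correct and follows essentially the same route as the paper: part~(a) via the identity $\sum r_\lambda F_*^e(\lambda)=F_*^e(\sum r_\lambda^q\lambda)$, parts~(b) and~(e) by splitting exponents into residue classes modulo~$q$ and expanding coefficients in~$\Lambda_e$ (with Proposition~\ref{Le2.34} supplying $(\sum c_ix^i)^q=\sum c_i^qx^{qi}$ in the power-series case), and the remaining parts by induction on the number of variables together with a second application of~(a). Your explicit remark that finiteness of~$\Lambda_e$ is exactly what makes the regrouping in~(e) a \emph{finite} linear combination is the same observation the paper makes (and illustrates by a counterexample immediately after the proposition).
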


 \begin{proof}
  (a) For any finite subset $\Lambda \subseteq \Lambda_e$ notice that $ r= \sum_{\lambda\in \Lambda} r_{\lambda}^q\lambda$ if and only if  $ F_*^e(r)= \sum_{\lambda\in \Lambda} r_{\lambda}F_*^e(\lambda)$ where $r_{\lambda} \in R$ for all $\lambda \in \lambda_e$. This proves the result.

  (b) If $r \in R$,  there exists a finite set $\Lambda \subseteq \Lambda_e$ such that $r= \sum_{\lambda \in \Lambda}r_{\lambda}^q\lambda $  where $r_{\lambda}\in R$ for all $\lambda \in \Lambda$. Since every    $n \in \mathbb{Z}_{+}$ can be written as $n=uq+t$  where $u,t \in \mathbb{Z}_{+}$ and  $0 \leq t \leq q-1$, it follows that $rx^n= \sum_{\lambda \in \Lambda}r_{\lambda}^q(x^{u})^q\lambda x^t$. This shows that $\{ \lambda x^j \, | \, \lambda \in \Lambda_e \text{ and } 0 \leq j \leq q-1 \}$ is a generating set of $R[x]$ as $R^q[x^q]$-module. Our task now is to show that $\{ \lambda x^j \, | \, \lambda \in \Lambda_e \text{ and } 0 \leq j \leq q-1 \}$ is linearly independent set (See Definition \ref{Def2.5}). It is enough to show that every finite set $\Omega$ on the following form is linearly independent where
  \begin{equation*}
    \Omega= \{\lambda_{(i,j)}x^{j} \, | \,\lambda_{(i,j)}\in \Lambda_e , 0 \leq j \leq q-1, 1 \leq i \leq n_j \} \text{ where } n_j \in \mathbb{N} \text{  for all  } j.
  \end{equation*}
  For every $f \in R[x]$ and $n \in \mathbb{Z}_{+}$ , let $[f]_n$ denote the coefficient of $x^n$ in $f$. Now let $f=\sum_{j=0}^{q-1}\sum_{i=1}^{n_j}f_{(i,j)}^q\lambda_{(i,j)}x^{j}$ where $f_{(i,j)}\in R[x]$ for all $0 \leq j \leq q-1$ and $1 \leq i \leq n_i$ and we aim to show that $f=0$ implies that  $f_{(i,j)}=0$ for all $i$ and $j$. This can be achieved by proving that $[ f_{(i,j)}]_s=0$ for every $s \in \mathbb{Z}_{+}$. Let $s \in \mathbb{Z}_{+}$  and $0 \leq t \leq q-1$. If    $\alpha,\beta \in \mathbb{Z}_{+}$ with $0 \leq \beta \leq q-1$ notice  that $sq+t= \alpha q +\beta$ if and only if $s=\alpha$ and $t=\beta$ and consequently we get
  \begin{equation}\label{Equat3}
   [f]_{sq+t}=\sum_{i=0}^{n_t}([f_{(i,t)}]_s)^q\lambda_{(i,t)}.
  \end{equation}
Now if $f=0$, we get  $[f]_{sq+t}=0$. Since $\lambda_{(i,t)} \in \Lambda_e$ for all $0 \leq i \leq n_t$,   it follows from  \ref{Equat3} that  $[f_{(i,t)}]_s=0$ for all $0 \leq i \leq n_t$.  This shows that  $[f_{(i,j)}]_s=0$ for all $0 \leq j \leq q-1$,   $0 \leq i \leq n_j$, and $s \in \mathbb{Z}_{+}$ and consequently $\{ \lambda x^j \, | \, \lambda \in \Lambda_e \text{ and } 0 \leq j \leq q-1 \}$ is a basis of $R[x]$ as $R^q[x^q]$-module.

(c) Use the  result (b) above  and the  induction on $n$.

(d) Use the results above ((c) and (a)).

(e) Let $\Lambda_e= \{ \lambda_1, \ldots, \lambda_m \}$ and let $f=\Sigma_{n=0}^{\infty}r_nx^n$. For every $n \in \mathbb{Z}_{+}$, there exist $r_n,t_n \in \mathbb{Z}_{+}$  with $0 \leq t_n \leq q-1$ such that $n=qr_n+t_n$. This enables us to write

\begin{eqnarray*}
% \nonumber to remove numbering (before each equation)
  f &=& \sum_{n=0}^{\infty}r_nx^n=\sum_{k=0}^{\infty}r_{qk}x^{qk}+ \sum_{k=0}^{\infty}r_{qk+1}x^{qk+1}+ \ldots +\sum_{k=0}^{\infty}r_{qk+q-1}x^{qk+q-1} \\
    &=& \sum_{j=0}^{q-1}\sum_{k=0}^{\infty}r_{qk+j}x^{qk+j}.
\end{eqnarray*}
For every $k,j \in \mathbb{Z}_{+}$ with $0 \leq j \leq q-1$, we can write  $r_{qk+j}= \sum_{i=1}^{m}u_{(i,j,k)}^q\lambda_i$ where $u_{(i,j,k)}\in R$ for all $i,j$ and $k$. Therefore, for each   $0 \leq j \leq q-1$ we get
\begin{eqnarray*}
% \nonumber to remove numbering (before each equation)
  \sum_{k=0}^{\infty}r_{qk+j}x^{qk+j} &=& \sum_{k=0}^{\infty}[\sum_{i=1}^{m}u_{(i,j,k)}^q\lambda_i]x^{qk+j} \\
    &=& \sum_{i=1}^{m}\sum_{k=0}^{\infty}u_{(i,j,k)}^q\lambda_ix^{qk+j} \\
    &=& \sum_{i=1}^{m}[\sum_{k=0}^{\infty}(u_{(i,j,k)}x^{k})^q]\lambda_ix^{j} \\
    &=& \sum_{i=1}^{m}[\sum_{k=0}^{\infty}u_{(i,j,k)}x^{k}]^q\lambda_ix^{j} \text{  (Proposition \ref{Le2.34}) }.
\end{eqnarray*}
As a result,
\begin{eqnarray*}
% \nonumber to remove numbering (before each equation)
  f &=& \sum_{j=0}^{q-1}\sum_{k=0}^{\infty}r_{qk+j}x^{qk+j} \\
    &=& \sum_{j=0}^{q-1}\sum_{i=1}^{m}[\sum_{k=0}^{\infty}u_{(i,j,k)}x^{k}]^q\lambda_ix^{j} \\
    &=& \sum_{j=0}^{q-1}\sum_{i=1}^{m}[f_{(i,j)}]^q\lambda_ix^{j}
\end{eqnarray*}
where $f_{(i,j)}= \sum_{k=0}^{\infty}u_{(i,j,k)}x^{k}$ for all $i$ and $j$. This shows that $\{ \lambda x^j \, | \, \lambda \in \Lambda_e \text{ and } 0 \leq j \leq q-1 \}$ is a generating set  of $R[\![x]\!]$ as a $R^q[\![x^q]\!]$-module. For every $f \in R[\![x]\!]$ and $n \in \mathbb{Z}_{+}$ , let $[f]_n$ denote the coefficient of $x^n$ in $f$.  Let $f=\sum_{j=0}^{q-1}\sum_{i=1}^{m}f_{(i,j)}^q\lambda_{i}x^{j}$ where $f_{(i,j)}\in R[\![x]\!]$ for all $1 \leq i \leq m$ and  $0 \leq j \leq q-1$. We aim to show that  if $f=0$ , we get that   $f_{(i,j)}=0$ for all $i$ and $j$. This can be achieved by proving that $[ f_{(i,j)}]_s=0$ for every $s \in \mathbb{Z}_{+}$. Let $s \in \mathbb{Z}_{+}$  and $0 \leq t \leq q-1$. If    $\alpha,\beta \in \mathbb{Z}_{+}$ with $0 \leq \beta \leq q-1$ notice  that $sq+t= \alpha q +\beta$ if and only if $s=\alpha$ and $t=\beta$ and consequently we get
  \begin{equation}\label{Equat4}
   [f]_{sq+t}=\sum_{i=0}^{m}([f_{(i,t)}]_s)^q\lambda_{i}.
  \end{equation}
Now if $f=0$, we get  $[f]_{sq+t}=0$. Since $\lambda_{i} \in \Lambda_e$ for all $0 \leq i \leq m$,   it follows from  \ref{Equat4} that  $[f_{(i,t)}]_s=0$ for all $0 \leq i \leq m$.  This shows that  $[f_{(i,j)}]_s=0$ for all $0 \leq j \leq q-1$,   $0 \leq i \leq m$, and $s \in \mathbb{Z}_{+}$ and consequently $\{ \lambda x^j \, | \, \lambda \in \Lambda_e \text{ and } 0 \leq j \leq q-1 \}$ is a basis of $R[\![x]\!]$ as $R^q[\![x^q]\!]$-module.

(f) Use the  result (e) above  and the  induction on $n$.

(g) Use the results above ((f) and (a)).
 \end{proof}

\begin{corollary}\label{L2.5}
Let $K$ be a  field of positive prime characteristic $p$ and $q=p^e$ for some $e \in \mathbb{N} $. Let $\Lambda_e$ be the basis of $K$ as $K^q$ vector space.

\begin{enumerate}
  \item [(a)] If $S:= K[x_1,...,x_{n}]$, then $F_*^{e}(S)$  is a free  $S$-module with the basis
\begin{center}
$\Delta_n^e:= \{ F_*^{e}(\lambda x_1^{k_1}x_2^{k_2}...x_{n}^{k_{n}}) \, | \, \lambda \in \Lambda_e ,  0\leq k_j\leq q-1 ,  1\leq j\leq n  \}$.
\end{center}
Furthermore, if $K(x_1,...,x_{n})$ is the fraction field of $K[x_1,...,x_{n}]$  and
\begin{equation*}
  \Omega_n^e:= \{ F_*^{e}(\frac{\lambda x_1^{k_1}x_2^{k_2}...x_{n}^{k_{n}}}{1}) \, | \, \lambda \in \Lambda_e ,  0\leq k_j\leq q-1 ,  1\leq j\leq n  \},
\end{equation*}

 then $\Omega_n^e$ is a basis of  $F_*^e(K(x_1,...,x_{n}))$  as $K(x_1,...,x_{n})$-vector space.
\item [(b)]If $S:= K[x_1,...,x_{n},....]$, then $F_*^{e}(S)$  is a free  $S$-module with the basis $\Delta^e = \cup_{n \geq 1}\Delta_n^e$ where $\Delta_n^e$ as  above. Furthermore, if $K(x_1,...,x_{n},...)$ is the fraction field of $K[x_1,...,x_{n},...]$  and   $\Omega^e= \cup_{n \geq 1}\Omega_n^e$ where $\Omega_n^e$ as above, then we get $F_*^e(K(x_1,...,x_{n},...))$  is a  $K(x_1,...,x_{n},...)$-vector space  with the infinite  basis $\Omega^e$.

  \item [(c)]If $S:= K[\![x_1,...,x_{n}]\!]$ and $K$ is finite $K^q$-vector space, then $F_*^{e}(S)$  is a finitely generated free  $S$-module with the basis
\begin{center}
$\Delta_n^e:= \{ F_*^{e}(\lambda x_1^{k_1}x_2^{k_2}...x_{n}^{k_{n}}) \, | \, \lambda \in \Lambda_e , 0\leq k_j\leq q-1 ,  1\leq j\leq n   \}$.
\end{center}
\end{enumerate}
\end{corollary}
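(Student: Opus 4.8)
The plan is to obtain all three parts as essentially immediate consequences of Proposition~\ref{L2.34}, whose items were set up for exactly this situation, together with the localization compatibility of Proposition~\ref{L2.3}. For the first assertion of (a) I would apply Proposition~\ref{L2.34}(d) with the base ring taken to be $K$: the hypothesis that $\Lambda_e$ is a free basis of $K$ as a free $K^q$-module is precisely what that item requires, and its conclusion is that $\Delta_n^e$ is a free basis of $F_*^{e}(K[x_1,\ldots,x_n])$ over $K[x_1,\ldots,x_n]$. In the same way I would deduce part (c) directly from Proposition~\ref{L2.34}(g) with base ring $K$: the hypothesis that $K$ is a finite $K^q$-vector space says exactly that $\Lambda_e$ is a finite free basis of $K$ over $K^q$, and the item then yields that $\Delta_n^e$ is a finite free basis of $F_*^{e}(K[\![x_1,\ldots,x_n]\!])$ over $K[\![x_1,\ldots,x_n]\!]$.

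For the fraction-field assertion of (a), set $W=K[x_1,\ldots,x_n]\setminus\{0\}$, so that $K(x_1,\ldots,x_n)=W^{-1}K[x_1,\ldots,x_n]$. By Proposition~\ref{L2.3} there is a $K(x_1,\ldots,x_n)$-linear isomorphism $F_*^{e}(K(x_1,\ldots,x_n))\cong W^{-1}F_*^{e}(K[x_1,\ldots,x_n])$, and inspecting the isomorphism used in that proof (which sends $F_*^{e}(m/w)$ to $F_*^{e}(w^{q-1}m)/w$) one sees it carries each generator $F_*^{e}\!\left(\frac{\lambda x_1^{k_1}\cdots x_n^{k_n}}{1}\right)$ of $\Omega_n^e$ to $\frac{F_*^{e}(\lambda x_1^{k_1}\cdots x_n^{k_n})}{1}$. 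Since the localization of a free module is free on the localized basis, and $\Delta_n^e$ is a basis of $F_*^{e}(K[x_1,\ldots,x_n])$ over $K[x_1,\ldots,x_n]$ by the previous paragraph, the set $\left\{\frac{F_*^{e}(\lambda x_1^{k_1}\cdots x_n^{k_n})}{1}\right\}$ is a basis of $W^{-1}F_*^{e}(K[x_1,\ldots,x_n])$ over $K(x_1,\ldots,x_n)$; transporting it along the isomorphism shows that $\Omega_n^e$ is a basis of $F_*^{e}(K(x_1,\ldots,x_n))$.

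Part (b) I would handle by a directed-union argument on the number of variables. Every element of $S=K[x_1,\ldots,x_n,\ldots]$ lies in some $K[x_1,\ldots,x_N]$, so $S=\bigcup_{N}K[x_1,\ldots,x_N]$ and $F_*^{e}(S)=\bigcup_N F_*^{e}(K[x_1,\ldots,x_N])$; moreover $\Delta_n^e\subseteq\Delta_{n+1}^e$ (take exponent $0$ in the extra variable), so $\Delta^e=\bigcup_n\Delta_n^e$ is a nested union. Spanning is immediate because any $F_*^{e}(f)$ already lies in some $F_*^{e}(K[x_1,\ldots,x_N])$, which is spanned by $\Delta_N^e$ over $K[x_1,\ldots,x_N]$ by (a); for linear independence, any finite $S$-linear dependence among elements of $\Delta^e$ involves only finitely many basis elements and coefficients, hence lies in $F_*^{e}(K[x_1,\ldots,x_L])$ for $L$ large, where $\Delta_L^e$ is already a free basis over $K[x_1,\ldots,x_L]$, forcing all coefficients to vanish. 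The fraction-field statement of (b) is the same bookkeeping applied to $K(x_1,\ldots,x_n,\ldots)=\bigcup_N K(x_1,\ldots,x_N)$ and $\Omega^e=\bigcup_N\Omega_N^e$, invoking the fraction-field part of (a) at each finite level. The only step needing more than a citation of Proposition~\ref{L2.34} is the fraction-field reduction, and there the substance is already contained in Proposition~\ref{L2.3}; so I expect no genuine obstacle.
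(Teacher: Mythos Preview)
Your proposal is correct and matches the paper's proof almost exactly: parts (a) first assertion, (b), and (c) cite Proposition~\ref{L2.34}(d), the directed-union argument, and Proposition~\ref{L2.34}(g) respectively, just as the paper does. The one small divergence is in the fraction-field assertion of (a): the paper observes directly that $\{\lambda x_1^{k_1}\cdots x_n^{k_n}/1\}$ is a basis of $K(x_1,\ldots,x_n)$ over $K^q(x_1^q,\ldots,x_n^q)$ and then applies Proposition~\ref{L2.34}(a), whereas you go through the localization isomorphism of Proposition~\ref{L2.3}; both are valid one-step reductions and neither is materially simpler than the other.
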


\begin{proof}

(a) It is obvious from Proposition \ref{L2.34} (d) that $\Delta_n^e$ is a basis for $F_*^{e}(S)$  is a free  $S$-module. Notice that $\{ \frac{\lambda x_1^{k_1}x_2^{k_2}...x_{n}^{k_{n}}}{1} \, | \, \lambda \in \Lambda_e ,  0\leq k_j\leq q-1 ,  1\leq j\leq n  \}$ is a basis for $K(x_1,...,x_{n})$ as $K^q(x^q_1,...,x^q_{n})$-vector space. It follows from Proposition \ref{L2.34} (a) that $\Omega_n^e$ is a basis of  $F_*^e(K(x_1,...,x_{n}))$  as $K(x_1,...,x_{n})$-vector space.

(b) Notice that $ K[x_1,...,x_{n},....]= \cup_{n \geq 1}K[x_1,...,x_{n}] $ and consequently we get  $ K(x_1,...,x_{n},...)= \cup_{n \geq 1}K(x_1,...,x_{n}) $.  It follows  obviously that   $F_*^e(K[x_1,...,x_{n},...])= \cup_{n \geq 1}F_*^e(K[x_1,...,x_{n}])$ and   $F_*^e(K(x_1,...,x_{n},...))= \cup_{n \geq 1}F_*^e(K(x_1,...,x_{n}))$. Therefore,   we obtain from  (a) that $\Delta^e = \cup_{n \geq 1}\Delta_n^e$  is a basis for $F_*^e(S)$ as $S$-module and hence  $\Omega^e$  is an infinite basis of $F_*^e(K(x_1,...,x_{n},...))$  as $K(x_1,...,x_{n},...)$-vector space.

(c)It is obvious  from Proposition \ref{L2.34} (g).
\end{proof}

The following example  explains that we can not remove  the finiteness condition  in Corollary \ref{L2.5} (c).

\begin{example}
Let $K$ be  a  field of positive prime characteristic $p$ and $S=K[\![x]\!]$. Suppose that  $\Lambda_e$ is an  infinite  basis of $K$ as $K^q$ vector space and let $\Delta_e=\{\lambda x^j \, |\, \lambda \in \Lambda_e, 0 \leq j \leq q-1 \}$. We aim to show that  $\{F_*^e(\lambda x^j) \, |\, \lambda \in \Lambda_e, 0 \leq j \leq q-1 \}$ is not a basis for $F_*^e(S)$ as $S$-module. It is enough by Proposition \ref{L2.34} (a) to show that $\Delta_e$ is not a free basis for $S$ as $S^q$-module. Assume the contrary that $\Delta_e$ is a free basis for $S$ as $S^q$-module. For every $f \in S$ and $n \in \mathbb{Z}_{+}$ , let $[f]_n$ denote the coefficient of $x^n$ in $f$. Let  $\{\lambda_n\}_{n \geq 0}$ be an infinite subset of $\Lambda_e$ such that $\lambda_i \neq \lambda_j$ whenever $i\neq j$ and let  $f= \sum_{n=0}^{\infty}\lambda_nx^{n}\in S$. Therefore, there exist nonnegative integers $n_0,...,n_{q-1}$   such that $f=\sum_{j=0}^{q-1}\sum_{i=1}^{n_j}f_{(i,j)}^q\lambda_{(i,j)}x^{i}$ where $ \lambda_{(i,j)} \in \Lambda_e $ and $f_{(i,j)} \in S $ for all $0 \leq j \leq q-1$ and $1 \leq i \leq n_j$. For every $s, \alpha \in \mathbb{Z}_{+}$  and $0 \leq t,\beta \leq q-1$,  we notice  that $sq+t= \alpha q +\beta$ if and only if $s=\alpha$ and $t=\beta$ and consequently
\begin{equation*}
   \lambda_{sq+t}=[f]_{sq+t}=\sum_{i=0}^{n_t}([f_{(i,t)}]_s)^q\lambda_{(i,t)}.
  \end{equation*}
The above equation implies that $\lambda_{sq+t} \in \{\lambda_{(i,j)} \, | \, 0 \leq j \leq q-1, 1 \leq i \leq n_j \}\cup \{0\}$ and consequently $\{\lambda_n\}_{n \geq 0} \subseteq \{\lambda_{(i,j)} \, | \, 0 \leq j \leq q-1, 1 \leq i\leq n_j \}\cup \{0\}$ which is a contradiction as $\{\lambda_n\}_{n \geq 0}$ is infinite set of  distinct elements.
\end{example}

 \begin{definition}\label{D2.6}
\emph{A Noetherian ring $R$ is said to be F-finite if $F_*^1(R)$ is finitely generated $R$ module  (or equivalently that $F_*^e(R)$ is finitely generated $R$-module for all $e\in \mathbb{N}$)}
\end{definition}
\begin{remark}
If  $R$ is $F$-finite ring, then it follows that:
\begin{enumerate}
  \item [(a)]$R/I$ is $F$-finite ring for any ideal $I$ of $R$.
  \item [(b)]$R[x]$ is $F$-finite ring.
  \item [(c)] $R[\![x]\!]$ is $F$-finite ring.
\end{enumerate}
\end{remark}

\begin{proof}
Let $\Delta$ be a set of $R$ satisfying that $\{F_*^1(\delta) | \delta \in \Delta \}$ is a generating set of $F_*^1(R)$ as $R$-module.\\
 (a) Notice  that $\{F_*^1(\delta +I ) | \delta \in \Delta \}$ is a generating set of $F_*^1(R/I)$ as $R/I$-module.\\
 (b) Let $f=\sum_{j=0}^nr_jx^j \in R[x]$ and hence $F_*^1(f)=\sum_{j=0}^nF_*^1(r_j)F_*^1(x^j)$. For each $0\leq j \leq n$, $j=u_jp+t_j$ for some $u_j, t_j \in \mathbb{Z}$ with $0 \leq t_j < p$ and we can write $F_*^1(r_j)= \sum_{\delta \in \Delta }r_{(j,\delta)} F_*^1(\delta)$ where $r_{(j,\delta)} \in R$ for all $ \delta \in \Delta$. Therefore,
  \begin{equation*}
    F_*^1(f)=\sum_{j=0}^nF_*^1(r_j)F_*^1(x^j)= \sum_{j=0}^n\sum_{\delta \in \Delta }r_{(j,\delta)}
    x^{u_j} F_*^1(\delta x^{t_j}).
  \end{equation*} This shows that  $\{F_*^1(\delta x^t ) | \delta \in \Delta ,   0 \leq t \leq p-1 \}$ is a generating set of $F_*^1(R[x])$   as $R[x]$-module.\\
  (c) Let $I$ be the ideal generated by $x$ in $R[x]$ and let $A=R[x]$. Since $F_*^1(A)$ is finitely generated $A$-module, it follows from Theorem \ref{thm 2.21} that
  $F_*^1(A)\otimes_A\widehat{A}_I$ is isomorphic to $ \widehat{F_*^1(A)}_I$ as $\widehat{A}_I$-module and consequently $ \widehat{F_*^1(A)}_I$ is finitely generated $\widehat{A}_I$-module. Now apply Proposition \ref{L2.4}.
\end{proof}

 \begin{remark}\label{C2.9}
 Let $M$ be a finitely generated $R$-module. If $R$ is F-finite, then $F_*^e(M)$ is a finitely generated $R$-module for all $e \in \mathbb{N}$.
 \end{remark}

 \begin{proof}
 Let $B$ be a generating set of $M$ as $R$-module. If $A$ is a generating set of $R$ as an $R^{p^e}$-module, notice  that  $\{F_*^e(ab) | a \in A \text{  and  } b\in B \}$ is a generating set of $F_*^e(M)$ as a finitely generated $R$-module.
 \end{proof}

 If $R$ is  any  ring (not necessarily of prime characteristic $p$), a non-zero $R$-module $M$ is said to be  decomposable provided that there exist non-zero R-modules $M_1, M_2$ such that $M = M_1 \oplus M_2$; otherwise $M$ is indecomposable.

\begin{discussion}\label{disc2.33}

\begin{enumerate}
  \item [(a)] Recall that if  $M$ is  a non-zero Noetherian module, then $M$ is a direct sum (not necessarily unique) of finitely many indecomposable
modules \cite[Proposition 2.1, Example 2.3]{NW}. However, if $M$ is a finitely generated module over a complete Noetherian local ring $R$ (where $R$ is not necessarily of prime characteristic $p$), by the Krull-Remak-Schmidt theorem \cite[Corollary 1.10]{CMR} or \cite[Theorem 2.13.]{NW}, $M$  can be written uniquely up to isomorphism as a direct sum of finitely many indecomposable
$R$-modules. In other words,  if $M$ is a finitely generated module over a complete Noetherian local ring $R$, and If $ M \cong M_1\oplus ... \oplus M_s \approx N_1\oplus ... \oplus N_t$, where the $M_i$ and $N_j$ are finitely generated indecomposable $R$-modules, then
$s=t$ and, after renumbering,  $ M_i \cong N_i$ for each $i$.
  \item [(b)]As a result, if $(R, \mathfrak{m})$ is a  Noetherian local ring not necessarily of prime characteristic $p$ and $M$ is a finitely generated $R$-module, then $M$ can be decomposed as $ M \cong R^a \oplus N$  where $a$ is a nonnegative integer and $N$ is an  $R$-module that has no free direct summand. The number $a$ is unique and independent of the decomposition as when we take the $\mathfrak{m}$-adic completion,  by the Krull-Remak-Schmidt theorem we stated above  and Proposition \ref{Pro2.22}  $a$ is uniquely determined.
\end{enumerate}

\end{discussion}
The following notion   was introduced in \cite[Section 0]{Y}.

\begin{definition}\label{D2.11}
\emph{Let $(R, \mathfrak{m})$ be a Noetherian local ring. If $M$ is a finitely generated $R$-module, the maximal rank of
free direct summand of $M$ is denoted by $\sharp(M,R)$.}
\end{definition}

If $(R, \mathfrak{m})$ is  an F-finite  Noetherian local ring and $M$ is a finitely generated $R$-module, it follows from  Remark \ref{C2.9} that $F_*^e(M)$ is a finitely generated $R$-module for all $e \in \mathbb{N}$ and consequently $F_*^e(M)$ is Noetherian $R$-module for all $e \in \mathbb{N}$. From the above discussion, there exists a unique non-negative integer $a_e$ and an $R$-module $M_e$ that has no free direct summand  such that $F_*^e(M) \cong R^{a_e}\oplus M_e$.

  This proves the following Remark.
\begin{remark}\label{C2.10}
Let $(R, \mathfrak{m})$ be an F-finite  Noetherian local ring. If $M$ is a finitely generated $R$-module, for every $e \in \mathbb{N}$ there exists a unique nonnegative integer $a_e$ such that  $F_*^e(M)\cong R^{a_e}\oplus M_e$ where $M_e$ has no non-zero free direct summand with the convention that $R^0=\{0\}$. The number $a_e$ is the maximal rank of
free direct summand of the  $R$-module $F_*^e(M)$ and we write $\sharp(F_*^e(M),R)=a_e$.
\end{remark}
By \cite[Section 1]{RF} we can define  $F$-pure ring as follows.
\begin{definition}
\emph{If $(R, \mathfrak{m})$ is an F-finite  Noetherian local ring, then $R$ is $F$-pure  if $\sharp(F_*^1(R),R) > 0$ (equivalently,  $\sharp(F_*^e(R),R) > 0$ for all $e\in \mathbb{N}$).}
\end{definition}
Recall that if $I$ and $J$ are two ideals of a ring $R$, then $(I:J)$ is the following ideal
\begin{equation*}
  (I:J)=\{r \in R | rJ \subseteq I \}.
\end{equation*}
R.Fedder in his paper \cite{RF} established the following criterion for the $F$-purity of a quotient of  regular local ring of characteristic $p$.
\begin{proposition}\cite[Proposition 1.7]{RF}\label{P2.13}
Let $(S,\mathfrak{m})$ be a regular local ring (see Definition \ref{Def2.64}) of prime characteristic $p$ and let
$R = S/I$ where $I$ is an ideal of $S$. Then $R$ is $F$-pure if and only if $(I^{[p]}:I)\nsubseteq \mathfrak{m}^{[p]}.$
\end{proposition}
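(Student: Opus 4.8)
The plan is to rephrase $F$-purity of $R=S/I$ as the splitting of Frobenius and then to push the problem onto the regular ring $S$, where $\Hom_S(F_*^1 S,S)$ is essentially free of rank one. First I would reduce to the case where $S$ is $F$-finite: by Cohen's structure theorem the $\mathfrak m$-adic completion $\widehat S$ is a power series ring $K[\![x_1,\dots,x_n]\!]$ with $[K:K^p]<\infty$, and by Proposition \ref{Pro2.22}(a) together with Proposition \ref{L2.4}, $R$ is $F$-pure if and only if $\widehat S/I\widehat S$ is, while the colon condition $(I^{[p]}:I)\not\subseteq\mathfrak m^{[p]}$ is unaffected by completion. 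So I fix a regular system of parameters $x_1,\dots,x_n$ and a $K^p$-basis $\Lambda$ of $K$; by Corollary \ref{L2.5}(c), $F_*^1 S$ is free over $S$ with basis $\{F_*^1(\lambda x_1^{a_1}\cdots x_n^{a_n}):\lambda\in\Lambda,\ 0\le a_i\le p-1\}$. Since $\sharp(F_*^1 R,R)>0$ says precisely that $R$ is a direct summand of $F_*^1 R$, and a standard argument (retracting the summand through the $R$-linear map $R\to F_*^1 R$, $r\mapsto rF_*^1(1)=F_*^1(r^p)$) shows $R$ is such a summand exactly when this map splits, $R$ is $F$-pure if and only if there is $\phi\in\Hom_R(F_*^1 R,R)$ with $\phi(F_*^1(1))$ a unit of $R$ (rescaling then gives a genuine splitting).

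Next I would transfer $\Hom_R(F_*^1 R,R)$ to $S$. By Remark \ref{R2.29}(f) one has $F_*^1 R\cong F_*^1(S)/F_*^1(I)$ as $S$-modules, so $\Hom_R(F_*^1 R,R)=\Hom_{S/I}(F_*^1(S)/F_*^1(I),S/I)$ is identified with the set of $\psi\in\Hom_S(F_*^1 S,S/I)$ vanishing on $F_*^1(I)$. The structural lemma I would establish here is that $\Hom_S(F_*^1 S,S)$ is a free $F_*^1 S$-module of rank one, where $F_*^1 S$ acts by premultiplication $(F_*^1 s\cdot\Phi)(F_*^1 t)=\Phi(F_*^1(st))$; a generator $\Phi$ is the projection onto the coefficient of the top basis element $F_*^1(\lambda_0 x_1^{p-1}\cdots x_n^{p-1})$ in the expansion above. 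Because $F_*^1 S$ is free, the surjection $S\twoheadrightarrow S/I$ induces a surjection $\Hom_S(F_*^1 S,S)\twoheadrightarrow\Hom_S(F_*^1 S,S/I)$, so every $\psi$ as above has the form $F_*^1 t\mapsto\Phi(F_*^1(ut))+I$ for some $u\in S$, and the requirement $\psi(F_*^1 I)=0$ becomes $\Phi(F_*^1(uI))\subseteq I$.

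The technical heart is then the detection lemma: for any ideal $J\subseteq S$ and any $b\in S$, $\Phi(F_*^1(bS))\subseteq J$ if and only if $b\in J^{[p]}$. To prove it, note that $\{F_*^1 s\cdot\Phi:s\in S\}$ is all of $\Hom_S(F_*^1 S,S)$, so $\Phi(F_*^1(bS))\subseteq J$ holds iff $\psi(F_*^1 b)\in J$ for every $\psi\in\Hom_S(F_*^1 S,S)$; testing against the dual basis of the free basis above, this is equivalent to $F_*^1 b\in J\cdot F_*^1 S=F_*^1(J^{[p]})$ (the last equality by Remark \ref{R2.29}(d)), i.e. to $b\in J^{[p]}$. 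Applying this with $J=I$ and $I=(f_1,\dots,f_r)$ gives $\Phi(F_*^1(uI))\subseteq I$ iff $uf_j\in I^{[p]}$ for all $j$ iff $u\in(I^{[p]}:I)$; applying it with $J=\mathfrak m$ gives, for $v\in S$, that $\Phi(F_*^1(vS))$ is the unit ideal iff $v\notin\mathfrak m^{[p]}$. Putting it together: $R$ is $F$-pure iff there is $u\in(I^{[p]}:I)$ with $\Phi(F_*^1 u)$ a unit. If such $u$ exists then $\Phi(F_*^1(uS))=S$, so $u\notin\mathfrak m^{[p]}$ and $(I^{[p]}:I)\not\subseteq\mathfrak m^{[p]}$. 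Conversely, if $v\in(I^{[p]}:I)\setminus\mathfrak m^{[p]}$, then $\Phi(F_*^1(vS))=S$, so $\Phi(F_*^1(cv))$ is a unit for a suitable $c\in S$, and $u:=cv\in(I^{[p]}:I)$ works.

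I expect the main obstacle to be the structural lemma in the second paragraph: proving that $\Hom_S(F_*^1 S,S)$ is free of rank one over $F_*^1 S$ and identifying the generator $\Phi$ explicitly. This is where regularity of $S$ is genuinely used, and it requires a concrete computation with the Frobenius action on monomials, together with some bookkeeping over the $K^p$-basis $\Lambda$ when the residue field is imperfect — one must check that every dual basis functional of the free module $F_*^1 S$ arises from $\Phi$ by premultiplication by an element of $F_*^1 S$. Once that lemma and the detection lemma are in hand, everything else is formal.
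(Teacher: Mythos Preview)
The paper does not actually prove this proposition: it is stated in Section~\ref{section:General Background} with the citation \cite[Proposition 1.7]{RF}, and the author explicitly says that results in that section carrying references are stated without proof. So there is nothing in the paper to compare your argument against.

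Your sketch is the standard proof of Fedder's criterion and is essentially correct. One small point: your first step ``reduce to the case where $S$ is $F$-finite'' is superfluous here, because the paper's definition of $F$-pure (just above Proposition~\ref{P2.13}) is only made for $F$-finite local rings, so $R=S/I$ being $F$-pure already forces $S$ to be $F$-finite; thus your Cohen-structure reduction with $[K:K^p]<\infty$ is automatic rather than something to be arranged. The rest of the argument --- identifying $\Hom_S(F_*^1 S,S)$ as a rank-one free $F_*^1 S$-module generated by the ``top monomial'' projection $\Phi$, the detection lemma $\Phi(F_*^1(bS))\subseteq J\iff b\in J^{[p]}$, and the translation to $(I^{[p]}:I)\not\subseteq\mathfrak m^{[p]}$ --- is exactly Fedder's original approach in \cite{RF}, and the obstacle you flag (the structural lemma on $\Hom_S(F_*^1 S,S)$) is indeed where regularity is used.
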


\subsection{Maximal Cohen Maculay Modules}
\label{subsection:Maximal Cohen Maculay Modules}

Throughout this section, $R$ is a Noetherian ring, and  $M$ is a finitely generated $R$-module.
Recall that an element $r \in R$ is called a zerodivisor on $M$ if there exists a nonzero  element $m \in M$ such that $rm=0$. An element  $r\in R$ is said to be  nonzerodivisor  on M if $r$ is not a zerodivisor on
$M$, i.e. for every $m \in M\setminus\{0\}$ it follows that $rm \neq 0$ .
\begin{definition}\cite[Section 19.1]{BCA}
\emph{An element $r$ of $R$ is said to be $M$-regular
if $rM \neq M$ and $r$  is a nonzerodivisor on $M$. The sequence $r_1,...,r_n$  is an $M$-regular sequence or simply an $M$-sequence if $M\neq (r_1,...,r_n)M$, $r_1$ is a nonzerodivisor on $M$, and $r_i$  a nonzerodivisor on $M/(r_1,...,r_{i-1})M $
for every $ 2 \leq i\leq n$. An $M$-sequence    $r_1,...,r_n$   in an ideal $I \subseteq R$,  is called  a maximal $M$-sequence in $I$ if $r_1,...,r_n, r$ is not a sequence on $M$ for any $r \in I$.}
\end{definition}
we can  observe the following remark.
\begin{remark}\label{R 2.40}
If $r_1,...,r_n$  is a sequence on $M$, then $r_1^{k_1},...,r_n^{k_n}$ is a sequence on $M$ for every positive integers $k_1,...,k_n$. In particular case, if $R$ has a prime characteristic $p$ and  $r_1,...,r_n$  is a sequence on $M$, then $r_1,...,r_n$  is a sequence on $F_*^e(M)$ for every $e\in \mathbb{N}$.
\end{remark}
It is well known \cite[ Corollary 19.1.4]{BCA} that any two maximal $M$-regular sequences in an ideal $I \subseteq R$  have the same length. This enables us to provide this definition.
\begin{definition}\cite[Section 19.1]{BCA}
\emph{Let $I$ be an ideal of $R$ with $IM\neq M$. The $I$-depth of $M$, denoted  $\depth_IM$, is the length of any maximal $M$-regular sequence in $I$.
If $(R,\mathfrak{m})$ is a Noetherian local ring and $M$ is a nonzero finitely generated
$R$-module (so that $\mathfrak{m}M \neq M$  by Nakayama Lemma \cite{BCA}) then the $\mathfrak{m}$-depth of $M$ is called
simply the depth of $M$, and in this case it is also denoted  $\depth M$.}
\end{definition}
\begin{definition}\cite[Definition 2.1.1]{BH}
\emph{Let $(R, \mathfrak{m})$ be a Noetherian local ring and let $M$ be a finitely generated $R$-module.  $M$  is  a Cohen Macaulay module if $M = 0$ or $ \depth M =
\dim M $. If  $ \depth M = \dim R $, then $M$ is called  a Maximal Cohen Macaulay module (henceforth abbreviated  MCM). If $R$ is an arbitrary Noetherian ring and $M$ is a finitely generated $R$-module, then $M$ is  Cohen Macaulay  (respectively MCM) if $M_{\mathfrak{m}}$ is a Cohen Macaulay $R_{\mathfrak{m}}$-module  (respectively a MCM $R_{\mathfrak{m}}$-module) for all maximal ideals $\mathfrak{m}$ of $R$.}
\end{definition}

\begin{lemma}\label{L2.55}
  Let $M$ be an $R$-module where $R$ is a ring (not necessarily Noetherian). Suppose that  $I$ is an ideal of $R$ such that $IM=0$. Let $P$ be a prime ideal of $R$ containing $I$. If   $\bar{P}= P/I$, then $M_P$ is isomorphic to $M_{\bar{P}}$ as $(R/I)_{\bar{P}}$-modules.
\end{lemma}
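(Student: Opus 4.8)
The plan is to write down the obvious candidate map and check that it is an isomorphism, with all of the subtlety residing in keeping track of the various module structures. I would begin with two bookkeeping remarks. Since $IM=0$, the abelian group $M$ is naturally an $R/I$-module via $(r+I)m=rm$, so $M_{\bar P}$ is defined; moreover $IM=0$ gives $I_PM_P=0$, so $M_P$ is a module over $R_P/I_P$, hence, through the ring isomorphism $\rho\colon(R/I)_{\bar P}\xrightarrow{\ \sim\ }R_P/I_P$ of Proposition \ref{LLL2}(c), a module over $(R/I)_{\bar P}$ — this is the structure referred to in the statement. Secondly, because $I\subseteq P$, the image of the multiplicative set $R\setminus P$ under the quotient map $R\to R/I$ is exactly $(R/I)\setminus\bar P$: one has $s+I\notin\bar P=P/I$ if and only if $s\notin P$.

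Next I would define $\phi\colon M_P\to M_{\bar P}$ by $\phi(m/s)=m/(s+I)$ and check the routine points. Well-definedness: if $m/s=n/t$ in $M_P$, there is $u\in R\setminus P$ with $u(tm-sn)=0$ in $M$; reducing modulo $I$ gives $(u+I)\bigl((t+I)m-(s+I)n\bigr)=0$ with $u+I\in(R/I)\setminus\bar P$, so $m/(s+I)=n/(t+I)$ in $M_{\bar P}$. The map $\phi$ is clearly additive; it is surjective by the second remark above, since every admissible denominator $s+I$ (with $s\notin P$) is hit; and it is injective by running the well-definedness argument backwards, using that the $R/I$-action on $M$ satisfies $(u+I)m=um$, so that $(u+I)m=0$ in $M$ means precisely $um=0$ there.

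Finally I would verify that $\phi$ is $(R/I)_{\bar P}$-linear by unwinding the action through $\rho$. An element $\frac{a+I}{s+I}$ of $(R/I)_{\bar P}$ acts on $m/t\in M_P$ the way $\rho\bigl(\frac{a+I}{s+I}\bigr)=\frac as+I_P\in R_P/I_P$ acts, namely by $m/t\mapsto am/(st)$, while on $M_{\bar P}$ it acts by $m/(t+I)\mapsto (a+I)m/(st+I)=am/(st+I)$; since $\phi(am/(st))=am/(st+I)$, the two agree and $\phi$ is $(R/I)_{\bar P}$-linear. Hence $\phi$ is the desired isomorphism. I do not expect a genuine obstacle here: the only thing demanding care is not conflating the $R$-action and the $R/I$-action on $M$, and matching the $(R/I)_{\bar P}$-module structure on $M_P$ with that on $M_{\bar P}$ via Proposition \ref{LLL2}(c), which is pure bookkeeping. (Alternatively, one could give a structure-free proof via universal properties, since $W^{-1}M$ and $\bar W^{-1}M$ are each terminal among the relevant diagrams, but the explicit map is shorter.)
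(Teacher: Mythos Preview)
Your proposal is correct and follows essentially the same approach as the paper: both set up the $(R/I)_{\bar P}$-module structure on $M_P$ via Proposition~\ref{LLL2}(c) and then define the map $\phi(m/t)=m/(t+I)$. The paper merely asserts that one can check $\phi$ is an isomorphism, whereas you actually carry out the well-definedness, bijectivity, and linearity verifications.
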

\begin{proof}
First notice that  $M$ is an $R/I$-module via the scalar multiplication $(r+I)m=rm$ for all $r+I \in R/I$ and $m\in M$  and consequently $I_PM_P=0$ makes $M_P$  an $R_P/I_P$-module with the scalar multiplication $(\frac{r}{s}+I_P)\frac{m}{t}=\frac{rm}{st}$ for all $\frac{r}{s} \in R_P$ and $\frac{m}{t} \in M_P$. As a result, it follows from Proposition \ref{LLL2}(c)   that  $M_P$ is an $(R/I)_{\bar{P}}$-modules via the scalar multiplication $\frac{r+I}{s+I}\frac{m}{t}=\frac{rm}{st}$ for all $\frac{r+I}{s+I} \in(R/I)_{\bar{P}}$ and $\frac{m}{t} \in M_P$. Furthermore, $M_{\bar{P}}$ is an $(R/I)_{\bar{P}}$-module  via the scalar multiplication $\frac{r+I}{s+I}\frac{m}{t+I}=\frac{rm}{st+I}$ for all $\frac{r+I}{s+I} \in(R/I)_{\bar{P}}$ and $\frac{m}{t+I} \in M_{\bar{P}}$. For every $\frac{m}{t} \in M_P$, define $ \phi(\frac{m}{t})= \frac{m}{t+I}$. One can check that $\phi$ defines an isomorphism   $ \phi: M_P \rightarrow M_{\bar{P}}$ of $(R/I)_{\bar{P}}$-modules.
\end{proof}
\begin{proposition}\label{P2.56}
 Let  $I$ be an ideal of $R$ such that $IM=0$. If $M$ is a Cohen Macaulay (respectively MCM) $R$-module, then $M$ is a Cohen Macaulay (respectively MCM) $R/I$-module.
\end{proposition}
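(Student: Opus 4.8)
The plan is to reduce the statement about being Cohen--Macaulay or MCM as an $R/I$-module to the corresponding statement over the various localizations, and then to apply Lemma \ref{L2.55} to transport depth and dimension across the isomorphism $M_P \cong M_{\bar P}$. First I would recall that, by Definition \ref{subsection:Maximal Cohen Maculay Modules}, $M$ being Cohen--Macaulay as an $R/I$-module means that $M_{\mathfrak{n}}$ is Cohen--Macaulay over $(R/I)_{\mathfrak{n}}$ for every maximal ideal $\mathfrak{n}$ of $R/I$. Every maximal ideal $\mathfrak{n}$ of $R/I$ is of the form $\bar P = P/I$ for a (necessarily maximal) prime $P$ of $R$ containing $I$, so it suffices to fix such a $P$ and show that $M_{\bar P}$ is a Cohen--Macaulay (resp.\ MCM) $(R/I)_{\bar P}$-module, using that $M_P$ is a Cohen--Macaulay (resp.\ MCM) $R_P$-module.

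The key step is then the comparison of invariants. By Lemma \ref{L2.55}, $M_P \cong M_{\bar P}$ as $(R/I)_{\bar P}$-modules, where the $(R/I)_{\bar P}$-action on $M_P$ is obtained from the $R_P$-action via the surjection $R_P \to R_P/I_P \cong (R/I)_{\bar P}$ (Proposition \ref{LLL2}(c)). Because $I_P M_P = 0$, a sequence of elements of $P R_P$ is $M_P$-regular if and only if the sequence of their images in $(R/I)_{\bar P}$ is $M_{\bar P}$-regular; hence a maximal $M_P$-regular sequence in $PR_P$ has the same length as a maximal $M_{\bar P}$-regular sequence in $\bar P (R/I)_{\bar P}$, giving $\operatorname{depth}_{R_P} M_P = \operatorname{depth}_{(R/I)_{\bar P}} M_{\bar P}$. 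Similarly, $\dim_{R_P} M_P = \dim\bigl(R_P/\operatorname{Ann}_{R_P} M_P\bigr)$; since $I_P \subseteq \operatorname{Ann}_{R_P} M_P$, the quotient $R_P/\operatorname{Ann}_{R_P}M_P$ is already a quotient of $(R/I)_{\bar P}$, and it coincides with $(R/I)_{\bar P}/\operatorname{Ann}_{(R/I)_{\bar P}} M_{\bar P}$ under the isomorphism, so $\dim_{R_P} M_P = \dim_{(R/I)_{\bar P}} M_{\bar P}$. Combining these two equalities, $M_P$ Cohen--Macaulay over $R_P$ forces $M_{\bar P}$ Cohen--Macaulay over $(R/I)_{\bar P}$.

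For the MCM case I would additionally use that $\dim R_P = \dim(R/I)_{\bar P} + \operatorname{ht}(I_P)$ is not quite what is needed; rather, when $\operatorname{Ann}_{R_P} M_P \supseteq I_P$ and $\operatorname{depth}_{R_P} M_P = \dim R_P$, one notes $\dim R_P \ge \dim_{R_P} M_P = \dim_{(R/I)_{\bar P}} M_{\bar P} = \operatorname{depth}_{(R/I)_{\bar P}} M_{\bar P} \le \dim (R/I)_{\bar P} \le \dim R_P$, so all inequalities are equalities and $\operatorname{depth}_{(R/I)_{\bar P}} M_{\bar P} = \dim (R/I)_{\bar P}$, i.e.\ $M_{\bar P}$ is MCM over $(R/I)_{\bar P}$. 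The main obstacle I anticipate is purely bookkeeping: making the identifications of annihilators and of regular sequences under the ring isomorphism of Proposition \ref{LLL2}(c) fully precise, so that ``depth over $R_P$'' and ``depth over $(R/I)_{\bar P}$'' are literally the same number rather than merely comparable; once that is nailed down, the Cohen--Macaulay and MCM conclusions are immediate from the definitions.
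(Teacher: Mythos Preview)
Your approach is essentially the same as the paper's: both arguments reduce to localizations, use Lemma~\ref{L2.55} to identify $M_P$ with $M_{\bar P}$, and exploit that $M$-regular sequences over $R_P$ correspond to $M$-regular sequences over $(R/I)_{\bar P}$ since $I_P M_P=0$ (the paper just treats the local case first and then localizes, whereas you localize immediately). One small point: your displayed chain $\dim R_P \ge \dim_{R_P}M_P = \cdots = \operatorname{depth}_{(R/I)_{\bar P}}M_{\bar P}\le \dim(R/I)_{\bar P}\le \dim R_P$ does not by itself force all terms equal; you need to insert the equality $\operatorname{depth}_{(R/I)_{\bar P}}M_{\bar P}=\operatorname{depth}_{R_P}M_P=\dim R_P$ (which you have already established) to close the loop.
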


\begin{proof}
  First recall that $M$ is an $R/I$-module via the scalar multiplication given by $(r+I)m=rm$ for all $r \in R$ and $m\in M$.  As a result, if $r_1,...,r_n \in R$ , then $r_1,...,r_n$  is  $M$-sequence on $R$ if and only if $r_1+I,...,r_n+I$ is  $M$-sequence on $R/I$. Therefore, if $R$ is  local, then    $M$ is a Cohen Macaulay (respectively MCM) $R/I$-module whenever $M$ is a Cohen Macaulay (respectively MCM) $R$-module. Now suppose that $R$ is non local and $M$ is a Cohen Macaulay (respectively MCM) $R$-module. This means that  $M_P$  is a Cohen Macaulay (respectively MCM) $R_P$-module for every maximal ideal $P$ of $R$. As a result, if $P$ is a maximal ideal of $R$ containing $I$ and $\bar{P}=P/I$, then $M_P$ is  a Cohen Macaulay (respectively MCM) $R_P/R_P$-module and consequently $M_P$  is a Cohen Macaulay (respectively MCM) $(R/I)_{\bar{P}}$-module. Since $M_P$ is isomorphic to $M_{\bar{P}}$ as $(R/I)_{\bar{P}}$-modules (Lemma \ref{L2.55}), it follows that $M_{\bar{P}}$ is a  Cohen Macaulay (respectively MCM) $(R/I)_{\bar{P}}$-module. This shows that $M$ is a Cohen Macaulay (respectively MCM) $R/I$-module.
\end{proof}

\begin{proposition}\cite[Theorem 2.1.3]{BH}\label{P2.43}
  Let $R$ be a Noetherian ring and $M$ a finitely generated $R$-module. Suppose that  $r_1,...,r_t$  is  an $M$-sequence and let $I=(r_1,...,r_t)$. If $M$ is a Cohen Macaulay $R$-module, then $M/IM$ is a Cohen Macaulay $R/I$-module.
\end{proposition}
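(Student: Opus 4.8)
The plan is to reduce to the case where $R$ is local and then compare $\depth$ and $\dim$ directly, using the fact recalled above that all maximal $M$-regular sequences in a fixed ideal have a common length.

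First I would pass to the local situation. Every maximal ideal of $R/I$ has the form $\bar{\mathfrak n}=\mathfrak n/I$ for a unique maximal ideal $\mathfrak n$ of $R$ containing $I$. Since $I$ annihilates $M/IM$, Proposition \ref{LLL2}(c) and Lemma \ref{L2.55} give a compatible chain of isomorphisms $(M/IM)_{\bar{\mathfrak n}}\cong (M/IM)_{\mathfrak n}\cong M_{\mathfrak n}/I_{\mathfrak n}M_{\mathfrak n}$ over $(R/I)_{\bar{\mathfrak n}}\cong R_{\mathfrak n}/I_{\mathfrak n}$. If $M_{\mathfrak n}=0$ this is trivially Cohen-Macaulay, so assume $M_{\mathfrak n}\ne 0$. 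By flatness of localization the images of $r_1,\dots,r_t$ form an $M_{\mathfrak n}$-sequence in $\mathfrak n R_{\mathfrak n}$ (the nonunit condition $I_{\mathfrak n}M_{\mathfrak n}\ne M_{\mathfrak n}$ coming from Nakayama, as $I\subseteq\mathfrak n$), and $M_{\mathfrak n}$ is Cohen-Macaulay by hypothesis. Hence it suffices to prove the statement when $(R,\mathfrak m)$ is Noetherian local, $M\ne 0$ is finitely generated and Cohen-Macaulay, and $r_1,\dots,r_t\in\mathfrak m$ is an $M$-sequence with $I=(r_1,\dots,r_t)$.

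In this local setting $I$ kills $M/IM$, so by Proposition \ref{P2.56} it is enough to show that $M/IM$ is Cohen-Macaulay as an $R$-module, i.e. that $\depth_R(M/IM)=\dim_R(M/IM)$. For the dimension: each $r_i$ is a nonzerodivisor of $\mathfrak m$ on the nonzero finitely generated module $M/(r_1,\dots,r_{i-1})M$, and killing a nonzerodivisor lying in the maximal ideal lowers dimension by exactly one; iterating gives $\dim_R(M/IM)=\dim_R M-t$. For the depth: any $M$-sequence in $\mathfrak m$ has length at most $\dim_R M$ (by the same dimension drop), so $r_1,\dots,r_t$ extends to a maximal $M$-sequence $r_1,\dots,r_t,s_1,\dots,s_k$ in $\mathfrak m$, and then $t+k=\depth_R M$ by the common-length theorem. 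Straight from the definition of an $M$-sequence, $s_1,\dots,s_k$ is an $(M/IM)$-sequence in $\mathfrak m$, and it is maximal, since any extension of it would extend the maximal $M$-sequence above. Therefore $\depth_R(M/IM)=k=\depth_R M-t$. As $M$ is Cohen-Macaulay, $\depth_R M=\dim_R M$, so $\depth_R(M/IM)=\dim_R(M/IM)$, as needed.

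The routine work is the transfer of the regular-sequence and Cohen-Macaulay hypotheses through localization and the bookkeeping of the two inductions. The one input I would quote rather than reprove is the dimension statement that a nonzerodivisor $x\in\mathfrak m$ on a nonzero finitely generated $R$-module $N$ satisfies $\dim_R(N/xN)=\dim_R N-1$; this is the main technical point, resting on the fact that a nonzerodivisor on $N$ lies in no minimal prime of $\Supp N$ together with Krull's principal ideal theorem. Everything else follows formally from results already proved in this chapter.
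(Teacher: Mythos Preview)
The paper does not actually prove this proposition; it is stated with a citation to \cite[Theorem 2.1.3]{BH} and, as announced at the start of Chapter~\ref{chapter:Preliminaries}, results in Section~\ref{section:General Background} carrying references are quoted without proof. So there is nothing in the paper to compare your argument against.

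That said, your proposal is a correct and standard proof. The reduction to the local case via Lemma~\ref{L2.55} and Proposition~\ref{P2.56} is exactly the right move, and in the local case your two computations are sound: extending $r_1,\dots,r_t$ to a maximal $M$-sequence and reading off $\depth_R(M/IM)=\depth_R M - t$ is immediate from the definitions, and the dimension drop $\dim_R(M/xM)=\dim_R M - 1$ for a nonzerodivisor $x\in\mathfrak m$ on a nonzero finitely generated module holds in general (not only in the Cohen--Macaulay case), by precisely the mechanism you name: $x$ avoids every minimal prime of $\Supp M$, so any prime in $\Supp(M/xM)$ strictly contains one of these, giving $\dim(M/xM)\le\dim M-1$, while the reverse inequality is Krull. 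Your identification of that dimension statement as the one genuine external input is accurate; everything else is internal to the chapter.
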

The following Proposition describes the behaviour of the depth along exact sequences.
\begin{proposition}\label{P2.29}\cite[Proposition 1.2.9]{BH}
Let $ 0 \rightarrow U \rightarrow M \rightarrow N \rightarrow 0$ be an exact sequence of finite $R$-modules. If $ I \subseteq R$ is an ideal, then: \\

\begin{enumerate}
  \item [(a)]$ \depth_I(M) \geq \min \{ \depth_I(U) , \depth_I(N) \}$ .
  \item [(b)]$ \depth_I(U) \geq \min \{ \depth_I(M) , \depth_I(N)+1 \}$.
  \item [(c)]$ \depth_I(N) \geq \min \{ \depth_I(U)-1 , \depth_I(M) \}$.
\end{enumerate}

\end{proposition}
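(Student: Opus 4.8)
The plan is to replace regular sequences by a cohomological description of depth and then extract all three inequalities from a single long exact sequence. First I would recall the Rees characterization of $I$-depth: for a finitely generated module $L$ over the Noetherian ring $R$ with $IL \neq L$,
\[
\depth_I(L) = \inf\{\, i \geq 0 \mid \operatorname{Ext}^i_R(R/I, L) \neq 0 \,\},
\]
with the convention $\depth_I(L) = +\infty$ when $IL = L$ (consistent, since then $\operatorname{Ext}^i_R(R/I,L)=0$ for all $i$). Proving this equivalence — by inducting on the length of a maximal $L$-sequence in $I$, using that an $L$-regular element $x \in I$ forces $\Hom_R(R/I, L) = 0$ together with the degree shift $\operatorname{Ext}^i_R(R/I, L/xL) \cong \operatorname{Ext}^{i+1}_R(R/I, L)$ arising from $0 \to L \xrightarrow{x} L \to L/xL \to 0$ — is the one genuinely substantial ingredient; everything after it is formal.

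Next I would apply $\Hom_R(R/I,-)$ to the short exact sequence $0 \to U \to M \to N \to 0$ and take the associated long exact sequence
\[
\cdots \to \operatorname{Ext}^{i}_R(R/I,U) \to \operatorname{Ext}^{i}_R(R/I,M) \to \operatorname{Ext}^{i}_R(R/I,N) \to \operatorname{Ext}^{i+1}_R(R/I,U) \to \cdots
\]
Each part is then a three-term exactness argument on vanishing. For (a): if $i < \min\{\depth_I(U),\depth_I(N)\}$, the two outer terms of $\operatorname{Ext}^{i}(R/I,U)\to\operatorname{Ext}^{i}(R/I,M)\to\operatorname{Ext}^{i}(R/I,N)$ vanish, hence so does the middle, so the least $i$ with $\operatorname{Ext}^i_R(R/I,M)\neq 0$ is at least $\min\{\depth_I(U),\depth_I(N)\}$. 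For (b): if $i < \min\{\depth_I(M),\depth_I(N)+1\}$, then $\operatorname{Ext}^i_R(R/I,M)=0$ and $\operatorname{Ext}^{i-1}_R(R/I,N)=0$ (the latter trivially so when $i=0$, where left-exactness of $\Hom$ already gives $\Hom_R(R/I,U)\subseteq\Hom_R(R/I,M)=0$), so exactness of $\operatorname{Ext}^{i-1}(R/I,N)\to\operatorname{Ext}^{i}(R/I,U)\to\operatorname{Ext}^{i}(R/I,M)$ forces $\operatorname{Ext}^i_R(R/I,U)=0$. For (c): if $i < \min\{\depth_I(U)-1,\depth_I(M)\}$, then $\operatorname{Ext}^{i+1}_R(R/I,U)=0$ and $\operatorname{Ext}^i_R(R/I,M)=0$, so $\operatorname{Ext}^i_R(R/I,N)=0$ by exactness of $\operatorname{Ext}^{i}(R/I,M)\to\operatorname{Ext}^{i}(R/I,N)\to\operatorname{Ext}^{i+1}(R/I,U)$.

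Finally I would dispose of the degenerate cases: if one of $IM=M$, $IU=U$, $IN=N$ holds, the corresponding depth is $+\infty$ and the asserted inequality is immediate under the stated convention; moreover the short exact sequence rules out inconsistent configurations (for instance $IM=M$ together with the surjection $M\twoheadrightarrow N$ gives $IN=N$), so no pathology arises. The only real obstacle is the Ext characterization of depth. If one wishes to avoid it, the alternative is a direct induction on $\min\{\cdots\}$ that splits off a common $x\in I$ which is a nonzerodivisor on $U$, $M$ and $N$ and passes to $0 \to U/xU \to M/xM \to N/xN \to 0$; but securing a single such $x$ for all three modules (prime avoidance applied to a union of associated primes) and tracking the base cases is fussier than the cohomological route, so I would present the Ext argument and simply cite \cite[Proposition 1.2.9]{BH} for the details of Rees's theorem if a fuller account is unnecessary.
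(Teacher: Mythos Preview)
The paper does not supply its own proof of this proposition: it is stated with a citation to \cite[Proposition 1.2.9]{BH} and, as announced at the start of Section~\ref{section:General Background}, results carrying references are recorded without proof. Your sketch via the Rees--Ext characterization of depth and the long exact sequence in $\operatorname{Ext}^\bullet_R(R/I,-)$ is correct and is precisely the standard argument (indeed, it is the one Bruns--Herzog give), so nothing further is needed.
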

This proposition leads to the following corollary.

\begin{corollary}
Let $(R, \mathfrak{m})$ be a Noetherian local ring. If $U$ and $N$  are finitely generated $R$-modules, then the $R$-module $U\oplus N$ is MCM if and only if $U$ and $N$ are both MCM $R$-modules.
\end{corollary}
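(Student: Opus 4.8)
The plan is to deduce the statement from Proposition \ref{P2.29} together with the standard fact that $\depth M \le \dim M \le \dim R$ for every nonzero finitely generated module $M$ over the Noetherian local ring $(R,\mathfrak{m})$ (recall $\dim M = \dim R/\Ann M$). Since the cases where $U$ or $N$ is zero are immediate, with the convention that the zero module is maximal Cohen--Macaulay, I would assume both $U$ and $N$ are nonzero, so that $\depth U$, $\depth N$ and $\depth(U\oplus N)$ are all defined; here $\depth$ abbreviates $\depth_{\mathfrak{m}}$. The argument is organized around the short exact sequence $0 \to U \to U\oplus N \to N \to 0$.

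The first step is to establish the identity
\begin{equation*}
\depth(U\oplus N) = \min\{\depth U,\ \depth N\}.
\end{equation*}
The inequality ``$\ge$'' is precisely Proposition \ref{P2.29}(a) applied to the sequence above (with $M = U\oplus N$). For ``$\le$'', assume without loss of generality that $\depth U \le \depth N$; Proposition \ref{P2.29}(b) gives $\depth U \ge \min\{\depth(U\oplus N),\ \depth N + 1\}$, and since $\depth N + 1 > \depth N \ge \depth U$ the minimum on the right cannot be $\depth N + 1$, hence it equals $\depth(U\oplus N)$, and we conclude $\depth(U\oplus N) \le \depth U = \min\{\depth U, \depth N\}$.

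With the identity in hand the corollary is immediate. We have $\dim(U\oplus N) = \dim R/\Ann(U\oplus N) \le \dim R$ because $\Ann(U\oplus N) = \Ann U \cap \Ann N$ is an ideal of $R$; hence $\depth(U\oplus N) \le \dim R$, and likewise $\depth U, \depth N \le \dim R$. Therefore $U\oplus N$ is MCM, i.e. $\depth(U\oplus N) = \dim R$, if and only if $\min\{\depth U, \depth N\} = \dim R$, and in view of the bounds $\depth U, \depth N \le \dim R$ this holds if and only if $\depth U = \depth N = \dim R$, that is, if and only if both $U$ and $N$ are MCM.

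There is no real obstacle here: the proof is a short piece of bookkeeping with the depth inequalities of Proposition \ref{P2.29}. The only points deserving a sentence of care are the use of $\depth M \le \dim M$ (a standard fact about finitely generated modules over Noetherian local rings, see \cite{BH}) and the handling of a possibly zero summand, neither of which causes difficulty.
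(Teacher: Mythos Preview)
Your proof is correct and uses the same ingredients as the paper's---Proposition \ref{P2.29} applied to the split short exact sequence $0 \to U \to U\oplus N \to N \to 0$, together with the bound $\depth \le \dim \le \dim R$. Your organization is a bit cleaner: by first establishing the identity $\depth(U\oplus N)=\min\{\depth U,\depth N\}$ you avoid the paper's longer case analysis (which argues by contradiction that $\depth U=\depth N$ and also invokes the second split sequence $0\to N\to U\oplus N\to U\to 0$).
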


\begin{proof}
Let $M$ be the $R$-module $U\oplus N$. Then we have the following short exact sequence
\begin{equation*}
0 \rightarrow U \rightarrow M \rightarrow N \rightarrow 0.
\end{equation*}
If $U$ and $N$ are both MCM $R$-modules, it follows that $\depth N= \depth U = \dim R$. Since $\depth M\leq \dim M$ \cite[Theorem 19.2.1]{BCA}, it follows from Proposition \ref{P2.29} (a) that
\begin{equation*}
  \dim R = \min \{ \depth U , \depth N \} \leq \depth M \leq \dim M \leq \dim R.
\end{equation*}
Therefore, $\depth M =\dim R$ and consequently $M$ is MCM $R$-module. \\
Now assume that $M = U\oplus N$ is MCM $R$-module.
First we will show that $\depth U=\depth N$. Assume that $\depth U<\depth N$.  Since $$ 0 \rightarrow U \rightarrow M \rightarrow N \rightarrow 0$$ is a short exact sequence, it follows from Proposition \ref{P2.29} (b) that
$$\min \{ \depth M , \depth N+1 \} \leq \depth U.$$  If $\depth N+1 =\min \{ \depth M , \depth N+1 \}$, then $\depth N+1 \leq \depth U < \depth N$ which is absurd. This makes
$$\dim R= \depth M = \min \{ \depth M , \depth N+1 \} \leq \depth U < \depth N $$  which is absurd too (as $\depth N \leq \dim N \leq \dim R$). Therefore, the assumption that $\depth U<\depth N$ is impossible and we conclude that $\depth N \leq \depth U$. Using the fact that $ 0 \rightarrow N \rightarrow M \rightarrow U \rightarrow 0$ is a short exact sequence and similar argument as above we conclude that $\depth U \leq \depth N$ and consequently  $\depth U = \depth N$. Now the fact that $ 0 \rightarrow U \rightarrow M \rightarrow N \rightarrow 0$ is a short exact sequence and  Proposition \ref{P2.29}(b) imply that
$$\min \{ \depth M , \depth N+1 \} \leq \depth U$$ and consequently $$\dim R=\depth M =\min \{ \depth M , \depth N+1 \} \leq \depth U=\depth N\leq \dim R.$$ This shows that $\dim R=\depth M =\depth U=\depth N$ as desired.
\end{proof}

An easy induction yields the following corollary.
\begin{corollary}\label{C2.30}
Let $(R, \mathfrak{m})$ be a Noetherian local ring. If $M_1,...,M_n$ are finitely generated $R$-modules, then the $R$-module $ \bigoplus_{i=1}^nM_i$ is MCM if and only if $M_i$ is MCM for every $1 \leq i \leq n$.
\end{corollary}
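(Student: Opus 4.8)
The plan is a straightforward induction on $n$, bootstrapping from the two-summand case established in the preceding corollary. For $n=1$ there is nothing to prove, and $n=2$ is exactly the statement just proved, so these serve as the base.

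For the inductive step, assume the claim holds for any $n-1$ finitely generated $R$-modules, and let $M_1,\ldots,M_n$ be finitely generated $R$-modules. Set $N=\bigoplus_{i=2}^n M_i$; this is again a finitely generated $R$-module, and there is a canonical $R$-module isomorphism $\bigoplus_{i=1}^n M_i \cong M_1 \oplus N$. Applying the two-summand corollary to the pair $M_1$ and $N$ shows that $\bigoplus_{i=1}^n M_i$ is MCM if and only if both $M_1$ and $N$ are MCM. By the induction hypothesis applied to $M_2,\ldots,M_n$, the module $N$ is MCM if and only if $M_i$ is MCM for every $2\le i\le n$. Chaining these two equivalences gives that $\bigoplus_{i=1}^n M_i$ is MCM if and only if $M_i$ is MCM for every $1\le i\le n$, completing the induction.

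There is essentially no obstacle here; the only bookkeeping points are that a finite direct sum of finitely generated modules is again finitely generated, so both the preceding corollary and the induction hypothesis genuinely apply, and that the grouping $\bigoplus_{i=1}^n M_i \cong M_1 \oplus \bigl(\bigoplus_{i=2}^n M_i\bigr)$ is an honest isomorphism of $R$-modules, under which the MCM property (being invariant under isomorphism, since depth and dimension are) is preserved.
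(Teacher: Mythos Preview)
Your proof is correct and takes essentially the same approach as the paper, which simply states that ``an easy induction yields the following corollary'' from the two-summand case.
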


\begin{definition}\cite[Background]{RW}
\emph{The ring $(R,\mathfrak{m})$ is said to have finite Cohen-Macaulay type (or finite CM type) if there are, up to isomorphism, only finitely many indecomposable MCM R-modules.}
\end{definition}

In 1957, M.Auslander and D.Buchsbaum introduced a formula that relates  the projective dimension of an $R$-module $M$  with  $\depth M$ and $\depth R$  as follows

\begin{proposition}\cite[Section 19.2]{BCA}\label{P2.23}
Let $(R,\mathfrak{m})$ be a Noetherian local ring and $M$ a nonzero finitely generated $R$-module. If $\pd_RM < \infty$, then
\begin{equation*}
  \pd M+\depth M=\depth R.
\end{equation*}
\end{proposition}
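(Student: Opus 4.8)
The statement is the Auslander--Buchsbaum formula, and the plan is the standard induction. I would induct on the integer $\depth R + \pd_R M$, which is finite because $\depth R \le \dim R < \infty$ and $\pd_R M < \infty$ by hypothesis; each reduction below strictly decreases this quantity.

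First I would dispose of the case $\pd_R M = 0$: then $M$ is projective, hence free over the local ring $(R,\mathfrak m)$ by Nakayama's lemma, so $M \cong R^r$ and $\depth M = \depth R$, giving $\pd M + \depth M = 0 + \depth R$. Next I would show that $\depth R = 0$ forces $\pd_R M = 0$, so that case reduces to the previous one: if $\depth R = 0$ there is a nonzero $a \in R$ with $\mathfrak m a = 0$, and if $\pd_R M = n \ge 1$ one looks at the tail $0 \to F_n \xrightarrow{\psi} F_{n-1}$ of a minimal free resolution, where minimality gives $\psi(F_n) \subseteq \mathfrak m F_{n-1}$; picking $v \in F_n$ with $av \ne 0$ yields $\psi(av) = a\psi(v) = 0$, contradicting injectivity of $\psi$.

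It remains to treat $\depth R \ge 1$ and $n := \pd_R M \ge 1$, and here I would split on $\depth_R M$. If $\depth_R M \ge 1$, then $\mathfrak m$ is not an associated prime of $R$ nor of $M$, so prime avoidance produces $x \in \mathfrak m$ that is a nonzerodivisor on both $R$ and $M$; passing to $\bar R := R/xR$ and $\bar M := M/xM$ one has $\pd_{\bar R}\bar M = \pd_R M$, $\depth\bar R = \depth R - 1$ and $\depth_{\bar R}\bar M = \depth_R M - 1$, so $\depth\bar R + \pd_{\bar R}\bar M$ is smaller, the induction hypothesis gives $\pd_{\bar R}\bar M + \depth_{\bar R}\bar M = \depth\bar R$, and adding $1$ to both sides recovers the formula for $M$. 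If instead $\depth_R M = 0$, I would take a surjection $F \twoheadrightarrow M$ from a finite free module with the minimal number of generators, so its kernel $K$ is finitely generated with $K \subseteq \mathfrak m F$ and $\pd_R K = n - 1$; since $K$ embeds in the free module $F$, every $R$-regular element of $\mathfrak m$ is $K$-regular, whence $\depth_R K \ge 1$, while Proposition \ref{P2.29}(c) applied to $0 \to K \to F \to M \to 0$ with $I = \mathfrak m$ gives $0 = \depth_R M \ge \min\{\depth_R K - 1,\, \depth R\}$, which together with $\depth R \ge 1$ forces $\depth_R K \le 1$, so $\depth_R K = 1$; applying the induction hypothesis to $K$ (its parameter is $\depth R + (n-1)$) gives $\pd_R K + \depth_R K = \depth R$, i.e. $(n-1) + 1 = \depth R$, which is exactly $\pd_R M + \depth_R M = \depth R$.

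The routine ingredients are freeness of finitely generated projectives over local rings and the behaviour of $\pd$ and $\depth$ under killing a regular element, which I would simply cite. The real content, and the step I expect to require the most care, is the case split when $\depth R \ge 1$: arranging the simultaneous regular element in the $\depth_R M > 0$ branch and, in the $\depth_R M = 0$ branch, combining the syzygy bound $\depth_R K \ge 1$ with the depth lemma of Proposition \ref{P2.29} to pin down the equality $\depth_R K = 1$ on which the whole argument hinges.
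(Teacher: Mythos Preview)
The paper does not prove this proposition: it is stated in Section~\ref{section:General Background} with a citation to \cite[Section~19.2]{BCA}, and the paper explicitly says that results in that section carrying references are stated without proof. So there is no proof in the paper to compare against.

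Your argument is the standard inductive proof of the Auslander--Buchsbaum formula and is correct. The induction on $\depth R + \pd_R M$ is well-founded, the base cases are handled properly, and in the inductive step the two branches are sound: in the $\depth_R M \ge 1$ branch the simultaneous regular element exists by prime avoidance since $\mathfrak m$ is associated to neither $R$ nor $M$, and the cited change-of-ring facts for $\pd$ and $\depth$ under $R \to R/xR$ are standard; in the $\depth_R M = 0$ branch your use of Proposition~\ref{P2.29}(c) to force $\depth_R K = 1$ is exactly right. One minor point worth making explicit is that $K \ne 0$ (since $\pd_R M \ge 1$), so Nakayama ensures $xK \ne K$ and the $R$-regular element is genuinely $K$-regular, but this is routine.
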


\subsection{Multiplicity and simple singularities}
\label{subsection:The multiplicity and simple singularity}
 $R$ is a ring and $M$ is an $R$-module throughout this subsection (which provides the required material for the main result in section \ref{section:Class of rings that have FFRT but not finite CM type}).
\begin{definition}\cite[Section 2]{Mat}
 \emph{ A chain $M=M_0  \supset M_1 \supset  ... \supset M_n = 0 $ of submodules of $M$ is called a composition series if each $M_j/M_{j+1}$ is simple,i.e $M_j/M_{j+1} \cong R/ \mathfrak{m}_j$ for some maximal ideal $\mathfrak{m}_j$ in $R$. In this case, $n$ is called the length of the composition series.}
\end{definition}
Any two composition series of an $R$-module $M$, by Jordan--H\"{o}lder theorem \cite[Theorem 6.1.4]{BCA}, have the same length. This yields the following definition.

\begin{definition}\cite[Section 2]{Mat}
 \emph{ An $R$-module $M$ is called of finite length if it has a composition series. The length of this composition series, denoted $\ell_R(M)$, is called the length  of $M$.}
\end{definition}

\begin{definition}\cite[Section 20.1]{BCA} \label{Def2.64}
 \emph{ Let $(R,\mathfrak{m})$ be a Noetherian local ring. If $\dim(R)=n$, we say that $R$ is a regular local ring or (RLR) if $\mathfrak{m}$ can be generated by $n$ elements.}
\end{definition}

\begin{definition}\label{D7.5}  \cite[Definition A.19 ]{CMR} \\
\emph{Let $(R,\mathfrak{m},k)$ be a local ring of dimension $d$, let $I$ be an
$\mathfrak{m}$-primary ideal of $R$, and let $M$ be a finitely generated $R$-module.
 The multiplicity of $I$ on $M$ is defined by
$$e_R(I,M) = \lim_{n\rightarrow \infty}\frac{d!}{n^d}\ell_R(M/I^nM) $$
where  $\ell_R(-)$ denotes length as an $R$-module. In particular we set $e_R(M)=e_R(\mathfrak{m},M)$ and call it the multiplicity of $M$. Finally, we denote $e(R)=e_R(R)$
and call it the multiplicity of the ring $R$.}
\end{definition}

\begin{proposition}\label{F1} \cite[Corollary  A.24]{CMR}\\
   Let $(S,\mathfrak{n})$ be a regular local ring and $f \in S$ a non-zero nonunit.
Then the multiplicity of the hypersurface ring $R = S/( f )$ is the largest
integer $t$ such that $f \in  \mathfrak{n}^t$.
\end{proposition}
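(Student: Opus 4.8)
The plan is to compute $e(R)=e_R(\mathfrak m,R)$, where $\mathfrak m=\mathfrak n R$ is the maximal ideal of $R$, by transferring the length computation into the regular ring $S$. Since a regular local ring is a domain, $f$ is a nonzerodivisor, and since $f$ is a nonunit the hypersurface ring $R$ has $\dim R=\dim S-1=:d$; moreover $R$ is Cohen--Macaulay, so $e_R(R)$ is defined by Definition \ref{D7.5} with this $d$. Let $t=\sup\{\,j\mid f\in\mathfrak n^{j}\,\}$, which is finite by Krull's intersection theorem applied to the Noetherian local domain $S$; this is the integer in the statement. For every $n$ we have $R/\mathfrak m^{n}R\cong S/(\mathfrak n^{n}+fS)$, and $f$ annihilates this module, so its composition series over $R$ and over $S$ coincide and $\ell_R\big(R/\mathfrak m^{n}R\big)=\ell_S\big(S/(\mathfrak n^{n}+fS)\big)$; I will compute the right-hand side.

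First I would use the short exact sequence of $S$-modules
\[
0\longrightarrow(\mathfrak n^{n}+fS)/\mathfrak n^{n}\longrightarrow S/\mathfrak n^{n}\longrightarrow S/(\mathfrak n^{n}+fS)\longrightarrow 0,
\]
together with the isomorphisms $(\mathfrak n^{n}+fS)/\mathfrak n^{n}\cong fS/(fS\cap\mathfrak n^{n})\cong S/(\mathfrak n^{n}:f)$, the last one coming from $fS\cap\mathfrak n^{n}=f\,(\mathfrak n^{n}:f)$ and the injectivity of multiplication by $f$. The heart of the argument is the identity $(\mathfrak n^{n}:f)=\mathfrak n^{\,n-t}$ for all $n\ge t$. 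The inclusion $\mathfrak n^{\,n-t}\subseteq(\mathfrak n^{n}:f)$ is immediate from $f\in\mathfrak n^{t}$. For the reverse inclusion I would invoke the fact that, $S$ being regular, $\mathrm{gr}_{\mathfrak n}(S)$ is a polynomial ring over the residue field $S/\mathfrak n$, hence a domain; therefore initial forms multiply without cancellation, so for $0\ne g\in S$ one has $\mathrm{ord}_{\mathfrak n}(gf)=\mathrm{ord}_{\mathfrak n}(g)+t$, and $gf\in\mathfrak n^{n}$ forces $\mathrm{ord}_{\mathfrak n}(g)\ge n-t$, i.e. $g\in\mathfrak n^{\,n-t}$. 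This yields
\[
\ell_R\big(R/\mathfrak m^{n}R\big)=\ell_S\big(S/\mathfrak n^{n}\big)-\ell_S\big(S/\mathfrak n^{\,n-t}\big)\qquad(n\ge t).
\]

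Finally I would substitute the Hilbert--Samuel function of the regular local ring $S$ of dimension $d+1$, namely $\ell_S\big(S/\mathfrak n^{m}\big)=\binom{m+d}{d+1}$ for $m\ge 0$, and expand in powers of $n$: the $n^{d+1}$-terms of $\binom{n+d}{d+1}$ and $\binom{n-t+d}{d+1}$ cancel, and the difference is a polynomial in $n$ of degree $d$ whose leading coefficient is $t/d!$ (the shift by $t$ contributes an extra $(d+1)t/(d+1)!=t/d!$ to the coefficient of $n^{d}$). Hence
\[
e(R)=\lim_{n\to\infty}\frac{d!}{n^{d}}\left(\binom{n+d}{d+1}-\binom{n-t+d}{d+1}\right)=t.
\]
As an alternative to these last two steps, one may observe that $\mathrm{gr}_{\mathfrak m}(R)\cong\mathrm{gr}_{\mathfrak n}(S)/(f^{\ast})$, valid because the initial form $f^{\ast}$, of degree $t$, is a nonzerodivisor on the domain $\mathrm{gr}_{\mathfrak n}(S)$, whose Hilbert series is $(1-z^{t})/(1-z)^{d+1}=(1+z+\dots+z^{t-1})/(1-z)^{d}$; the multiplicity is then the value $t$ of the numerator at $z=1$.

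The step I expect to be the main obstacle is the colon identity $(\mathfrak n^{n}:f)=\mathfrak n^{\,n-t}$: controlling that colon ideal is exactly where regularity of $S$ enters essentially, through the fact that $\mathrm{gr}_{\mathfrak n}(S)$ is a domain, so that $\mathfrak n$-adic orders are additive under multiplication. Once that is in place, the rest is routine manipulation of binomial coefficients together with the standard equality of lengths over $R$ and over $S$ for modules killed by $f$.
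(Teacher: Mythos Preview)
Your proof is correct and complete. The paper itself does not give a proof of this proposition; it merely cites \cite[Corollary~A.24]{CMR} and uses the result as a black box (as the thesis announces at the start of Chapter~\ref{chapter:Preliminaries}, cited background results are stated without proof). So there is no ``paper's own proof'' to compare against.

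On the substance: each step is sound. The length transfer $\ell_R(R/\mathfrak m^nR)=\ell_S(S/(\mathfrak n^n+fS))$ is fine since the module is annihilated by $f$ and the residue fields coincide. The colon computation $(\mathfrak n^n:f)=\mathfrak n^{\,n-t}$ is exactly where regularity enters, via the domain property of $\mathrm{gr}_{\mathfrak n}(S)$ and additivity of $\mathfrak n$-adic order; you have identified this correctly as the crux. The binomial computation of the leading coefficient is accurate. Your alternative via $\mathrm{gr}_{\mathfrak m}(R)\cong \mathrm{gr}_{\mathfrak n}(S)/(f^\ast)$ and the Hilbert series $(1-z^t)/(1-z)^{d+1}$ is also valid and is closer in spirit to how this is usually packaged in the literature (and to the argument in \cite{CMR}).
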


\begin{definition} \cite[Definition 9.1]{CMR}
\emph{Let $(S,\mathfrak{n})$ be a regular local ring, and let $R = S/( g )$, where
$0 \neq g \in \mathfrak{n}^2$. We call R a simple  singularity  provided there are only finitely
 many ideals $L$ of $S$ such that $g \in  L^2$}.
\end{definition}

\begin{proposition}\label{F2}  \cite[ Lemma  9.3]{CMR} \\
 Let $(S,\mathfrak{n},k)$ be a regular local ring, $0 \neq f \in \mathfrak{n}^2$, and $R =S/( f )$
with $d = \dim(R)>1$.
If $R$ is a simple singularity and $k$ is an infinite field, then $e(R) \leq 3$.
\end{proposition}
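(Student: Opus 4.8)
The plan is to prove the statement by contradiction: assume $e(R) \geq 4$ and exhibit infinitely many ideals $L$ of $S$ with $f \in L^2$, contradicting the hypothesis that $R$ is a simple singularity. The first move is to convert the multiplicity assumption into containment information about $f$. By Proposition \ref{F1}, applied to the regular local ring $(S,\mathfrak{n})$ and the nonzero nonunit $f$, the multiplicity $e(R)$ equals the largest integer $t$ with $f \in \mathfrak{n}^t$; hence $e(R) \geq 4$ forces $f \in \mathfrak{n}^4 = (\mathfrak{n}^2)^2$.

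Next I would use this to manufacture many ideals whose square contains $f$. If $L$ is any ideal of $S$ with $\mathfrak{n}^2 \subseteq L \subseteq \mathfrak{n}$, then $L^2 \supseteq (\mathfrak{n}^2)^2 = \mathfrak{n}^4 \ni f$, so every such $L$ is relevant to the simple-singularity condition. By the correspondence theorem, the ideals $L$ with $\mathfrak{n}^2 \subseteq L \subseteq \mathfrak{n}$ are precisely the preimages of the ideals of $S/\mathfrak{n}^2$ contained in $\mathfrak{n}/\mathfrak{n}^2$; and since $(\mathfrak{n}/\mathfrak{n}^2)^2 = 0$ in $S/\mathfrak{n}^2$, every $k$-subspace of $V := \mathfrak{n}/\mathfrak{n}^2$ is such an ideal, with distinct subspaces giving distinct ideals $L$ of $S$.

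Finally I would count. Since $S$ is a domain and $f \neq 0$, the hypersurface drops the dimension by one, so $\dim S = \dim R + 1 = d+1$, and regularity of $S$ gives $\dim_k V = \dim S = d+1$; as $d > 1$ this is at least $3$, in particular at least $2$. A vector space of dimension $\geq 2$ over the infinite field $k$ has infinitely many $1$-dimensional subspaces (parametrized by $\mathbb{P}(V)$), hence infinitely many subspaces, producing infinitely many ideals $L$ of $S$ with $f \in L^2$. This contradicts the definition of a simple singularity, so we conclude $e(R) \leq 3$. There is no serious obstacle in this argument; the only points that need a little care are the correct invocation of Proposition \ref{F1} (which is where regularity of $S$ and $f \in \mathfrak{n}^2$ are really used) and the observation that the embedding dimension $\dim_k V = d+1$ is at least $2$, which is exactly where the hypotheses $d > 1$ and "$k$ infinite" enter.
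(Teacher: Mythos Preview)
The paper does not supply its own proof of this proposition; it is quoted from \cite[Lemma~9.3]{CMR} and, as stated at the start of Section~\ref{section:General Background}, results with references are given without proof. Your argument is correct and is essentially the standard proof found in that reference: one uses Proposition~\ref{F1} to get $f\in\mathfrak{n}^4$, and then observes that every ideal $L$ with $\mathfrak{n}^2\subseteq L\subseteq\mathfrak{n}$ satisfies $f\in L^2$, while such $L$ correspond to $k$-subspaces of $\mathfrak{n}/\mathfrak{n}^2$, of which there are infinitely many since $k$ is infinite and $\dim_k(\mathfrak{n}/\mathfrak{n}^2)\geq 2$.

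One small remark on your closing comment: the hypothesis $d>1$ is actually a bit stronger than what your argument needs. Already $d\geq 1$ gives $\dim_k(\mathfrak{n}/\mathfrak{n}^2)=d+1\geq 2$, which is enough to produce infinitely many one-dimensional subspaces over an infinite field. (In \cite{CMR} the hypothesis $d>1$ is used for a further conclusion in the same lemma, not reproduced here.) This does not affect the validity of your proof of the stated inequality.
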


\begin{proposition}\label{F3} \cite[Theorem 9.2]{CMR} \\
Let $(S,\mathfrak{n})$ be a regular local ring, $0 \neq f \in \mathfrak{n}^2$, and $R =S/( f )$.
If $R$ has finite CM type, then $R$ is a simple singularity.
\end{proposition}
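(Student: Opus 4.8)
The plan is to prove the contrapositive: assuming $R$ admits infinitely many ideals $L\subseteq S$ with $f\in L^{2}$, I will manufacture infinitely many pairwise non-isomorphic indecomposable maximal Cohen--Macaulay $R$-modules, contradicting finite CM type. The whole argument runs through the matrix-factorization description of MCM modules over the hypersurface $R=S/(f)$: up to isomorphism the MCM $R$-modules without free summands are exactly the cokernels $\Cok(\varphi)$ of reduced matrix factorizations $(\varphi,\psi)$ of $f$ over $S$, and $R$ has finite CM type if and only if, up to equivalence, only finitely many such $(\varphi,\psi)$ occur; equivalently there is a uniform bound $B$ on the sizes of the indecomposable ones. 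First I would make the harmless reductions: pass to the $\mathfrak n$-adic completion (which preserves finite CM type by Krull--Remak--Schmidt and Proposition \ref{Pro2.22}, preserves the matrix-factorization dictionary, and does not affect whether $R$ is a simple singularity), and, if necessary, enlarge the residue field so that it is infinite.

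Now the construction. Suppose $R$ is \emph{not} a simple singularity, so there is an infinite set $\mathcal L$ of ideals $L$ with $f\in L^{2}$. Since $f\in L^{2}\subseteq L$, the cyclic module $S/L$ is an $R$-module, and its $d$-th syzygy $N_{L}:=\Omega^{d}_{R}(S/L)$, with $d=\dim R$, is MCM: iterating Proposition \ref{P2.29}(b) along the short exact sequences $0\to \Omega^{i}_{R}(S/L)\to F_{i-1}\to \Omega^{i-1}_{R}(S/L)\to 0$ coming from the minimal $R$-free resolution gives $\depth N_{L}\ge d=\dim R$, and $N_{L}\neq 0$ because $S/L$ is not $R$-free (its annihilator $LR$ is nonzero, since $L\neq fS$). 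A finite presentation of $L$ over $S$ together with the quadratic identity expressing $f\in L^{2}$ yields an explicit reduced matrix factorization $(\varphi_{L},\psi_{L})$ of $f$ with $\Cok(\varphi_{L})\cong N_{L}$. The strategy is then: (i) infinitely many $L\in\mathcal L$ share a common colength $\ell_{S}(S/L)=c$, and for these the number of minimal generators of $L$ --- hence $\mu_{R}(N_{L})$ and the size of $\varphi_{L}$ --- is bounded in terms of $c$ and $\dim S$; (ii) the ideal $L$ can be recovered from $N_{L}$ up to finitely many choices, so the modules $N_{L}$ realize infinitely many isomorphism classes.

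Granting (i) and (ii), the conclusion is immediate: decomposing each $N_{L}$ into indecomposables of size $\le B$ and applying a pigeonhole argument forces infinitely many isomorphism classes of indecomposable MCM $R$-modules to appear, so $R$ cannot have finite CM type.

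The technical heart --- and the step I expect to be the main obstacle --- is (i)--(ii): proving the \emph{uniform} bound on $\mu(L)$ over ideals with $f\in L^{2}$ of a fixed colength, and showing that passing from $L$ to the syzygy module $N_{L}$ loses only a bounded amount of information; this is exactly where one exploits that the infinitely many $L$ genuinely move in a family, and where some care is needed about the equivalence relation on matrix factorizations and about stripping off free summands before counting. By contrast, the completion/residue-field reductions, the construction $L\mapsto N_{L}$, and the final pigeonhole step are routine. (One could instead invoke the classification of hypersurface singularities of finite CM type as the simple $ADE$ singularities, but that classification is itself established by essentially the argument sketched here.)
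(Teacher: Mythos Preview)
The paper does not prove this proposition: it is stated in the preliminaries (Section~\ref{subsection:The multiplicity and simple singularity}) as a background result quoted from \cite[Theorem 9.2]{CMR}, and the paper explicitly declares that such referenced statements in Chapter~\ref{chapter:Preliminaries} are given without proof. So there is no proof in the paper to compare your proposal against.

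That said, your outline is broadly the standard argument one finds in \cite{CMR}: from each ideal $L$ with $f\in L^2$ one writes $f$ as a bilinear expression in generators of $L$, obtains a matrix factorization (hence an MCM $R$-module), and then argues that distinct $L$'s yield non-isomorphic modules because the ideal generated by the entries of a minimal matrix factorization is an invariant of the MCM module. Your proposed route through the $d$-th syzygy $\Omega^d_R(S/L)$ is more circuitous than necessary; the direct matrix-factorization construction already lands you on an MCM module, and the invariant ``ideal of entries'' makes step~(ii) clean. Your step~(i), bounding $\mu(L)$ via a common colength, is not how the argument usually goes and is the shakiest part of your sketch: there is no a~priori reason infinitely many of the $L$'s should share a colength, and in any case a colength bound is not what is needed. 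What one actually uses is that if $R$ had finite CM type there would be a bound on the \emph{size} of indecomposable reduced matrix factorizations, hence on $\mu(L)$ for those $L$ arising from indecomposable pieces, and then a Noetherian/finiteness argument on ideals with a bounded number of generators and $f\in L^2$ finishes it. So your overall strategy is correct but the execution of~(i) should be reworked along those lines rather than via colength.
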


\section{Technical Lemmas}
\label{section:Technical Lemmas}
Throughout   this section, we adopt the following notation
\begin{notation}
\emph{Let $\mathfrak{P}$ denote a ring with identity that is not necessarily commutative.
Let $m$ and $ n$ be  positive integers. If $\lambda \in \mathfrak{P}$, $ 1 \leq i \leq m $ and $ 1 \leq j \leq n  $, then $L_{i,j}^{m \times n}(\lambda)$ (and $L_{i,j}^{ n}(\lambda)$) denotes the $m \times n$ (and $n\times n$) matrix whose $(i,j)$ entry is $\lambda$ and the rest are all zeros. When $i\neq j$, we write $E_{i,j}^n(\lambda):= I_n+L_{i,j}^n(\lambda)$ where $I_n$ is the identity matrix in $M_n(\mathfrak{P})$. If there is no ambiguity, we write $E_{i,j}(\lambda)$ (and $L_{i,j}(\lambda)$)  instead of $E_{i,j}^n(\lambda)$ (and $L_{i,j}^{ n}(\lambda)$).}
\end{notation}

It is easy to observe the following remark

\begin{remark}\label{r2.1}
Let $m,n$ and $k$ be  positive integers such that  $1 \leq k,m \leq n$ with $k\neq m$. If $ \lambda \in \mathfrak{P}$ and $A \in M_n(\mathfrak{P})$,  then :
\begin{enumerate}
  \item [(a)] $E_{k,m}(\lambda)A$ is the matrix obtained from $A$ by adding $\lambda$ times row $m$ to row $k$.
  \item [(b)] $AE_{k,m}(\lambda)$ is the matrix obtained from $A$ by adding $\lambda$ times column  $k$ to column $m$.
\end{enumerate}

\end{remark}

\begin{lemma}\label{L2.9}
Let $m$ be an integer with $m \geq 2$  and $n=2m$. If $A$ is a matrix in $M_n(\mathfrak{P})$ that is given by
\begin{equation*}
  A= \left[
       \begin{array}{ccccccc}
         b &   &  &   &   &  & x  \\
         0 & b &  &   &   &   & \\
         1 & 0 & b &   &   &   &  \\
           & 1 & 0  & b &   &   &   \\
           &   & \ddots & \ddots & \ddots &   &  \\
           &   &   &  1 & 0 & b &   \\
           &   &   &   &  1 & 0 & b \\
       \end{array}
     \right]
\end{equation*}
then there exist two invertible matrices $M,N \in M_n(\mathfrak{P})$ such that:
\begin{enumerate}

\item[(a)]

$M$  has the form
\begin{equation*}
   M= \left[
        \begin{array}{ccccccc}
          1 & 0 & a_{1,3} &   & \ldots  &   & a_{1,n} \\
            & 1 & 0 & a_{2,4} &   &  &  \\
            &   & 1 & 0 &   &   &  \vdots \\
            &   &   & \ddots & \ddots & \ddots &   \\
            &   &   &   & \ddots & \ddots & a_{n-2,n} \\
            &   &   &   &   & 1 & 0 \\
            &   &   &   &   &   & 1 \\
        \end{array}
      \right].
\end{equation*}
%$M$  has the form
%\begin{equation*}
 % M= \left[
 %                                  \begin{array}{cc}
 %                                   I_2  & \tilde{M}  \\
 %                                      & I_{2(m-1)} \\
  %                                 \end{array}
  %                               \right]
%\end{equation*}
% where $I_2$ and $I_{2(m-1)}$ are the identity matrices in $M_2(\mathfrak{P})$ and $M_{2(m-1)}(\mathfrak{P})$ respectively,
%and $\tilde{M}$ is  a $2 \times 2(m-1)$ matrix over $\mathfrak{P}$.\\
 \item[(b)]$N$ has the form
 \begin{equation*}
   N= \left[
        \begin{array}{ccccccc}
          1 & 0 & b_{1,3} &   & \ldots  &   & b_{1,n} \\
            & 1 & 0 & b_{2,4} &   &  &  \\
            &   & 1 & 0 &   &   &  \vdots \\
            &   &   & \ddots & \ddots & \ddots &   \\
            &   &   &   & \ddots & \ddots & b_{n-2,n} \\
            &   &   &   &   & 1 & 0 \\
            &   &   &   &   &   & 1 \\
        \end{array}
      \right].
 \end{equation*}
 \item[(c)]
 \begin{equation*}
   MAN=\left[
       \begin{array}{ccccccc}
         0 &   &  &   &   & (-1)^{m-1}b^m & x  \\
         0 & 0 &  &   &   &   &(-1)^{m-1}b^m \\
         1 & 0 & 0 &   &   &   &  \\
           & 1 & 0  & 0 &   &   &   \\
           &   & \ddots & \ddots & \ddots &   &  \\
           &   &   &  1 & 0 & 0 &   \\
           &   &   &   &  1 & 0 & 0 \\
       \end{array}
     \right].
 \end{equation*}

 \end{enumerate}
\end{lemma}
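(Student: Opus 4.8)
The plan is to exhibit $M$ and $N$ as explicit products of the elementary matrices $E_{i,j}(\lambda)$ and then to verify (c) by carrying out the corresponding row and column operations on $A$ via Remark \ref{r2.1}. (Conceptually, $A$ splits — apart from the single entry $x$ — into two identical $m\times m$ lower‑bidiagonal blocks on the odd‑ and even‑indexed coordinates; this is the reason the answer involves $b^m$. I will argue directly.)

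\emph{Step 1: row operations clearing the first two diagonal entries.} Using the $1$'s of $A$ sitting in positions $(2k+1,2k-1)$ and $(2k+2,2k)$, perform, for $k=1,2,\dots,m-1$, the operations $R_1\to R_1+(-1)^kb^kR_{2k+1}$ and $R_2\to R_2+(-1)^kb^kR_{2k+2}$. No row of index $\ge 3$ is ever altered, so these commute in effect and their net effect is left multiplication by
\[
M:=I_n+\sum_{k=1}^{m-1}\Bigl(L_{1,\,2k+1}\bigl((-1)^kb^k\bigr)+L_{2,\,2k+2}\bigl((-1)^kb^k\bigr)\Bigr).
\]
Its only nonzero off‑diagonal entries lie in rows $1$ and $2$ and in columns $\ge 3$, so $M_{ii}=1$, $M_{i,i+1}=0$: $M$ has the shape required in (a), and it is invertible since $M-I_n$ is nilpotent. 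A short computation — in which the telescoping cancellation $(-1)^{l}b^{l}\cdot b+(-1)^{l+1}b^{l+1}=0$ kills every non‑final entry of rows $1,2$, and in which the $x$ in position $(1,n)$ is untouched because rows $3,\dots,n$ vanish in column $n$ — shows that $MA$ has: row $1$ equal to $(-1)^{m-1}b^m$ in column $n-1$ and $x$ in column $n$; row $2$ equal to $(-1)^{m-1}b^m$ in column $n$; and rows $3,\dots,n$ unchanged, i.e.\ a $1$ in position $(i,i-2)$ and a $b$ in position $(i,i)$.

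\emph{Step 2: column operations clearing the remaining diagonal entries.} Now apply, for $i=1,2,\dots,n-2$ \emph{in this order}, the column operation $C_{i+2}\to C_{i+2}-b\,C_i$; this is right multiplication by
\[
N:=E_{1,3}(-b)\,E_{2,4}(-b)\cdots E_{n-2,\,n}(-b)=\prod_{i=1}^{n-2}E_{i,\,i+2}(-b).
\]
Each factor $E_{i,i+2}(-b)=I_n+L_{i,i+2}(-b)$ is upper triangular with $1$'s on the diagonal and a single entry two places above the diagonal; the matrices $I_n+U$ with $U$ strictly upper triangular and $U_{i,i+1}=0$ are closed under products (a product $L_{a,b}(\lambda)L_{c,d}(\mu)$ is nonzero only when $b=c$, hence lands four places above the diagonal), so $N$ has the shape required in (b) and is invertible. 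To verify (c) I would prove by induction on $i$ that after the operation $C_{i+2}\to C_{i+2}-b\,C_i$ the first $i+2$ columns of the running matrix already coincide with the target matrix $T$ of (c). The key is the invariant that once column $\ell$ has been processed it equals the standard basis vector $e_{\ell+2}$ — exactly what the subdiagonal $1$'s of $MA$ deliver (and columns $1,2$ of $MA$ equal $e_3,e_4$ from the outset). Hence the step $C_{i+2}\to C_{i+2}-b\,C_i$ merely deletes the surviving $b$ from position $(i+2,i+2)$ and leaves the rest of column $i+2$ in place; since column $i+2$ of $MA$ is already in target form apart from that one $b$ — namely $b\,e_{i+2}+e_{i+4}$ for $1\le i\le n-4$, then $(-1)^{m-1}b^m e_1+b\,e_{n-1}$ when $i+2=n-1$, and $x\,e_1+(-1)^{m-1}b^m e_2+b\,e_n$ when $i+2=n$ — carrying the induction through to $i=n-2$ gives $MAN=T$, the displayed matrix.

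\emph{Main obstacle.} Noncommutativity of $\mathfrak{P}$ is harmless: every product that occurs involves only powers of the single element $b$, and the one occurrence of $x$ stays fixed throughout. The genuinely delicate bookkeeping is (i) pinning down the exact entries of $MA$ in Step 1 via the telescoping cancellations, and (ii) checking in Step 2 that the \emph{sequentially} applied column operations never reintroduce a nonzero entry, which is precisely what the ``column $\ell$ becomes $e_{\ell+2}$ after processing'' invariant controls; the triangular shapes of $M$ and $N$ and the vanishing of their first superdiagonals are then immediate from the explicit formulas above.
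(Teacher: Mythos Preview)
Your proof is correct and uses essentially the same approach as the paper: the matrices $M$ and $N$ you write down explicitly are exactly the ones the paper constructs recursively by induction on $m$ (unwinding the paper's recursion $M=E_{2,n}((-1)^{m}b^m)E_{1,n-1}((-1)^{m}b^m)\,[\hat{M}\oplus I_2]$ and $N=[\hat{N}\oplus I_2]\,E_{n-3,n-1}(-b)E_{n-2,n}(-b)$ gives precisely your products of elementary matrices). The only difference is organizational---you verify $MAN=T$ by a direct telescoping/invariant argument rather than by the paper's induction on $m$---but the underlying row and column operations are identical.
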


 \begin{proof}
 We will prove the result by induction on $m\geq 2$. Let
 \begin{equation*}
   A=\left[
       \begin{array}{cccc}
         b & 0 & 0 & x \\
         0 & b & 0 & 0 \\
         1 & 0 & b & 0 \\
         0 & 1 & 0 & b \\
       \end{array}
     \right]
 \end{equation*}
  It follows from remark \ref{r2.1} that
 \begin{equation*}
  E_{2,4}(-b)E_{1,3}(-b)A E_{1,3}(-b) E_{2,4}(-b)=\left[
       \begin{array}{cccc}
         0 & 0 & -b^2 & x \\
         0 & 0 & 0 & -b^2 \\
         1 & 0 & 0 & 0 \\
         0 & 1 & 0 & 0 \\
       \end{array}
     \right].
 \end{equation*}
 Taking  $M=E_{2,4}(-b)E_{1,3}(-b)$ and $N=E_{1,3}(-b) E_{2,4}(-b)$ yields that
 \begin{equation*}
   M=N=\left[
         \begin{array}{cccc}
           1 & 0 & -b & 0 \\
           0 & 1 & 0 & -b \\
           0 & 0 & 1 & 0 \\
           0 & 0 & 0 & 1 \\
         \end{array}
       \right].
 \end{equation*}

 Now let $n=2(m+1)$ and let $A$ be the $n \times n$ matrix that is given by
 \begin{equation*}
  A= \left[
       \begin{array}{ccccccc}
         b &   &  &   &   &  & x  \\
         0 & b &  &   &   &   & \\
         1 & 0 & b &   &   &   &  \\
           & 1 & 0  & b &   &   &   \\
           &   & \ddots & \ddots & \ddots &   &  \\
           &   &   &  1 & 0 & b &   \\
           &   &   &   &  1 & 0 & b \\
       \end{array}
     \right].
 \end{equation*}
 Let $\hat{A}$ be the  $2m \times 2m$ matrix, obtained from $A$ by deleting the last two rows and the last tow columns of $A$, that is given by
 \begin{equation*}
 \hat{A}= \left[
       \begin{array}{ccccccc}
         b &   &  &   &   &  &    \\
         0 & b &  &   &   &   & \\
         1 & 0 & b &   &   &   &  \\
           & 1 & 0  & b &   &   &   \\
           &   & \ddots & \ddots & \ddots &   &  \\
           &   &   &  1 & 0 & b &   \\
           &   &   &   &  1 & 0 & b \\
       \end{array}
     \right].
 \end{equation*}
We can write
\begin{equation*}
 A= \left[
      \begin{array}{cccc|cc}
          &   &   &   &  0 & x \\
          &  \hat{A} &   &   &  0 &  0 \\
          &   &   &   & \vdots  & \vdots  \\
          &   &   &   &   &   \\\hline
        0 & \ldots &  1 & 0 & b &  0 \\
        0 & \ldots &  0 & 1 & 0 & b \\
      \end{array}
    \right].
\end{equation*}
By the induction hypothesis (where $x=0$), there exist   two matrices $\hat{M},\hat{N} \in M_{2m}(\mathfrak{P})$  such that

\begin{equation*}
 \hat{M}\hat{A}\hat{N}=\left[
       \begin{array}{ccccccc}
         0 &   &  &   &   & (-1)^{m-1}b^m & 0  \\
         0 & 0 &  &   &   &   &(-1)^{m-1}b^m \\
         1 & 0 & 0 &   &   &   &  \\
           & 1 & 0  & 0 &   &   &   \\
           &   & \ddots & \ddots & \ddots &   &  \\
           &   &   &  1 & 0 & 0 &   \\
           &   &   &   &  1 & 0 & 0 \\
       \end{array}
     \right].
\end{equation*}

 let $B$ and $C$ be $n \times n$ matrices, where $n=2(m+1)$,that are  given by

  $ B = \left[
                                                                                     \begin{array}{cc}
                                                                                       \hat{M} &  \\
                                                                                         & I_2 \\
                                                                                     \end{array}
                                                                                   \right]$ and $ C = \left[
                                                                                     \begin{array}{cc}
                                                                                       \hat{N} &  \\
                                                                                         & I_2 \\
                                                                                     \end{array}
                                                                                   \right]$
where $I_2$ is the identity matrix in $M_2(\mathfrak{P})$.

As a result, it follows that

\begin{equation*}
  BAC =\left[
       \begin{array}{ccccccc|cc}
         0 &   &  &   &   & (-1)^{m-1}b^m & 0 & 0 &x  \\
         0 & 0 &  &   &   &   &(-1)^{m-1}b^m & 0 & 0 \\
         1 & 0 & 0 &   &   &   & & & \\
           & 1 & 0  & 0 &   &   & & &  \\
           &   & \ddots & \ddots & \ddots &   & & & \\
           &   &   &  1 & 0 & 0 &  &  &   \\
           &   &   &   &  1 & 0 & 0 &  &  \\ \hline
           &   &   &   &    &  1 &  0 & b &  \\
           &   &   &   &    &   & 1  & 0 & b \\
       \end{array}
     \right].
\end{equation*}
Now  multiply the $(n-1)$-th row by $(-1)^{m}b^m$ and add it to the first row and then  multiply the $(n-3)$-th  column by $-b$ and add it to the $(n-1)$-th column. After that, multiply the $n$-th row  by $(-1)^{m}b^m$ and add it to the second  row and then  multiply the $(n-2)$-th column by $-b$ and add it to the $n$-th column.  It follows that

\begin{equation*}
MAN
=\left[
       \begin{array}{ccccccc}
         0 &   &  &   &   & (-1)^{m}b^{m+1} & x  \\
         0 & 0 &  &   &   &   &(-1)^{m}b^{m+1} \\
         1 & 0 & 0 &   &   &   &  \\
           & 1 & 0  & 0 &   &   &   \\
           &   & \ddots & \ddots & \ddots &   &  \\
           &   &   &  1 & 0 & 0 &   \\
           &   &   &   &  1 & 0 & 0 \\
       \end{array}
     \right]
\end{equation*}
where $M=E_{2,n}((-1)^{m}b^m)  E_{1,n-1}((-1)^{m}b^m)B $ and $N= C E_{n-3,n-1}(-b) E_{n-2,n}(-b)$. It is clear from the construction of the matrices $B$ and $C$ and from  remark \ref{r2.1} that $M$ and $N$ have the right form.
 \end{proof}

 \begin{corollary}\label{C2.14}
Let $n=2m+1$ where $m$ is an integer with $m \geq 2$ and let $A$ be a matrix in $M_n(\mathfrak{P})$ given by
\begin{equation*}
  A= \left[
       \begin{array}{ccccccc}
         b &   &  &   &   & x & 0  \\
         0 & b &  &   &   &   & y \\
         1 & 0 & b &   &   &   &  \\
           & 1 & 0  & b &   &   &   \\
           &   & \ddots & \ddots & \ddots &   &  \\
           &   &   &  1 & 0 & b &   \\
           &   &   &   &  1 & 0 & b \\
       \end{array}
     \right].
\end{equation*}
Then there exist  invertible matrices $M$ and $N$ in $M_n(\mathfrak{P})$ such that
\begin{equation*}
 MAN =\left[
       \begin{array}{ccccccc}
         0 &   &  &   &   &  x &  (-1)^{m}b^{\frac{n+1}{2}} \\
         0 & 0 &  &   &   & (-1)^{m-1}b^{\frac{n-1}{2}}  & y \\
         1 & 0 & 0 &   &   &   &  \\
           & 1 & 0  & 0 &   &   &   \\
           &   & \ddots & \ddots & \ddots &   &  \\
           &   &   &  1 & 0 & 0 &   \\
           &   &   &   &  1 & 0 & 0 \\
       \end{array}
     \right].
\end{equation*}
\end{corollary}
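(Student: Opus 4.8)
The plan is to deduce this from Lemma \ref{L2.9} by deleting the last row and the last column of $A$. Let $\hat A\in M_{2m}(\mathfrak P)$ be the matrix obtained from $A$ by removing its last row and its last column. Then $\hat A$ is precisely the matrix treated in Lemma \ref{L2.9}, for the same value of $m$: the deletion removes the entry $y$ (which sat in position $(2,n)$), it leaves $x$ in the top-right corner of $\hat A$, and it keeps the $b$'s on the diagonal together with the $1$'s in the positions $(i+2,i)$ for $1\le i\le 2m-2$. Since $m\ge 2$, Lemma \ref{L2.9} yields invertible matrices $\hat M,\hat N\in M_{2m}(\mathfrak P)$ of the upper-triangular shapes described in parts (a) and (b) of that lemma, with $\hat M\hat A\hat N$ equal to the normal form of part (c).

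Next I would pass back to size $n=2m+1$. Writing $e_i$ for the $i$-th standard basis column vector, decompose $A$ in block form as
\begin{equation*}
A=\begin{bmatrix}\hat A & y\,e_2\\ e_{2m-1}^{T} & b\end{bmatrix},
\end{equation*}
using that column $n$ of $A$ has $y$ in row $2$ and $0$ elsewhere, and that the $1$ in the bottom row of $A$ lies in column $n-2=2m-1$. Set $B=\hat M\oplus I_1$ and $C=\hat N\oplus I_1$, invertible matrices of size $n$. A block multiplication gives
\begin{equation*}
BAC=\begin{bmatrix}\hat M\hat A\hat N & y\,\hat M e_2\\ e_{2m-1}^{T}\hat N & b\end{bmatrix}.
\end{equation*}
From the explicit shapes of $\hat M$ and $\hat N$ one reads off that the first two columns of $\hat M$ are $e_1,e_2$ and that the $(2m-1)$-st row of $\hat N$ is $e_{2m-1}^{T}$; hence $\hat M e_2=e_2$ and $e_{2m-1}^{T}\hat N=e_{2m-1}^{T}$, so the off-diagonal blocks survive untouched. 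Thus $BAC$ agrees with the target matrix except that position $(1,n-2)$ holds $(-1)^{m-1}b^m$ (the target has $0$ there), position $(1,n)$ holds $0$ (the target has $(-1)^m b^{(n+1)/2}$ there), and position $(n,n)$ holds a leftover $b$ (the target has $0$ there).

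Finally I would remove these discrepancies by two elementary operations, exactly as in the inductive step of Lemma \ref{L2.9}. Row $n$ of $BAC$ equals $e_{n-2}^{T}+b\,e_n^{T}$, and outside of row $1$ the column $n-2$ of $BAC$ carries only the entry $1$ in row $n$. Hence adding $(-1)^m b^m$ times row $n$ to row $1$ annihilates the $(1,n-2)$ entry and installs $(-1)^m b^{m+1}=(-1)^m b^{(n+1)/2}$ in position $(1,n)$; afterwards, adding $-b$ times column $n-2$ to column $n$ annihilates the leftover $b$ at $(n,n)$ and disturbs nothing else. Taking $M=E_{1,n}((-1)^m b^m)B$ and $N=C\,E_{n-2,n}(-b)$ then produces invertible matrices with $MAN$ in the asserted form. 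I expect the only fiddly part to be the index bookkeeping — keeping straight that $n-2=2m-1$ is the column of the bottom-row $1$ of $A$, and that $(n-1)/2=m$ while $(n+1)/2=m+1$ — but there is no genuine obstacle beyond Lemma \ref{L2.9} together with these two explicit row-column moves.
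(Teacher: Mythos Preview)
Your proof is correct and follows essentially the same route as the paper: delete the last row and column to obtain the $2m\times 2m$ matrix $\hat A$ of Lemma~\ref{L2.9}, enlarge $\hat M,\hat N$ by a $1\times 1$ identity block, and then clear the remaining three entries with the row operation $E_{1,n}((-1)^m b^m)$ and the column operation $E_{n-2,n}(-b)$, arriving at exactly the same $M$ and $N$. Your explicit verification that $\hat M e_2=e_2$ and $e_{2m-1}^{T}\hat N=e_{2m-1}^{T}$ (using the upper-triangular shapes from Lemma~\ref{L2.9}(a),(b)) is a nice touch that the paper leaves implicit.
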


\begin{proof}
Let $ \hat{A}$ be the $2m \times 2m $ matrix obtained from $A$ be deleting the last row and the last column of $A$

\begin{equation*}
\hat{A}=\left[
       \begin{array}{ccccccc}
         b &   &  &   &   &  & x  \\
         0 & b &  &   &   &   & \\
         1 & 0 & b &   &   &   &  \\
           & 1 & 0  & b &   &   &   \\
           &   & \ddots & \ddots & \ddots &   &  \\
           &   &   &  1 & 0 & b &   \\
           &   &   &   &  1 & 0 & b \\
       \end{array}
     \right].
\end{equation*}
It follows that

 \begin{equation*}
   A= \left[
        \begin{array}{ccccc|c}
            &   &   &   &   & 0 \\
            &   &   &   &   & y \\
            &   &  \hat{A} &   &   & 0 \\
            &   &   &   &   & \vdots \\
            &   &   &   &   & 0 \\ \hline
          0 & \ldots & 0 & 1 & 0 & b \\
        \end{array}
      \right].
 \end{equation*}

 By Lemma \ref{L2.9}  there exist $2m \times 2m$ matrices $\hat{M}$ and $ \hat{N}$ such that

 \begin{equation*}
  \hat{M}\hat{A}\hat{N}=\left[
       \begin{array}{ccccccc}
         0 &   &  &   &   & (-1)^{m-1}b^m & x  \\
         0 & 0 &  &   &   &   &(-1)^{m-1}b^m \\
         1 & 0 & 0 &   &   &   &  \\
           & 1 & 0  & 0 &   &   &   \\
           &   & \ddots & \ddots & \ddots &   &  \\
           &   &   &  1 & 0 & 0 &   \\
           &   &   &   &  1 & 0 & 0 \\
       \end{array}
     \right].
 \end{equation*}

 Let $B$ and $C$ be the matrices in $M_n(\mathfrak{P})$   given by

 \begin{equation*}
   B= \left[
        \begin{array}{ccc|c}
            &   &   & 0 \\
           & \hat{M} &   & \vdots \\
            &   &   & 0 \\ \hline
          0 & \ldots & 0 & 1 \\
        \end{array}
      \right]    \text{   and   } C= \left[
        \begin{array}{ccc|c}
            &   &   & 0 \\
           & \hat{N} &   & \vdots \\
            &   &   & 0 \\ \hline
          0 & \ldots & 0 & 1 \\
        \end{array}
      \right].
 \end{equation*}

 As a result, it follows that
 \begin{equation*}
   BAC= \left[
        \begin{array}{ccccc|c}
           0 &   &   & (-1)^{m-1}b^m  &  x & 0 \\
          0  & 0  &   &   & (-1)^{m-1}b^m  & y \\
           1 & 0  & 0  &   &   & 0 \\
            & \ddots  & \ddots  & \ddots  &   & \vdots \\
            &   &   1& 0  & 0  & 0 \\ \hline
          0 & \ldots & 0 & 1 & 0 & b \\
        \end{array}
      \right].
 \end{equation*}

 Now multiply the last row of $BAC$ by $(-1)^{m}b^m$ and add it to the first row after that multiply the $(n-2)$-th column  of $BAC$ by $-b$ and add it to the last column. This produces the required result.
 Indeed, if  $M = E_{1,n}((-1)^{m}b^m)B$ and  $N= C E_{n-2,n}(-b)$, we get that $M$ and $N$ are invertible matrices satisfying that

 \begin{equation*}
 MAN =\left[
       \begin{array}{ccccccc}
         0 &   &  &   &   &  x &  (-1)^{m}b^{m+1} \\
         0 & 0 &  &   &   & (-1)^{m-1}b^{m}  & y \\
         1 & 0 & 0 &   &   &   &  \\
           & 1 & 0  & 0 &   &   &   \\
           &   & \ddots & \ddots & \ddots &   &  \\
           &   &   &  1 & 0 & 0 &   \\
           &   &   &   &  1 & 0 & 0 \\
       \end{array}
     \right].
\end{equation*}
\end{proof}
\begin{lemma}\label{L.16}
Let $n$ be an integer   with $n \geq 2 $. If $ A \in M_n(\mathfrak{P})$ is given by
 $$ A = \begin{bmatrix} b & & & & &  \\
                        1 & b & & & & \\
                          & 1 & b & & & \\
                          & & \ddots & \ddots & & \\
                         & & &1& b   \end{bmatrix} , $$
                         there exist invertible upper triangular matrices $B, C \in M_n(\mathfrak{P})$ such that the $(i,i)$ entries of $B$ and $C$ are the identity element of $\mathfrak{P}$ for all $i=1, ..., n $ and
                        \[ BAC = \left[\begin{array}{ccccc} 0 & & & &  (-1)^{n+1}b^n \\
                        1 & 0 & & &  \\
                          & 1 &0 & &  \\
                         &  & \ddots &\ddots & \\
                         & & &1& 0 \end{array} \right]. \]
\end{lemma}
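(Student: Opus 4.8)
The plan is to mimic the inductive argument used in Lemma \ref{L2.9} and Corollary \ref{C2.14}, but now the matrix $A$ is a single Jordan-type block with $b$ on the diagonal and $1$ on the subdiagonal, and the goal is to sweep all the diagonal $b$'s up into a single corner entry $(-1)^{n+1}b^n$ while keeping the subdiagonal of $1$'s intact. First I would set up the induction on $n\geq 2$. For the base case $n=2$, take $A=\left[\begin{smallmatrix} b & 0\\ 1 & b\end{smallmatrix}\right]$; adding $-b$ times the first row to the second and then $-b$ times the second column to the first (equivalently, left-multiplying by $E_{1,2}$-type elementary matrices and right-multiplying by their transposes) should bring $A$ to $\left[\begin{smallmatrix} 0 & -b^2\\ 1 & 0\end{smallmatrix}\right]$, and one checks the required elementary matrices $B,C$ are upper triangular with identity diagonal.

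For the inductive step, I would delete the last row and last column of $A$ to obtain $\hat A\in M_{n-1}(\mathfrak{P})$ of the same shape, apply the induction hypothesis to get upper triangular $\hat B,\hat C$ (with identity diagonal) such that $\hat B\hat A\hat C$ has the desired corner form, and then embed these as $B_0=\hat B\oplus [1]$, $C_0=\hat C\oplus[1]$ acting on $M_n(\mathfrak{P})$. Multiplying out $B_0AC_0$, the principal $(n-1)\times(n-1)$ block becomes the reduced form, the last column picks up the entries coming from the $1$ in position $(n,n-1)$ of $A$ and the $b$ in position $(n,n)$, and the last row is $(0,\ldots,0,1,b)$. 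I would then perform one more pair of elementary operations: use the subdiagonal $1$ in the last row to clear the $b$ in the bottom-right corner (a column operation), and then use the resulting $1$ in the bottom-right to sweep the entry that has appeared in position $(1,n-1)$ up into position $(1,n)$, producing the corner entry $(-1)^{n+1}b^n$ with the correct sign. Each of these is left- or right-multiplication by a single $E_{i,j}(\lambda)$ with $i<j$, so upper-triangularity with identity diagonal is preserved, and the product $B=E\cdots B_0$, $C=C_0\cdots E$ still has that form.

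The bookkeeping to watch is the sign: at each stage the corner entry gets multiplied by $-b$ (from clearing one more diagonal $b$), so after $n-1$ steps beyond the base case one lands on $(-1)^{n}(-b)b^{n-1}=(-1)^{n+1}b^n$, matching the statement; I would verify the sign propagates correctly by tracking it through the two elementary operations in the inductive step exactly as in the proof of Lemma \ref{L2.9}. The main obstacle, such as it is, is purely organizational: one must be careful that the single extra column operation used to clear the corner $b$ does not disturb the subdiagonal of $1$'s or reintroduce any $b$'s on the diagonal, and that the row operation used afterwards touches only row $1$. Since all operations are of the form ``add a multiple of a later row/column to an earlier one,'' commutativity of $\mathfrak{P}$ is not needed and the upper-triangular-with-unit-diagonal structure of $B$ and $C$ is automatic. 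No genuinely hard step arises; the content is entirely in arranging the elementary operations in the right order and tracking the corner entry.
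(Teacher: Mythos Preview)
Your approach is essentially identical to the paper's: induct on $n$, peel off the last row and column, apply the induction hypothesis to the $(n-1)\times(n-1)$ block, re-embed via $\hat B\oplus[1]$ and $\hat C\oplus[1]$, and finish with the column operation $E_{n-1,n}(-b)$ together with the row operation $E_{1,n}\bigl((-1)^{n+1}b^{\,n-1}\bigr)$. Your verbal description of the base case is backwards (one must add $-b$ times row~$2$ to row~$1$ and $-b$ times column~$1$ to column~$2$, so $B=C=E_{1,2}(-b)$, not its transpose), and in the inductive step the ``$1$ in the bottom-right'' you invoke actually sits at position $(n,n-1)$ after the column operation; but these are slips of exposition rather than of strategy, and once corrected the argument is exactly the paper's.
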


\begin{proof}

 We will prove this lemma by induction on $n \geq 2$. If $ A= \begin{bmatrix} b & 0  \\ 1 & b \end{bmatrix} $, then $ E_{1,2}(-b) A E_{1,2}(-b)= \begin{bmatrix} 0 & (-1)^3b^2 \\ 1 & 0 \end{bmatrix}$ as required. Let $ A$ be a matrix in $ M_{n+1}(\mathfrak{P}) $ that is  given by

 \begin{equation*}
   A = \begin{bmatrix} b & & & & &  \\
                        1 & b & & & & \\
                          & 1 & b & & & \\
                          & & \ddots & \ddots & & \\
                         & & &1& b   \end{bmatrix}.
 \end{equation*}

  Let $\hat{A} $ be  the $n \times n$  matrix over $\mathfrak{P}$ obtained from $A$ by deleting the last row and last column of $A$
 \begin{equation*}
\hat{A} = \begin{bmatrix} b & & & & &  \\
                        1 & b & & & & \\
                          & 1 & b & & & \\
                          & & \ddots & \ddots & & \\
                         & & &1& b   \end{bmatrix}.
 \end{equation*}

  It follows that  $ A$  can be written as

 \begin{equation*}
A=
\left[
\begin{array}{c|c}
   & 0 \\ \hat{A} & \vdots \\ &  0 \\ \hline
  0 \ldots 1 & b\\
\end{array}
\right].
 \end{equation*}

By the induction hypothesis, there exist invertible  upper triangular matrices $\hat{B}, \hat{C} \in M_n(\mathfrak{P})$ such that the $(i,i)$ entry of $\mathfrak{P}$ and $C$ is the identity element of $A$ for all $i=1, ..., n $ and
                         $$\hat{B} \hat{A}\hat{C} = \begin{bmatrix} 0 & & & &  (-1)^{n+1}b^n \\
                        1 & 0 & & &  \\
                          & 1 &0 & &  \\
                          & & \ddots & \ddots & & \\
                         & & &1& 0  \end{bmatrix}.  $$

Let
\begin{equation*}
M=\left[
\begin{array}{cccc|c}
 & & & & 0 \\ & \hat{B} & & &\vdots \\ &&&&0 \\\hline
  0& \ldots &0& 0 & 1\\
\end{array}
\right]
\end{equation*}
and
\begin{equation*}
 N= \left[
\begin{array}{cccc|c}
 & & & & 0 \\ &\hat{C} & & &\vdots \\ &&&&0 \\\hline
  0& \ldots &0& 0 & 1\\
\end{array}
\right].
\end{equation*}

As a result, it follows that

\begin{equation*}
  MAN= \left[\begin{array}{ccccc|c} 0 & & & &  (-1)^{n+1}b^n &0 \\
                        1 & 0 & & & &  \\
                          & 1 &0 & & & \\
                         &  & \ddots &\ddots & &\\
                         & & &1& 0 & 0 \\ \hline
                      0 & & &0 & 1 & b \end{array} \right].
\end{equation*}

Now if  $B=E_{1,n+1}((-1)^{n+2}b^n)M$ and $ C=NE_{ n,n+1}(-b) $, then $B$ and $C$ are invertible upper triangular matrices in $M_{n+1}(\mathfrak{P})$ such that the $(i,i)$ entry of $B$ and $C$ is the identity element of $\mathfrak{P}$ for all $i=1, ..., n+1 $ and

\begin{equation*}
BAC=\begin{bmatrix} 0 & & & &  (-1)^{n+2}b^{n+1} \\
                        1 & 0 & & &  \\
                          & 1 &0 & &  \\
                          & & \ddots & \ddots & & \\
                         & & &1& 0  \end{bmatrix}.
\end{equation*}
\end{proof}
\begin{lemma}\label{1}
Let  $n$ be an element in $ \mathbb{N} $  with $n \geq 2 $. If $ B \in M_n(\mathfrak{P})$ is given by
 $$ B = \begin{bmatrix} b & & & &  y \\
                        1 & b & & & & \\
                          & 1 & b & & & \\
                          & & \ddots & \ddots & & \\
                         & & &1& b   \end{bmatrix} , $$

                        then the matrix $B$ is equivalent to the matrix
\begin{equation*}
 \begin{bmatrix} 1 & & & & &   \\
                          & 1 & & & & \\
                          &   &1 & & & \\
                          & &   & \ddots & & \\
                         & & & & y+(-1)^{n+1}b^n   \end{bmatrix}.
\end{equation*}
\end{lemma}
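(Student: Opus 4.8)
The plan is to peel off the top-right entry $y$ and reduce to Lemma~\ref{L.16}. Write $A$ for the $n\times n$ matrix of Lemma~\ref{L.16}, i.e.\ the matrix obtained from $B$ by putting $0$ in the $(1,n)$-position, so that $B=A+L_{1,n}^{n}(y)$. Lemma~\ref{L.16} then supplies invertible upper triangular matrices $P,Q\in M_n(\mathfrak P)$, each with the identity of $\mathfrak P$ down the main diagonal, such that $PAQ$ is the matrix with $1$'s on the subdiagonal, $(-1)^{n+1}b^n$ in the $(1,n)$-slot, and zeros elsewhere.

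The key step is to observe that conjugating the perturbation by $P$ and $Q$ leaves it unchanged: $P\,L_{1,n}(y)\,Q=L_{1,n}(y)$. Indeed, since $P$ is upper triangular with unit diagonal its first column is $e_1$, whence $P\,L_{1,n}(y)=L_{1,n}(y)$; and since $Q$ is upper triangular with unit diagonal its last row is $e_n^{T}$, whence $L_{1,n}(y)\,Q=L_{1,n}(y)$. Therefore $PBQ=PAQ+L_{1,n}(y)$ is the matrix with $1$'s on the subdiagonal, $(-1)^{n+1}b^n+y$ in the $(1,n)$-slot, and zeros elsewhere.

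It remains to bring this to diagonal form. Let $\Pi$ be the permutation matrix of the cyclic permutation moving each row $i\ge 2$ up to row $i-1$ and row $1$ down to row $n$. A direct inspection shows $\Pi(PBQ)=\operatorname{diag}\bigl(1,\dots,1,\;y+(-1)^{n+1}b^n\bigr)$. Since $\Pi$, $P$ and $Q$ are all invertible, rearranging gives $B=(P^{-1}\Pi^{-1})\,\operatorname{diag}\bigl(1,\dots,1,\,y+(-1)^{n+1}b^n\bigr)\,Q^{-1}$, which is precisely the claimed equivalence. I do not expect a genuine obstacle here: the only point needing care is that $\mathfrak P$ is not assumed commutative, but every matrix used (the elementary and permutation matrices and their inverses) is invertible over any ring, and no product of $b$ with $y$ ever appears, so noncommutativity is harmless; the one substantive ingredient is the pass-through identity $P\,L_{1,n}(y)\,Q=L_{1,n}(y)$, which is exactly what the unit-diagonal upper triangularity from Lemma~\ref{L.16} provides.
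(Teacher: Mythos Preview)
Your proof is correct and follows essentially the same approach as the paper: both reduce the claim to Lemma~\ref{L.16} and then finish with elementary row/column moves. The paper applies Lemma~\ref{L.16} to the $(n-1)\times(n-1)$ block $\hat B$ obtained by deleting the last row and column and then says ``appropriate row and column operations,'' whereas you apply Lemma~\ref{L.16} to the full $n\times n$ matrix $A=B-L_{1,n}(y)$ and make explicit the pass-through identity $P\,L_{1,n}(y)\,Q=L_{1,n}(y)$; this is arguably cleaner, but the underlying idea is the same.
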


\begin{proof}
Let $B$ be a matrix in $M_n(\mathfrak{P})$ with $n \geq 2$ . The result is obvious when   $n=2$. Now assume that $n > 2$ and let $\hat{B}$ be the $(n-1) \times (n-1)$ matrix over $\mathfrak{P}$ obtained from $B$ by deleting the last row and last column of $B$
\begin{equation*}
 \hat{B}= \begin{bmatrix} b & & & &  \\
                        1 & b & & & & \\
                          & 1 & b & & & \\
                          & & \ddots & \ddots & & \\
                         & & &1& b   \end{bmatrix}.
\end{equation*}
Then $B$ has the following form
\begin{equation*}
B =\left[
\begin{array}{cccc|c}
 & & & & y \\  & \hat{B}& & \\ &&&&0 \\\hline
  0& \ldots &0& 1 & b\\
\end{array}
\right].
\end{equation*}

Now use \ref{L.16} and appropriate row and column operations to get the result.
\end{proof}

\begin{corollary}\label{L.20}
Let $n $  be a positive integer such that $ n \geq 3 $ ,  $ 1 \leq k \leq n-1 $ and let $m= n-k$. Suppose that $u$ and $ v$ are two variables on $\mathfrak{P}$ and let $A_1^{(k)} \in M_k(\mathfrak{P})$ and $A_2^{(k)} \in M_m(\mathfrak{P})$ be given by $$A_1^{(k)} = \begin{bmatrix} b & & & & &  \\
                        1 & b & & & & \\
                          & 1 & b & & & \\
                          & & \ddots & \ddots & & \\
                         & & &1& b   \end{bmatrix}  \text{ and  }A_2^{(k)} = \begin{bmatrix} b & & & & &  \\
                        1 & b & & & & \\
                          & 1 & b & & & \\
                          & & \ddots & \ddots & & \\
                         & & &1& b   \end{bmatrix} . $$

 If $B_k = \left[ \begin{array}{c|c} A_1^{(k)} & L_{1,m}^{k \times m}(v) \\ \hline  L_{1,k
 }^{m \times k}(u) & A_2^{(k)} \end{array}\right]  =
 \left[ \begin{array}{cccc|cccc} b & & & & & & &  v  \\
                                    1 & b & & & & & &   \\
                                      &  \ddots & \ddots &  & & & &  \\
                                      & & 1& b & & & &  \\ \hline
                                      & & &u &    b & & &  \\
                                      & & & &  1 & b & &  \\
                                      & & & &   &  \ddots & \ddots &  \\
                                      & & & &   & &1 & b
                                        \end{array}\right]  $,    then $ B_k$ is equivalent to the matrix $C_k= I_{n-2} \oplus \left[
                                                                                                                                  \begin{array}{cc}
                                                                                                                                    (-1)^{k+1}b^k & v \\
                                                                                                                                    u & (-1)^{m+1}b^m \\
                                                                                                                                  \end{array}
                                                                                                                                \right]\in M_n(\mathfrak{P})$ where $I_{n-2}$ is the identity matrix in $M_{n-2}(\mathfrak{P})$.

Moreover, if $D \in M_n(\mathfrak{P})$ is given by $ D = \begin{bmatrix} b & & &  &uv  \\
                        1 & b & & & \\
                          & 1 & b &  & \\
                          & & \ddots & \ddots &  \\
                        & & &1& b   \end{bmatrix}$ , then $D$ is equivalent to the matrix $\tilde{D}= I_{n-2}\oplus \left[
                                                                                                                      \begin{array}{cc}
                                                                                                                        (-1)^{n}b^{n-1} & uv \\
                                                                                                                        1 & b \\
                                                                                                                      \end{array}
                                                                                                                    \right]\in M_n(\mathfrak{P})$  where $I_{n-2}$ is the identity matrix in $M_{n-2}(\mathfrak{P})$.
\end{corollary}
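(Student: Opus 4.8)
The plan is to unwind the block notation, reduce the two diagonal blocks of $B_k$ by Lemma \ref{L.16}, peel off an identity summand by a permutation, and then deduce the ``moreover'' part from Lemma \ref{1}. Written out explicitly, $B_k$ is the $n\times n$ matrix over $\mathfrak{P}$ that is lower bidiagonal with $b$ on the main diagonal and $1$ on the subdiagonal, \emph{except} that the subdiagonal entry in position $(k+1,k)$ equals $u$, together with one further entry $v$ in position $(1,n)$. Similarly $D$ is the $n\times n$ lower bidiagonal matrix with $b$ on the main diagonal, $1$ on the subdiagonal, and the single extra entry $uv$ in position $(1,n)$; that is, $D$ is exactly the matrix ``$B$'' of Lemma \ref{1} with the parameter $y$ specialised to $uv$.

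For the first assertion, apply Lemma \ref{L.16} to the leading $k\times k$ block $A_1^{(k)}$: it yields invertible upper triangular matrices $\hat{B}_1,\hat{C}_1\in M_k(\mathfrak{P})$, each with $1$'s on the diagonal, and replacing $B_k$ by $(\hat{B}_1\oplus I_m)\,B_k\,(\hat{C}_1\oplus I_m)$ turns that block into the matrix with $0$'s on its diagonal, $1$'s on its subdiagonal, and $(-1)^{k+1}b^k$ in position $(1,k)$ (if $k=1$ this step is vacuous, since $[\,b\,]$ is already of this shape). Because $\hat{B}_1$ and $\hat{C}_1$ are upper triangular with unit diagonal and supported on the indices $1,\dots,k$, the corresponding elementary operations only add rows $2,\dots,k$ into rows $1,\dots,k$ and columns $1,\dots,k-1$ into columns $2,\dots,k$; in every such operation the entry added in the ``connector'' positions $(k+1,k)$ and $(1,n)$ is zero (the relevant rows $2,\dots,k$ vanish in column $n$, and the relevant columns $1,\dots,k-1$ vanish in row $k+1$), so $u$ and $v$ are left untouched. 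Now apply Lemma \ref{L.16} in the same way to the trailing $m\times m$ block, which is still exactly $A_2^{(k)}$ (again vacuous if $m=1$; note $n\ge 3$ forces $\max(k,m)\ge 2$): the same argument shows $u$ and $v$ survive, and the block becomes the matrix with $0$'s on its diagonal, $1$'s on its subdiagonal, and $(-1)^{m+1}b^m$ in position $(k+1,n)$.

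At this stage the matrix carries the entries $(-1)^{k+1}b^k$, $v$, $u$, $(-1)^{m+1}b^m$ in positions $(1,k)$, $(1,n)$, $(k+1,k)$, $(k+1,n)$, a $1$ in position $(i,i-1)$ for each $i\in\{2,\dots,k\}\cup\{k+2,\dots,n\}$, and zeros elsewhere. Thus the rows indexed by $\{2,\dots,k\}\cup\{k+2,\dots,n\}$ and the columns indexed by $\{1,\dots,k-1\}\cup\{k+1,\dots,n-1\}$ meet the matrix only through these $n-2$ lone $1$'s, while rows $1,k+1$ and columns $k,n$ carry the block $\left[\begin{smallmatrix}(-1)^{k+1}b^k & v\\ u & (-1)^{m+1}b^m\end{smallmatrix}\right]$. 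Hence the simultaneous row permutation $i\mapsto i-1$ on $\{2,\dots,k\}\cup\{k+2,\dots,n\}$, sending $1$ and $k+1$ to the last two slots, together with the matching column permutation, brings the matrix to $I_{n-2}\oplus\left[\begin{smallmatrix}(-1)^{k+1}b^k & v\\ u & (-1)^{m+1}b^m\end{smallmatrix}\right]=C_k$; permutation matrices being invertible, $B_k\sim C_k$. For the ``moreover'' part, Lemma \ref{1} with $y=uv$ gives $D\sim\mathrm{diag}(1,\dots,1,\,uv+(-1)^{n+1}b^n)$, and an elementary computation shows $\left[\begin{smallmatrix}(-1)^{n}b^{n-1} & uv\\ 1 & b\end{smallmatrix}\right]\sim\left[\begin{smallmatrix}1 & 0\\ 0 & uv+(-1)^{n+1}b^n\end{smallmatrix}\right]$ (use the unit $1$ in position $(2,1)$ to clear $(1,1)$ by a row operation, clear $(2,2)$ by a column operation, and swap the rows); therefore $\tilde{D}=I_{n-2}\oplus\left[\begin{smallmatrix}(-1)^{n}b^{n-1} & uv\\ 1 & b\end{smallmatrix}\right]\sim\mathrm{diag}(1,\dots,1,\,uv+(-1)^{n+1}b^n)\sim D$.

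The main obstacle is the bookkeeping in the first assertion: one must check carefully that the row and column operations furnished by Lemma \ref{L.16}, applied to the two diagonal blocks, really do leave the off-diagonal entries $u$ and $v$ in place, and then track the permutation that splits off the $I_{n-2}$ summand. Nothing conceptually new is required, and the non-commutativity of $\mathfrak{P}$ causes no difficulty, since everything is reduced to the already established Lemmas \ref{L.16} and \ref{1} and to elementary operations involving the unit $1$.
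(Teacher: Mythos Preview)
Your argument is correct. For the first assertion you follow exactly the paper's route: apply Lemma~\ref{L.16} to each diagonal block via the block-diagonal matrices $B=\operatorname{diag}(\hat B_1,\hat B_2)$ and $C=\operatorname{diag}(\hat C_1,\hat C_2)$, then permute rows and columns. The paper states the intermediate product $BB_kC$ without justifying that the off-diagonal corners $u,v$ are preserved; your explicit check that the upper-triangular unit-diagonal operations never touch positions $(k+1,k)$ and $(1,n)$ fills that gap nicely.

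For the ``moreover'' part your approach is genuinely different and shorter. The paper proves $D\sim\tilde D$ by induction on $n$: it strips off the last row and column, applies Lemma~\ref{L.16} to the remaining $(n-1)\times(n-1)$ block, and then permutes. You instead observe that $D$ is literally the matrix of Lemma~\ref{1} with $y=uv$, so $D\sim I_{n-1}\oplus\bigl[uv+(-1)^{n+1}b^n\bigr]$ in one stroke, and then reduce the $2\times 2$ block $\left[\begin{smallmatrix}(-1)^n b^{n-1}&uv\\ 1&b\end{smallmatrix}\right]$ to $\left[\begin{smallmatrix}1&0\\ 0&uv+(-1)^{n+1}b^n\end{smallmatrix}\right]$ by two elementary operations and a swap. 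This avoids the induction entirely and is cleaner; the paper's inductive argument essentially re-derives a special case of Lemma~\ref{1} rather than invoking it.
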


\begin{proof}
By Lemma \ref{L.16}, there exist upper triangular matrices $B_1, C_1 \in M_k(\mathfrak{P})$ and $B_2, C_2 \in M_{n-k}(\mathfrak{P})$ with $1$ along their diagonal such that
$$ B_1 A_1^{(k)}C_1 = \left[ \begin{array}{cccc} 0 & & & (-1)^{k+1}b^k \\
                     1 & 0 & &  \\
                      & \ddots & \ddots &  \\
                      & &1 & 0 \end{array} \right] ,
 B_2 A_2^{(k)}C_2 = \left[ \begin{array}{cccc} 0 & & & (-1)^{m+1}b^m \\
                     1 & 0 & &  \\
                      & \ddots & \ddots &  \\
                      & &1 & 0 \end{array} \right] $$
where $m=n-k$.
Define $B,C \in M_n(\mathfrak{P})$ to be
$B = \begin{bmatrix} B_1 & 0 \\ 0 & B_2 \end{bmatrix} $ and $C = \begin{bmatrix} C_1 & 0 \\ 0 & C_2 \end{bmatrix} $.

Therefore,
\begin{align*}
                             B B_kC & = \left[ \begin{array}{cc} B_1 A_1^{(k)}C_1 & L_{1,m}^{k \times m}(v) \\  L_{1,k}^{m \times k}(u) & B_2 A_2^{(k)}C_2 \end{array}\right]  \\
                              & = \left[ \begin{array}{cccc|cccc} 0 & & &(-1)^{k+1}b^k & & & &  v  \\
                                    1 & 0 & & & & & &   \\
                                      &  \ddots & \ddots &  & & & &  \\
                                      & & 1&0 & & & &  \\ \hline
                                      & & &u &    0 & & & (-1)^{m+1}b^m \\
                                      & & & &  1 &0 & &  \\
                                      & & & &   &  \ddots & \ddots &  \\
                                      & & & &   & &1 & 0
                                        \end{array}\right].
                           \end{align*}
                           Switching columns and rows of $ R B_kC $ yields  the desired equivalent matrix.

Now by induction on $n \geq 3$ we prove the result related to $D$.

If $D= \left[
         \begin{array}{ccc}
           b & 0 & uv \\
          1 & b & 0 \\
           0 & 1 & b \\
         \end{array}
       \right]$, we get $E_{1,2}(-b)DE_{1,2}(-b)=\left[
                                                           \begin{array}{ccc}
                                                             0 & -b^2 & uv \\
                                                             1 & 0 & 0 \\
                                                             0 & 1 & b \\
                                                           \end{array}
                                                         \right].$
Switch the rows to get the desired result. Now assume that  $D$ is  $(n+1) \times (n+1)$ matrix. Then $D$ can be written as

\begin{equation*}
D=
\left[
\begin{array}{c|c}
   & uv \\ \hat{D} & \vdots \\ & 0 \\ \hline
  0 \ldots 1 & b\\
\end{array}
\right]
 \end{equation*}

where $\hat{D}$ is the $n\times n$  matrix over $\mathfrak{P}$ that is given by

\begin{equation*}
 \hat{D}= \begin{bmatrix} b & & & &  \\
                        1 & b & & & & \\
                          & 1 & b & & & \\
                          & & \ddots & \ddots & & \\
                         & & &1& b   \end{bmatrix}.
\end{equation*}

By Lemma \ref{L.16},  there exist upper triangular matrices $\hat{B}, \hat{C }\in M_n(\mathfrak{P})$ such that the $(i,i)$ entries of $\hat{B}$ and $\hat{C}$ are the identity element of $\mathfrak{P}$ for all $i=1, ..., n $ and
                        \[ \hat{B}\hat{D}\hat{C} = \left[\begin{array}{ccccc} 0 & & & &  (-1)^{n+1}b^n \\
                        1 & 0 & & &  \\
                          & 1 &0 & &  \\
                         &  & \ddots &\ddots & \\
                         & & &1& 0 \end{array} \right]. \]

Let
\begin{equation*}
M=\left[
\begin{array}{cccc|c}
 & & & & 0 \\ & \hat{B} & & &\vdots \\ &&&&0 \\\hline
  0& \ldots &0& 0 & 1\\
\end{array}
\right]
\end{equation*}
and
\begin{equation*}
 N= \left[
\begin{array}{cccc|c}
 & & & & 0 \\ &\hat{C} & & &\vdots \\ &&&&0 \\\hline
  0& \ldots &0& 0 & 1\\
\end{array}
\right].
\end{equation*}

As a result, it follows that

\begin{equation*}
  MDN= \left[\begin{array}{ccccc|c} 0 & & & &  (-1)^{n+1}b^n &uv \\
                        1 & 0 & & & &  \\
                          & 1 &0 & & & \\
                         &  & \ddots &\ddots & &\\
                         & & &1& 0 & 0 \\ \hline
                      0 & & &0 & 1 & b \end{array} \right].
\end{equation*}

 Switching the columns  of $MDN $ yields  the desired equivalent matrix.
\end{proof}
%%%%%%%%%%%%%%%%%%%%%%%%%%%%%%%%%%%%%%%%%%%%%%%%%%%%%%%%%%%%%%

%%CHAPTER3%%%%%%%%%%%%%%%%%%%%%%%%%%%%%%%%%%%%%%%%%%%%%%%%%
%%%%%%%%%%%%%%%%%%%%%%%%%%%%%%%%%%%%%%%%%%%%%%%%%%%%%%%%%%%
%%%%%%%%%%%%%%%%%%%%%%%%%%%%%%%%%%%%%%%%%%%%%%%%%%%%%%%%%%%%
\chapter{Matrix Factorization}
\label{chapter:Matrix Factorization}
   In this chapter, we discus the concept of a matrix factorization and their basic properties  needed later in the rest of this thesis.

Matrix factorizations were introduced by David Eisenbud in \cite{ED} who  proved that the MCM modules over hypersurfaces have a periodic resolutions.

\section{Definitions and Properties}
\label{section:Definitions and Properties}
\begin{definition}\cite[Definition 1.2.1]{DI}
\emph{Let $f$ be a nonzero element of a  ring $S$. A matrix factorization of $f$ is a pair $(\phi,\psi)$ of homomorphisms
between finitely generated free $S$-modules  $ \phi: G \rightarrow F$  and $\psi: F \rightarrow G $, such
that $\psi\phi = f I_G$ and $\phi\psi = f I_F$}.
\end{definition}

\begin{remark}\label{C3.2}
Let $f$ be a nonzero element of a commutative ring $S$. If $( \phi: G \rightarrow F,\psi: F \rightarrow G )$ is a matrix factorization of $f$, then:
\begin{enumerate}
  \item [(a)] $f\Cok( \phi )= f \Cok( \psi )=0$.
  \item [(b)] If $f$ is a  non-zerodivisor, then $\phi$ and $\psi$ are injective.
  \item [(c)] If $S$ is a domain, then $G$ and $F$ are finitely generated free modules having the same rank.
\end{enumerate}
\end{remark}
\begin{proof}
(a) Since $\psi\phi = f I_G$ and $\phi\psi = f I_F$, it follows that $fG \subseteq \Ima(\psi)$ and $fF\subseteq \Ima( \phi)$ which proves the result. \\
(b) Let $x \in G$  such that $\phi(x)=0$. Thus $ \psi(\phi(x))=0$ and hence $fx=0$. Since $f$ is a  non-zerodivisor, it follows that $x=0$ and hence $\phi$ is injective. By similar argument, we prove that $\psi$ is injective.\\
(c) If  $M= \Cok( \phi )$, then the following short sequence is exact.
\begin{equation}\label{E111}
  0 \longrightarrow G\xrightarrow{\phi}F  \rightarrow  M \longrightarrow 0
\end{equation}
Recall that the rank of the  finitely generated module $M$ over the domain $S$, denoted  $rank_SM$,  is the dimension of the vector space $K\otimes_SM$ over $K$ \cite[Section 11.6]{E} where $K$ is the quotient field of the integral domain $S$.  We know  from Proposition \ref{LLL2} (d) that $K\otimes_SM $ is isomorphic to $W^{-1}M$ as $S$-module where $W=S\setminus\{0\}$. Since $fM=0$, it follows that $K\otimes_SM=0$. Therefore, tensoring the short exact sequence (\ref{E111}) with $K$ over $S$ yields that $K\otimes_SG \simeq K\otimes_SF$ and thus $G$ and $F$ have the same rank as free $S$-modules.
\end{proof}

As a result, we can define the matrix factorization of a nonzero element $f$ in a domain as the following.

\begin{definition}\label{D23}
\emph{Let $S$ be a domain and let $f\in S$ be a nonzero element. A matrix factorization (of size $n$) is a pair $(\phi,\psi)$  of $n \times n$  matrices with
coefficients in $S$ such that $\psi\phi = \phi\psi = fI_n$ where $I_n$ is the identity matrix in $M_n(S)$. By $\Cok_S(\phi,\psi)$ and $\Cok_S(\psi,\phi)$, we mean $\Cok_S(\phi)$ and $\Cok_S(\psi)$ respectively. There are two distinguished trivial matrix factorizations of any element
$f$, namely $( f ,1)$ and $(1, f )$. Note that $\Cok_S(1, f ) = 0$, while $\Cok_S(f ,1)=S/fS$. Two matrix factorizations $(\phi,\psi)$ and $(\alpha,\beta)$  of $f$ are said to be equivalent (and we write $(\phi,\psi)\sim(\alpha,\beta)$) if $\phi,\psi,\alpha,\beta \in M_n(S)$ for some positive integer $n$ and there exist invertible matrices $V,W \in M_n(S)$ such that $V\phi = \alpha W$ and $ W \psi = \beta V$. If $(S,\mathfrak{m})$ is a local domain,    a matrix factorization $(\phi,\psi)$ of an element   $f \in \mathfrak{m}\setminus\{0\}$ is reduced if all entries of $\phi$ and $\psi$ are in $\mathfrak{m}$.}
\end{definition}
%\begin{definition}
%Let $(S,\mathfrak{m})$ be a regular local ring and $f \in \mathfrak{m}-\{0\}$.

 %A matrix factorization of $f$ is a pair $(\phi,\psi)$ of homomorphisms
%between free $S$-modules of the same rank, $ \phi: G \rightarrow F$  and $\psi: F \rightarrow G $, such
%that
%$\psi\phi = f I_G$ and $\phi\psi = f I_F$ .
%Equivalently (after choosing bases), $\phi$ and $\psi$ are  matrices in $M_n(S)$ such that
%$\psi\phi = \phi\psi = fI_n$ . By $\Cok_S(\phi,\psi)$ and $\Cok_S(\psi,\phi)$, we mean $\Cok_S(\phi)$ and $\Cok_S(\psi)$ respectively. There are two distinguished trivial matrix factorizations of any element
%$f$, namely $( f ,1)$ and $(1, f )$. Note that $cok_S(1, f ) = 0$, while $cok_S(f ,1)=S/fS$. A matrix factorization $(\phi,\psi)$ of $f$ is reduced if all entries of $\phi$ and $\psi$ are in $\mathfrak{m}$.  Two matrix factorizations $(\phi,\psi)$ and $(\alpha,\beta)$  of $f$ are said to be equivalent and we write $(\phi,\psi)\sim(\alpha,\beta)$ if $\phi,\psi,\alpha,\beta \in M_n(S)$ for some positive integer $n$ and there exist invertible matrices $V,W \in M_n(S)$ such that $V\phi = \alpha W$ and $ W \psi = \beta V$.

%\end{definition}
We can  notice the following remark:
\begin{remark}\label{R3.8}
Let $(S,\mathfrak{m})$  be a local domain,   $f \in \mathfrak{m} \setminus \{0\}$, $R=S/fS$ and let $ u$, $v$ and $z$  be  variables on $S$.
Suppose that $(\phi,\psi)$ and $(\alpha,\beta)$ are two $n \times n$ matrix factorizations of $f$. Then
\begin{enumerate}
\item[(a)] $\Cok_S(\phi)$ and $\Cok_S(\psi)$ are both modules over the ring $R$ as $f\Cok_S(\phi)=0$ and $f\Cok_S(\psi) =0$.
\item[(b)]  The $S$-linear maps $ \phi: S^n \rightarrow S^n$  and $\psi: S^n \rightarrow S^n $ are both injective.
\item[(c)]  If $(\phi,\psi)\sim(\alpha,\beta)$, then $\Cok_S(\phi,\psi)$ is isomorphic to $\Cok_S(\alpha, \beta)$ over $S$ (and consequently over $R$), likewise,
$\Cok_S(\psi, \phi)$ is isomorphic to $\Cok_S(\beta, \alpha)$ over $S$ (and consequently over $R$).
\item[(d)]  If $(\phi,\psi)$ and $(\alpha,\beta)$ are reduced matrix factorizations of $f$ such that $\Cok_S(\phi,\psi)$ is isomorphic to $\Cok_S(\alpha,\beta)$,
then   $(\phi,\psi)\sim(\alpha,\beta)$.

\item[(e)]  We define $(\phi,\psi)\oplus(\alpha,\beta):=(\phi \oplus \alpha, \psi \oplus \beta)$ and hence $(\phi,\psi)\oplus(\alpha,\beta)$ is a matrix factorization of $f$.
\item[(f)] We define $ (\phi, \psi)^{\maltese}:= ( \begin{bmatrix}  \phi & -vI \\  uI & \psi \end{bmatrix} , \begin{bmatrix}  \psi & vI \\  -uI & \phi \end{bmatrix})$ and hence $(\phi, \psi)^{\maltese}$
  is a matrix factorization for $f+uv$ in $S[\![u,v]\!]$ (and in $S[u,v]$). Furthermore, if $(\phi,\psi)\sim(\alpha,\beta)$, then  $(\phi,\psi)^{\maltese}\sim(\alpha,\beta)^{\maltese}$.
\item[(g)]  $[(\phi, \psi)\oplus(\alpha,\beta)]^{\maltese}$ is equivalent to $ (\phi, \psi)^{\maltese}\oplus(\alpha,\beta)^{\maltese}$.
% \item[(h)]  $ \Cok_{S[[u,v]]}[(\phi, \psi)\oplus(\alpha,\beta)]^{\maltese}$ is isomorphic to $\Cok_{S[[u,v]]}(\phi, \psi)^{\maltese}\oplus \Cok_{S[[u,v]]}(\alpha, \beta)^{\maltese} $.
\item[(h)]  We define $ [\Cok_S(\phi, \psi)]^{\maltese}=\Cok_{S[[u,v]]}(\phi, \psi)^{\maltese}$ and hence if $(\phi_j,\psi_j)$ is a matrix factorization of $f$ for all $1 \leq j \leq n$, then   $$[ \bigoplus\limits_{j=1}^{n}\Cok_S(\phi_j,\psi_j)]^{\maltese} =\bigoplus \limits_{j=1}^{n}\Cok_{S[\![u,v]\!]}(\phi_j,\psi_j)^{\maltese}.$$
\item[(i)]  If $R^{\bigstar}=S[\![u,v]\!]/(f+uv)$, then   $\Cok_{S[\![u,v]\!]}(f, 1)^{\maltese}=R^{\bigstar}= \Cok_{S[\![u,v]\!]}(1, f)^{\maltese}$  and hence we can write $(R)^{\maltese}= R^{\bigstar}$ as $R=\Cok_S(f,1)$.
\item[(j)]   We define $ (\phi, \psi)^{\sharp}:= ( \begin{bmatrix}  \phi & -zI \\  zI & \psi \end{bmatrix} ,
\begin{bmatrix}  \psi & zI \\  -zI & \phi \end{bmatrix})$, and hence $ (\phi, \psi)^{\sharp}$ is a matrix factorization of $f+z^2$ in $S[\![z]\!]$ (and in $S[z]$).
\item[(k)] If $(\phi,\psi)\sim(\alpha,\beta)$, then  $(\phi,\psi)^{\sharp}\sim(\alpha,\beta)^{\sharp}$.
\item[(l)] $ [(\phi, \psi)\oplus(\alpha,\beta)]^{\sharp}$ is equivalent to $ (\phi, \psi)^{\sharp}\oplus(\alpha,\beta)^{\sharp}$.
\item[(m)]  If $R^{\sharp}=S[\![z]\!]/(f+z^2)$, then $$R^{\sharp}=S[\![z]\!]/(f+z^2)= \Cok_{S[\![z]\!]}(f, 1)^{\sharp} = \Cok_{S[\![z]\!]}(1,f)^{\sharp}.$$
\end{enumerate}
\end{remark}

\begin{proof}
We will just prove  the  result (d) and (f) as the rest follows from  the   definitions. \\
 (d) Assume that  $(\phi,\psi)$ and $(\alpha,\beta)$ are reduced matrix factorizations of $f$ such that $\Cok_S(\phi,\psi)$ is isomorphic to $\Cok_S(\alpha,\beta)$. This means that $\Cok_S(\phi)$ is isomorphic to $\Cok_S(\alpha)$ and consequently from Proposition \ref{L3.5} it follows that $V\phi = \alpha W$ for invertible matrices $V$ and $W$. Therefore, we have $V\phi \psi = \alpha W \psi$ and thus $V (fI) = \alpha W \psi$. As a result,  we get $\beta V (fI) = \beta \alpha W \psi = f I W \psi$ and consequently $ f \beta V= f W \psi$. Since $f$ is an element of the integral domain $S$, we conclude that $  \beta V=  W \psi$. This proves that $(\phi,\psi)$ is equivalent to $(\alpha,\beta)$.

(f)  Assume that  $V\phi = \alpha W$ and $ W \psi = \beta V$ for invertible matrices $V$ and $W$. It follows that
\begin{equation*}
   \left[
     \begin{array}{cc}
       V &   \\
         & W \\
     \end{array}
   \right] \left[
             \begin{array}{cc}
               \phi & -vI \\
               uI & \psi \\
             \end{array}
           \right] = \left[
                       \begin{array}{cc}
                         \alpha & -vI \\
                         uI & \beta \\
                       \end{array}
                     \right]\left[
                              \begin{array}{cc}
                                W &   \\
                                 & V \\
                              \end{array}
                            \right]
\end{equation*}

and
\begin{equation*}
   \left[
     \begin{array}{cc}
      W &   \\
         & V \\
     \end{array}
   \right] \left[
             \begin{array}{cc}
               \psi &  vI \\
               -uI & \phi \\
             \end{array}
           \right] = \left[
                       \begin{array}{cc}
                          \beta &  vI \\
                         -uI & \alpha \\
                       \end{array}
                     \right]\left[
                              \begin{array}{cc}
                                V &   \\
                                 & W \\
                              \end{array}
                            \right].
\end{equation*}
This proves that the matrix factorization $ (  \left[
             \begin{array}{cc}
               \phi & -vI \\
               uI & \psi \\
             \end{array}  \right], \left[
             \begin{array}{cc}
               \psi &  vI \\
               -uI & \phi \\
             \end{array}
           \right] )$ is equivalent to the matrix factorization $$ (  \left[
             \begin{array}{cc}
               \alpha & -vI \\
               uI & \beta \\
             \end{array} \right] , \left[
             \begin{array}{cc}
               \beta &  vI \\
               -uI & \alpha \\
             \end{array}
           \right] )$$ and therefore $(\phi,\psi)^{\maltese}\sim(\alpha,\beta)^{\maltese}.$
\end{proof}

\begin{notation}
 \emph{ If  $(\phi,\psi)$ is a matrix factorization, we write $$(\phi,\psi)^n=\underbrace{(\phi,\psi)\oplus...\oplus(\phi,\psi)}_{\text{$n$ times}}.$$}
\end{notation}

\begin{definition}
  \emph{Let $f$ be a nonzero element of a domain. A matrix factorization $(\phi,\psi)$ of $f$ is called trivial if it is equivalent  to one of the following forms:
  \begin{equation*}
    (f,1)^n, (1,f)^n, \text{  or  } (f,1)^r \oplus (1,f)^t
  \end{equation*}
where $n$ is the size of $(\phi,\psi)$  and $0 < r,s < n$ with $ r+s=n$.}
\end{definition}
\section{Matrix factorization and Maximal Cohen Maculay modules}
\label{section:Matrix factorization and Maximal Cohen Maculay modules}

The importance of the concept of the matrix factorization in the subject of MCM modules appears clearly in the following proposition.
\begin{proposition}\label{P25}  \cite[Proposition 8.3]{CMR}

Let $(S,\mathfrak{m})$ be a regular local ring and let $f$ be a non-zero
element of $\mathfrak{m}$ and $R=S/fS$ .
\begin{enumerate}
\item[(a)] For every MCM $R$-module $M$, there is a matrix factorization $(\phi , \psi)$ of $f$
with $\Cok\phi \cong M$. \\
\item[(b)] If $(\phi,\psi)$ is a matrix factorization of $f$, then $\Cok\phi$ and $\Cok\psi$ are MCM
$R$-modules.
\end{enumerate}
\end{proposition}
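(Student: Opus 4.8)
The plan is to run Eisenbud's classical argument, whose engine is that a regular local ring has finite global dimension, so that the Auslander--Buchsbaum formula (Proposition \ref{P2.23}) converts the MCM (i.e.\ depth) condition into the statement that the relevant projective dimension over $S$ equals $1$; a length-one free resolution over $S$ is then exactly half of a matrix factorization of $f$.

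I would dispose of part (b) first. Given a matrix factorization $(\phi,\psi)$ of $f$ with $\phi,\psi\colon S^n\to S^n$, Remark \ref{C3.2}(b) gives that $\phi$ is injective, since $f$ is a nonzerodivisor in the domain $S$; hence
$$0\longrightarrow S^n\xrightarrow{\ \phi\ }S^n\longrightarrow\Cok\phi\longrightarrow 0$$
is a free $S$-resolution of length $\le 1$, so $\pd_S\Cok\phi\le 1$. Assuming $\Cok\phi\ne 0$ (the zero module being trivially MCM), and using that a regular local ring is Cohen--Macaulay so $\depth S=\dim S$, Proposition \ref{P2.23} gives $\depth_S\Cok\phi=\dim S-\pd_S\Cok\phi\ge\dim S-1=\dim R$. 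Since $\Cok\phi$ is annihilated by $f$, a sequence in $\mathfrak m$ is $\Cok\phi$-regular over $S$ precisely when its image is $\Cok\phi$-regular over $R$, so $\depth_R\Cok\phi=\depth_S\Cok\phi\ge\dim R$; combined with $\depth_R\Cok\phi\le\dim_R\Cok\phi\le\dim R$ this forces equality, i.e.\ $\Cok\phi$ is MCM over $R$. The identical argument with $\psi$ in place of $\phi$ handles $\Cok\psi$.

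For part (a) I would take an MCM $R$-module $M$, assume it nonzero, and regard it as an $S$-module via $S\twoheadrightarrow R$. The same depth-invariance remark gives $\depth_S M=\depth_R M=\dim R=\dim S-1$, and $\pd_S M<\infty$ because $S$ is regular; Auslander--Buchsbaum then forces $\pd_S M=1$. Fix a free resolution $0\to S^n\xrightarrow{\phi}S^m\xrightarrow{\pi}M\to 0$, where $\phi$ is injective by exactness at $S^n$. Tensoring with the fraction field $K$ of $S$ and using $K\otimes_S M=0$ (as $fM=0$ and $f\ne 0$) yields $n=m$, so $\phi$ is square. Because $fM=0$, each basis vector $e_i$ of $S^m$ satisfies $fe_i\in\ker\pi=\Ima\phi$, so choosing $w_i\in S^n$ with $\phi(w_i)=fe_i$ and setting $\psi(e_i)=w_i$ defines $\psi\colon S^m\to S^n$ with $\phi\psi=fI_m$. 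Then $\phi(\psi\phi-fI_n)=(\phi\psi)\phi-f\phi=0$, and injectivity of $\phi$ gives $\psi\phi=fI_n$; thus $(\phi,\psi)$ is a matrix factorization of $f$ with $\Cok\phi=\Cok\pi\cong M$.

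I do not expect a genuine obstacle here. The points needing care are: the invariance of depth along $S\to R$, which is what lets the hypotheses on $R$ feed the Auslander--Buchsbaum formula over $S$ (alternatively one can route part (b) through Proposition \ref{P2.56}); the rank equality $n=m$, obtained by localizing at the generic point of $S$; and the passage from $\phi\psi=fI$ to $\psi\phi=fI$, which uses both injectivity of $\phi$ and that $S$ is a domain. The degenerate cases $M=0$ and $\Cok\phi=0$ (resp.\ $\Cok\psi=0$) should be noted at the outset, since the depth count is vacuous there.
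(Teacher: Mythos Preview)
Your proposal is correct and follows essentially the same approach as the paper's proof: both run Eisenbud's classical argument, using the Auslander--Buchsbaum formula over the regular local ring $S$ to translate the MCM condition into $\pd_S=1$, and then constructing $\psi$ from $fF\subseteq\Ima\phi$ with injectivity of $\phi$ forcing $\psi\phi=fI$. Your version is in fact slightly more careful than the paper's in explicitly handling the rank equality $n=m$ (via tensoring with the fraction field) and the degenerate cases.
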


\begin{proof}
 Let $\dim R= d$. \\
(a) By   \cite[Corollary 11.18]{AM} it follows that $\dim S=d+1$. Notice that $M$ can be viewed as   $S$-module such that every $M$-regular sequence on $R$ is $M$-regular sequence on $S$ and every $M$-regular sequence on $S$ is also $M$-regular sequence on $R$. As a result, $\depth_SM=\depth_RM$. Since $M$ is finitely generated module over the regular local ring $S$, it follows from Auslander-Buchsbaum-Serre Theorem \cite[Theorem2.2.7]{BH} that $\pd_SM < \infty $. Auslander-Buchsbaum formula  \ref{P2.23} implies  that
\begin{equation*}
 \pd_SM=\depth S-\depth_SM=\dim S-\depth_RM=\dim S-\dim R=1.
\end{equation*}

Therefore, there exist projective $S$-modules $F$ and $G$, and consequently they are free $S$-modules \cite[Proposition 18.4.1]{BCA},  such that the following sequence is exact

\begin{equation*}
  0 \longrightarrow G\xrightarrow{\phi}F \xrightarrow{\lambda}  M \longrightarrow 0.
\end{equation*}
 For each $x \in F$, we notice that $\lambda(fx)=0$ and consequently $fF \subseteq \Ker \lambda = \Ima \phi$. As $\phi$ is injective,  for each $x \in F$ there exists a unique element $y \in G$ such that $ \phi( y )=fx$. This enables us to define a map $ \psi: F \rightarrow G$ via $ \psi(x)= y$ if and only if $ \phi( y )=fx$. It is easy to check that $\psi$ is an injective homomorphism over $S$ satisfying that $\psi \phi =  fI_G$ and $\psi\phi=fI_F$ where $I_F$ and $I_G$ denote the identity maps on $F$ and $G$ respectively. Therefore, $M=\Cok(\phi,\psi)$. \\
(b) If $(\phi,\psi)$ is a matrix factorization of $f$, then there exist two free $S$-modules $F$ and $G$ having the same rank and making the following short sequences exact over $S$

\begin{equation*}
  0 \longrightarrow G\xrightarrow{\phi}F  \rightarrow  \Cok \phi \longrightarrow 0
\end{equation*}

and
\begin{equation*}
  0 \longrightarrow G\xrightarrow{\psi}F  \rightarrow  \Cok \psi \longrightarrow 0.
\end{equation*}

This means that $\pd_S ( \Cok \phi )= 1 = \pd_S ( \Cok \psi )$. Since $\depth_S S=\dim S $, it follows from Auslander-Buchsbaum formula \ref{P2.23} that
\begin{equation*}
\depth_R \Cok \phi= \depth_S \Cok \phi = \depth_SS-\pd_S ( \Cok \phi )=\dim S-1=\dim R
\end{equation*}
and
\begin{equation*}
\depth_R \Cok \psi= \depth_S \Cok \psi = \depth_SS-\pd_S ( \Cok \psi )=\dim S-1=\dim R.
\end{equation*}

This proves that $\Cok \phi$  and $\Cok \psi$ are MCM $R$-modules.
\end{proof}

\begin{definition} \emph{Let $R$ be a ring, a non-zero $R$-module  $M$ is called a stable $R$-module if   $M$  does not have
a direct summand isomorphic to $R$.}
\end{definition}

D.  Eisenbud has established a relationship between  reduced matrix factorizations and stable  MCM modules as follows.
\begin{proposition}\label{A}\cite[ Corollary 7.6]{YY} \cite[Theorem 8.7]{CMR}
Let $(S,\mathfrak{m})$ be a regular local ring and let $f$ be a non-zero
element of $\mathfrak{m}$ and $R=S/fS$. Then
the association $ (\phi, \psi)\mapsto \Cok(\phi, \psi)$   yields a bijective correspondence between the
set of equivalence classes of reduced matrix factorizations of f and the set of isomorphism
classes of stable MCM modules over R.
\end{proposition}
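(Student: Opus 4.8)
The plan is to show that the assignment $(\phi,\psi)\mapsto\Cok(\phi,\psi)$ is well-defined on equivalence classes, lands in stable MCM modules, is surjective, and is injective. Well-definedness is immediate from Remark \ref{R3.8}(c): equivalent matrix factorizations have isomorphic cokernels over $S$, hence over $R$. That the cokernel is MCM is exactly Proposition \ref{P25}(b). For stability, I would argue as follows: if $(\phi,\psi)$ is a reduced matrix factorization of size $n$ and $\Cok(\phi,\psi)\cong R\oplus N$ for some $R$-module $N$, then one can use Remark \ref{R3.8}(d) to compare $(\phi,\psi)$ with a matrix factorization of the form $(f,1)\oplus(\alpha,\beta)$ whose cokernel is $R\oplus N$ — but $(f,1)$ is not reduced (its entry $f$ need not lie in $\mathfrak{m}$; in fact the trivial factorization has a unit entry $1$), contradicting that $(\phi,\psi)$ is reduced. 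More carefully: a reduced matrix factorization cannot have $S$ as a direct summand of its cokernel because splitting off a free summand $R$ would, after the uniqueness in Proposition \ref{L3.5}/Remark \ref{R3.8}(d), force a block $(f,1)$ or $(1,f)$ to appear up to equivalence, and neither is reduced. So $\Cok(\phi,\psi)$ is a stable MCM $R$-module.

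For surjectivity, let $M$ be a stable MCM $R$-module. By Proposition \ref{P25}(a) there is a matrix factorization $(\phi,\psi)$ of $f$ with $\Cok\phi\cong M$. The task is to replace $(\phi,\psi)$ by an equivalent \emph{reduced} one. The standard move is to perform invertible row/column operations over the local ring $S$ to clear any unit entries: if some entry of $\phi$ is a unit, elementary operations (conjugating $\psi$ correspondingly to preserve $\psi\phi=\phi\psi=fI$) split off a trivial block $(1,f)$ or $(f,1)$; the block $(1,f)$ contributes a zero summand to the cokernel, and a block $(f,1)$ contributes an $R$ summand. Since $M$ is stable, after removing all such blocks no $(f,1)$ block survives, and we are left with a reduced matrix factorization whose cokernel is still $M$. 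This uses the Krull–Remak–Schmidt uniqueness (Discussion \ref{disc2.33}) to guarantee the reduction terminates with the same $M$.

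For injectivity, suppose $(\phi,\psi)$ and $(\alpha,\beta)$ are reduced matrix factorizations with $\Cok(\phi,\psi)\cong\Cok(\alpha,\beta)$ as $R$-modules (equivalently as $S$-modules, since the $S$-structures factor through $R$). Both $\phi$ and $\alpha$ are injective $S$-linear maps with all entries in $\mathfrak{m}$ (Remark \ref{R3.8}(b) and reducedness), so Proposition \ref{L3.5} applies directly: the matrices are of the same size and there are invertible $V,W$ with $V\phi=\alpha W$. Then, as in the proof of Remark \ref{R3.8}(d), multiplying by $\psi$ on the right and using $\psi\phi=fI$, $\beta\alpha=fI$ together with the fact that $f$ is a nonzerodivisor in the domain $S$ yields $W\psi=\beta V$, so $(\phi,\psi)\sim(\alpha,\beta)$. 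The main obstacle is really the surjectivity step: turning an arbitrary matrix factorization into a reduced one while controlling exactly which trivial summands are shed requires the Krull–Remak–Schmidt bookkeeping, and one must be careful that "reducing" $\phi$ by clearing unit entries is compatible with simultaneously adjusting $\psi$ so that the pair remains a matrix factorization of $f$ — this is where I would spend the most care.
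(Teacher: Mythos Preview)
The paper does not prove Proposition \ref{A}; it is stated with citations to Yoshino and to Leuschke--Wiegand and used as a black box. Your proposal is therefore not comparable to a proof in the paper, but it is essentially correct and assembles exactly the ingredients the paper has already set up: Remark \ref{R3.8}(c) for well-definedness, Proposition \ref{P25}(b) for the MCM property, Proposition \ref{P.24} for the reduction step in surjectivity, and Remark \ref{R3.8}(d) (which rests on Proposition \ref{L3.5}) for injectivity.

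Two small clean-ups. In your stability argument you say a block $(f,1)$ ``is not reduced'' because of its entry $f$; but $f\in\mathfrak{m}$, so the first matrix of $(f,1)$ is fine --- it is the unit entry $1$ in the \emph{second} matrix that violates reducedness. The contradiction therefore has to be read off from $\psi$: if $(\phi,\psi)$ were equivalent to $(\alpha\oplus(f),\,\beta\oplus(1))$, then $\psi$ would be equivalent to a matrix with a unit entry, and reducing modulo $\mathfrak{m}$ shows this is impossible since $\psi$ has all entries in $\mathfrak{m}$. Second, your surjectivity step does not need Krull--Remak--Schmidt: once Proposition \ref{P.24} gives $M\cong\Cok(\alpha,\beta)\oplus R^t$ with $(\alpha,\beta)$ reduced, stability of $M$ forces $t=0$ directly from the definition of ``stable''. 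These are cosmetic; the argument is sound.
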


\begin{remark}
  Let $(S,\mathfrak{m})$ be a regular local ring. If $f$ is  a non-zero
element of $\mathfrak{m}$, let   $R=S/fS$ and $R^{\bigstar}:=S[\![u,v]\!]/(f+uv)$. If $M$ is a stable MCM $R$-module, then $M=\Cok_S(\phi,\psi)$ where $(\phi,\psi)$ is a reduced matrix factorization of $f$. This enables us to define $M^{\maltese}=\Cok_{S[\![u,v]\!]}(\phi,\psi)^{\maltese}$. Indeed, if $M=\Cok_S(\alpha,\beta)$ for some matrix factorization $(\alpha,\beta)$ of $f$, then by Proposition \ref{A} $(\alpha,\beta)$ is reduced matrix factorization of $f$ and $(\alpha,\beta) \sim (\phi,\psi)$. According to Remark \ref{R3.8} (f), $(\alpha,\beta)^{\maltese} \sim (\phi,\psi)^{\maltese}$. This shows that $\Cok_{S[\![u,v]\!]}(\alpha,\beta)^{\maltese} \cong \Cok_{S[\![u,v]\!]}(\phi,\psi)^{\maltese}$. Therefore, the association $M\rightarrow M^{\maltese}$ is well defined on the class of stable MCM $R$-modules.
\end{remark}

If $R$ and $R^{\bigstar}$ are  as in Remark \ref{R3.8} , the indecomposable non-free MCM modules over $R$ and $R^{\bigstar}$ can be  related in the following situation.

\begin{proposition}\label{P28}  \cite[Theorem 8.30]{CMR}
Let $(S, \mathfrak{m}, K)$ be a complete regular local ring such that $K$ is algebraically closed of characteristic not $2$ and $f\in \mathfrak{m}^2 \smallsetminus \{0\}$. If $R=S/fS$ and $R^{\bigstar}:=S[\![u,v]\!]/(f+uv)$, then the association $M\rightarrow M^{\maltese}$ defines a bijection between the isomorphisms classes of indecomposable non-free MCM modules over $R$ and $R^{\bigstar}$.
\end{proposition}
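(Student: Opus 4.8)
The plan is to move the whole statement to the level of matrix factorizations and build the bijection there. Since $S$ is complete regular local, Proposition \ref{A} identifies the isomorphism classes of stable MCM $R$-modules with the equivalence classes of reduced matrix factorizations of $f$ over $S$; applying the same proposition to the complete regular local ring $S[\![u,v]\!]$ and the element $f+uv$ (which lies in $\mathfrak{m}_{S[\![u,v]\!]}^{2}\smallsetminus\{0\}$ because $f\in\mathfrak{m}^{2}\smallsetminus\{0\}$) identifies the isomorphism classes of stable MCM $R^{\bigstar}$-modules with the equivalence classes of reduced matrix factorizations of $f+uv$ over $S[\![u,v]\!]$. Over the local rings $R$ and $R^{\bigstar}$, being non-free and indecomposable is the same as being stable and indecomposable, and by Krull--Remak--Schmidt (Discussion \ref{disc2.33}) together with Remark \ref{R3.8}(d) a reduced matrix factorization is indecomposable precisely when its cokernel is indecomposable. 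Under these identifications the operation $M\mapsto M^{\maltese}$ corresponds to $(\phi,\psi)\mapsto(\phi,\psi)^{\maltese}$ (Remark \ref{R3.8}(f),(h)); this sends reduced matrix factorizations of $f$ to reduced matrix factorizations of $f+uv$ — the entries of $\phi,\psi$ lie in $\mathfrak{m}$ and the remaining entries $\pm u,\pm v$ in the maximal ideal of $S[\![u,v]\!]$ — carries $\oplus$ to $\oplus$, and sends non-zero modules to non-zero modules. So everything comes down to two assertions about matrix factorizations: (i) if $(\phi_1,\psi_1)^{\maltese}\sim(\phi_2,\psi_2)^{\maltese}$ then $(\phi_1,\psi_1)\sim(\phi_2,\psi_2)$; and (ii) every reduced matrix factorization of $f+uv$ over $S[\![u,v]\!]$ is equivalent to $(\phi,\psi)^{\maltese}$ for some reduced matrix factorization $(\phi,\psi)$ of $f$.

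Granting (i) and (ii), the map $M\mapsto M^{\maltese}$ is a bijection between isomorphism classes of stable MCM modules over $R$ and over $R^{\bigstar}$; since it is compatible with direct sums and carries non-zero modules to non-zero modules, Krull--Remak--Schmidt on both sides forces it to match indecomposables with indecomposables, which is exactly the claim of the Proposition (the free module $R$ is omitted on the one side, matching the omission of the free module $R^{\bigstar}$ on the other). For (ii) I would argue by a normal-form reduction: given a reduced $n\times n$ matrix factorization $(\Phi,\Psi)$ of $f+uv$, use invertible row and column operations over the two-variable power series ring $S[\![u,v]\!]$, in the style of the technical lemmas of Section \ref{section:Technical Lemmas} (e.g. Lemma \ref{L.16} and Corollary \ref{L.20}), to bring $\Phi$ to the block form $\begin{bmatrix}\phi & -vI\\ uI & \psi\end{bmatrix}$; the relation $\Psi\Phi=(f+uv)I$ then forces $\psi\phi=\phi\psi=fI$, so $(\phi,\psi)$ is a matrix factorization of $f$, necessarily reduced. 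This is the step where $\charact K\neq 2$ is needed (to eliminate the $uv$ cross-term, equivalently to pass to coordinates in which $f+uv$ becomes a sum of two squares, which requires $2$ and $\sqrt{-1}$ to be available in $K$) and where the algebraic closedness of $K$ is used to carry out the diagonalisations that the elimination requires. For (i), once (ii) holds, equivalence of reduced matrix factorizations is detected by isomorphism of cokernels (Proposition \ref{A}), so it suffices to show $M_1^{\maltese}\cong M_2^{\maltese}$ over $R^{\bigstar}$ implies $M_1\cong M_2$ over $R$; this follows by running the normal-form construction in reverse to recover $(\phi,\psi)$ from $(\phi,\psi)^{\maltese}$ up to equivalence canonically enough that an isomorphism of the $\maltese$'s descends, with Proposition \ref{Pro2.22} used to pass between modules and their completions.

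The genuinely hard point is (ii): an arbitrary matrix factorization of $f+uv$ is not a priori ``linear in $u$ and $v$'', and forcing it into the block shape $\begin{bmatrix}\phi & -vI\\ uI & \psi\end{bmatrix}$ requires a delicate elimination over $S[\![u,v]\!]$ — this is precisely the matrix-factorization content of Knörrer periodicity, and it is where the hypotheses on $K$ are indispensable. I would follow the treatment in \cite[Chapter 8, Theorem 8.30]{CMR}, combining Proposition \ref{A}, the reduction lemmas of Section \ref{section:Technical Lemmas}, and Proposition \ref{Pro2.22}.
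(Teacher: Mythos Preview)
The paper does not prove this proposition: it is cited from \cite[Theorem~8.30]{CMR} and stated without proof, so there is no argument in the thesis to compare your attempt against.

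That said, your outline is the standard strategy for Kn\"orrer periodicity, and your identification of (ii) as the hard step is correct. The gap is in your proposed mechanism for (ii): the lemmas of Section~\ref{section:Technical Lemmas} (Lemma~\ref{L.16}, Corollary~\ref{L.20}) concern only the very specific bidiagonal matrices that arise from $M_L(g,e)$ for particular $g$, and give no leverage on an \emph{arbitrary} reduced matrix factorization $(\Phi,\Psi)$ of $f+uv$ over $S[\![u,v]\!]$. The proof in \cite{CMR} does not attempt a direct row-reduction over $S[\![u,v]\!]$; instead it uses the change of variables $uv=z^2+w^2$ (available because $\charact K\neq 2$ and $\sqrt{-1}\in K$) to factor the passage $R\to R^{\bigstar}$ as two applications of the single branched cover $f\mapsto f+z^2$, and for the single cover it constructs an explicit inverse functor: given a stable MCM $R^{\sharp}$-module $N$, multiplication by $z$ is an $R$-linear endomorphism of $N$ squaring to $-f$, and analysing this operator recovers a reduced matrix factorization $(\phi,\psi)$ of $f$ with $N\cong\Cok(\phi,\psi)^{\sharp}$. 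Your intuition about where the hypotheses on $K$ enter is right, but the route is through this factorization and the inverse functor, not through elementary row and column operations of the kind in Section~\ref{section:Technical Lemmas}.
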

Any matrix factorization has a decomposition as follows.
\begin{proposition}\label{P.24}(cf.\cite[Result 7.5.2]{YY})
Let $(S,\mathfrak{m})$ be a  local domain and  $f \in \mathfrak{m} \smallsetminus  \{0\}$.
If  $(\phi,\psi)$ is   a nontrivial   matrix factorization  of $f$  of size $n$, then $(\phi,\psi)$ can be written  as
\begin{equation*}\label{E4}
  (\phi,\psi) = (\alpha,\beta)\oplus (f,1)^t\oplus (1,f)^r
\end{equation*}
where $(\alpha,\beta)$ is a reduced matrix factorization of $f$ and  $0\leq t, r < n$. Furthermore, if $(S,\mathfrak{m})$ is a regular local ring, the above decomposition is unique up to equivalence.
\end{proposition}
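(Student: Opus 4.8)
The plan is to induct on the size $n$ of the matrix factorization, peeling off one trivial summand at a time. The base of the argument rests on the following observation: if $(\phi,\psi)$ is a matrix factorization of $f$ with all entries in the local ring $(S,\mathfrak{m})$, then it is reduced and we are done (take $t=r=0$). So assume $(\phi,\psi)$ is not reduced, meaning some entry of $\phi$ or $\psi$ is a unit. After swapping the roles of $\phi$ and $\psi$ if necessary, say $\phi$ has a unit entry. First I would use row and column operations (i.e.\ multiply $\phi$ on the left and right by invertible matrices over $S$, adjusting $\psi$ correspondingly by the inverse operations so as to preserve the relations $\psi\phi=\phi\psi=fI$) to move that unit entry into the $(1,1)$ position and then clear out the rest of the first row and first column of $\phi$. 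This exhibits $\phi$ as equivalent to $u\oplus\phi'$ for a unit $u$ and a smaller matrix $\phi'$; rescaling, $\phi\sim 1\oplus\phi'$.

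The key step is then to check that this splitting is compatible with $\psi$: from $\psi\phi=fI$ and the block form of $\phi$, together with the fact that $S$ is a domain (so $f$ is a nonzerodivisor and the maps are injective by Remark \ref{C3.2}(b)), one deduces that after the same change of basis $\psi$ acquires the block form $f\oplus\psi'$, with $(\phi',\psi')$ a matrix factorization of $f$ of size $n-1$. Concretely, $(\phi,\psi)\sim(1,f)\oplus(\phi',\psi')$. (Had the unit entry been in $\psi$ instead, the same manipulation produces a summand $(f,1)$.) Now apply the induction hypothesis to $(\phi',\psi')$ to write it as $(\alpha,\beta)\oplus(f,1)^{t'}\oplus(1,f)^{r'}$ with $(\alpha,\beta)$ reduced, and reassemble. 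The direct-sum decompositions combine using Remark \ref{R18}(a) and Remark \ref{R3.8}(e), and the inequalities $0\le t,r<n$ follow because $(\phi,\psi)$ is nontrivial, so $(\alpha,\beta)$ cannot be empty (an entirely trivial factorization would contradict the hypothesis).

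For the uniqueness statement when $(S,\mathfrak{m})$ is regular local: suppose $(\alpha,\beta)\oplus(f,1)^t\oplus(1,f)^r\sim(\alpha',\beta')\oplus(f,1)^{t'}\oplus(1,f)^{r'}$ with $(\alpha,\beta),(\alpha',\beta')$ reduced. Passing to cokernels, $\Cok(\alpha)\oplus R^t\cong\Cok(\alpha')\oplus R^{t'}$ as $R$-modules, where $R=S/fS$. By Proposition \ref{A}, $\Cok(\alpha)$ and $\Cok(\alpha')$ are stable MCM $R$-modules, hence have no free direct summand; since $R$ is (complete, or we localize and complete) the Krull--Remak--Schmidt uniqueness in Discussion \ref{disc2.33} forces $t=t'$ and $\Cok(\alpha)\cong\Cok(\alpha')$, and applying the same reasoning to the dual factorization $(\psi,\phi)$ gives $r=r'$. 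Then Remark \ref{R3.8}(d) (uniqueness of reduced matrix factorizations with isomorphic cokernels, via Proposition \ref{L3.5}) yields $(\alpha,\beta)\sim(\alpha',\beta')$.

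The main obstacle I anticipate is the bookkeeping in the inductive splitting step: one must verify carefully that clearing the first row and column of $\phi$ can be done by invertible operations over $S$ (not merely over the fraction field) and that these operations, when transported to $\psi$ via the defining relations, genuinely produce a block-diagonal $\psi$ rather than something with off-diagonal tails. This is where the domain hypothesis (cancellation of $f$) and the injectivity of $\phi,\psi$ are essential, and it is the only genuinely non-formal part of the argument; the rest is assembling known results.
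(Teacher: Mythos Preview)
Your proposal is correct and follows essentially the same route as the paper: induction on the size $n$, using elementary row and column operations to split off a $(1,f)$ or $(f,1)$ summand whenever an entry is a unit, and for uniqueness passing to cokernels, invoking Proposition~\ref{A} (reduced $\Rightarrow$ stable), applying Krull--Remak--Schmidt after completion to match $t,r$, and then Remark~\ref{R3.8}(d) to recover equivalence of the reduced parts. The only cosmetic difference is that the paper argues the block form of $\hat\psi$ directly from $\hat\psi(1\oplus\tilde\phi)=fI=(1\oplus\tilde\phi)\hat\psi$ rather than phrasing it via injectivity, but this is the same computation.
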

\begin{proof}
  By the induction on the size of the matrix factorization we will prove the result.
The case when the size is one is obvious.  Suppose that $(\phi,\psi)$  is a matrix factorization of $f$ of size $(n+1) $. If $(\phi,\psi)$  is reduced, we are done. Without lose of generality, assume that one entry of $\phi$ is a unite. Using row and column operations, there exist  invertible matrices $U,V$ in $M_n(S)$ such that

\begin{equation*}
  U \phi V= \left[
              \begin{array}{cc}
                1 &  \\
                 & \tilde{\phi} \\
              \end{array}
            \right]
\end{equation*}
where $\tilde{\phi}$ is $n \times n$ matrix. Set $\hat{\psi}= V^{-1} \psi U^{-1}$ and notice that $ \hat{\psi}\left[
              \begin{array}{cc}
                1 &  \\
                 & \tilde{\phi} \\
              \end{array}
            \right]=fI$ and $ \left[
              \begin{array}{cc}
                1 &  \\
                 & \tilde{\phi} \\
              \end{array}
            \right]\hat{\psi}=fI$. This makes  $\hat{\psi}= \left[ \begin{array}{cc}
                f &  \\
                 & \tilde{\psi} \\
              \end{array}\right]$ where $(\tilde{\phi},\tilde{\psi})$ is a matrix factorization of $f$. Therefore, $(\phi,\psi) \sim (\tilde{\phi},\tilde{\psi}) \oplus (1,f)$. If $(\tilde{\phi},\tilde{\psi})$ is reduced, we get the desired result. Otherwise, apply the induction hypothesis on $(\tilde{\phi},\tilde{\psi})$ to completes the proof.

Now assume that $(S,\mathfrak{m})$ is  a regular local ring, $R=S/fS$, and   let $(\phi,\psi)$ be a nontrivial   matrix factorization  of $f$  of size $n$. Suppose
 that  $(\phi,\psi)$ can be written as  $ (\alpha_j,\beta_j)\oplus (f,1)^{t_j}\oplus (1,f)^{r_j}$ where $(\alpha_j,\beta_j)$ is a reduced matrix factorization of $f$ for $j=1,2$.   This makes $\widehat{(M_1)}_{\mathfrak{m}}\oplus (\widehat{R}_{\mathfrak{m}})^{\oplus t_1}=\widehat{(M_2)}_{\mathfrak{m}}\oplus (\widehat{R}_{\mathfrak{m}})^{\oplus t_2}$ where $M_j=\Cok_j(\alpha_j,\beta_j)$ for $j=1,2$. Since $M_j$ has no free direct summands (Proposition \ref{A}), it follows from Proposition \ref{Pro2.22}(a)  that $\widehat{(M_j)}_{\mathfrak{m}}$ has  no free direct summands where $j=1,2$. Therefore,  by  Krull-Remak-Schmidt theorem (see discussion \ref{disc2.33})  $t_1=t_2$. Since $(\psi,\phi)$ can be written as  $ (\beta_j,\alpha_j) \oplus (1,f)^{r_j}\oplus (f,1)^{t_j}$, it follows from a similar argument that $r_1=r_2$. Furthermore, Krull-Remak-Schmidt theorem (see discussion \ref{disc2.33}) implies that $\widehat{(M_1)}_{\mathfrak{m}} \cong \widehat{(M_2)}_{\mathfrak{m}}$ and consequently
\begin{equation*}
 \Cok_S(\alpha_1,\beta_1) =M_1 \cong M_2= \Cok_S(\alpha_2,\beta_2) \text{ (see Proposition \ref{Pro2.22} (b))}.
\end{equation*}
 Since $(\alpha_1,\beta_1)$  and $(\alpha_2,\beta_2)$ are reduced   with $\Cok_S(\alpha_1,\beta_1)  \cong \Cok_S(\alpha_2,\beta_2)$,  it follows from Remark \ref{R3.8} (d) that $(\alpha_1,\beta_1)$  and $(\alpha_2,\beta_2)$ are equivalent. This finishes the proof.
\end{proof}

One can use Proposition \ref{A} and  Remark \ref{R3.8} to show the following corollary.
\begin{corollary}\label{C3.8}
Let $(S,\mathfrak{m})$ be a regular local ring ,  $f \in\mathfrak{ m} \smallsetminus \{0\}$,  $R=S/fS$, $R^{\bigstar}=S[\![u,v]\!]/(f+uv)$ , and $R^{\sharp}=S[\![z]\!]/(f+z^2)$ where $u,v$ and $z$ are  variables over $S$ .
If  $(\phi,\psi)$ is a matrix factorization of $f$ having the decomposition
\begin{equation*}\label{E44}
  (\phi,\psi) = (\alpha,\beta)\oplus (f,1)^t \oplus (1,f)^r
\end{equation*}
where  $(\alpha,\beta)$ is  reduced and $t, r$ are non-negative integers, then :
\begin{enumerate}
\item[(a)] $\sharp (\Cok_S(\phi,\psi),R)=t$ and $\sharp (\Cok_S(\psi,\phi),R)=r$.
\item[(b)] $\sharp (\Cok_{S[\![u,v]\!]}(\phi,\psi)^{\maltese},R^{\bigstar})=\sharp (\Cok_S(\phi,\psi),R)+\sharp (\Cok_S(\psi,\phi),R)$.
\item[(c)] $\sharp (\Cok_{S[\![z]\!]}(\phi,\psi)^{\sharp},R^{\sharp})=\sharp (\Cok_S(\phi,\psi),R)+\sharp (\Cok_S(\psi,\phi),R)$.
\end{enumerate}
\end{corollary}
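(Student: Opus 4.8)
The plan is to read off each assertion from the decomposition together with the behaviour of the operations $(-)^{\maltese}$ and $(-)^{\sharp}$ on direct sums, using the uniqueness part of Proposition~\ref{P.24} and Fedder-free invariance results already established. Throughout I write $M=\Cok_S(\phi,\psi)$ and $M'=\Cok_S(\psi,\phi)$.

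\textbf{Part (a).} By Remark~\ref{R18}(a) and the definition of $\oplus$ of matrix factorizations, the decomposition $(\phi,\psi)=(\alpha,\beta)\oplus(f,1)^t\oplus(1,f)^r$ gives
\begin{equation*}
  M=\Cok_S(\phi,\psi)\cong \Cok_S(\alpha,\beta)\oplus \Cok_S(f,1)^t\oplus \Cok_S(1,f)^r\cong \Cok_S(\alpha,\beta)\oplus R^t,
\end{equation*}
since $\Cok_S(f,1)=R$ and $\Cok_S(1,f)=0$. By Proposition~\ref{A}, $\Cok_S(\alpha,\beta)$ is a stable MCM $R$-module, i.e.\ it has no direct summand isomorphic to $R$; hence by the uniqueness of the free-rank splitting in Discussion~\ref{disc2.33}(b), $\sharp(M,R)=t$. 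Applying the same reasoning to $(\psi,\phi)=(\beta,\alpha)\oplus(1,f)^t\oplus(f,1)^r$ (note the trivial factors swap) gives $M'\cong \Cok_S(\beta,\alpha)\oplus R^r$ with $\Cok_S(\beta,\alpha)$ stable, so $\sharp(M',R)=r$.

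\textbf{Parts (b) and (c).} Here the key is that $(-)^{\maltese}$ and $(-)^{\sharp}$ convert $\oplus$ into $\oplus$ up to equivalence (Remark~\ref{R3.8}(g),(l)) and send the two trivial factorizations of $f$ to $R^{\bigstar}$ and $R^{\sharp}$ respectively (Remark~\ref{R3.8}(i),(m)). Concretely, applying $(-)^{\maltese}$ to the decomposition and using Remark~\ref{R3.8}(h),(g),(i):
\begin{equation*}
  \Cok_{S[\![u,v]\!]}(\phi,\psi)^{\maltese}\cong \Cok_{S[\![u,v]\!]}(\alpha,\beta)^{\maltese}\oplus (R^{\bigstar})^t\oplus (R^{\bigstar})^r = \Cok_{S[\![u,v]\!]}(\alpha,\beta)^{\maltese}\oplus (R^{\bigstar})^{t+r}.
\end{equation*}
(Here one uses that $(1,f)^{\maltese}$ also has cokernel $R^{\bigstar}$, which follows from Remark~\ref{R3.8}(i).) Since $(\alpha,\beta)$ is a reduced matrix factorization of $f\in\mathfrak{m}^2$, the matrix factorization $(\alpha,\beta)^{\maltese}$ of $f+uv$ is again reduced (all its entries lie in the maximal ideal of $S[\![u,v]\!]$, because the entries of $\alpha,\beta$ lie in $\mathfrak{m}$ and the off-diagonal blocks are $\pm uI,\pm vI$). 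Hence, by Proposition~\ref{A} applied over the regular local ring $S[\![u,v]\!]$, $\Cok_{S[\![u,v]\!]}(\alpha,\beta)^{\maltese}$ is a stable MCM $R^{\bigstar}$-module and so contributes no free summand; uniqueness of the free rank then yields $\sharp(\Cok_{S[\![u,v]\!]}(\phi,\psi)^{\maltese},R^{\bigstar})=t+r=\sharp(M,R)+\sharp(M',R)$. Part (c) is identical with $(-)^{\sharp}$, $z^2$, and $R^{\sharp}$ in place of $(-)^{\maltese}$, $uv$, $R^{\bigstar}$, using Remark~\ref{R3.8}(l),(m) and the fact that $(\alpha,\beta)^{\sharp}$ is reduced.

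\textbf{Main obstacle.} The only genuinely non-formal point is checking that $(\alpha,\beta)^{\maltese}$ (resp.\ $(\alpha,\beta)^{\sharp}$) is reduced and that its cokernel has no free $R^{\bigstar}$-summand — equivalently invoking Proposition~\ref{A} in the extended ring. If one prefers to avoid re-proving reducedness, an alternative is to appeal directly to the uniqueness clause of Proposition~\ref{P.24}: the decomposition $(\phi,\psi)^{\maltese}\sim(\alpha,\beta)^{\maltese}\oplus(f+uv,1)^{t+r}\oplus(1,f+uv)^{?}$ forces the number of $(f+uv,1)$ trivial factors to be an invariant, and a direct inspection of $(1,f)^{\maltese}$ versus $(f,1)^{\maltese}$ identifies that number as $t+r$. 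Either route is routine; the bookkeeping of which trivial factor maps to which under $(-)^{\maltese}$ is the one place to be careful.
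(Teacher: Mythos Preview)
Your proof is correct and follows essentially the same route as the paper's: decompose the cokernel, use Proposition~\ref{A} to see that $\Cok_S(\alpha,\beta)$ (resp.\ $\Cok_{S[\![u,v]\!]}(\alpha,\beta)^{\maltese}$, $\Cok_{S[\![z]\!]}(\alpha,\beta)^{\sharp}$) is stable, and invoke Discussion~\ref{disc2.33}(b). One tiny slip: you write $f\in\mathfrak{m}^2$ in part~(b), but the hypothesis only gives $f\in\mathfrak{m}\setminus\{0\}$ --- this is irrelevant to the argument, since reducedness of $(\alpha,\beta)^{\maltese}$ depends only on $(\alpha,\beta)$ being reduced, not on where $f$ sits.
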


\begin{proof}
(a) It is obvious that $ \Cok( f,1)=R$ but  $\Cok(1,f)=\{0\}$ and hence $\Cok(\phi,\psi)=\Cok(\alpha,\beta)\oplus R^t$. Since $(\alpha,\beta)$ is  reduced, it follows  from Proposition \ref{A} that $\Cok(\alpha,\beta)$ is stable. This implies by Discussion \ref{disc2.33}(b) that  $\sharp (\Cok(\phi,\psi),R)=t$. Now, since $(\psi,\phi) = (\beta, \alpha)\oplus (f,1)^r \oplus (1,f)^t$, it follows from what we have proved that $\sharp (\Cok(\psi,\phi),R)=r$.\\
(b) Notice by Remark \ref{R3.8} (i)  that $\Cok(f,1)^{\maltese} = R^{\bigstar} =\Cok(1,f)^{\maltese}$. This makes

\begin{equation*}
  \Cok(\phi,\psi)^{\maltese}= \Cok(\alpha,\beta)^{\maltese} \oplus (R^{\bigstar})^t \oplus (R^{\bigstar})^r.
\end{equation*}

Since $(\alpha,\beta)^{\maltese}$ is a reduced  matrix factorization of $f+uv$, it follows  from Proposition \ref{A} that $\Cok(\alpha,\beta)^{\maltese}$ is stable $R^{\bigstar}$-module. This proves, by Discussion \ref{disc2.33}(b), that
$$\sharp (\Cok(\phi,\psi)^{\maltese},R^{\bigstar})=t+r=\sharp (\Cok(\phi,\psi),R)+\sharp (\Cok(\psi,\phi),R).$$
(c) Similar argument to the above argument.
\end{proof}

 Krull-Remak-Schmidt theorem (see discussion \ref{disc2.33}) and Proposition \ref{Pro2.22} enable us to establish the following Proposition.
\begin{proposition}\label{C3.14}
 Let $(S,\mathfrak{m})$ be a regular local ring, $f \in \mathfrak{m} \smallsetminus \{0\}$, and $R=S/fS$. If $(\phi, \psi)$  is a  reduced matrix factorization  of $f$, then $$(\phi, \psi)\sim [(\phi_1,\psi_1)\oplus (\phi_2,\psi_2)\oplus \dots\oplus(\phi_n,\psi_n)]$$ where $(\phi_i,\psi_i)$ is a reduced matrix factorization of $f$ with $\Cok_S(\phi_i,\psi_i)$ is non-free indecomposable MCM $R$-module for all $ 1 \leq i \leq n$. Furthermore,  the above representation of $(\phi, \psi)$  is unique up to equivalence when $S$ is also complete.
\end{proposition}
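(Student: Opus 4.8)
Let $(S,\mathfrak{m})$ be a regular local ring, $f \in \mathfrak{m} \smallsetminus \{0\}$, and $R=S/fS$. If $(\phi, \psi)$ is a reduced matrix factorization of $f$, then $(\phi, \psi)\sim [(\phi_1,\psi_1)\oplus \dots\oplus(\phi_n,\psi_n)]$ where each $(\phi_i,\psi_i)$ is a reduced matrix factorization of $f$ with $\Cok_S(\phi_i,\psi_i)$ a non-free indecomposable MCM $R$-module; and this is unique up to equivalence when $S$ is also complete.

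Let me think about how to prove this.

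We have a reduced matrix factorization $(\phi,\psi)$ of $f$. By Remark \ref{C3.2} and \ref{R3.8}(b), $\phi$ and $\psi$ are injective, and $M := \Cok_S(\phi,\psi)$ is an MCM $R$-module (Proposition \ref{P25}(b)). Since $(\phi,\psi)$ is reduced, by Proposition \ref{A}, $M$ is a stable $R$-module (no free direct summand). Now $M$ is a finitely generated $R$-module, hence Noetherian, so by Discussion \ref{disc2.33}(a) it decomposes as $M \cong M_1 \oplus \dots \oplus M_n$ with each $M_i$ indecomposable. Each $M_i$ is MCM by Corollary \ref{C2.30}, and no $M_i$ is free (a free summand of some $M_i$ would be a free summand of $M$, contradicting stability); so each $M_i$ is a non-free indecomposable MCM $R$-module, hence stable. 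By Proposition \ref{A} (or Proposition \ref{P25}(a)) each $M_i = \Cok_S(\phi_i,\psi_i)$ for a reduced matrix factorization $(\phi_i,\psi_i)$ of $f$.

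The plan for the first assertion is then: the matrix factorization $(\phi_1,\psi_1)\oplus\dots\oplus(\phi_n,\psi_n)$ is reduced (a direct sum of reduced matrix factorizations is reduced, since all entries stay in $\mathfrak{m}$), and by Remark \ref{R18}(a) its cokernel is $\Cok_S(\phi_1)\oplus\dots\oplus\Cok_S(\phi_n) = M_1\oplus\dots\oplus M_n \cong M = \Cok_S(\phi,\psi)$. Since both $(\phi,\psi)$ and $\bigoplus_i(\phi_i,\psi_i)$ are reduced matrix factorizations of $f$ with isomorphic cokernels, Remark \ref{R3.8}(d) gives $(\phi,\psi) \sim \bigoplus_i(\phi_i,\psi_i)$, as desired.

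For the uniqueness statement, assume in addition that $S$ is complete, so $R = S/fS$ is a complete Noetherian local ring, and the Krull–Remak–Schmidt theorem (Discussion \ref{disc2.33}(a)) applies to finitely generated $R$-modules. Suppose $(\phi,\psi) \sim \bigoplus_{i=1}^n(\phi_i,\psi_i) \sim \bigoplus_{j=1}^m(\phi'_j,\psi'_j)$ are two such representations. Taking cokernels and using Remark \ref{R3.8}(c) and Remark \ref{R18}(a), we get $\bigoplus_i \Cok_S(\phi_i,\psi_i) \cong \Cok_S(\phi,\psi) \cong \bigoplus_j \Cok_S(\phi'_j,\psi'_j)$ as $R$-modules. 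Since each summand is indecomposable, Krull–Remak–Schmidt forces $n = m$ and, after renumbering, $\Cok_S(\phi_i,\psi_i) \cong \Cok_S(\phi'_i,\psi'_i)$ for each $i$; as both $(\phi_i,\psi_i)$ and $(\phi'_i,\psi'_i)$ are reduced, Remark \ref{R3.8}(d) yields $(\phi_i,\psi_i) \sim (\phi'_i,\psi'_i)$. This gives the uniqueness up to equivalence and reordering. The only mildly delicate point — which I expect to be the main thing to check carefully rather than a genuine obstacle — is the bookkeeping that a direct sum of reduced matrix factorizations is again reduced and that the cokernel functor commutes with $\oplus$ in the way stated; both follow directly from the definitions and Remark \ref{R18}(a).
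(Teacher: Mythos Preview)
Your proof is correct and follows essentially the same approach as the paper: decompose $M=\Cok_S(\phi,\psi)$ into indecomposable MCM summands via Discussion~\ref{disc2.33}(a) and Corollary~\ref{C2.30}, realize each summand as the cokernel of a reduced matrix factorization via Proposition~\ref{A}, and then use Remark~\ref{R3.8}(d) to pass from isomorphic cokernels back to equivalent matrix factorizations; the uniqueness is likewise obtained from Krull--Remak--Schmidt over the complete ring $R$ together with Remark~\ref{R3.8}(d). Your presentation is in fact slightly cleaner than the paper's in the uniqueness part, since you work directly with KRS over $R$ rather than detouring through Proposition~\ref{Pro2.22}(b).
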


\begin{proof}
By Proposition  \ref{A},   $M:=\Cok_S(\phi , \psi )$ is stable MCM $R$-module.   We may assume by Discussion \ref{disc2.33}(a) that $M=M_1\oplus \dots \oplus M_n$ where $M_j$ is a non-free indecomposable MCM $R$-module for each $ 1 \leq j \leq n$ (Corollary \ref{C2.30}). Again by Proposition \ref{A}, we have $M_j=\Cok_S(\phi_j,\psi_j)$ for some reduced matrix factorization $(\phi_j,\psi_j)$ of $f$ for all $ 1 \leq j \leq n$. As a result, $\Cok_S(\phi , \psi )$ is isomorphic to  $\Cok_S[(\phi_1,\psi_1)\oplus (\phi_2,\psi_2)\oplus \dots\oplus(\phi_n,\psi_n)]$ and hence  by Remark \ref{R3.8}(d), $(\phi, \psi)\sim [(\phi_1,\psi_1)\oplus (\phi_2,\psi_2)\oplus \dots\oplus(\phi_n,\psi_n)]$. Now if $(\phi, \psi)\sim [(\alpha_1,\beta_1)\oplus (\alpha_2,\beta_2)\oplus \dots\oplus(\alpha_m,\beta_m)]$ is another representation of $(\phi, \psi)$ and $N_j= \Cok_S(\alpha_j,\beta_j)$ where $(\alpha_j,\beta_j)$ is a matrix factorization of $f$  for all $1 \leq j \leq m$, then  $ \bigoplus_{i=1}^n M_i$ is isomorphic to  $ \bigoplus_{j=1}^m N_j$ as $R$-modules. By Krull-Remak-Schmidt theorem (Discussion \ref{disc2.33}(a)), $n=m$ and, after renumbering, $M_i\cong N_i$ for each $i$. Therefore,  by  Proposition \ref{Pro2.22} (b) and hence  by Remark \ref{R3.8}(d) $(\phi_i,\psi_i) \sim (\alpha_i, \beta_i)$ for each $i$.
\end{proof}
\begin{proposition}\label{P3.15}
Let $(S, \mathfrak{m}, K)$ be a complete regular local ring such that $K$ is algebraically closed of characteristic not $2$, and   $f\in \mathfrak{m}^2 \smallsetminus \{0\}$. let $R=S/fS$ and  $R^{\bigstar}:=S[\![u,v]\!]/(f+uv)$. If $(\phi, \psi)$ and $(\alpha, \beta)$ are reduced matrix factorizations of $f$, then $\Cok_S(\phi, \psi)$ is isomorphic to $\Cok_S(\alpha, \beta)$ over $R$ if and only if $\Cok_{S[\![u,v]\!]}(\phi, \psi)^{\maltese}$ is isomorphic to $\Cok_{S[\![u,v]\!]}(\alpha,\beta)^{\maltese}$  over $R^{\bigstar}$.
\end{proposition}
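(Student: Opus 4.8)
The plan is to reduce the statement to the bijection already established in Proposition~\ref{P28} together with the well-definedness observation recorded in the Remark preceding it. The forward direction is essentially formal: if $\Cok_S(\phi,\psi) \cong \Cok_S(\alpha,\beta)$ over $R$, then since both $(\phi,\psi)$ and $(\alpha,\beta)$ are reduced matrix factorizations of $f$, Remark~\ref{R3.8}(d) gives $(\phi,\psi) \sim (\alpha,\beta)$; by Remark~\ref{R3.8}(f) this equivalence is preserved under the $\maltese$-operation, so $(\phi,\psi)^{\maltese} \sim (\alpha,\beta)^{\maltese}$, and then Remark~\ref{R3.8}(c) yields $\Cok_{S[\![u,v]\!]}(\phi,\psi)^{\maltese} \cong \Cok_{S[\![u,v]\!]}(\alpha,\beta)^{\maltese}$ over $R^{\bigstar}$. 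Note this direction does not even need the hypotheses that $K$ is algebraically closed or that $\charact K \neq 2$.

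For the converse, I would invoke Proposition~\ref{P28}. First I would check that $\Cok_S(\phi,\psi)$ and $\Cok_S(\alpha,\beta)$ are genuine objects in the source category of that bijection: by Proposition~\ref{A} they are stable MCM $R$-modules, hence direct sums of non-free indecomposable MCM $R$-modules (Corollary~\ref{C2.30}). The map $M \mapsto M^{\maltese}$ of Proposition~\ref{P28} is additive on direct sums by Remark~\ref{R3.8}(h), so it sends isomorphism classes of (not necessarily indecomposable) stable MCM $R$-modules injectively to isomorphism classes of stable MCM $R^{\bigstar}$-modules. Concretely: decompose $\Cok_S(\phi,\psi) \cong \bigoplus_i M_i$ and $\Cok_S(\alpha,\beta) \cong \bigoplus_j N_j$ into non-free indecomposables (Proposition~\ref{C3.14}); then $\Cok_{S[\![u,v]\!]}(\phi,\psi)^{\maltese} \cong \bigoplus_i M_i^{\maltese}$ and similarly for $(\alpha,\beta)$, with each $M_i^{\maltese}$, $N_j^{\maltese}$ indecomposable non-free over $R^{\bigstar}$ by Proposition~\ref{P28}. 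If the two $\maltese$-modules are isomorphic, the Krull--Remak--Schmidt theorem (Discussion~\ref{disc2.33}) over the complete local ring $S[\![u,v]\!]/(f+uv)$ forces the multisets $\{M_i^{\maltese}\}$ and $\{N_j^{\maltese}\}$ to agree up to isomorphism, and injectivity of $M \mapsto M^{\maltese}$ on indecomposables (Proposition~\ref{P28}) then gives $\{M_i\} = \{N_j\}$ up to isomorphism and reordering, whence $\Cok_S(\phi,\psi) \cong \Cok_S(\alpha,\beta)$ over $R$.

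The main obstacle, such as it is, is bookkeeping rather than a genuine difficulty: one must make sure the $\maltese$-operation on modules is literally additive (Remark~\ref{R3.8}(h)) and that the decomposition into indecomposables behaves compatibly with $\maltese$, i.e.\ that $(\bigoplus_i (\phi_i,\psi_i))^{\maltese} \sim \bigoplus_i (\phi_i,\psi_i)^{\maltese}$ (Remark~\ref{R3.8}(g)) and that a reduced matrix factorization stays reduced after applying $\maltese$. Once these are in hand, the argument is a direct transport of the bijection of Proposition~\ref{P28} from indecomposables to arbitrary stable MCM modules via Krull--Remak--Schmidt. One should also remember to record explicitly that $f \in \mathfrak{m}^2$, so that $R$ and $R^{\bigstar}$ are in the situation of Proposition~\ref{P28}, and that all the completeness/algebraic-closedness hypotheses are needed only for the converse.
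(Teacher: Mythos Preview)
Your proposal is correct and follows essentially the same route as the paper: the forward direction via Remark~\ref{R3.8}(d),(f),(c), and the converse by decomposing into indecomposables (Proposition~\ref{C3.14}), applying $\maltese$ additively (Remark~\ref{R3.8}(g),(h)), invoking Krull--Remak--Schmidt over $R^{\bigstar}$, and then using the bijection of Proposition~\ref{P28} on the indecomposable summands. The paper works a bit more at the level of matrix factorizations (concluding $(\phi_i,\psi_i)\sim(\alpha_i,\beta_i)$ and hence $(\phi,\psi)\sim(\alpha,\beta)$) whereas you phrase the endgame at the module level, but the argument is the same.
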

\begin{proof}
If $\Cok_S(\phi, \psi)$ is isomorphic to $\Cok_S(\alpha, \beta)$, by Remark \ref{R3.8} (d), $(\phi, \psi)\sim(\alpha, \beta)$ and consequently $(\phi, \psi)^{\maltese}\sim(\alpha, \beta)^{\maltese}$. This shows that $\Cok_{S[[u,v]]}(\phi, \psi)^{\maltese}$ is isomorphic to $\Cok_{S[[u,v]]}(\alpha,\beta)^{\maltese}$  over $R^{\bigstar}$.

Assume now that $\Cok_{S[\![u,v]\!]}(\phi, \psi)^{\maltese}$ is isomorphic to $\Cok_{S[\![u,v]\!]}(\alpha,\beta)^{\maltese}$  over $R^{\bigstar}$.
Using Proposition \ref{C3.14} we see that  $(\phi, \psi)\sim [(\phi_1,\psi_1)\oplus (\phi_2,\psi_2)\oplus \dots\oplus(\phi_n,\psi_n)]$
 and $(\alpha, \beta)\sim [(\alpha_1,\beta_1)\oplus (\alpha_2,\beta_2)\oplus \dots\oplus(\alpha_t,\beta_t)]$ where $(\phi_i,\psi_i)$
 and $(\alpha_j,\beta_j)$ are reduced  matrix factorizations of $f$ satisfying that $\Cok_S(\phi_i,\psi_i)$ and $\Cok_S(\alpha_j,\beta_j)$ are
 non-free indecomposable MCM $R$-module for all $ i $ and $ j $.
 Remark \ref{R3.8} (f),(g) and (c) imply that
 $$\Cok_{S[\![u,v]\!]}(\phi, \psi)^{\maltese}= \bigoplus _{j=1}^n \Cok_{S[\![u,v]\!]}(\phi_j, \psi_j)^{\maltese} $$
  and  $$\Cok_{S[\![u,v]\!]}(\alpha,\beta)^{\maltese}= \bigoplus_{i=1}^t \Cok_{S[\![u,v]\!]}(\alpha_i,\beta_i)^{\maltese}.$$
  Notice that Proposition \ref{P28} implies that $\Cok_{S[\![u,v]\!]}(\phi_j, \psi_j)^{\maltese}$ and $\Cok_{S[\![u,v]\!]}(\alpha_i,\beta_i)^{\maltese}$ are indecomposable non-free MCM modules over  $R^{\bigstar}$ for all $i$ and $j$. Now  Krull-Remak-Schmidit Theorem (Discussion \ref{disc2.33}(a)) gives that $n=t$ and after renumbering we get $\Cok_{S[\![u,v]\!]}(\phi_i, \psi_i)^{\maltese}$ is isomorphic to $\Cok_{S[\![u,v]\!]}(\alpha_i,\beta_i)^{\maltese}$ over $R^{\bigstar}$  and consequently
  Proposition \ref{P28} implies that  $\Cok_S(\phi_i, \psi_i)$ is isomorphic to $\Cok_S(\alpha_i,\beta_i)$ over $R$ for all $1 \leq i \leq n$. Therefore, by Remark \ref{R3.8}(d) $(\phi_i, \psi_i)\sim (\alpha_i,\beta_i)$. As a result, it follows that
  $ (\phi , \psi ) \sim \bigoplus_{j=1}^n (\phi_j , \psi_j )\sim \bigoplus_{j=1}^n (\alpha_j ,\beta_j ) \sim (\alpha , \beta) $ and hence $\Cok_S(\phi , \psi )$
  is isomorphic to $\Cok_S(\alpha, \beta )$ as desired.
\end{proof}

%%%%%%%%%%%%%%%%%%%%%%%%%%%%%%%%%%%%%%%%%%%%%%%%%%%%%%%%%%%%%%
%%%%%%%%%%%%%%%%%%%%%%%%%%%%%%%%%%%%%%%%%%%%%%%%%%%%%%%%%%%%%%
%%%%%%%%%%%%%%%%%%%%%%%%%%%%%%%%%%%%%%%%%%%%%%%%%%%%%%%%%%%%%%

\chapter{Modules of finite F-representation type }
\label{Chapter: Modules of finite Frepresentation type}
In this chapter, all rings are   Noetherian of prime characteristic $p$  unless otherwise stated.

\section{Definition and examples}
\label{section:Definition and examples}

The notion of finite F-representation type was introduced by  K. Smith
and M. Van den Bergh in \cite{SV} for $F$-finite rings  over which the  Krull-Remak-Schmidit Theorem is satisfied, i.e,  they defined the notion of finite F-representation type for an  $F$-finite ring $R$ with the property that  every  finitely generated $R$-module can be written uniquely up to isomorphism as a direct sum of finitely many indecomposable
$R$-modules. However, Y.Yao in \cite{Y} generalized this notion to be defined for  finitely generated modules over Noetherian rings of prime characteristic $p$. After that,  S. Takagi and R. Takahashi    in \cite{TT} and T.Shibuta in  \cite{TS} studied  this notion under  the general  assumption made by Y.Yao that the ring is just  Noetherian  of prime characteristic $p$. In this thesis, we also adapt the same definition of  this notion under the same general assumptions for the ring as they appear  in \cite[Definition1.1]{Y} and \cite[Definition 2.1]{TS}.
\begin{definition}\label{D1}

%\begin{enumerate}
\emph{Let $R$ be a   ring.
 % \item [(a)]
  If $M$ , $M_1$, ... , $M_s$ are finitely generated $R$-modules, then  $M$ is said to have   finite F-representation type (henceforth abbreviated FFRT) by the $R$-modules $M_1$, ... , $M_s$ if for every positive integer $e$, the $R$-module $F_*^e(M)$ is isomorphic
to a finite direct sum of the $R$-modules $M_1,\ldots ,M_s$, that is, there exist
nonnegative integers $t_{(e,1)}, \ldots , t_{(e,s)}$ such that
 $$F_*^e(M)= \bigoplus_{j=1}^s M_j^{\oplus t_{(e,j)}}.$$
 In particular,  $R$ is said to have finite F-representation type if there exist finitely
generated $R$-modules $M_1,\ldots,M_s$ by which $R$ has finite F-representation
type.}
%  \item [(b)] Let $R = \bigoplus_{n \geq 0}R_n$ be a Noetherian graded ring of prime characteristic $p$. A  finitely generated graded $R$-module $M$ is said to have
% finite graded F-representation type by finitely generated $\mathbb{Q}$-
%graded $R$-modules $M_1, \ldots ,M_s$ if for every positive integer $e $, the $\mathbb{Q}$-graded $R$-module $F_*^e(M)$
%is isomorphic to a finite direct sum of the $\mathbb{Q}$-graded $R$-modules $M_1, \ldots ,M_s$,
%that is, there exist nonnegative integers $t_{ei}$ and rational numbers $\alpha_{ij}^{(e)}$ for all
%$1 \leq i \leq s$ and $1 \leq j \leq t_{ei}$ such that there exists a $\mathbb{Q}$-graded isomorphism of
%the form
%$$F_*^e(M)= \bigoplus_{i=1}^s\bigoplus_{j=1}^{t_{ei}} M_i(\alpha_{ij}^{(e)}) $$
%In particular case,   $R$ is said to  have finite graded F-representation type if there exist
%finitely generated $\mathbb{Q}$-graded $R$-modules $M_1, \ldots ,M_s$ by which $R$ has finite
%graded F-representation type.
%\end{enumerate}
\end{definition}

  S. Takagi and R. Takahashi    exhibit in \cite{TT} examples of rings with finite F-representation type.

 \begin{example}\label{Ex FFRT}
A ring $R$ has finite F-representation type in the following cases:
%(resp. finite graded F-representation type)

\begin{enumerate}

  \item [(a)] $R=K[x_1,...,x]$ or $R=K[\![x_1,...,x]\!]$ where $K$ is a field of prime characteristic $p$ with $[K:K^p]< \infty$ (Corollary \ref{L2.5}).
  \item [(b)](\cite[Observation 3.1.2]{SV})  $R$ is an    complete $F$-finite regular local
ring of prime characteristic $p > 0$ with  $[K:K^p]< \infty$ where $K$ is the residue field of $R$.
%(resp. a polynomial ring $k[x_1, \ldots ,x_d]$ over a
%field $k$ of characteristic $p > 0$ such that $[k : k^p] < \infty$).
  \item [(c)](\cite[Observation 3.1.3]{SV})  $R$ is a   Cohen-Macaulay $F$-finite local ring of prime characteristic $p$ with finite Cohen-Macaulay type.
  %(resp. a
%Cohen-Macaulay graded ring) of prime characteristic $p$ with finite representation type (resp. finite graded representation type).
  \item [(d)]\cite[Example1.3(ii) ]{TT} $R$ is an  Artinian  $F$-finite local ring of prime characteristic  $p$ with  $[K:K^p]< \infty$ where $K$ is the residue field of $R$.
  %(resp. $R =\bigoplus_
%{n \geq 0} R_n$ is an Artinian graded ring with $k := R_0$ a field of characteristic
%$p > 0$) such that $[k : k^p] < \infty$.
  \item [(e)]\cite[Theorem 1]{TS}  $R$ is a complete local  one-dimensional domain
of prime characteristic such that its residue field is algebraically closed or finite.
\end{enumerate}
\end{example}

T.Shibuta in his paper \cite{TS}  presents examples \cite[Example 3.3 and Example 3.4]{TS} of a complete local
one-dimensional domains which do not have finite F-representation type with a
perfect residue field.

\begin{remark}
  Notice from the definitions \ref{D1} and \ref{D2.6} that a Noetherian ring that has FFRT is F-finite. As a result, if $R$ is a Noetherian ring that is not F-finite, then $R$ does not have FFRT. For example, if $K=\mathbb{Z}/p\mathbb{Z}$ where $p$ is a prime integer and $R=K(x_1,x_2,...,x_n,...)$, then $F_*^e(R)$ is $R$-vector space with infinite basis (Corollary \ref{L2.5}(b)). This proves that $R$ is not F-finite and hence $R$ does not have $FFRT$ (K.Schwede and W.Zhang observed in \cite[Section 2]{SZ} that $R$ is not F-finite).
\end{remark}

\section{Several FFRT extensions of FFRT rings}
\label{section:Several FFRT extensions of FFRT rings}
If $R$ is a  ring that has FFRT,  Y.Yao in \cite{Y} observed that  localizations and  completions of $R$  both have FFRT. In this section, we prove this observation and that each of $R[x]$ and $R[\![x]\!]$ has FFRT.
\begin{proposition}\label{P4.2}
Let $R$ be  ring and $M$ be a finitely generated $R$-module. Assume that $M$ has FFRT on $R$. If $W$ is a multiplicative  closed set and $I$ is an ideal of $R$, then:

\begin{enumerate}
  \item [(a)] $W^{-1}M$ has FFRT on $W^{-1}R$.
  \item [(b)] $\widehat{M}_I$ has FFRT on $\widehat{R}_I$.
\end{enumerate}

\end{proposition}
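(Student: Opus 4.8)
The plan is to reduce both statements to the key fact that $M$ having FFRT means $F_*^e(M)$ decomposes as a finite direct sum drawn from a \emph{fixed} finite list $M_1,\dots,M_s$, and then to transport that decomposition across the flat base change (localization in (a), $I$-adic completion in (b)). The crucial commutation results are already available in the excerpt: Proposition \ref{L2.3} gives $F_*^e(W^{-1}M)\cong W^{-1}F_*^e(M)$ as $W^{-1}R$-modules, and Proposition \ref{L2.4} gives $F_*^e(\widehat{M}_I)\cong\widehat{F_*^e(M)}_I$ as $\widehat{R}_I$-modules. Since by hypothesis $F_*^e(M)\cong\bigoplus_{j=1}^s M_j^{\oplus t_{(e,j)}}$ for every $e$, the only thing left is that the chosen functor commutes with finite direct sums and sends each $M_j$ to a fixed finitely generated module over the new ring.

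For (a): apply $W^{-1}(-)$ to the isomorphism $F_*^e(M)\cong\bigoplus_{j=1}^s M_j^{\oplus t_{(e,j)}}$. Localization is an additive functor, so $W^{-1}F_*^e(M)\cong\bigoplus_{j=1}^s (W^{-1}M_j)^{\oplus t_{(e,j)}}$, and combined with Proposition \ref{L2.3} this gives $F_*^e(W^{-1}M)\cong\bigoplus_{j=1}^s (W^{-1}M_j)^{\oplus t_{(e,j)}}$. Each $W^{-1}M_j$ is a finitely generated $W^{-1}R$-module (localization of a f.g.\ module over a Noetherian ring), and the finite list $\{W^{-1}M_1,\dots,W^{-1}M_s\}$ does not depend on $e$. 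Hence $W^{-1}M$ has FFRT over $W^{-1}R$ by the $W^{-1}R$-modules $W^{-1}M_1,\dots,W^{-1}M_s$. I should also note $W^{-1}R$ is Noetherian and, since $R$ is F-finite (being a ring with FFRT), so is $W^{-1}R$, so the ambient hypotheses are preserved — though strictly the definition of FFRT as stated only needs finite generation of the summands.

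For (b): the argument is parallel. Apply $\widehat{(-)}_I$, which is an exact (in particular additive) functor on finitely generated modules over the Noetherian ring $R$ by Theorem \ref{thm 2.21}, to $F_*^e(M)\cong\bigoplus_{j=1}^s M_j^{\oplus t_{(e,j)}}$, obtaining $\widehat{F_*^e(M)}_I\cong\bigoplus_{j=1}^s (\widehat{M_j}_I)^{\oplus t_{(e,j)}}$; here I use that $I$-adic completion commutes with finite direct sums, which follows from Theorem \ref{thm 2.21}(a) since $\widehat{N}_I\cong N\otimes_R\widehat{R}_I$ and tensor commutes with finite direct sums. Combining with Proposition \ref{L2.4} gives $F_*^e(\widehat{M}_I)\cong\bigoplus_{j=1}^s (\widehat{M_j}_I)^{\oplus t_{(e,j)}}$. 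Each $\widehat{M_j}_I$ is a finitely generated $\widehat{R}_I$-module, and the list is independent of $e$, so $\widehat{M}_I$ has FFRT over $\widehat{R}_I$.

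I do not anticipate a genuine obstacle: the entire content has been front-loaded into Propositions \ref{L2.3} and \ref{L2.4} and the flatness/additivity statements in Theorem \ref{thm 2.21}. The only point requiring a word of care is making explicit that the finite indexing family $\{M_1,\dots,M_s\}$ is chosen once and for all (independently of $e$) so that its image $\{W^{-1}M_j\}_j$ resp.\ $\{\widehat{M_j}_I\}_j$ is again a single finite family witnessing FFRT for all $e$ simultaneously; this is immediate from Definition \ref{D1} but worth stating so the reader sees why "for every $e$" is preserved.
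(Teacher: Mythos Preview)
Your proposal is correct and follows essentially the same approach as the paper: both use Proposition~\ref{L2.3} (resp.\ Proposition~\ref{L2.4}) to commute $F_*^e$ with localization (resp.\ $I$-adic completion), then push the fixed FFRT decomposition $F_*^e(M)\cong\bigoplus_j M_j^{\oplus t_{(e,j)}}$ through the additive functor, invoking Theorem~\ref{thm 2.21} in the completion case. Your write-up is slightly more explicit about why completion commutes with finite direct sums and why the witness family is independent of $e$, but the logical content is identical.
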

\begin{proof}
Suppose that $M$ has  FFRT  by finitely generated
$R$-modules $M_1$, ... , $M_s$. If $e \in \mathbb{Z}^{+}$,  there exist
nonnegative integers $n_{(e,1)}, \ldots , n_{(e,s)}$ such that
 $$F_*^e(M)= \bigoplus_{j=1}^s M_j^{\oplus n_{(e,j)}}. $$

It follows from Proposition \ref{L2.3} and Proposition \ref{LLL2} that
\begin{equation*}
 F_*^e(W^{-1}M) = W^{-1}F_*^e(M)=\bigoplus_{j=1}^s (W^{-1}M_j)^{\oplus n_{(e,j)}}.
\end{equation*}
As a result, $W^{-1}M$ has  FFRT on $W^{-1}R$  by the finitely generated
$W^{-1}R$-modules $W^{-1}M_1$, ... , $W^{-1}M_s$. Furthermore, by Proposition \ref{L2.4} and Theorem \ref{thm 2.21}, we get that

\begin{equation*}
  F_*^e(\widehat{M}_I) = \widehat{F_*^e(M)}_I= \bigoplus_{j=1}^s (\widehat{(M_j)}_I)^{\oplus n_{(e,j)}}.
\end{equation*}
Therefore, we conclude that $\widehat{M}_I$ has  FFRT on $\widehat{R}_I$  by the finitely generated
$\widehat{R}_I$-modules $\widehat{M_1}_I$, ... , $\widehat{M_s}_I$.
\end{proof}

\begin{proposition}\label{P4.5}
Let $R$ be a  ring  and let $M$ be a finitely generated $R$-module.
If $M$ has FFRT over $R$, then $M[x]$ has FFRT over $R[x]$.
\end{proposition}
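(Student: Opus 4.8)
The plan is to reduce the statement about $M[x]$ over $R[x]$ to the given hypothesis that $M$ has FFRT over $R$, using the structural decomposition of $F_*^e(M[x])$ established earlier in Proposition~\ref{P2.41}. First I would recall that by Proposition~\ref{P2.41}(d), for every $e \in \mathbb{N}$ there is an isomorphism of $R[x]$-modules
\[
F_*^e(M[x]) \cong \bigl( F_*^e(M)[x] \bigr)^{\oplus q}, \qquad q = p^e.
\]
This is the key reduction: it expresses the Frobenius pushforward of $M[x]$ entirely in terms of the Frobenius pushforward of $M$, with the ``polynomial'' and ``$x$-grading'' bookkeeping already absorbed into the direct sum over $q$ copies.

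Next I would invoke the FFRT hypothesis on $M$: there exist finitely generated $R$-modules $M_1,\dots,M_s$ such that for each $e$ there are nonnegative integers $t_{(e,1)},\dots,t_{(e,s)}$ with $F_*^e(M) \cong \bigoplus_{j=1}^s M_j^{\oplus t_{(e,j)}}$. Applying the functor $(-)[x]$, which commutes with finite direct sums (since $N[x] \cong N \otimes_R R[x]$ by Remark~\ref{R4.3}(b), and tensoring commutes with direct sums), gives
\[
F_*^e(M)[x] \cong \Bigl( \bigoplus_{j=1}^s M_j^{\oplus t_{(e,j)}} \Bigr)[x] \cong \bigoplus_{j=1}^s \bigl( M_j[x] \bigr)^{\oplus t_{(e,j)}}
\]
as $R[x]$-modules. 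Combining this with the displayed isomorphism from Proposition~\ref{P2.41}(d) yields
\[
F_*^e(M[x]) \cong \bigoplus_{j=1}^s \bigl( M_j[x] \bigr)^{\oplus q\, t_{(e,j)}}.
\]
Since each $M_j$ is a finitely generated $R$-module, each $M_j[x]$ is a finitely generated $R[x]$-module by Remark~\ref{R4.3}(c). Therefore $M[x]$ has FFRT over $R[x]$ by the finitely generated $R[x]$-modules $M_1[x],\dots,M_s[x]$, which completes the argument.

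There is essentially no hard obstacle here; the proof is a bookkeeping reduction once Proposition~\ref{P2.41} is in hand. The only point requiring mild care is the compatibility of the isomorphism $F_*^e(M)[x] \cong M_k$ (and hence of the direct sum decomposition in Proposition~\ref{P2.41}) with the $R[x]$-module structures on both sides — but this compatibility was exactly what was verified in the proof of Proposition~\ref{P2.41}(c), so I may simply cite it. I would also note in passing that the same scheme, with Proposition~\ref{L2.34}(f),(g) in place of Proposition~\ref{P2.41}, will give the power series version $M[\![x]\!]$ over $R[\![x]\!]$, which is presumably the next step toward Theorem~\ref{C4.6}.
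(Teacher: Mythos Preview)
Your proof is correct and follows essentially the same route as the paper: invoke Proposition~\ref{P2.41}(d) to write $F_*^e(M[x])\cong (F_*^e(M)[x])^{\oplus q}$, apply the FFRT decomposition of $F_*^e(M)$, and pass through $(-)[x]\cong -\otimes_R R[x]$ via Remark~\ref{R4.3} to conclude that $M[x]$ has FFRT by $M_1[x],\dots,M_s[x]$. Your added remark on the power-series analogue is also in the spirit of the paper's next step (Theorem~\ref{C4.6}), though there the paper proceeds via completion rather than by a direct analogue of Proposition~\ref{P2.41}.
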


\begin{proof}
 If $e \in \mathbb{Z}^{+}$ and $q=p^e$, recall from  Proposition \ref{P2.41} that $$ F_*^e(M[x])= (F_*^e(M)[x])^{\oplus^{q}}.$$
Since $M$ has FFRT, there exist finitely generated $R$-modules $M_1,...,M_s$ and nonnegative integers $n_{(e,1)},...,n_{(e,s)}$ such that $$F_*^e(M)=\bigoplus_{j=1}^s M_j^{\oplus n_{(e,j)}}.$$
Tensoring with $R[x]$ both sides of the above equality and using the  Remark \ref{R4.3} yield that $$F_*^e(M)[x] =\bigoplus_{j=1}^s (M_j[x])^{\oplus n_{(e,j)}}.$$ As a result, it follows that $$F_*^e(M[x]) =\bigoplus_{j=1}^s (M_j[x])^{\oplus qn_{(e,j)}}.$$
Therefore, we conclude  that $ M[x]$ has FFRT by the finitely generated $R[x]$-modules $M_1[x],...,M_s[x]$.
\end{proof}

\begin{theorem}\label{C4.6}
Let $R$ be a  ring of  prime characteristic $p$. If $R$ has FFRT, then:
\begin{enumerate}
  \item [(a)] $R[x]$ has FFRT over $R[x]$.
  \item [(b)] $R[\![x]\!]$ has FFRT over $R[\![x]\!]$.
  \item[(c)] $R[x_1,\ldots,x_n]$ has FFRT over $R[x_1,\ldots,x_n]$.
  \item[(d)] $R[\![x_1,\ldots,x_n]\!]$ has FFRT over $R[\![x_1,\ldots,x_n]\!]$.
\end{enumerate}
\end{theorem}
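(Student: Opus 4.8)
The plan is to obtain all four statements from the two building blocks already established: Proposition \ref{P4.5} (if $M$ has FFRT over $R$, then $M[x]$ has FFRT over $R[x]$) and Proposition \ref{P4.2}(b) (FFRT passes to $I$-adic completions). First I would observe that the hypothesis ``$R$ has FFRT'' means precisely that the $R$-module $R$ has FFRT over $R$ in the sense of Definition \ref{D1}. Applying Proposition \ref{P4.5} with $M=R$ gives that $R[x]$ has FFRT as a module over $R[x]$, which is statement (a). For statement (c), I would iterate: since $R[x_1,\ldots,x_{n-1}]$ has FFRT over itself (by induction on $n$), Proposition \ref{P4.5} applied with the ring $R[x_1,\ldots,x_{n-1}]$ in place of $R$ and one more variable $x_n$ yields that $R[x_1,\ldots,x_{n-1}][x_n]=R[x_1,\ldots,x_n]$ has FFRT over itself; the base case $n=1$ is part (a). Note Proposition \ref{P4.5} is stated for an arbitrary ring $R$ of prime characteristic, so this iteration is legitimate.

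For the power series cases I would pass through completion. By Proposition \ref{EX1}, if $A=R[x]$ and $I=(x)$, then $\widehat{A}_I=R[\![x]\!]$. By part (a), $A=R[x]$ has FFRT over $A$, so Proposition \ref{P4.2}(b) gives that $\widehat{A}_I=R[\![x]\!]$ has FFRT over $\widehat{A}_I=R[\![x]\!]$, which is statement (b). For statement (d), I would again iterate, using at each step that $R[\![x_1,\ldots,x_{n-1}]\!]$ has FFRT over itself, then apply (b) with this ring in the role of the base ring and $x_n$ the new variable to conclude $R[\![x_1,\ldots,x_{n-1}]\!][\![x_n]\!]=R[\![x_1,\ldots,x_n]\!]$ has FFRT over itself; alternatively, one can apply Proposition \ref{EX1} directly with $A=R[x_1,\ldots,x_n]$ and $I=(x_1,\ldots,x_n)$, since by (c) this $A$ has FFRT over itself and $\widehat{A}_I=R[\![x_1,\ldots,x_n]\!]$. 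Either route works; I would pick the direct one to avoid a second induction.

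The only genuine subtlety, and the step I would be most careful with, is making sure the hypotheses of Propositions \ref{P4.5} and \ref{P4.2} are met at each stage — in particular that one is always allowed to take ``$R$'' in those propositions to be a polynomial or power series ring over the original coefficient ring, and that the module being tracked is the free rank-one module. Proposition \ref{P4.5} is stated for a general ring $M$ a finitely generated module, so there is no obstacle; Proposition \ref{P4.2}(b) is likewise stated for a general ring and a finitely generated module with $I$ an arbitrary ideal, and $R[x]$ is a finitely generated module over itself, so the application is clean. Thus the theorem is essentially a bookkeeping consequence of the two preceding propositions together with Proposition \ref{EX1}, and no new computation is required.
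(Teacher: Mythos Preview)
Your proposal is correct and follows essentially the same route as the paper: part (a) is Proposition \ref{P4.5} applied to $M=R$, part (b) combines (a) with Proposition \ref{EX1} and Proposition \ref{P4.2}(b), and parts (c) and (d) follow by iterating (a) and (b). The paper's proof is terser but identical in substance.
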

\begin{proof}
(a) This is a particular case of Proposition \ref{P4.5}.\\
(b) Recall from Proposition \ref{EX1} that if $I=(x)R[x]$, then $\widehat{R[x]}_I= R[\![x]\!]$. Now apply the above result and Proposition \ref{P4.2}.\\
(c) and (d) follows from (a) and (b).
\end{proof}

\section{Modules of FFRT by FFRT system}
\label{section: Rings of FFRT by FFRT system}

The notion of FFRT system was introduced by Yao in his paper \cite{Y} as follows:

\begin{definition}
\emph{A finite set $\Gamma$ of   finitely generated $R$-modules is said to be a finite $F$-representation type System  (or FFRT system)  if  for every $N \in \Gamma$, $F_*^1(N)$ can be written as a finite direct sum whose direct summands are all taken from $\Gamma$.
We say that $M$ has FFRT by a FFRT system if there exists  a FFRT system $\Gamma  $  such that $M$ has FFRT by $\Gamma  $.}
\end{definition}
Let $R$ be a Noetherian ring not necessarily of prime characteristic.  A class $\textit{C}(R)$  of finitely generated $R$-modules is called reasonable if it satisfies that  every $R$-module
that is isomorphic to a direct summand of a module in $\textit{C}(R)$ is in $\textit{C}(R)$. The notion of reasonable class was introduced by R.Wiegand  in his paper \cite[Section 1]{RW} who proved the following useful theorem.

\begin{proposition}\label{T1}\cite[Theorem 1.4.]{RW}
Let $R$ be a Noetherian semilocal ring not necessarily of prime characteristic,i.e. $R$ has finitely many maximal ideals, and let $S$ be  a faithfully flat $R$-algebra (Definition \ref{D2.7}).  Let $\textit{C}(R)$  and $\textit{C}(S)$  be reasonable classes of modules such that
$S\otimes_{R}M \in \textit{C}(S)$ for all $M \in \textit{C}(R)$. If $\textit{C}(S)$ contains only finitely many
indecomposable modules up to isomorphism, the same holds for $\textit{C}(R)$.
\end{proposition}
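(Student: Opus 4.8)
The plan is to reduce the assertion to two finiteness statements about the base-change operation $M\mapsto S\otimes_R M$. Since $\textit{C}(S)$ is reasonable, the hypothesis that it has only finitely many indecomposables up to isomorphism means there are finitely generated $S$-modules $N_1,\dots,N_r$ such that every module in $\textit{C}(S)$ is isomorphic to $\bigoplus_{i=1}^r N_i^{\oplus a_i}$ for suitable $a_1,\dots,a_r\in\mathbb{Z}_{+}$. Hence for every finitely generated $M\in\textit{C}(R)$ the hypothesis $S\otimes_R M\in\textit{C}(S)$ yields a well-defined invariant $\Phi(M)=(a_1(M),\dots,a_r(M))$, characterised by $S\otimes_R M\cong\bigoplus_i N_i^{\oplus a_i(M)}$. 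Because $\textit{C}(R)$ is reasonable, every module in it is a direct sum of indecomposables of $\textit{C}(R)$, so it suffices to bound those; and for that it is enough to prove: (i) $\Phi$ takes only finitely many values on the indecomposable finitely generated modules of $\textit{C}(R)$; and (ii) $M\mapsto[S\otimes_R M]$ is finite-to-one on isomorphism classes of finitely generated $R$-modules. Granting (i) and (ii), the indecomposables of $\textit{C}(R)$ fall into finitely many isomorphism classes, which is the claim.

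For (ii) I would pass to the completion. Let $J$ be the Jacobson radical of $R$ and $\widehat{R}=\widehat{R}_J$; by Theorem \ref{thm 2.21}(b) the ring $\widehat{R}$ is a faithfully flat $R$-algebra, and it is a finite product of complete local Noetherian rings, so the Krull--Remak--Schmidt theorem holds for its finitely generated modules (Discussion \ref{disc2.33}). Since $\widehat{M}_J$ is the product of the $\mathfrak{m}_i$-adic completions of the localizations $M_{\mathfrak{m}_i}$ at the maximal ideals of $R$, and isomorphism of finitely generated modules over a semilocal ring can be tested at the maximal ideals, Proposition \ref{Pro2.22}(b) gives: $M\cong N$ if and only if $\widehat{M}_J\cong\widehat{N}_J$ over $\widehat{R}$. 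Now $\widehat{R}\otimes_R(S\otimes_R M)\cong(\widehat{R}\otimes_R S)\otimes_{\widehat{R}}\widehat{M}_J$, so $S\otimes_R M\cong S\otimes_R N$ forces the $(\widehat{R}\otimes_R S)$-modules $(\widehat{R}\otimes_R S)\otimes_{\widehat{R}}\widehat{M}_J$ and $(\widehat{R}\otimes_R S)\otimes_{\widehat{R}}\widehat{N}_J$ to be isomorphic, and it suffices to establish the finite-to-one statement over the \emph{complete} semilocal ring $\widehat{R}$. There one writes $\widehat{M}_J\cong\bigoplus_j L_j^{\oplus c_j}$ with the $L_j$ indecomposable, observes that base-changing to $\widehat{R}\otimes_R S$ recovers the multiset $\{(L_j,c_j)\}$ up to a controlled ambiguity, and deduces that $\widehat{M}_J$, hence $M$, ranges over only finitely many classes. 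This is the point at which the Noetherian semilocal hypothesis is used decisively.

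For (i) — which I expect to be the main obstacle — the task is to bound the total number $\sum_i a_i(M)$ of indecomposable summands of $S\otimes_R M$ uniformly as $M$ runs over the indecomposable finitely generated modules of $\textit{C}(R)$. Since $M$ is finitely presented over the Noetherian ring $R$ and $S$ is flat, $\Hom$ commutes with this base change and $\mathrm{End}_S(S\otimes_R M)\cong S\otimes_R\Lambda$ with $\Lambda=\mathrm{End}_R(M)$ a module-finite $R$-algebra; the number of indecomposable summands of $S\otimes_R M$ equals the size of a maximal family of orthogonal primitive idempotents of $S\otimes_R\Lambda$. Indecomposability of $M$ makes $\Lambda$ connected (its only idempotents are $0$ and $1$), and one must bound the number of primitive idempotents that can occur in a faithfully flat base change of a connected module-finite algebra over a semilocal Noetherian ring, for instance by comparing with the semisimple Artinian quotient $\Lambda/\rad(\Lambda)$ and its simple factors. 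The genuinely delicate part is to make this bound \emph{uniform} in $M$; this is where the finite-representation-type hypothesis on $\textit{C}(S)$ re-enters, since it forces bounded representation type of $\textit{C}(S)$ (a common bound $b$ on $\mu(N_i)$), and together with the identity $\mu_R(M)=\mu_S(S\otimes_R M)$ (faithful flatness preserves minimal numbers of generators) this feeds a minimality argument that pins $\Phi(M)$ into a finite set.

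Putting (i) and (ii) together, the indecomposable finitely generated modules in $\textit{C}(R)$ lie in finitely many isomorphism classes, as required. The main risk in this route is step (i): the endomorphism-ring bound is straightforward for a single $M$ but not obviously uniform, so the definitive argument may instead extract the uniform bound in a different way, exploiting the representation generator $\bigoplus_i N_i$ of $\textit{C}(S)$ in concert with the finite-fiber statement (ii) — descending the information ``how $S\otimes_R M$ sits inside a fixed large $\textit{C}(S)$-module'' back to $R$.
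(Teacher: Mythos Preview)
The paper does not prove this proposition at all: it is stated with the citation \cite[Theorem 1.4.]{RW} and no argument is given, the text moving directly to Theorem~\ref{P1}. So there is no ``paper's own proof'' to compare against; the result is imported from Wiegand's paper as a black box.

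As for your sketch on its own merits: the overall architecture---pass to the $J$-adic completion to gain Krull--Remak--Schmidt, record the multiplicity vector $\Phi(M)$ of $S\otimes_R M$, and show both that $\Phi$ takes finitely many values on indecomposables and that base change is finite-to-one---is indeed the shape of Wiegand's argument. But you have correctly identified that your step~(i) is not complete: the endomorphism-ring/idempotent count gives a bound for each fixed $M$, not a uniform one, and the sentence about ``bounded representation type'' and $\mu_R(M)=\mu_S(S\otimes_R M)$ does not actually close the gap (why should $\mu_R(M)$ be bounded for indecomposable $M$?). In Wiegand's proof the uniform bound on the number of indecomposable $S$-summands of $S\otimes_R M$ for indecomposable $M$ comes from a genuinely different source---a divisor-theoretic/monoid argument on the class of direct-sum decompositions over a semilocal base---not from an endomorphism-ring count. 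Your step~(ii) also contains a real gap: the phrase ``recovers the multiset $\{(L_j,c_j)\}$ up to a controlled ambiguity'' is the entire content of the finite-fiber statement and cannot be waved through; over a complete semilocal base this is where one invokes a cancellation/uniqueness argument for direct-sum decompositions after faithfully flat base change, which again is nontrivial.

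In short: there is nothing in the paper to compare to, and your proposal is a reasonable outline of Wiegand's strategy but, as you yourself flag, it does not supply the two key finiteness inputs. If you want a self-contained proof you should consult \cite{RW} directly.
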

\begin{theorem}\label{P1}
Let $R$ be a  local ring. If a finitely generated $R$-module  $M$ has FFRT, then $M$ has FFRT by a FFRT system.
\end{theorem}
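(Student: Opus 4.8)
The plan is to exploit the completion functor together with the fact (Theorem~\ref{thm 2.21}(b)) that for a local ring $(R,\mathfrak m)$ the $\mathfrak m$-adic completion $\widehat R_{\mathfrak m}$ is faithfully flat over $R$, and to use R.~Wiegand's descent result (Proposition~\ref{T1}) to pull back a FFRT system from $\widehat R_{\mathfrak m}$ to $R$. First I would pass to the completion: by Proposition~\ref{P4.2}(b), if $M$ has FFRT over $R$ then $\widehat M_{\mathfrak m}$ has FFRT over $\widehat R_{\mathfrak m}$. Since $\widehat R_{\mathfrak m}$ is a \emph{complete} Noetherian local ring, the Krull--Remak--Schmidt theorem holds there (Discussion~\ref{disc2.33}(a)), so every finitely generated $\widehat R_{\mathfrak m}$-module decomposes uniquely into indecomposables; consequently a module over $\widehat R_{\mathfrak m}$ with FFRT automatically has FFRT by a FFRT system, namely the (finite, by uniqueness of KRS decompositions) set $\widehat\Gamma$ of all indecomposable summands occurring in the modules $F_*^e(\widehat M_{\mathfrak m})$ for all $e$ — this is finite because FFRT gives finitely many $\widehat R_{\mathfrak m}$-modules $N_1,\dots,N_s$ whose summands suffice, and each $N_i$ has a finite indecomposable decomposition. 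One checks $\widehat\Gamma$ is closed under taking indecomposable summands of $F_*^1(-)$ of its members, using Remark~\ref{R2.29}(h),(j) (exactness and compatibility of $F_*^1$ with finite direct sums).

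Next I would produce the FFRT system over $R$ itself. Let $M_1,\dots,M_s$ be the finitely many finitely generated $R$-modules by which $M$ has FFRT, and form the reasonable class $\textit{C}(R)$ generated by $\{M, M_1,\dots,M_s\}$ together with $F_*^e(M_i)$ for all $e\ge 0$ and all $i$ — more precisely, let $\textit{C}(R)$ be the class of all finitely generated $R$-modules isomorphic to a direct summand of a finite direct sum of modules of the form $F_*^{e}(M_i)$. This is reasonable by construction. Set $\textit{C}(\widehat R_{\mathfrak m})$ to be the analogous class generated over $\widehat R_{\mathfrak m}$; since completion commutes with $F_*^e$ (Proposition~\ref{L2.4}) and with finite direct sums and direct summands (Proposition~\ref{Pro2.22}(a), Theorem~\ref{thm 2.21}(a)), we have $\widehat R_{\mathfrak m}\otimes_R N\in \textit{C}(\widehat R_{\mathfrak m})$ for every $N\in\textit{C}(R)$. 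Because $\widehat M_{\mathfrak m}$ has FFRT over $\widehat R_{\mathfrak m}$ and that ring is complete local, the discussion above shows $\textit{C}(\widehat R_{\mathfrak m})$ contains only finitely many indecomposables up to isomorphism. Since $R$ is local (hence semilocal) and $\widehat R_{\mathfrak m}$ is faithfully flat over $R$, Proposition~\ref{T1} yields that $\textit{C}(R)$ also contains only finitely many indecomposable modules up to isomorphism; call this finite set $\Gamma$.

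Finally I would verify that $\Gamma$ is a FFRT system and that $M$ has FFRT by $\Gamma$. Every $F_*^e(M)$ is, by hypothesis, a finite direct sum of the $M_i$, hence lies in $\textit{C}(R)$, hence (using that $R$ is Noetherian local, so finitely generated modules decompose into finitely many indecomposables, Discussion~\ref{disc2.33}(a)) is a finite direct sum of members of $\Gamma$; so $M$ has FFRT by $\Gamma$. For any $N\in\Gamma$, the module $F_*^1(N)$ is again in $\textit{C}(R)$ (it is $F_*^1$ of a direct summand of a sum of $F_*^e(M_i)$'s, hence a direct summand of $F_*^{e+1}(M_i)$'s by Remark~\ref{R2.29}(g),(h),(j)), so its indecomposable summands all lie in $\Gamma$; thus $\Gamma$ is closed under $F_*^1$ and is a genuine FFRT system. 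The main obstacle I anticipate is the bookkeeping in setting up the two reasonable classes so that the hypothesis ``$S\otimes_R M\in\textit{C}(S)$ for all $M\in\textit{C}(R)$'' of Proposition~\ref{T1} holds on the nose — this requires carefully invoking that completion commutes with $F_*^e$, with finite direct sums, and with the operation of passing to a direct summand, all of which are available from Proposition~\ref{L2.4}, Remark~\ref{R2.29}, Proposition~\ref{Pro2.22}, and Theorem~\ref{thm 2.21}.
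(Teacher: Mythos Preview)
Your proposal is correct and follows essentially the same route as the paper: pass to the completion, use Krull--Remak--Schmidt there to see that $\widehat{M}$ has FFRT by a FFRT system, set up matching reasonable classes over $R$ and $\widehat{R}$ closed under $F_*^e$ and direct summands, and then descend the finiteness of indecomposables via Wiegand's result (Proposition~\ref{T1}). The only cosmetic difference is that the paper builds its reasonable class $L_R(\Gamma)$ by iterating ``take direct summands of $F_*^1(-)$'' starting from $\Gamma$, whereas you take direct summands of finite sums of $F_*^e(M_i)$ in one shot; both constructions yield the same class and the verification that completion carries one into the other is the same computation (and, as in the paper, implicitly uses that each $M_i$ may be assumed to actually occur in some $F_*^e(M)$).
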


\begin{proof}
Let $\mathfrak{m}$ be the maximal ideal of $R$ and let $\hat{R}$  (respectively$\hat{M}$) denote the $\mathfrak{m}$-adic completion of $R$ (respectively $M$).  Recall from Proposition \ref{thm 2.21} that   $\hat{R}$  is a faithfully
flat $R$-algebra. Assume that $\Gamma$ is a finite set of finitely generated $R$-modules such that $M$ has FFRT by $\Gamma$. If  $\hat{\Gamma}= \{ \hat{N} \, | \, N \in \Gamma \}$, then $\hat{M}$ has FFRT by $\hat{\Gamma}$ (Proposition \ref{P4.2}). Now let $ L_{\hat{R}}^0(\hat{\Gamma})= \hat{\Gamma}$ and let   $ L_{\hat{R}}^e (\hat{\Gamma})$ denote the set of all direct summands of $F_*^e(N)$ for all $N \in  L_{\hat{R}}^{e-1} (\hat{\Gamma)}$. Set  $ L_{\hat{R}} (\hat{\Gamma})= \bigcup_{e \in \mathbb{N}}L_{\hat{R}}^e (\hat{\Gamma})$. Similarly, we define  $ L_R (\Gamma)= \bigcup_{e \in \mathbb{N}}L_R^e (\Gamma)$ where  $ L_R^e (\Gamma)$ is  the set of all direct summands of $F_*^e(N)$ for all $N \in  L_R^{e-1} (\Gamma)$ and $ L_R^0(\Gamma)= \Gamma$. Therefore, $L_{\hat{R}} (\hat{\Gamma})$ and $ L_R (\Gamma)$ are reasonable classes of modules. We aim now to show that
$ \hat{R}\otimes_{R}N \in L_{\hat{R}} (\hat{\Gamma})$ for all $N \in L_{\hat{R}} (\hat{\Gamma})$. For this purpose, we show that $ \hat{R}\otimes_{R}N \in L_{\hat{R}}^e(\hat{\Gamma})$ whenever  $N \in  L_{R}^e(\Gamma)$ for all $e \in \mathbb{Z}_{+}$. The case when $e=0$ is obvious. If $N \in  L_{R}^e(\Gamma)$, then $N$ is a direct summand of $F_*^e(H)$  for some $H \in L_{R}^{e-1}(\Gamma)$ and hence $\hat{R}\otimes_{R}N$ is a direct summand of $\hat{R}\otimes_{R}F_*^e(H)$. Notice from Proposition \ref{thm 2.21} and Proposition \ref{L2.4} that $\hat{R}\otimes_{R}F_*^e(H)= \widehat{F_*^e(H)}=F_*^e(\widehat{H})=F_*^e(\hat{R}\otimes_{R}H)$. The induction hypothesis implies that $\hat{R}\otimes_{R}H \in L_{\hat{R}}^{e-1}(\hat{\Gamma})$ and consequently $ \hat{R}\otimes_{R}N \in L_{\hat{R}}^e(\hat{\Gamma})$. Since $\widehat{M}$ has FFRT and $\hat{R}$ satisfies Krull-Schmidt theorem on the class of all finitely generated $R$-modules (Discussion \ref{disc2.33}), it follows  that $\widehat{M}$ has FFRT by FFRT system and hence  $L_{\hat{R}} (\hat{\Gamma})$ contains only finitely many indecomposable modules up to isomorphism and consequently by Proposition \ref{T1} the same holds for $ L_R (\Gamma)$. Now let $\{ M_1, ... , M_s \}$ be the  set of representatives for the isomorphism classes of those indecomposable modules in $ L_R (\Gamma)$. Therefore, every  $R$-module  $N \in \Gamma$ can be written as a finite direct sum whose direct summands are all taken from $\{ M_1, ... , M_s \}$ and hence $M$ has FFRT by $\{ M_1, ... , M_s \}$. Furthermore,  $F_*^1(M_j)$ can be written as a finite direct sum whose direct summands are all taken from $\{M_1, ... , M_s\}$ which makes $\{ M_1, ... , M_s \}$ a  FFRT system.  This proves that $M$ has FFRT by the FFRT system $\{ M_1, ... , M_s \}$.
\end{proof}

\section{FFRT locus of a module is an open set}
\label{section: FFRT locus of a module is an open set}
Let $R$ be a  ring and $M$ be a finitely generated $R$-module.

The FFRT  locus of $M$ is the set
$$FFRT(M):=\{Q \in \Spec(R) \;|\; M_Q \text{ has } FFRT \text{ over } R_Q\}.$$ In this section, we will prove that $FFRT(M)$ is an open set in the Zariski topology on $\Spec(R)$.

The following Lemma is essential to prove the main result in this section.
\begin{lemma}\label{L4}
Let $R$ be a ring and  $M$   a finitely generated $R$-module. If  $Q$ is  a prime ideal such  that $M_Q$ has Finite F-representation type over $R_Q$, then  $M_u$ has Finite F-representation type over $R_u$ for some $u \in R \setminus Q$.

\end{lemma}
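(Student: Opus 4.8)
The plan is to convert the genuinely infinite amount of data hidden in the statement that $M_Q$ has FFRT over $R_Q$ (namely one decomposition of $F_*^e(M_Q)$ for every $e$) into finitely many isomorphisms, spread each of them out to a basic open neighbourhood of $Q$, and then rebuild the FFRT structure over the resulting localization.

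First I would localize at $Q$: the ring $R_Q$ is Noetherian and local and $M_Q$ has FFRT over it, so by Theorem~\ref{P1} the module $M_Q$ has FFRT \emph{by a FFRT system} $\Gamma=\{N_1,\dots,N_s\}$ of finitely generated $R_Q$-modules. Unwinding the definitions, all the information we need is packaged into the following $s+1$ isomorphisms of $R_Q$-modules: $F_*^1(M_Q)\cong\bigoplus_{j=1}^{s}N_j^{\oplus a_j}$, and, for each $1\le i\le s$, $F_*^1(N_i)\cong\bigoplus_{j=1}^{s}N_j^{\oplus b_{ij}}$, for suitable nonnegative integers $a_j$ and $b_{ij}$.

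Next I would descend the data to $R$. Since $R$ is Noetherian, each finitely generated $R_Q$-module $N_j$ is isomorphic to $(P_j)_Q$ for some finitely generated $R$-module $P_j$. By Proposition~\ref{L2.3} we have $F_*^1(M_Q)\cong\bigl(F_*^1(M)\bigr)_Q$ and $F_*^1(N_i)\cong\bigl(F_*^1(P_i)\bigr)_Q$; and since $F_*^1$ preserves finite generation of $R$-modules (Remark~\ref{C2.9}, the ring being F-finite), both $F_*^1(M)$ and every $F_*^1(P_i)$ are finitely generated over $R$. Hence each of the $s+1$ isomorphisms above is an isomorphism between localizations at $W=R\setminus Q$ of finitely generated $R$-modules, and Lemma~\ref{LL2} yields elements $u_0,u_1,\dots,u_s\in R\setminus Q$ such that the $0$-th isomorphism already holds after inverting $u_0$ and the $i$-th after inverting $u_i$. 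Setting $u=u_0u_1\cdots u_s$, which lies in $R\setminus Q$ because $Q$ is prime, and using Proposition~\ref{LLL2}(b) to further localize (each $R_u$ being a localization of $R_{u_i}$) together with Proposition~\ref{L2.3} to identify $F_*^1\bigl((-)_u\bigr)$ with $\bigl(F_*^1(-)\bigr)_u$, I obtain over $R_u$, writing $\widetilde{N}_j:=(P_j)_u$, that $F_*^1(M_u)\cong\bigoplus_{j}\widetilde{N}_j^{\oplus a_j}$ and $F_*^1(\widetilde{N}_i)\cong\bigoplus_{j}\widetilde{N}_j^{\oplus b_{ij}}$ for every $i$.

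Finally I would check that $\{\widetilde{N}_1,\dots,\widetilde{N}_s\}$ is a FFRT system over $R_u$ by which $M_u$ has FFRT: the last displayed isomorphisms are exactly the case $e=1$ and the defining property of a FFRT system, and an induction on $e$ — using $F_*^{e+1}=F_*^1\circ F_*^e$ (Remark~\ref{R2.29}(a)) and $F_*^1\bigl(\bigoplus_j\widetilde{N}_j^{\oplus c_j}\bigr)\cong\bigoplus_j F_*^1(\widetilde{N}_j)^{\oplus c_j}$ (Remark~\ref{R2.29}(j)) — shows that $F_*^e(M_u)$ is a finite direct sum of the $\widetilde{N}_j$ for every $e\ge1$; so $M_u$ has FFRT over $R_u$. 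The only genuinely substantive step is the very first one: before repackaging the infinitely many decompositions as the finitely many isomorphisms above there is no single element of $R\setminus Q$ that one could hope to invert, so the passage through Theorem~\ref{P1} — not the subsequent localization bookkeeping — is the heart of the argument, and the place one must be careful is ensuring the relevant Frobenius pushforwards are finitely generated over $R$ so that Lemma~\ref{LL2} applies.
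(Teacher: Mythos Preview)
Your proposal is correct and follows essentially the same route as the paper's proof: invoke Theorem~\ref{P1} to upgrade FFRT of $M_Q$ to FFRT by a FFRT system, spread out the finitely many resulting isomorphisms via Lemma~\ref{LL2}, take the product of the witnessing elements, and then induct on $e$. Your version is in fact slightly more explicit than the paper's in two places: you spell out the descent of the $R_Q$-modules $N_j$ to finitely generated $R$-modules $P_j$ (the paper simply writes the system as $\{(M_1)_Q,\dots,(M_t)_Q\}$ without comment), and you flag that $F_*^1(M)$ and $F_*^1(P_i)$ must be finitely generated over $R$ for Lemma~\ref{LL2} to apply, which the paper uses silently.
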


\begin{proof}
Let $Q $ be a prime ideal  for which $M_Q$ has Finite F-representation type. It follows from Theorem \ref{P1}  that $M_Q$ has FFRT by a FFRT $\{{M_1}_Q,...,{M_t}_Q \}$ where $M_1,...,M_t$ are finitely generated $R$-modules.   As a result, for  every positive integer $e$ and every $j \in \{ 1,...,t \}$ there exist nonnegative integers $\alpha(1,j)$ and $\beta(i,j)$ for all $ 1 \leq i,j \leq t$ such that
\begin{equation*}
 F_*^1(M)_Q=F_*^1(M_Q)= \bigoplus_{j=1}^t[(M_j)_Q]^{\oplus\alpha(1,j)} = [\bigoplus_{j=1}^tM_j^{\oplus\alpha(1,j)}]_Q
\end{equation*}
and
\begin{equation*}
F_*^1(M_i)_Q=F_*^1(({M_i})_Q)= \bigoplus_{j=1}^t[(M_j)_Q]^{\oplus\beta(i,j)} = [\bigoplus_{j=1}^tM_j^{\beta(i,j)}]_Q  \text{ for all } 1 \leq i \leq t.
\end{equation*}
By lemma \ref{LL2}, there exist $s, s_1,...,s_t \in R \setminus Q$ such that

\begin{equation*}
 F_*^1(M)_s=F_*^1(M_s)= \bigoplus_{j=1}^t[(M_j)_s]^{\oplus\alpha(1,j)} = [\bigoplus_{j=1}^tM_j^{\oplus\alpha(1,j)}]_s
\end{equation*}
and
\begin{equation*}
F_*^1(M_i)_{s_i}=F_*^1(({M_i})_{s_i})= \bigoplus_{j=1}^t[(M_j)_{s_i}]^{\oplus\beta(i,j)} = [\bigoplus_{j=1}^tM_j^{\oplus\beta(i,j)}]_{s_i}  \text{ for all } 1 \leq i \leq t.
\end{equation*}
Let $u=ss_1...s_t$. We will prove by the induction on $e\geq 1$ that $F_*^e(M_u)$ can be written as a direct sum with direct summand taken from $\{(M_1)_u,\ldots,(M_t)_u \}$.

It follows from Proposition \ref{LLL2} and the above equations that

\begin{equation} \label{Equ1}
 F_*^1(M)_u=F_*^1(M_u)= \bigoplus_{j=1}^t[(M_j)_u]^{\oplus\alpha(1,j)} = [\bigoplus_{j=1}^tM_j^{\oplus\alpha(1,j)}]_u
\end{equation}
and
\begin{equation*}
F_*^1(M_i)_u=F_*^1(({M_i})_u)= \bigoplus_{j=1}^t[(M_j)_u]^{\oplus\beta(i,j)} = [\bigoplus_{j=1}^tM_j^{\oplus\beta(i,j)}]_u  \text{ for all } 1 \leq i \leq t.
\end{equation*}

Now assume that $  F_*^e(M_u)= \bigoplus_{i=1}^t[(M_i)_u]^{\oplus\alpha(e,i)}$  where $\alpha(e,i)$ is nonnegative  for all $ 1 \leq i \leq t$.  By Equation \ref{Equ1}, it follows that
\begin{eqnarray*}
% \nonumber to remove numbering (before each equation)
   F_*^{e+1}(M_u) &=& F_*^1[\bigoplus_{i=1}^t[(M_i)_u]^{\oplus\alpha(e,i)}] \\
    &=& \bigoplus_{i=1}^tF_*^1[({M_i})_u]^{\oplus\alpha(e,i)} \\
    &=& \bigoplus_{i=1}^t[\bigoplus_{j=1}^t[(M_j)_u]^{\oplus\beta(i,j)}]^{\oplus\alpha(e,i)}
\end{eqnarray*}
This induction on $e$ proves that $M_u$ has Finite F-representation type over $R_u$.
\end{proof}

For every  $u \in R$ where $R$ is a ring, recall that $V(u)$ denote the set of all prime ideals $P$ containing $u$ and let $D(u)= \Spec(R)\backslash V(u)$. Recall that the collection $\{D(u) \, | \, u \in R \}$ forms a basis of open
sets for the Zariski topology on $\Spec(R)$ (cf \cite[Section 4]{Mat}).

\begin{theorem}\label{P4.11}
 If $R$ is a ring and  $M$ is  a finitely generated $R$-module,  then the FFRT locus of $M$  is an open set in the Zariski topology on $\Spec(R)$.
\end{theorem}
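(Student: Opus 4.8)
The plan is to show that $FFRT(M)$ contains, with each of its points, a basic open neighbourhood, which immediately gives openness. The key tool is Lemma \ref{L4}, which converts the local FFRT hypothesis at a prime into FFRT after inverting a single element outside that prime.

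First I would take an arbitrary prime $Q \in FFRT(M)$, so by definition $M_Q$ has FFRT over $R_Q$. Applying Lemma \ref{L4}, there exists $u \in R \setminus Q$ such that $M_u$ has FFRT over $R_u$. The next step is to observe that $D(u) = \Spec(R) \setminus V(u)$ is an open neighbourhood of $Q$ (indeed $Q \in D(u)$ precisely because $u \notin Q$), and that the $D(u)$ form a basis for the Zariski topology as recalled just before the statement.

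The heart of the argument is then to check that $D(u) \subseteq FFRT(M)$. For this, let $P \in D(u)$, i.e. $u \notin P$, so that $P R_u$ is a prime of $R_u$ and $R_P \cong (R_u)_{P R_u}$, $M_P \cong (M_u)_{P R_u}$ by the transitivity of localization (Proposition \ref{LLL2}(a),(b)). Since $M_u$ has FFRT over $R_u$, Proposition \ref{P4.2}(a) (localization of an FFRT module has FFRT) applied to the multiplicative set $R_u \setminus P R_u$ shows that $(M_u)_{P R_u}$ has FFRT over $(R_u)_{P R_u}$; transporting along the isomorphisms, $M_P$ has FFRT over $R_P$, so $P \in FFRT(M)$. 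Hence $Q \in D(u) \subseteq FFRT(M)$, and since $Q$ was arbitrary, $FFRT(M)$ is a union of basic open sets, thus open.

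I do not expect a serious obstacle here: all the real content is already packaged in Lemma \ref{L4} (which in turn rests on Theorem \ref{P1} and Lemma \ref{LL2}) and in the stability of FFRT under localization (Proposition \ref{P4.2}). The only point that needs a little care is the bookkeeping of iterated localizations — making sure that "$M_u$ has FFRT over $R_u$" genuinely localizes further to "$M_P$ has FFRT over $R_P$" for every $P \in D(u)$ — but this is exactly what Proposition \ref{LLL2} and Proposition \ref{P4.2} deliver, so it is routine.
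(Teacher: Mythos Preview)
Your proposal is correct and follows essentially the same route as the paper: invoke Lemma~\ref{L4} to pass from the prime $Q$ to a single element $u\in R\setminus Q$ with $M_u$ having FFRT over $R_u$, then use Proposition~\ref{LLL2}(a) together with Proposition~\ref{P4.2}(a) to conclude $D(u)\subseteq FFRT(M)$. The only cosmetic difference is that the paper mentions the empty case separately, which your formulation handles implicitly.
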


\begin{proof}
   If $FFRT(M)$ is empty, $FFRT(M)$ is open. Assume now that $FFRT(M)$ is not empty. If $Q \in FFRT(M) $, there exists by lemma \ref{L4} an element $u \in R \setminus Q$ such that $M_u$ has FFRT over $R_u$ and hence by Proposition \ref{LLL2}(a) and Proposition \ref{P4.2}  $M_P$ has FFRT over $R_P$ for all $P \in \textit{D}(u)$. This proves that  $ \textit{D}(u) \subseteq  FFRT(M)$ and hence $FFRT(M)$ is open set in the Zariski topology on $\Spec(R)$.
\end{proof}
%%%%%%%%%%%%%%%%%%%%%%%%%%%%%%%%%%%%%%%%%%%%%%%%%%%%%%%%%%%%%%

\chapter{On the FFRT over hypersurfaces}
\label{chapter:The finite F-representation type over hypersurfaces}

 Let $S=K[x_1,...,x_n]$ or $S=K[\![x_1,...,x_n]\!]$ where $K$ is a field of prime characteristic $p$ with $[K:K^p]< \infty$ and  $R=S/fS$ for some $f\in S$, in this chapter we introduce  a presentation of $F_*^e(R)$  as a cokernel of a matrix factorization of $f$ that is denoted  $M_S(f,e)$.  The properties of this presentation and its applications to the concept of finite F-representation type will be considered in this chapter.

\section{The presentation of  $F_*^e(S/fS)$ as a cokernel of a Matrix Factorization of $f$}
\label{section:The presentation of  Fe as a cokernel of a Matrix Factorization of f}
Throughout the rest of this thesis, unless otherwise mentioned, we will adopt the following notation:
\begin{notation}\label{N4.1}
\emph{$K$ will denote a  field  of prime characteristic $p$ with $[K:K^p] < \infty $, and we set $q=p^e$ for some $e \geq 1$. $S$ will denote the ring  $K[x_1,\dots,x_n]$ or $K[\![x_1,\dots,x_n]\!]$. Let $\Lambda_e$ be  a basis of $K$ as $K^{p^e}$-vector space.   We set $$ \Delta_e := \{\lambda x_1^{a_1}\dots x_n^{a_n}\,|\, 0 \leq a_i \leq p^e-1 \text{ for all } 1\leq i \leq n \text{ and } \lambda \in \Lambda_e \} $$
and set  $r_e := | \Delta_e| = [K:K^p]^eq^n .$}
\end{notation}
\begin{discussion}\label{D4.2}
Recall from Corollary  \ref{L2.5} that $ \{ F_*^e(j)\, |\, j \in \Delta_e \} $ is a basis of $F_*^e(S)$ as free $S$-module. Let  $f \in S$. If $S \xrightarrow{f} S $ is the $S$-linear map given by $s \longmapsto fs$, let $F_*^e(S) \xrightarrow{F_*^e(f)} F_*^e(S) $ be the $S$-linear map that is given by $F_*^e(s) \longmapsto F_*^e(fs)$ for all $s\in S$. We write $M_S(f,e)$ (or $M(f,e)$ if $S$ is known) to denote the matrix $\Mat(F_*^e(f))$ which is the $r_e \times r_e$  matrix representing the $S$-linear map $F_*^e(S) \xrightarrow{F_*^e(f)} F_*^e(S) $ with respect to the basis $ \{ F_*^e(j)\, |\, j \in \Delta_e \} $ (see \ref{Rem 2.9}). Indeed, if $j \in \Delta_e $, there exists a unique  set  $\{f_{(i,j)} \in S \,|\, i \in \Delta_e \} $ such that $F_*^e(jf)= \bigoplus_{i\in\Delta_e}f_{(i,j)}F^e(i)$ and consequently $M_S(f,e)=[f_{(i,j)}]_{(i,j) \in \Delta^2_e}$.   The   matrix $M_S(f,e)$   is called the matrix of relations of $f$ over $S$ with respect to $e$.
\end{discussion}

\begin{example}
Let $K$ be a perfect field of prime characteristic 3 , $S=K[x,y]$ or $S=k[\![x,y]\!]$ and  let $ f =x^2 + xy$.  We aim to construct  $M_S(f,1)$. Since the  set $\{ F_*^1(1) , F_*^1(x) , F_*^1(x^2) , F_*^1(y) , F_*^1(yx) , F_*^1(yx^2),F_*^1(y^2) , F_*^1(y^2x) , F_*^1(y^2x^2) \}$ is the basis of $F_*^1(S)$ as  $S$-module, we get that \\
$ F_*^1(f) = F_*^1(x^2 + xy) = F_*^1(x^2) + F_*^1(yx) $  \\
$ F_*^1(xf)= F_*^1(x^3 + x^2y) = xF_*^1(1) + F_*^1(x^2y)$ \\
$ F_*^1(x^2f)= F_*^1(x^4 + x^3y) = xF_*^1(x) + xF_*^1(y)$ \\
$ F_*^1(yf) = F_*^1(yx^2 + xy^2) = F_*^1(yx^2) + F_*^1(y^2x) $  \\
$ F_*^1(yxf) = F_*^1(yx^3 + x^2y^2) = xF_*^1(y) + F_*^1(y^2x^2)  $  \\
$ F_*^1(yx^2f) = F_*^1(yx^4 + x^3y^2) = xF_*^1(yx) + xF_*^1(y^2) $  \\
$ F_*^1(y^2f) = F_*^1(y^2x^2 + xy^3) = F_*^1(y^2x^2) + yF_*^1(x) $  \\
$ F_*^1(y^2xf) = F_*^1(y^2x^3 + x^2y^3) = xF_*^1(y^2) + yF_*^1(x^2) $  \\
$ F_*^1(y^2x^2f) = F_*^1(y^2x^4 + x^3y^3) = xF_*^1(y^2x) + xyF_*^1(1) $  \\
Therefore,

$$ M_S(f,1)= \begin{bmatrix}  0 & x & 0 & 0 & 0 & 0 & 0 & 0 & yx \\ 0 & 0 & x & 0 & 0 & 0 & y & 0 & 0 \\ 1 & 0 & 0 & 0 & 0 & 0 & 0& y & 0 \\ 0 & 0 & x & 0 & x & 0 & 0 & 0 & 0 \\ 1 & 0 & 0 & 0 & 0 & x & 0 & 0 & 0 \\ 0 & 1 & 0 & 1 & 0 & 0 & 0 & 0 & 0 \\ 0 & 0 & 0 & 0 & 0 & x & 0 & x & 0 \\ 0 & 0 & 0 & 1 & 0 & 0 & 0 & 0 & x \\ 0 & 0 & 0 & 0 & 1 & 0 & 1 & 0 & 0 \end{bmatrix} $$
\end{example}
\begin{remark}\label{Ex4.3}
If $m \in \mathbb{N}$, then $F_*^e(f^{mq}j)=f^mF_*^e(j)$ for all $j \in \Delta_e$. This makes $M_S(f^{mq},e)=f^mI$ where $I$ is the identity matrix of size $r_e \times r_e$.
\end{remark}

\begin{proposition}\label{P4.3}

If $f,g  \in S$, then
\begin{enumerate}
\item[(a)] $ M_S(f+g,e)=M_S(f,e)+M_S(g,e)$,
\item[(b)] $ M_S(fg,e)=M_S(f,e)M_S(g,e)$ and consequently $ M_S(f,e)M_S(g,e)=M_S(g,e)M_S(f,e)$, and
\item[(c)] $  M_S(f^m,e)= [M_S(f,e)]^m$ for all $m \geq 1$
\end{enumerate}
\end{proposition}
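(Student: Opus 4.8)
The proof of Proposition \ref{P4.3} should follow directly from the definition of the matrix of relations in Discussion \ref{D4.2} together with the functoriality of $F_*^e(-)$, so the plan is to unwind the definitions and use the correspondence between $S$-linear endomorphisms of the free module $F_*^e(S)$ and their matrices with respect to the basis $\{F_*^e(j)\mid j\in\Delta_e\}$ recorded in Remark \ref{Rem 2.9}.

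First I would observe that, for fixed $e$, the assignment $f\mapsto F_*^e(f)$ from elements of $S$ to $S$-linear endomorphisms of $F_*^e(S)$ is additive and multiplicative: indeed $F_*^e(f+g)$ acts on $F_*^e(s)$ by $F_*^e(s)\mapsto F_*^e((f+g)s)=F_*^e(fs)+F_*^e(gs)$, which is $\bigl(F_*^e(f)+F_*^e(g)\bigr)(F_*^e(s))$; similarly $F_*^e(fg)$ sends $F_*^e(s)$ to $F_*^e(fgs)=F_*^e(f)\bigl(F_*^e(gs)\bigr)=\bigl(F_*^e(f)\circ F_*^e(g)\bigr)(F_*^e(s))$. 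Then, since $M_S(-,e)=\Mat(F_*^e(-))$ is taken with respect to one fixed basis, I would invoke the last sentence of Remark \ref{Rem 2.9}, which says precisely that $\Mat$ is additive, $\Mat(\theta+\tau)=\Mat(\theta)+\Mat(\tau)$, and multiplicative, $\Mat(\theta\tau)=\Mat(\theta)\Mat(\tau)$, for $R$-linear endomorphisms of a finite free module. Composing these two facts gives (a) $M_S(f+g,e)=\Mat(F_*^e(f+g))=\Mat(F_*^e(f))+\Mat(F_*^e(g))=M_S(f,e)+M_S(g,e)$ and (b) $M_S(fg,e)=\Mat(F_*^e(f)\circ F_*^e(g))=M_S(f,e)M_S(g,e)$.

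For the "consequently" clause of (b), I would note that $fg=gf$ in the commutative ring $S$, hence $M_S(f,e)M_S(g,e)=M_S(fg,e)=M_S(gf,e)=M_S(g,e)M_S(f,e)$. Part (c) is then an immediate induction on $m\ge 1$: the base case $m=1$ is trivial, and for the inductive step $M_S(f^{m+1},e)=M_S(f^m\cdot f,e)=M_S(f^m,e)M_S(f,e)=[M_S(f,e)]^m M_S(f,e)=[M_S(f,e)]^{m+1}$ by the induction hypothesis and part (b).

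There is no real obstacle here; the only point requiring a little care is to make sure that in Discussion \ref{D4.2} the same ordered basis $\{F_*^e(j)\mid j\in\Delta_e\}$ is used for all of $f$, $g$, and their sum and product, so that the matrices genuinely add and multiply as claimed — this is exactly the hypothesis under which Remark \ref{Rem 2.9} applies, and it is built into the definition of $M_S(-,e)$. It is also worth recording explicitly (as a sanity check consistent with Remark \ref{Ex4.3}) that $M_S(1,e)=I$, so that $M_S(-,e)$ is in fact a ring homomorphism from $S$ to $M_{r_e}(S)$ landing in the centralizer of its own image; parts (a), (b), (c) are then just the statement that this map respects addition, multiplication, and powers.
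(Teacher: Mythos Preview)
Your proposal is correct and follows exactly the paper's approach: the paper's proof simply says ``The proof follows immediately from Discussion \ref{D4.2} and Remark \ref{Rem 2.9},'' and you have spelled out precisely what that means. Your additional remarks (the commutativity consequence in (b), the induction for (c), and the sanity check $M_S(1,e)=I$) are all fine elaborations of the same one-line argument.
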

\begin{proof}
The proof follows immediately from Discussion \ref{D4.2} and Remark \ref{Rem 2.9}.
\end{proof}

According to  Remark \ref{Ex4.3} and Proposition \ref{P4.3}, we get that $$M_S(f^k,e)M_S(f^{q-k},e)=M_S(f^{q-k},e)M_S(f^k,e)= M_S(f^q,e)= fI_{r_e}$$ for all $0 \leq k \leq q-1$. This shows the following result.

\begin{proposition}\label{P5.5}
For every $f \in S$ and $0 \leq k \leq q-1$, the pair  $ ( M_S(f^k,e),M_S(f^{q-k},e))$ is a matrix factorization of $f$.

\end{proposition}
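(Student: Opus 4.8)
The statement is an immediate consequence of the multiplicativity and power-law properties of the matrix-of-relations construction established just above, so the plan is simply to assemble those two facts together with Remark \ref{Ex4.3}. The plan is to recall that for any $g \in S$ one has $M_S(g,e)M_S(h,e)=M_S(gh,e)$ by Proposition \ref{P4.3}(b), and that $M_S(f^q,e)=fI_{r_e}$ by Remark \ref{Ex4.3} (taking $m=1$ there, so that $f^{mq}=f^q$ and the matrix of relations is scalar).

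First I would apply Proposition \ref{P4.3}(b) with the two elements $f^k$ and $f^{q-k}$ of $S$, obtaining
\begin{equation*}
M_S(f^k,e)\,M_S(f^{q-k},e)=M_S(f^k f^{q-k},e)=M_S(f^q,e).
\end{equation*}
Next I would invoke Remark \ref{Ex4.3} to rewrite the right-hand side as $fI_{r_e}$. By the commutativity clause of Proposition \ref{P4.3}(b) (or simply by applying the same computation with the roles of $f^k$ and $f^{q-k}$ interchanged, using $f^{q-k}f^k=f^q$), one also gets $M_S(f^{q-k},e)\,M_S(f^k,e)=fI_{r_e}$. Thus both composites equal $fI_{r_e}$, which is exactly the defining condition (Definition \ref{D23}) for the pair $\bigl(M_S(f^k,e),M_S(f^{q-k},e)\bigr)$ to be a matrix factorization of $f$ of size $r_e$; here one uses that $S$ is a domain and that $M_S(f^k,e), M_S(f^{q-k},e)\in M_{r_e}(S)$ by construction.

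There is essentially no obstacle: the only small points to check are bookkeeping ones — that $0\le k\le q-1$ guarantees both $f^k$ and $f^{q-k}$ are genuine (possibly trivial, when $k=0$ or $k=q$, but the range excludes $k=q$) elements of $S$ so that Proposition \ref{P4.3} applies, and that the size $r_e=[K:K^p]^e q^n$ is the common size of all these matrices, so the identity matrix appearing is $I_{r_e}$. I would also remark that the boundary cases $k=0$ and (if one allowed it) $k=q$ recover the trivial factorizations $(I_{r_e},fI_{r_e})$ and $(fI_{r_e},I_{r_e})$, consistent with Remark \ref{Ex4.3}. This completes the argument.
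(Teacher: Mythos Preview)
Your proposal is correct and is essentially identical to the paper's own argument: the paper simply notes, immediately before the proposition, that by Remark \ref{Ex4.3} and Proposition \ref{P4.3} one has $M_S(f^k,e)M_S(f^{q-k},e)=M_S(f^{q-k},e)M_S(f^k,e)=M_S(f^q,e)=fI_{r_e}$, which is precisely your computation.
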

\begin{discussion}

Let  $x_{n+1}$ be a new variable  and let $ L=S[x_{n+1}]$  if $S=K[x_1,\ldots,x_n]$ or ($ L=S[\![x_{n+1}]\!]$  if $S=K[\![x_1,\ldots,x_n]\!]$). We aim to describe $M_L(g,e)$ for some $g \in L$ by describing the columns of $M_L(g,e)$. First we will construct a basis  of the free $L$-module  $F_*^e(L)$   using  the basis $ \{ F_*^e(j) | j \in\Delta_e \} $  of the free $S$-module $F_*^e(S)$.  For each $0\leq v \leq q-1 $, let $ \mathfrak{B}_v= \{F_*^{e}(j x_{n+1}^{v})\, |\, j \in\Delta_e  \}$  and set  $ \mathfrak{B} = \mathfrak{B}_0 \cup \mathfrak{B}_1 \cup \mathfrak{B}_2\cup \dots \cup \mathfrak{B}_{q-1} $.
Therefore $\mathfrak{B}$  is a basis for $F_*^e(L)$ as free $L$-module and if   $g \in L$,  we write
 \begin{eqnarray*}F_*^e(g)& = & \bigoplus\limits_{i\in\Delta_e}g_{i}^{(0)}F^{e}(i) \oplus\bigoplus\limits_{i\in\Delta_e}g_{i}^{(1)}F^{e}(ix_{n+1}^1)\oplus\dots \oplus \bigoplus\limits_{i\in\Delta_e}g_{i}^{(q-1)}F^{e}(ix_{n+1}^{q-1})
\end{eqnarray*}
where $ g_{i}^{(s)} \in L $ for all $0 \leq s \leq q-1$ and $i\in\Delta_e $.
For each $ 0 \leq s \leq q-1 $ let   $ [F_*^e(g)]_{\mathfrak{B}_s} $ denote the column whose entries are the coordinates $ \{g_{i}^{(s)} \in L \,|\, i\in\Delta_e\} $ of $F_*^e(g)$ with respect to $\mathfrak{B}_s$. Let $[F_*^e(g)]_\mathfrak{B}$ be the $r_eq \times 1$  column that is composed of the columns $[F_*^e(g)]_{\mathfrak{B}_0},\dots,[F_*^e(g)]_{\mathfrak{B}_{q-1}}$ respectively.
%\begin{equation*}
 % [F_*^e(g)]_L= \begin{bmatrix} [F_*^e(g)]_{L_0} \\ \vdots \\ [F_*^e(g)]_{L_{q-1}} \end{bmatrix}
%\end{equation*}
Therefore  $M_L(g,e)$ is the  $r_eq \times r_eq$ matrix over $L$ whose columns are all the columns  $ [F_*^e(jx_{n+1}^s g)]_\mathfrak{B}$ where $ 0 \leq s \leq q-1$ and $ j \in \Delta_e$. This means that
 $ M_L(g,e) = \left[\begin{array}{ccc}
                 C_{0} & \ldots & C_{q-1} \\
               \end{array}
             \right]$  where $C_{m}$ is the  $r_eq \times r_e$ matrix over $L$ whose columns are the columns $ [F_*^e(jx_{n+1}^m g)]_\mathfrak{B}$  for all $j \in \Delta_e$. If we define $C_{(k,m)}$ to be the  $r_e \times r_e$ matrix over $L$ whose columns are $ [F_*^e(jx_{n+1}^m g)]_{\mathfrak{B}_k}$ for all $j\in\Delta_e$, then $C_{m}$ consists of $ C_{(0,m)},\dots,C_{(q-1,m)} $ respectively
%\begin{equation*}
%   C_{m}= \left[
 %             \begin{array}{c}
 %               C_{(0,m)} \\
 %               \vdots\\
  %             C_{(q-1,m)} \\
  %            \end{array}
 %           \right]
%\end{equation*}
and hence the matrix
 $M_L(g,e)$ is given by :
\begin{equation}
  M_L(g,e) = \left[
               \begin{array}{ccc}
                 C_{0} & \ldots & C_{q-1} \\
               \end{array}
             \right]=\left[
               \begin{array}{ccc}
                 C_{(0,0)} & \ldots & C_{(0,q-1)} \\
                 \vdots&   & \vdots \\
                 C_{(q-1,0)} & \ldots & C_{(q-1,q-1)} \\
               \end{array}
             \right]
\end{equation}

\end{discussion}
Using the above discussion we can prove the following lemma
\begin{lemma}\label{L4.7}
Let $f \in S$ with $A = M_S(f,e)$ and let $ L=S[x_{n+1}]$  if $S=K[x_1,\ldots,x_n]$  or ($ L=S[\![x_{n+1}]\!]$  if $S=K[\![x_1,\ldots x_n]\!]$). If $0\leq d \leq q-1 $, then
\begin{equation}
  M_L(fx_{n+1}^d,e) = \left[
               \begin{array}{ccc}
                 C_{(0,0)} & \ldots & C_{(0,q-1)} \\
                 \vdots&   & \vdots \\
                 C_{(q-1,0)} & \ldots & C_{(q-1,q-1)} \\
               \end{array}
             \right]
\end{equation}
where
\begin{equation*}
    C_{(k,m)} = \begin{cases}
              A             &\text{if   } (m,k) \in \{ (d,0),(d+1,1),\ldots ,(q-1,q-1-d)\} \\
             x_{n+1}A                &\text{if   } (m,k) \in \{ (0,q-d),(1,q-1-d),\ldots ,(d,q-1)\}\\
              0               & \text{otherwise}
           \end{cases}
\end{equation*}

\end{lemma}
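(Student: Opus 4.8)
The plan is to compute $M_L(fx_{n+1}^d, e)$ directly from the block description of $M_L(g,e)$ given in the Discussion preceding the statement, by working out the coordinates of $F_*^e\!\left(j\,x_{n+1}^{m}\cdot f x_{n+1}^d\right)$ with respect to the basis $\mathfrak{B}=\mathfrak{B}_0\cup\cdots\cup\mathfrak{B}_{q-1}$. The key observation is that $F_*^e\!\left(j f\,x_{n+1}^{m+d}\right)$ lives in a single block $\mathfrak{B}_k$ determined by the residue of $m+d$ modulo $q$, with a factor of $x_{n+1}$ appearing precisely when $m+d\ge q$ (since $x_{n+1}^{m+d}=x_{n+1}^{q}\cdot x_{n+1}^{m+d-q}$ and $F_*^e$ pulls out $x_{n+1}$ from $x_{n+1}^q$). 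So first I would fix $0\le m\le q-1$ and write $m+d = q\,\epsilon + k$ with $\epsilon\in\{0,1\}$ and $0\le k\le q-1$; then $\epsilon = 0$ and $k = m+d$ when $m+d\le q-1$, while $\epsilon = 1$ and $k = m+d-q$ when $m+d\ge q$.

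Next, for each $j\in\Delta_e$ I would expand $F_*^e(jf)=\bigoplus_{i\in\Delta_e} a_{i,j}F_*^e(i)$ where $A=[a_{i,j}]=M_S(f,e)$ by the very definition of the matrix of relations (Discussion \ref{D4.2}). Multiplying through by $x_{n+1}^{m+d}$ and using $F_*^e(s x_{n+1}^{q})=x_{n+1}F_*^e(s)$ (an instance of Remark \ref{Ex4.3} / Proposition \ref{L2.34}, since $x_{n+1}^q$ is a $q$-th power), I get
$$
F_*^e\!\left(j x_{n+1}^{m}\cdot f x_{n+1}^{d}\right)=x_{n+1}^{\epsilon}\bigoplus_{i\in\Delta_e} a_{i,j}\,F_*^e\!\left(i\,x_{n+1}^{k}\right),
$$
so the column $[F_*^e(jx_{n+1}^m f x_{n+1}^d)]_{\mathfrak{B}}$ has its only nonzero entries in the sub-column indexed by $\mathfrak{B}_k$, where it equals $x_{n+1}^{\epsilon}$ times the $j$-th column of $A$. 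This says exactly that the block $C_{(k',m)}$ is $A$ when $k'=k$ and $\epsilon=0$, is $x_{n+1}A$ when $k'=k$ and $\epsilon=1$, and is $0$ otherwise — which is the asserted formula once one checks that the index set $\{(m,k): m+d\le q-1\}$ is $\{(d,0),(d+1,1),\dots,(q-1,q-1-d)\}$ and $\{(m,k): m+d\ge q\}$ is $\{(0,q-d),(1,q-1-d),\dots,(d,q-1)\}$, both of which are immediate from $k=m+d$ resp. $k=m+d-q$.

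There is no real obstacle here; the only thing requiring a little care is bookkeeping the two cases (the carry $\epsilon$) and matching the resulting index pairs $(m,k)$ against the two explicit lists in the statement, together with confirming that the block decomposition of $M_L(g,e)$ from the preceding Discussion is being applied with $g=fx_{n+1}^d$. I would also note at the outset that $fx_{n+1}^d\in L$ and $[K:K^p]<\infty$ guarantee $F_*^e(L)$ is free over $L$ with the stated basis $\mathfrak B$ (Corollary \ref{L2.5}), so the matrix $M_L(fx_{n+1}^d,e)$ is well defined and of size $r_eq\times r_eq$, consistent with the $q\times q$ array of $r_e\times r_e$ blocks $C_{(k,m)}$.
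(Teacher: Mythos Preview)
Your proposal is correct and follows essentially the same route as the paper: you expand $F_*^e(jf)$ via the entries of $A$, multiply by $x_{n+1}^{m+d}$, reduce the exponent modulo $q$ (your carry bit $\epsilon$ is exactly the paper's case split $d+m\le q-1$ versus $d+m>q-1$), and read off which block $\mathfrak{B}_k$ the result lands in. The only difference is cosmetic notation, and your remark about well-definedness of $M_L(fx_{n+1}^d,e)$ via Corollary~\ref{L2.5} is a harmless addition.
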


\begin{proof}
If $A = M_S(f,e)=[f_{(i,j)}]$, for each $j \in \Delta_e$ we can write  $F_*^e(jf)= \bigoplus_{i\in \Delta_e}f_{(i,j)}F^e(i)$. If $g=fx_{n+1}^d$, for every $1 \leq m \leq q-1 $ and   $j \in \Delta_e$, it follows that $F_*^e(jx_{n+1}^mg) = \bigoplus_{i\in \Delta_e}f_{(i,j)}F^e(ix_{n+1}^{d+m})$. Therefore,

\begin{equation*}
    F_*^e(jx_{n+1}^mg) = \begin{cases}
             \bigoplus_{i\in \Delta_e}f_{(i,j)}F^e(ix_{n+1}^{d+m})   &\text{if   } d+m \leq q-1 \\
           \bigoplus_{i\in \Delta_e}x_{n+1}f_{(i,j)}F^e(ix_{n+1}^{d+m -q })             &\text{if   } d+m > q-1
           \end{cases}
\end{equation*}
 Accordingly, if $m \leq q-1-d $, then

 %\begin{equation*}
   %C_{m}= \left[
   %           \begin{array}{c}
   %             C_{(0,m)} \\
    %            \vdots\\
    %           C_{(q-1,m)} \\
     %         \end{array}
    %        \right]
%\end{equation*}

%where

 \begin{equation*}
    C_{(k,m)} = \begin{cases}
             A             &\text{ if } k=d+m \\
              0            &\text{ if } k \neq d+m
           \end{cases}
\end{equation*}
  However,  if $m > q-1-d $, it follows that
  %\begin{equation*}
  % C_{m}= \left[
  %            \begin{array}{c}
   %             C_{(0,m)} \\
   %             \vdots\\
    %           C_{(q-1,m)} \\
   %           \end{array}
    %        \right]
%\end{equation*}

%where

 \begin{equation*}
    C_{(k,m)} = \begin{cases}
              x_{n+1}A              &\text{if } k=d+m -q \\
              0                     &\text{if } k \neq d+m -q
           \end{cases}
\end{equation*}

  This shows the required result.
\end{proof}

\begin{proposition}\label{EP}
Let $ L=S[x_{n+1}]$ (if $S=K[x_1,\ldots,x_n]$)or $ L=S[\![x_{n+1}]\!]$ (if $S=K[\![x_1,\ldots,x_n]\!]$). Suppose that $g \in L$ is given by $$ g = g_0 + g_1x_{n+1} + g_2x_{n+1}^2 + \dots +g_dx_{n+1}^d$$ where $d < q$ and $g_k \in S$ for all $ 0 \leq k \leq d $ . If $A_k=M_S(g_k,e)$ for each $0 \leq k \leq d $ then  \[ M_L(g,e) = \begin{bmatrix}
A_0& &  & & x_{n+1}A_d&x_{n+1}A_{d-1} &\ldots &x_{n+1}A_1  \\
A_1&A_0&  & & &x_{n+1}A_d &   &\vdots \\
 & & & & & &\ddots & \\
 & & & & & & &x_{n+1}A_d \\
\vdots & \vdots & \ddots & \ddots& & & & \\
A_d&\vdots&  & & & & & \\
  & A_d & & & & & &\\
  &  &  & & & & & \\
  &  &  &A_d &A_{d-1} &A_{d-2}& \ldots &A_0

\end{bmatrix}.\]
\end{proposition}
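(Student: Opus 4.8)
The plan is to reduce Proposition \ref{EP} to Proposition \ref{P4.3}(a) and Lemma \ref{L4.7}, treating everything as a computation with the fixed $L$-basis $\mathfrak{B}=\mathfrak{B}_0\cup\dots\cup\mathfrak{B}_{q-1}$ of $F_*^e(L)$ described in the discussion preceding Lemma \ref{L4.7}. First I would observe that $g=\sum_{k=0}^{d}g_kx_{n+1}^k$ with each $g_k\in S$, so by Proposition \ref{P4.3}(a) applied over the ring $L$ we have $M_L(g,e)=\sum_{k=0}^{d}M_L(g_kx_{n+1}^k,e)$. This moves the whole problem to understanding a single summand $M_L(g_kx_{n+1}^k,e)$, and here is exactly where Lemma \ref{L4.7} applies: with $A_k:=M_S(g_k,e)$, the lemma tells us that $M_L(g_kx_{n+1}^k,e)$ is the block matrix whose $(k',m)$ block (indexing block rows/columns by $0,\dots,q-1$, so the block-row index records the $\mathfrak{B}_{k'}$-component and the block-column index records multiplication by $x_{n+1}^m$) equals $A_k$ when $k'=k+m$ with $k+m\le q-1$, equals $x_{n+1}A_k$ when $k'=k+m-q$ with $k+m\ge q$, and is $0$ otherwise.

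Next I would add these up over $k=0,\dots,d$. The block in position $(k',m)$ of $M_L(g,e)$ receives a contribution from the $k$-th summand precisely when $k\equiv k'-m\pmod q$, and since $0\le k\le d<q$ there is at most one such $k$, namely $k=k'-m$ if $k'\ge m$ (contributing $A_{k'-m}$, provided $k'-m\le d$) or $k=k'-m+q$ if $k'<m$ (contributing $x_{n+1}A_{k'-m+q}$, provided $k'-m+q\le d$). So the $(k',m)$ block of $M_L(g,e)$ is: $A_{k'-m}$ if $0\le k'-m\le d$; $x_{n+1}A_{k'-m+q}$ if $0\le k'-m+q\le d$ (equivalently $q-d\le m-k'\le q$, i.e. $m-k'\in\{q-d,\dots,q-1\}$ after excluding the degenerate $m=k'$ case already covered); and $0$ otherwise. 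I would then check that this is exactly the displayed matrix: the block below the diagonal in block-column $0$ carries $A_0,A_1,\dots,A_d,0,\dots,0$ reading downward (take $m=0$, $k'=0,\dots,q-1$); along the main block diagonal one gets $A_0$ in every slot; and the blocks strictly above the diagonal in the last $d$ block-columns carry $x_{n+1}A_d,x_{n+1}A_{d-1},\dots,x_{n+1}A_1$ as indicated, with all remaining blocks zero. This is a direct case-check matching indices against the picture in the statement, so I would present it compactly rather than writing all $q^2$ blocks.

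The main obstacle — and really the only delicate point — is bookkeeping: getting the block-index conventions consistent between Lemma \ref{L4.7} (whose ``$(m,k)$'' pairs list the column-block then row-block) and the displayed target matrix, and making sure the wrap-around term $x_{n+1}A_d$ etc.\ lands in the correct super-diagonal blocks given that $d<q$ guarantees no overlap between the ``$A_{k'-m}$'' case and the ``$x_{n+1}A_{k'-m+q}$'' case. I would handle this by fixing once and for all that block-row $i$ corresponds to the $\mathfrak{B}_i$ coordinates and block-column $j$ corresponds to the images of $\{F_*^e(\iota x_{n+1}^{\,j}g):\iota\in\Delta_e\}$, then citing Lemma \ref{L4.7} verbatim for each $g_kx_{n+1}^k$ and summing. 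No convergence or finiteness issue arises since $d<q$ and all $A_k$ are honest $r_e\times r_e$ matrices over $S$, so the sum is finite and lives in $M_{r_eq}(L)$ as required.
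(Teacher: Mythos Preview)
Your proposal is correct and follows essentially the same approach as the paper: decompose $g=\sum_{k=0}^d g_kx_{n+1}^k$, apply Proposition \ref{P4.3}(a) to write $M_L(g,e)=\sum_{k=0}^d M_L(g_kx_{n+1}^k,e)$, invoke Lemma \ref{L4.7} for each summand, and add the resulting block matrices. The paper simply displays each $M_L(g_kx_{n+1}^k,e)$ explicitly and sums, whereas you carry out the block-index bookkeeping more systematically; the content is the same.
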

\begin{proof}

Recall from Lemma \ref{L4.7} that

$$ M_A(g_0,e)= \left[
                 \begin{array}{cccc}
                  A_0 &   &   &   \\
                    & A_0 &   &   \\
                    &   & \ddots &   \\
                    &   &  & A_0 \\
                 \end{array}
               \right]$$

$$ M_A(g_1x_{n+1},e)=\left[
                       \begin{array}{cccccc}
                           & &   &   &   & x_{n+1}A_1 \\
                         A_1 &   &  &   &   &   \\
                           & A_1 &  &   &   &  \\
                           &  & \ddots &   &   &   \\
                           &   &   &   & A_1 &  \\
                           &   &   &   &   &   \\
                       \end{array}
                     \right]$$

 $$M_A(g_2x_{n+1}^2,e)=\left[
     \begin{array}{ccccc}
         &   &   & x_{n+1}A_2 &   \\
         &   &  &   & x_{n+1}A_2 \\
       A_2 &   &  &   &   \\
         &\ddots &  &   &  \\
         &   &A_2 &   &   \\
     \end{array}
   \right]
  $$
and finally we get
$$ M_A(g_dx_{n+1}^d,e)=\begin{bmatrix}
 & &  & & x_{n+1}A_d& &  &  \\
 & &  & & &x_{n+1}A_d &   & \\
 & & & & &  &\ddots & \\
 & & & & & & &x_{n+1}A_d \\
  &   &   &  & & & & \\
A_d& &  & & & & & \\
  & A_d & & & & & &\\
  &  & \ddots & & & & & \\
  &  &  &A_d &  & &   & \\
\end{bmatrix}.   $$

 Proposition  \ref{P4.3}(a) implies that $$ M_A(g,e) = M_A(g_0,e) + M_A(g_1x_{n+1},e) + M_A(g_{2}x_{n+1}^2,e) + ... +M_A(g_dx_{n+1}^d,e).$$
 This proves the result.
\end{proof}

\begin{example}
Let $K$ be a perfect field of prime characteristic 3 and let $S=K[x]$ or $S=K[\![x]\!]$ . Assume $ L=S[y]$ (if $S=K[x]$) or $L=S[\![y]\!]$ (if $S=K[\![x]\!]$ ).  Let $ f =x^2 + xy$, $f_0=x^2$, and $f_1=x$.
%and we aim to construct  $M_A(f,1)$ using the above proposition. Let   $f_0 , f_1 \in R$  where $f_0=x$ and $f_1=x^2$. Since the  set $\{ F^1(1) , F^1(x) , F^1(x^2)  \}$ is a basis of $F^1(R)$ as right $R$-module, we get that \\
%$ F_*^1(f_0) = F_*^1(x) = 1F_*^1(x)  $  \\
%$ F_*^1(xf_0)= F_*^1(x^2)  = 1F_*^1(x^2) $ \\
%$ F_*^1(x^2f_0)= F_*^1(x^3) = xF_*^1(1) $ \\
%$ F_*^1(f_1) = F_*^1(x^2) = F_*^1(x^2)  $  \\
%$ F_*^1(xf_1) = F_*^1(x^3)  = xF_*^1(1)  $  \\
%$ F_*^1(x^2f_1) = F_*^1(x^4)  = xF_*^1(x) $  \\

%Therefore,
%$ M_R(f_0,1)= \begin{bmatrix} 0 & 0 & x \\ 1 & 0 & 0 \\ 0 & 1 & 0 \end{bmatrix} $ and $ M_R(f_1,1)= \begin{bmatrix} 0 & x & 0 \\ 0 & 0 & x \\ 1 & 0 & 0 \end{bmatrix} $.
%Let $M_0=M_R(f_0,1)$ and $M_1=M_R(f_1,1)$. Since $f= f_0 + f_1y \in A $,
By Proposition \ref{EP}, it follows that

$$ M_L(f,1)=\begin{bmatrix} M_S(f_0,1) & & yM_S(f_1,1) \\ M_S(f_1,1) & M_S(f_0,1) &  \\ & M_S(f_1,1) & M_S(f_0,1)  \end{bmatrix} = \left[\begin{array}{ccc|ccc|ccc}  0 & x & 0 & 0 & 0 & 0 & 0 & 0 & yx \\ 0 & 0 & x & 0 & 0 & 0 & y & 0 & 0 \\ 1 & 0 & 0 & 0 & 0 & 0 & 0& y & 0 \\ \hline 0 & 0 & x & 0 & x & 0 & 0 & 0 & 0 \\ 1 & 0 & 0 & 0 & 0 & x & 0 & 0 & 0 \\ 0 & 1 & 0 & 1 & 0 & 0 & 0 & 0 & 0 \\ \hline 0 & 0 & 0 & 0 & 0 & x & 0 & x & 0 \\ 0 & 0 & 0 & 1 & 0 & 0 & 0 & 0 & x \\ 0 & 0 & 0 & 0 & 1 & 0 & 1 & 0 & 0 \end{array} \right]. $$
\end{example}

%We can  observe easily the following Lemma
%\begin{lemma}\label{L4.15}
%Let $I \subseteq S$ be an ideal. If $R=S/I$, then $F_*^e(R)$ is a finitely generated $S$-module by the set $ \{ F_*^e(j + I) \,| \,j \in \Delta_e \} $
%\end{lemma}

%\begin{proof}
%Recall that the scalar multiplication of the $S$-module $F_*^e(R)$  is given by $ aF_*^e(f+I) = F_*^e(a^qf+I)$ for every $a\in S$ and $f \in S$. Since $F_*^e(S)$ is a free $S$-module, for $f \in S$ there exists uniquely $ \{ r_j \in S \, | \, j \in \Delta_e \}$ such that $F_*^e(f)= \sum_{j\in \Delta_e}r_jF_*^e(j)$ and hence $f= \sum_{j\in \Delta_e}r_j^qj $ . Therefore
%\begin{equation*}
 % F_*^e(f+I )= F_*^e(\sum_{j\in \Delta_e}r_j^qj+I)= \sum_{j\in \Delta_e}F_*^e(r_j^qj+I)=\sum_{j\in \Delta_e}r_jF_*^e(j+I)
%\end{equation*}

%\end{proof}
\begin{theorem}\label{P5.11}
Let $f \in S$ be a non-zero non-unit element. If $R=S/fS$, then
\begin{enumerate}
\item[(a)]$F_*^e(R)$ is a MCM $R$-module.
\item[(b)] $F_*^e(R)$ is isomorphic to $\Cok_S(M_S(f,e))$ as $S$-modules (and as $R$-modules).
%2) $F_*^e(R)$ is isomorphic to $\Cok_S(M_S(f,e))$ as $R$ -modules.\\
%\item[(b)]  If $S=K[[x_1,\dots,x_n]]$ and $f$ is a nonzero element of the maximal ideal of $S$, then  $F_*^e(R)$  is MCM $R$-module.
\end{enumerate}
\end{theorem}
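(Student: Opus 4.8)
The plan is to establish part (b) first and then deduce part (a). For (b) I would begin with the short exact sequence of $S$-modules
\[ 0 \longrightarrow S \xrightarrow{\ f\ } S \longrightarrow R \longrightarrow 0, \]
which is exact because $f$ is a non-zerodivisor in the domain $S$. Applying the exact functor $F_*^e(-)$ (Remark \ref{R2.29}(f),(h)) gives an exact sequence of $S$-modules
\[ 0 \longrightarrow F_*^e(S) \xrightarrow{\ F_*^e(f)\ } F_*^e(S) \longrightarrow F_*^e(R) \longrightarrow 0. \]
By Corollary \ref{L2.5}, $F_*^e(S)$ is $S$-free on the basis $\{F_*^e(j)\mid j\in\Delta_e\}$, of rank $r_e$, and by the definition of $M_S(f,e)$ in Discussion \ref{D4.2} the endomorphism $F_*^e(f)$ of $F_*^e(S)$ is represented with respect to this basis precisely by $M_S(f,e)$. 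Hence $F_*^e(R)=\Cok\bigl(F_*^e(f)\bigr)\cong\Cok_S\bigl(M_S(f,e)\bigr)$ as $S$-modules. Since $f^q=f\cdot f^{q-1}\in fS$, the module $F_*^e(R)$ is killed by $f$, so it is an $R=S/fS$-module; the same holds for $\Cok_S(M_S(f,e))$ by Remark \ref{C3.2}(a), as $M_S(f,e)$ is one entry of a matrix factorization of $f$ (Proposition \ref{P5.5}). Any $S$-linear isomorphism of two $S/fS$-modules is automatically $R$-linear, which yields the parenthetical clause.

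For (a) I would invoke Proposition \ref{P5.5}, which says that $\bigl(M_S(f,e),M_S(f^{q-1},e)\bigr)$ is a matrix factorization of $f$. When $S=K[\![x_1,\dots,x_n]\!]$ the ring $S$ is regular local, so Proposition \ref{P25}(b) applies directly and $\Cok_S(M_S(f,e))\cong F_*^e(R)$ is a MCM $R$-module. When $S=K[x_1,\dots,x_n]$, MCM-ness is a local condition, so I would fix a maximal ideal $\mathfrak m$ of $R$, let $\mathfrak n\subseteq S$ be its preimage, observe that $S_{\mathfrak n}$ is regular local, that localizing the matrix factorization yields a matrix factorization of $f$ over $S_{\mathfrak n}$, and that its cokernel is $(F_*^e(R))_{\mathfrak m}$ because localization commutes with cokernels and with $F_*^e$ (Proposition \ref{L2.3}); then Proposition \ref{P25}(b) over $S_{\mathfrak n}$ finishes the argument. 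An alternative, matrix-free proof of (a) runs as follows: $R$ is Cohen--Macaulay (hypersurface), $F_*^e(R)$ is a finitely generated $R$-module since $R$ is F-finite (Remark \ref{C2.9}), a maximal $R$-regular sequence remains an $F_*^e(R)$-regular sequence by Remark \ref{R 2.40}, and $\dim_R F_*^e(R)=\dim R$ because $\sqrt{\Ann_R F_*^e(R)}$ equals the nilradical of $R$; combining these, $\dim R=\depth R\le\depth F_*^e(R)\le\dim F_*^e(R)=\dim R$, so $F_*^e(R)$ is MCM.

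I do not anticipate a real obstacle. The two points needing care are: (i) confirming that $M_S(f,e)$ is exactly the matrix of $F_*^e(f)$ in the chosen basis, which is bookkeeping essentially built into Discussion \ref{D4.2}, but I would spell out the column convention so that $\Cok$ of the matrix literally matches $\Cok$ of the map; and (ii) the reduction to the regular-local case when $S$ is a polynomial ring, where one must pass to the localizations $S_{\mathfrak n}$ at maximal ideals of $R$ before citing Proposition \ref{P25}. Beyond these, everything is an immediate consequence of the exactness of $F_*^e(-)$, the freeness of $F_*^e(S)$, and Propositions \ref{P5.5} and \ref{P25}.
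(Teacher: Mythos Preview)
Your proposal is correct and essentially matches the paper's proof. For (b), the paper does precisely what you describe: it applies $F_*^e$ to the presentation $0\to S\xrightarrow{f}S\to R\to 0$ (writing out the maps $\psi=F_*^e(f)$ and $\phi$ explicitly rather than invoking exactness of $F_*^e$), identifies the matrix of $\psi$ as $M_S(f,e)$, and then cites Proposition~\ref{P5.5} and Remark~\ref{C3.2}(a) for the $R$-module structure; for (a), the paper uses exactly your ``alternative, matrix-free'' argument via Proposition~\ref{P2.43} and Remark~\ref{R 2.40}, localizing in the polynomial case.
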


\begin{proof}

(a) First, if  $S=K[\![x_1,\dots,x_n]\!]$, then  Proposition \ref{P2.43} implies that $R$ is  Cohen Maculay and consequently $F_*^e(R)$ is a MCM $R$-module (see Remark \ref{R 2.40}).
Now, if  $S=K[x_1,\dots,x_n]$,  then  Proposition \ref{P2.43} implies that $R$ is Cohen Maculay,i.e. $R_{\mathfrak{n}}$ is  Cohen Maculay for every maximal ideal $\mathfrak{n}$ of $R$. It follows from Remark \ref{R 2.40} that $F_*^e(R_{\mathfrak{n}})$ is  a MCM $R_{\mathfrak{n}}$-module  and hence by Proposition \ref{L2.3} $F_*^e(R)_{\mathfrak{n}}$ is  a MCM $R_{\mathfrak{n}}$-module. This shows that $F_*^e(R)$ is a MCM $R$-module.

(b) Write $I=f S$.  Since $ \{ F_*^e(j) \, | \, j \in \Delta_e \} $ is a basis of $F_*^e(S)$ as free $S$-module,
the module $F_*^e(R)$ is generated as $S$-module by the set $ \{ F_*^e(j + I) \,| \,j \in \Delta_e \} $. For every $g \in S$, define
$ \phi( F_*^e(g)) = F_*^e(g + I)$. It is clear that $\phi: F_*^e(S)  \longrightarrow F_*^e(R)$ is a surjective homomorphism of $S$-modules whose kernel is the $S$-module
$F_*^e(I)$ that is generated by the set $ \{ F_*^e(jf) \, | \, j \in \Delta_e \} $.
Now, define the $S$-linear map  $ \psi : F_*^e(S) \rightarrow F_*^e(S)$  by $ \psi( F_*^e(h)) = F_*^e(hf)$ for all $h\in S$.
We have an exact sequence $F_*^e(S)\xrightarrow{\psi}F_*^e(S)\xrightarrow{\phi}F_*^e(R) \xrightarrow{} 0$. Notice for each $j \in \Delta_e$  that $\psi(F_*^e(j))= F_*^e(jf)= \bigoplus_{i\in\Delta_e}f_{(i,j)}F^e(i)$
and hence $M_S(f,e)$
represents the map $\psi$ on the given free-bases (Remark \ref{Rem 2.9}).
By  Proposition \ref{P5.5} and  Remark \ref{C3.2}(a), it follows  that  $F_*^e(R)$ is isomorphic to $\Cok_S(M_S(f,e))$ as $R$ -modules.
\end{proof}

\begin{corollary}\label{C4.12}
Let $f\in S$ be a non-zero non-unit element. If $1 \leq k \leq q-1$ and $R=S/fS$, then
\begin{enumerate}
\item[(a)]  $F_*^e(S/f^kS)$ is a MCM $S/f^kS$ -modules isomorphic to  $\Cok_S(M_S(f^k,e))$ as $S$-modules (and as $S/f^kS$-modules), and
\item[(b)] $F_*^e(S/f^kS)$ is a MCM $R$-module isomorphic to $\Cok_S(M_S(f^k,e))$ as $S$-modules (and as $R$-modules).
%\item[(b)]  $F_*^e(S/f^kS)$ is isomorphic to $Cok_S(M_S(f^k,e))$ as $R$ -modules,
%\item[(c)]  If $S=K[\![x_1,\dots,x_n]\!]$ and $f$ is a nonzero element of the maximal ideal of $S$, then $F_*^e(S/f^kS)$  is MCM $R$-module,

%\item[(e)]   If $S=K[\![x_1,\dots,x_n]\!]$ and $f$ is a nonzero element of the maximal ideal of $S$, then $F_*^e(S/f^kS)$  is MCM $S/f^kS$-module.
\end{enumerate}
\end{corollary}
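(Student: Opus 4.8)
<br>

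The plan is to derive Corollary \ref{C4.12} from Theorem \ref{P5.11} by the standard observation that powers of $f$ behave well under the matrix-of-relations construction, together with the fact that $F_*^e(-)$ commutes with taking quotients (Remark \ref{R2.29}(f)). First I would record the basic algebraic facts we need. By Proposition \ref{P4.3}(c) we have $M_S(f^k,e) = [M_S(f,e)]^k$, and by Remark \ref{Ex4.3} and Proposition \ref{P4.3}(b), the pair $(M_S(f^k,e), M_S(f^{q-k},e))$ is a matrix factorization of $f$ (this is exactly Proposition \ref{P5.5}); in particular $f\,\Cok_S(M_S(f^k,e)) = 0$, and since $f^k$ divides $f$ up to nothing — more precisely $f^k \mid f \cdot (\text{unit})$ is false, but $f \in (f^k)$ is also false — I should instead argue directly: $f^k \Cok_S(M_S(f^k,e))=0$ because $(M_S(f^k,e), M_S(f^{kq-k},e))$ — wait, the cleaner route is that $\Cok_S(M_S(f^k,e))$ is annihilated by $f$ (from Proposition \ref{P5.5} and Remark \ref{C3.2}(a)), hence a fortiori this cokernel is a module over $S/f^kS$ as well, since $f^k S \subseteq fS$. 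Thus the cokernel is naturally an $S/f^kS$-module and an $R$-module simultaneously.

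Next I would run the same exact-sequence argument as in the proof of Theorem \ref{P5.11}, but with $f^k$ in place of $f$. Writing $I_k = f^k S$, the surjection $\phi\colon F_*^e(S)\to F_*^e(S/f^kS)$, $F_*^e(g)\mapsto F_*^e(g+I_k)$, has kernel $F_*^e(I_k)$, which is generated over $S$ by $\{F_*^e(j f^k)\,|\,j\in\Delta_e\}$ since $\{F_*^e(j)\,|\,j\in\Delta_e\}$ is a free $S$-basis of $F_*^e(S)$ (Corollary \ref{L2.5}). The $S$-linear map $\psi\colon F_*^e(S)\to F_*^e(S)$ given by $\psi(F_*^e(h))=F_*^e(hf^k)$ has image equal to this kernel, and by Discussion \ref{D4.2} (applied to $f^k$) the matrix representing $\psi$ on the basis $\{F_*^e(j)\,|\,j\in\Delta_e\}$ is precisely $M_S(f^k,e)$. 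Hence $F_*^e(S/f^kS)\cong \Cok_S(M_S(f^k,e))$ as $S$-modules, and this isomorphism is compatible with the $S/f^kS$-module (resp. $R$-module) structures since both sides are annihilated by $f^k$ (resp. by $f$). For the MCM claims: $S/f^kS$ is a hypersurface with $f^k$ a nonzerodivisor, so Proposition \ref{P2.43} (as in the proof of Theorem \ref{P5.11}(a), in the local case directly, in the polynomial case after localizing at each maximal ideal using Proposition \ref{L2.3} and Remark \ref{R 2.40}) shows $S/f^kS$ is Cohen-Macaulay and $F_*^e(S/f^kS)$ is a MCM $S/f^kS$-module. For part (b), that $F_*^e(S/f^kS)$ is MCM as an $R$-module, I would invoke Proposition \ref{P25}(b): since $(M_S(f^k,e), M_S(f^{q-k},e))$ is a matrix factorization of $f$ over $S$, its cokernel $\Cok_S(M_S(f^k,e))\cong F_*^e(S/f^kS)$ is a MCM $R$-module (in the local case; in the polynomial-ring case, localize at each maximal ideal of $R$).

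There is essentially no substantive obstacle here — Corollary \ref{C4.12} is a routine generalization of Theorem \ref{P5.11}, obtained by substituting $f^k$ for $f$ and then using $f^k S \subseteq fS$ to equip the cokernel with the additional $R$-module structure. The only point requiring mild care is the bookkeeping around the two module structures: I must check that the isomorphism $F_*^e(S/f^kS)\cong \Cok_S(M_S(f^k,e))$ of $S$-modules descends to an isomorphism of $S/f^kS$-modules (immediate, since the $S$-action factors through $S/f^kS$ on both sides, as $f^k$ kills both) and separately to an isomorphism of $R$-modules (also immediate, since $f$ kills both sides — the left because $F_*^e(S/f^kS)$ is a module over $S/f^kS$ and $f\in fS$... one checks $f\cdot F_*^e(S/f^kS) = F_*^e(f^q\cdot(S/f^kS))$ by Remark \ref{R2.29}(d), and $f^q \in f^k S$ when $q \ge k$, which holds since $1\le k\le q-1$). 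The MCM-over-$R$ statement is the one place where I genuinely use Proposition \ref{P25}(b) rather than just re-running the Cohen-Macaulayness argument for $S/f^kS$, so I would make sure the hypotheses of Proposition \ref{P25} are met (regular local $S$, or the localized version for the polynomial case) and that the matrix factorization in question really is a factorization of $f$ and not of $f^k$.
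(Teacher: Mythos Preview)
Your argument is correct. For part (a) you do exactly what the paper does: apply Theorem~\ref{P5.11} with $f^k$ in place of $f$. The difference lies in how you establish that $F_*^e(S/f^kS)$ is MCM over $R$ in part (b). You invoke Proposition~\ref{P25}(b): since $(M_S(f^k,e),M_S(f^{q-k},e))$ is a matrix factorization of $f$, its cokernel is automatically MCM over $R=S/fS$ (localizing at maximal ideals in the polynomial case). The paper instead uses Proposition~\ref{P2.56}: having already shown in (a) that $F_*^e(S/f^kS)$ is MCM over $S/f^kS$, and observing that the element $f+f^kS$ annihilates it, one descends to conclude MCM-ness over $(S/f^kS)/(f+f^kS)\cong R$. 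Your route ties in more directly with the matrix-factorization machinery of the chapter, while the paper's route is a clean module-theoretic descent that reuses part (a) and handles both the local and polynomial cases uniformly without a separate localization argument.
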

\begin{proof}
%(a) and (c) follow from Proposition \ref{P4.11}.

(a) can be proved by applying Proposition \ref{P5.11} to $f^k$ instead of $f$.

(b) Since  the pair $(M_S(f^k,e),M_S(f^{q-k},e))$ is a matrix factorization of $f$, it follows that $f \Cok_S(M_S(f^k,e))=0$. Notice that  $fF_*^e(S/f^kS)=0$
This makes $F_*^e(S/f^kS)$ and  $\Cok_S(M_S(f^k,e))$ $R$-modules and consequently  $F_*^e(S/f^kS)$ is isomorphic to $\Cok_S(M_S(f^k,e))$ as $R$ -modules. Since $F_*^e(S/f^kS)$ is a MCM $S/f^kS$ -modules and $(f+f^kS)F_*^e(S/f^kS)=0$, it follows by Proposition \ref{P2.56} that $F_*^e(S/f^kS)$ is  MCM module over  the ring $\frac{S/f^kS}{(f+f^kS)(S/f^kS)}= S/fS$.
\end{proof}

\begin{proposition}\label{L4.13}
Let $K$ be a  field of prime characteristic $p>2 $ with $[K:K^p] <\infty$  and let $T=S[z]$ if $S=K[x_1,\dots,x_n]$ (or $T=S[\![z]\!]$ if $S=K[\![x_1,\dots,x_n]\!]$). If $A=M_S(f,e)$ for some $f \in S$,   then
\begin{equation*}
 F_*^e(T/(f+z^2))= \Cok_{T}\left[
                                              \begin{array}{cc}
                                                A^{\frac{q-1}{2}} & -zI \\
                                                zI &  A^{\frac{q+1}{2}} \\
                                              \end{array}
                                            \right].
\end{equation*}
\end{proposition}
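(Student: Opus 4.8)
The plan is to combine the presentation of $F_*^e$ of a hypersurface as a cokernel of a matrix factorization (Theorem \ref{P5.11}) with the explicit block description of the matrix of relations over a polynomial extension (Proposition \ref{EP}), applied to the element $g = f + z^2 \in T$. Write $q = p^e$ and $d = 2$; since $p > 2$ we have $2 < q$, so $z^2$ genuinely has $z$-degree $d = 2 < q$ and Proposition \ref{EP} applies with $g_0 = f$, $g_1 = 0$, $g_2 = 1$. Here $M_S(f,e) = A$ and $M_S(1,e) = I_{r_e}$ (the identity of size $r_e$, by Remark \ref{Ex4.3} with $m=0$), while $M_S(0,e) = 0$. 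Thus by Theorem \ref{P5.11} applied to the ring $T/(f+z^2)$ and the element $f + z^2 \in T$, we get $F_*^e(T/(f+z^2)) \cong \Cok_T(M_T(f+z^2,e))$, and it remains to identify $\Cok_T(M_T(f+z^2,e))$ with the displayed $2r_e \times 2r_e$ cokernel.

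First I would write out $M_T(f+z^2,e)$ explicitly from Proposition \ref{EP}: it is the $q \times q$ block matrix (blocks of size $r_e$) with $A$ on the diagonal, $I$ on the subdiagonal two steps below ($A_2 = I$ in positions $(k+2,k)$), and $zI$ in the upper-right corner positions coming from the $x_{n+1}^{q-2}, x_{n+1}^{q-1}$ wrap-around terms $zA_2 = zI$. So the only nonzero blocks are: diagonal $= A$; the blocks two below the diagonal $= I$; and two blocks $zI$ in the top-right. The strategy is then to perform block row and column operations over $T$ (which amount to multiplying by invertible block matrices, hence preserve the cokernel by Remark \ref{R18}(b)) to eliminate the $I$-blocks on the lower subdiagonal, using them as pivots. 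Concretely, using the $I$ in block-row $i+2$, block-column $i$ (for $i$ running appropriately) one clears the $A$ above it and propagates; this is the block analogue of the reduction carried out in Lemmas \ref{L2.9}, \ref{L.16} and Corollary \ref{L.20}, and after chaining the eliminations the surviving $2 \times 2$ block structure on the "boundary" indices collects the accumulated powers of $A$ together with the two $\pm zI$ terms. The bookkeeping of signs is exactly the $(-1)$-sign pattern appearing in those lemmas, and since $\frac{q-1}{2} + \frac{q+1}{2} = q$ and $A^q = fI$ (Remark \ref{Ex4.3}, Proposition \ref{P4.3}(c)), one checks $\left(A^{\frac{q-1}{2}}, A^{\frac{q+1}{2}}\right)$ with the off-diagonal $\mp zI$ is indeed a matrix factorization of $f + z^2$, consistent with Remark \ref{R3.8}(j) applied to the matrix factorization $(A^{\frac{q-1}{2}}, A^{\frac{q+1}{2}})$ of $f$ given by Proposition \ref{P5.5}.

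Alternatively — and this may be the cleaner route to write up — I would avoid the messy block elimination entirely by observing that $F_*^e(T/(f+z^2))$ can be computed in two stages: first $F_*^e$ relative to the variable $z$ alone, then $F_*^e$ over $S$. That is, one decomposes $F_*^e(K[\![x_1,\dots,x_n,z]\!]/(f+z^2))$ using the basis adapted to $z$-powers $0,\dots,q-1$, writing an element $F_*^e(h)$ with $h = h_0 + h_1 z + \dots$; the relation $z^2 \equiv -f$ lets one reduce, and the Frobenius action organizes $F_*^e(T/(f+z^2))$ as a module presented by the matrix $\left[\begin{smallmatrix} A^{(q-1)/2} & -zI \\ zI & A^{(q+1)/2}\end{smallmatrix}\right]$ directly, because pushing a factor of $z$ through Frobenius converts $z$ to $z^q = z \cdot (z^2)^{(q-1)/2} = z \cdot (-f)^{(q-1)/2}$, which is where the power $A^{(q-1)/2}$ enters. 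I expect the main obstacle to be precisely this sign/exponent bookkeeping — making sure the identification of the two blocks as $A^{(q-1)/2}$ and $A^{(q+1)/2}$ (rather than, say, $A^{(q+1)/2}$ and $A^{(q-1)/2}$, or with a wrong sign on $zI$) is correct and matches the convention $M_S(f^k,e) = A^k$ from Proposition \ref{P4.3}(c). Once the matrix is pinned down, the matrix-factorization identity $\psi\phi = \phi\psi = (f+z^2)I$ is a direct check using $A^q = fI$, and the MCM assertion (if needed) follows from Theorem \ref{P5.11}(a) applied to $T/(f+z^2)$, or from Proposition \ref{P25}(b) since $(A^{(q-1)/2}, A^{(q+1)/2})$ with the $\pm zI$ corners is a matrix factorization of $f+z^2$ over the regular ring $T$.
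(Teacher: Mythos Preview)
Your first approach is essentially the paper's proof: apply Proposition~\ref{EP} with $g_0=f$, $g_1=0$, $g_2=1$ to obtain the $q\times q$ block matrix with $A$ on the diagonal, $I$ two steps below, and two $zI$ blocks in the upper-right corner, then perform the block elimination. The paper invokes Corollary~\ref{C2.14} (the odd-size case $n=2m+1$ with $m=\tfrac{q-1}{2}$, $b=A$, $x=y=zI$) rather than Lemma~\ref{L2.9} or Corollary~\ref{L.20}, and then finishes exactly as you anticipate, by a case split on the parity of $m$ to absorb the signs $(-1)^{m}$, $(-1)^{m-1}$ via left or right multiplication by $\mathrm{diag}(\pm I,\mp I)$.
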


\begin{proof}
Let $I$ be the identity matrix in $M_{r_e}(S)$. It follows from Proposition \ref{EP} that $ M_{T}(f+z^2, e)$ is a $q \times q$ matrix over the ring $M_{r_e}(S)$ that is given by

\begin{equation*}
  M_{T}(f+z^2, e) = \left[
       \begin{array}{ccccccc}
         A &   &  &   &   & zI & 0  \\
         0 & A &  &   &   &   & zI \\
         I & 0 & A &   &   &   &  \\
           & I & 0  & A &   &   &   \\
           &   & \ddots & \ddots & \ddots &   &  \\
           &   &   &  I & 0 & A &   \\
           &   &   &   &  I & 0 & A \\
       \end{array}
     \right].
\end{equation*}
It follows from  Corollary \ref{C2.14} and Theorem \ref{P5.11} that

\begin{equation*}
  F_*^e(T/(f+z^2)) = \Cok_{T}\left[
                                              \begin{array}{cc}
                                                zI &  (-1)^mA^{\frac{q+1}{2}} \\
                                               (-1)^{m-1} A^{\frac{q-1}{2}} & zI \\
                                              \end{array}
                                            \right]  \text{ where } m=\frac{q-1}{2}.
\end{equation*}

If $m$ is odd integer, we get

\begin{eqnarray*}
% \nonumber to remove numbering (before each equation)
    F_*^e(T/(f+z^2))  &=&\Cok_{T}\left[
                                              \begin{array}{cc}
                                                A^{\frac{q-1}{2}} & zI  \\
                                                 zI & -A^{\frac{q+1}{2}} \\
                                              \end{array}
                                            \right]\\
  &=& \Cok_{T}(
  \left[
                                              \begin{array}{cc}
                                                A^{\frac{q-1}{2}} & zI  \\
                                                 zI & -A^{\frac{q+1}{2}} \\
                                              \end{array}
                                            \right]\left[
                 \begin{array}{cc}
                   I & 0 \\
                   0 & -I \\
                 \end{array}
               \right]) \\
               &=& \Cok_{T}\left[
                                              \begin{array}{cc}
                                                A^{\frac{q-1}{2}} & -zI \\
                                                zI &  A^{\frac{q+1}{2}} \\
                                              \end{array}
                                            \right].
\end{eqnarray*}
However, if $m$ is even, it follows that

\begin{eqnarray*}
% \nonumber to remove numbering (before each equation)
    F_*^e(T/(f+z^2))  &=&\Cok_{T}\left[
                                              \begin{array}{cc}
                                                -A^{\frac{q-1}{2}} & zI  \\
                                                 zI &  A^{\frac{q+1}{2}} \\
                                              \end{array}
                                            \right]\\
  &=& \Cok_{T}(\left[
                 \begin{array}{cc}
                   -I & 0 \\
                   0 & I \\
                 \end{array}
               \right]\left[
                                              \begin{array}{cc}
                                                -A^{\frac{q-1}{2}} & zI  \\
                                                 zI &  A^{\frac{q+1}{2}} \\
                                              \end{array}
                                            \right])
   \\
               &=& \Cok_{T}\left[
                                              \begin{array}{cc}
                                                A^{\frac{q-1}{2}} & -zI \\
                                                zI &  A^{\frac{q+1}{2}} \\
                                              \end{array}
                                            \right].
\end{eqnarray*}
\end{proof}
                                            %we conclude that the matrix $ M_{T}(f+z^2, e)$  is equivalent to  the following matrix

%\begin{equation*}
%\left[
%       \begin{array}{ccccccc}
 %        0 &   &  &   &   &  zI &  (-1)^{\frac{q-1}{2}}A^{\frac{q+1}{2}} \\
 %        0 & 0 &  &   &   & (-1)^{\frac{q+1}{2}}A^{\frac{q-1}{2}}  & zI \\
 %        I & 0 & 0 &   &   &   &  \\
 %          & I & 0  & 0 &   &   &   \\
  %         &   & \ddots & \ddots & \ddots &   &  \\
  %         &   &   &  I & 0 & 0 &   \\
  %         &   &   &   &  I & 0 & 0 \\
  %     \end{array}
 %    \right]
%\end{equation*}

%Therefore,

%\begin{equation*}
 % F_*^e(T/(f+z^2)) = \Cok_{T}\left[
  %                                            \begin{array}{cc}
  %                                              A^{\frac{q-1}{2}} & -zI \\
   %                                             zI &  A^{\frac{q+1}{2}} \\
   %                                           \end{array}
    %                                        \right]
%\end{equation*}
%\begin{eqnarray*}
% \nonumber to remove numbering (before each equation)
 % F_*^e(T/(f+z^2)) &=& \Cok_{T}M_{T}(f+z^2, e) \\
  % &=& \Cok_{T}\left[
                     % \begin{array}{cc}
                     %   zI & (-1)^{\frac{q-1}{2}}A^{\frac{q+1}{2}} \\
                    %    (-1)^{\frac{q+1}{2}}A^{\frac{q-1}{2}} & zI\\
                     % \end{array}
                   % \right]
   %  \\
  % &=& \Cok_{T}\left[
                                           %   \begin{array}{cc}
                                            %    A^{\frac{q-1}{2}} & -zI \\
                                            %    zI &  A^{\frac{q+1}{2}} \\
                                            %  \end{array}
                                          %  \right]
%\end{eqnarray*}

\begin{proposition}\label{P21}
Let $u$ and $v$ be new variables on $S$ and let $L=S[u,v]$ if $S=K[x_1,\dots,x_n]$ (or $L=S[\![u,v]\!]$  if $S=K[\![x_1,\dots,x_n]\!]$). Let  $R^{\bigstar}=L/(f+uv)$ where $f \in S$. If $A$ is the matrix $M_S(f,e)$ and $I$ is the identity matrix in the ring $M_{r_e}(S)$,  then $$ F_*^e(L/(f+uv))= (R^{\bigstar})^{r_e} \bigoplus \bigoplus_{k=1}^{q-1}\Cok_LB_k $$ where  $ B_k = \begin{bmatrix}  A^k & -vI \\ uI & A^{q-k}  \end{bmatrix}$ for all $ 1\leq k \leq q-1$.
\end{proposition}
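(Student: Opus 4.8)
The plan is to compute $M_L(f+uv,e)$ explicitly using Proposition \ref{EP}, then repeatedly apply the block row/column reduction from Corollary \ref{L.20} to each of the $2\times 2$ blocks that arise, and finally invoke Theorem \ref{P5.11} together with Remark \ref{R18}(a),(b) to read off the cokernel decomposition. First I would set $T=S[u,v]$ (or $S[\![u,v]\!]$) viewed as a polynomial ring in $v$ over $S[u]$, write $f+uv = f + (u)v$, and apply Proposition \ref{EP} with $d=1$, $g_0 = f$ and $g_1 = u$. Since $M_{S[u]}(f,e)$ and $M_{S[u]}(u,e)$ must themselves be computed: $M_{S[u]}(f,e) = A \oplus A \oplus \cdots \oplus A$ ($q$ copies, since $f\in S$ involves no $u$), while $M_{S[u]}(u,e)$ is the companion-type matrix with $I$'s on the subdiagonal and $uI$ in the top-right corner (this is the $d$-adic shift from Lemma \ref{L4.7} applied to $u = 1\cdot u^1$, scaled by $u$). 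Substituting into Proposition \ref{EP} then gives $M_L(f+uv,e)$ as a $q\times q$ block matrix over $M_{r_e}(S)$ with $A$ on the diagonal, $I$ on the subdiagonal, and a single $vI$ in the top-right corner — exactly the shape $D$ in Corollary \ref{L.20} but with the scalar entries replaced by the $r_e\times r_e$ block $A$, so that "$b$" becomes $A$ and "$uv$" becomes $vuI$ suitably interpreted.

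The key step is then the reduction. Working over the (non-commutative) ring $\mathfrak{P} = M_{r_e}(S)$, I would recognize $M_L(f+uv,e)$ as a matrix of the form treated in Corollary \ref{L.20}: namely $D$ with $b \rightsquigarrow A$ and the corner entry $uv \rightsquigarrow vu = uv$ (scalars commute with matrices, so this is fine). However, a subtlety is that Corollary \ref{L.20} is stated with scalar $b$ and the corner term $uv$; here the "corner" after the reduction should be $\pm A^{q}$ adjusted by the $uv$ term. Actually the cleaner route is to observe that $f+uv$, when $f\in S$, has $M_L(f+uv,e) = M_L(f,e) + M_L(uv,e)$ by Proposition \ref{P4.3}(a), and $M_L(uv,e) = M_L(u,e)M_L(v,e)$; but it is simplest to just directly reduce the $q\times q$ block matrix. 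Applying Lemma \ref{L.16} block-wise (valid since its proof only uses that the diagonal entries are a fixed element $b=A$ of $\mathfrak{P}$ and the subdiagonal entries are the identity of $\mathfrak{P}$, which is $I$), I get that $M_L(f+uv,e)$ is equivalent over $L$ to $I_{(q-2)r_e}\oplus \begin{bmatrix} (-1)^{q}A^{q-1} & vuI \\ I & A\end{bmatrix}$-type block, but this does not yet match the claimed answer.

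So instead I would not combine $f$ and $uv$ into one shift; rather I would keep the structure as in the stated matrix for $M_T(f+z^2,e)$ in Proposition \ref{L4.13}, i.e. first reduce to a $2\times 2$ block matrix using Corollary \ref{C2.14}/Lemma \ref{L2.9}, obtaining $F_*^e(L/(f+uv)) \cong \Cok_L\begin{bmatrix} A^{(q-1)/2}\cdot(\pm 1) & *vI \\ *uI & A^{(q+1)/2}(\pm 1)\end{bmatrix}$-style expressions — but wait, $q$ need not be odd here, and the claim involves a direct sum over \emph{all} $k$ from $1$ to $q-1$ plus $r_e$ free summands, which is a much finer decomposition. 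The correct approach: observe $M_L(f+uv,e)$, as the $q\times q$ block companion matrix with $vI$ in the corner, can be block-diagonalized not to a single $2\times2$ block but, by choosing the right invertible matrices, directly to $\bigoplus_{k=0}^{q-1}\begin{bmatrix} A^k & -vI \\ uI & A^{q-k}\end{bmatrix}$ up to the convention that the $k=0$ (equivalently $k=q$) block, having $A^0 = I$ on a diagonal entry, contributes a free summand. Concretely, I would prove: for $0\le k\le q-1$, the block $B_k$ with a unit entry $I$ in it (only $B_0$ has this) satisfies $\Cok_L B_0 \cong (R^{\bigstar})^{r_e}$ by elementary row/column operations clearing the $I$, while for $1\le k\le q-1$ all entries lie in the maximal ideal so no further reduction occurs. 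The decomposition $M_L(f+uv,e)\sim \bigoplus_{k=0}^{q-1} B_k$ is obtained by the same inductive shuffle as in Lemma \ref{L.16} and Corollary \ref{L.20}, peeling off one $2\times2$ block at a time; I expect the bookkeeping of signs (the $(-1)^{k}$ factors) to wash out since $\begin{bmatrix}(-1)^{k}A^k & -vI\\ uI & (-1)^{k}A^{q-k}\end{bmatrix}$ is equivalent to $B_k$ via conjugation by $\mathrm{diag}((-1)^k I, I)$ or $\mathrm{diag}(I,(-1)^kI)$. The main obstacle I anticipate is getting the precise form of $M_L(f+uv,e)$ and carrying out the block reduction cleanly so that exactly the blocks $B_1,\dots,B_{q-1}$ and one "trivial" block $B_0\sim (f+uv,1)^{r_e}$ appear; once that equivalence is in hand, Theorem \ref{P5.11}(b), Remark \ref{R18}(a),(b), and Remark \ref{R3.8}(i) (identifying $\Cok_L(f+uv,1) = R^{\bigstar}$) finish the proof immediately.
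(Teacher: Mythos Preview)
Your proposal has a genuine gap at the crucial step. First, a sizing error: $M_L(f+uv,e)$ is a $q^2r_e\times q^2r_e$ matrix, not $qr_e\times qr_e$. When you apply Proposition \ref{EP} over $S[u]$, the blocks $A_0=M_{S[u]}(f,e)$ and $A_1=M_{S[u]}(u,e)$ live in $M_{qr_e}(S[u])$, not $M_{r_e}(S)$; so your description ``$q\times q$ block matrix over $M_{r_e}(S)$ with $A$ on the diagonal, $I$ on the subdiagonal, and a single $vI$ in the corner'' is off by a factor of $q$ and would only account for the $k=0$ piece. Second, and more importantly, you assert that the big matrix is equivalent to $\bigoplus_{k=0}^{q-1}B_k$ ``by the same inductive shuffle as in Lemma \ref{L.16} and Corollary \ref{L.20}, peeling off one $2\times 2$ block at a time'' --- but those lemmas do not peel off multiple blocks. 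Each of them reduces a single companion-type matrix to a \emph{single} $1\times 1$ or $2\times 2$ block. Applied naively to the full $q^2r_e\times q^2r_e$ matrix they would collapse everything to one block, not $q$ of them.

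The missing idea, which the paper supplies, is a $\mathbb{Z}/q\mathbb{Z}$-grading on $F_*^e(L)$: put $\deg F_*^e(u)=1$, $\deg F_*^e(v)=-1$, and $\deg F_*^e(x_i)=0$, with $L$ concentrated in degree $0$. Since $F_*^e(f+uv)$ is homogeneous of degree $0$, multiplication by it preserves this grading, so $F_*^e(L)=\bigoplus_{k=0}^{q-1}M_k$ with $M_k$ the degree-$k$ piece, and $F_*^e(L/(f+uv))=\bigoplus_k M_k/M_kF_*^e(f+uv)$. Equivalently, reorder the basis $\{F_*^e(ju^sv^t)\}$ by the value of $s-t\pmod q$; in this ordering $M_L(f+uv,e)$ is already block-diagonal with $q$ blocks, each a $q\times q$ matrix over $M_{r_e}(L)$ of exactly the shape in Corollary \ref{L.20}. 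Only \emph{after} this decomposition does Corollary \ref{L.20} apply, reducing each graded piece to a single $2\times 2$ block $B_k$ (and $B_0$ further to $(f+uv)I$, yielding $(R^{\bigstar})^{r_e}$). Your row/column reductions are the right finishing move, but you need the grading to separate the pieces first.
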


\begin{proof}
Recall  that $\mathfrak{D}=\{ F_*^e(ju^sv^t) \,|\, j\in \Delta_e , 0 \leq s,t \leq q-1 \}$ is a free basis of $F_*^e(L)$ as $L$-module.
We introduce a $\mathbb{Z}/q\mathbb{Z}$-grading on both $L$ and $F_*^e(L)$ as follows:
$L$ is concentrated in degree 0, while $\deg(F_*^e(x_i))=0$ for each $1 \leq i \leq n$, $\deg(F_*^e(u))=1$ and $\deg(F_*^e(v))=-1$.
We can now write
$F_*^e(L)= \bigoplus_{k=0}^{q-1}M_k$ where $M_k$ is the free $L$-submodule of $F_*^e(L)$
of homogeneous elements  of degree $k$, i.e. $M_k$ is  the $L$-submodule of $F_*^e(L)$ that is generated by the subset
$$\mathfrak{D}_k = \{F_*^e(ju^sv^t)\,|\, \deg(F_*^e(ju^sv^t))=k \}$$ of the basis $\mathfrak{D}$.
Note that
$\mathfrak{D}_0= \{F_*^e(ju^sv^s)\,|\, j\in\Delta_e , 0 \leq s \leq q-1 \} $,  and that for all $1 \leq k \leq q-1$
\begin{eqnarray*}
% \nonumber to remove numbering (before each equation)
  \mathfrak{D}_k&=& \{F_*^e(ju^{k+r}v^{r})\,|\, j\in\Delta_e , 0 \leq r \leq q-k-1 \} \cup\\
    & &  \{F_*^e(ju^{r}v^{q-k+r})\,|\, j\in\Delta_e ,  0 \leq r \leq k-1 \}.
\end{eqnarray*}
Let $J$ be the ideal $(f+uv)L$. Since $\deg(F_*^e(f+uv))=0$, it follows that $F_*^e(J)= \bigoplus_{k=0}^{q-1}M_kF_*^e(f+uv)$ and consequently
\begin{equation}\label{EEEE1}
  F_*^e(L/(f+uv))= F_*^e(L)/F_*^e(J)=\bigoplus_{k=0}^{q-1}M_k/M_kF_*^e(f+uv).
\end{equation}

We now show that
$M_k/M_kF_*^e(f+uv) \cong \Cok_LC_k  $  where $C_0= \begin{bmatrix} (-1)^{q}A^{q-1} & uvI \\ I & A \end{bmatrix} $ and $ C_k = \begin{bmatrix} (-1)^{q-k+1}A^{q-k} & vI \\ uI & (-1)^{k+1}A^{k} \end{bmatrix}$ for all $1\leq k \leq q-1$.
Recall that if $M_s(f,e)=[f_{(i,j)}]$, then $F_*^e(jf)=\bigoplus_{i\in\Delta_e}f_{(i,j)}F_*^e(i)$ for all $j \in \Delta_e$.
Therefore,
 \begin{eqnarray}\label{Eq18}
 % \nonumber to remove numbering (before each equation)
  \nonumber  F_*^e(ju^sv^t(f+uv)) &=& F_*^e(jf)F_*^e(u^sv^t) + F_*^e(ju^{s+1}v^{t+1}) \\
     &=& \bigoplus_{i\in\Delta_e}f_{(i,j)}F_*^e(iu^sv^t) \oplus F_*^e(ju^{s+1}v^{t+1}).
 \end{eqnarray}

Since $\deg(F_*^e(iu^sv^t))=\deg(F_*^e(ju^{s+1}v^{t+1}))$ for all $i,j  \in \Delta_e$ and all $0 \leq s,t \leq q-1$, it follows that $ F_*^e(ju^sv^t(f+uv)) \in M_k$ for all $F_*^e(ju^sv^t) \in \mathfrak{D}_k$. This enables us to define the homomorphism  $\psi_k :M_k\rightarrow M_k$ that is  given by $\psi_k(F_*^e(ju^sv^t))=F_*^e(ju^sv^t(f+uv))$ for all $F_*^e(ju^sv^t) \in \mathfrak{D}_k$ and consequently we have the following short exact sequence   $$M_k\xrightarrow{\psi_k}M_k\xrightarrow{\phi_k}M_k/M_kF_*^e(f+uv) \xrightarrow{} 0$$
where $\phi_k :M_k\rightarrow M_k/M_kF_*^e(f+uv)$ is the canonical surjection.
Notice that if $0 \leq  s < q-1$, equation (\ref{Eq18}) implies that
\begin{equation}\label{Eq19}
 F_*^e(ju^sv^s(f+uv))=  \bigoplus_{i\in\Delta_e}f_{(i,j)}F_*^e(iu^sv^s) \oplus F_*^e(ju^{s+1}v^{s+1})
\end{equation} and

 \begin{equation}\label{Eq20}
 F_*^e( ju^{q-1}v^{q-1}(f+uv))=  \bigoplus_{i\in\Delta_e}f_{(i,j)}F_*^e(iu^{q-1}v^{q-1}) \oplus uv F_*^e(j) ,
\end{equation}
therefore $\psi_0$ is represented by the matrix $\begin{bmatrix} A & & &uvI \\ I & A & & \\ & \ddots & \ddots & \\ & & I & A\end{bmatrix}$ which is a $q\times q$ matrix over the ring $M_{r_e}(L).$
Now Corollary \ref{L.20} implies that
\begin{equation}
 M_0/M_0F_*^e(f+uv)\cong \Cok_L \begin{bmatrix} (-1)^{q}A^{q-1} & uvI \\ I & A \end{bmatrix}.
\end{equation}

However,
\begin{equation*}
 \begin{bmatrix} (-1)^{q}A^{q-1} & uvI \\ I & A \end{bmatrix}\sim \begin{bmatrix} 0 &A(A^{q-1})+ uvI \\ I & A \end{bmatrix}\sim \begin{bmatrix} A(A^{q-1})+ uvI & 0\\ 0 & I \end{bmatrix}.
\end{equation*}
Since $AA^{q-1}=fI$ as $(A,A^{q-1})$ is a matrix factorization of $f$ (Proposition \ref{P5.5}), it follows that

\begin{equation}\label{EEEE2}
 M_0/M_0F_*^e(f+uv)\cong \Cok_L \begin{bmatrix} (f+uv)I & 0 \\ 0 & I \end{bmatrix}=(R^{\bigstar})^{r_e}.
\end{equation}

Now let $1 \leq k \leq q-1$. If $0 \leq r < q-k-1$, then  it follows from equation (\ref{Eq18}) that
\begin{equation}\label{Eq21}
 F_*^e( ju^{k+r}v^r(f+uv))=  \bigoplus_{i\in\Delta_e}f_{(i,j)}F_*^e(iu^{k+r}v^{r}) \oplus F_*^e(ju^{k+r+1}v^{r+1})
\end{equation}
and
\begin{equation}\label{Eq22}
 F_*^e( ju^{q-1}v^{q-k-1}(f+uv))= \bigoplus_{i\in\Delta_e}f_{(i,j)}F_*^e(iu^{q-1}v^{q-k-1}) \oplus u F_*^e(jv^{q-k}).
\end{equation}
However, if $0 \leq r < k-1$, it follows from (\ref{Eq18}) that
\begin{equation}\label{Eq222}
 F_*^e( ju^{r}v^{q-k-r}(f+uv))= \bigoplus_{i\in\Delta_e}f_{(i,j)}F_*^e(iu^{r}v^{q-k-r}) \oplus  F_*^e(ju^{r+1}v^{q-k-r+1})
\end{equation}
and
\begin{equation}\label{Eq23}
 F_*^e( ju^{k-1}v^{q-1}(f+uv))=\bigoplus_{i\in\Delta_e}f_{(i,j)}F_*^e(iu^{k-1}v^{q-1}) \oplus vF_*^e(ju^{k}).
\end{equation}

As a result, $\psi_k$  is represented by the matrix  $ \left[ \begin{array}{cccc|cccc} A & & & & & & &  vI  \\
                                    I & A & & & & & &   \\
                                      &  \ddots & \ddots &  & & & &  \\
                                      & & I& A & & & &  \\ \hline
                                      & & &uI &    A & & &  \\
                                      & & & &  I & A & &  \\
                                      & & & &   &  \ddots & \ddots &  \\
                                      & & & &   & &I & A
                                        \end{array}\right] $ which is a $q\times q$ matrix over the ring $M_{r_e}(L)$  where   $uI$ is in the

                                         $(q-k+1,q-k)$ spot of this matrix.

Therefore, Corollary \ref{L.20} implies that
\begin{eqnarray}
% \nonumber to remove numbering (before each equation)
 \nonumber M_k/M_kF_*^e(f+uv) &\cong & \Cok_L\begin{bmatrix} (-1)^{q-k+1}A^{q-k} & vI \\ uI & (-1)^{k+1} A^{k} \end{bmatrix}.
\end{eqnarray}
If $k$ is odd integer, we notice that

\begin{eqnarray}\label{EEq}
% \nonumber to remove numbering (before each equation)
 \nonumber  M_k/M_kF_*^e(f+uv)  &\cong & \Cok_L( \left[
                                                                                              \begin{array}{cc}
                                                                                                -I & 0 \\
                                                                                                0 & I \\
                                                                                              \end{array}
                                                                                            \right] \begin{bmatrix} (-1)^{q-k+1}A^{q-k} & vI \\ uI & (-1)^{k+1} A^{k} \end{bmatrix}) \\
   &\cong & \Cok_L \begin{bmatrix} A^{q-k} & -vI \\ uI &  A^{k} \end{bmatrix}.
\end{eqnarray}

Similar argument when  $k$ is even shows that
\begin{equation}\label{Eq24}
  M_k/M_kF_*^e(f+uv)  \cong \Cok_L \begin{bmatrix} A^{q-k} & -vI \\ uI &  A^{k} \end{bmatrix}.
\end{equation}

Now the proposition follows from (\ref{EEEE1}), (\ref{EEEE2}), (\ref{EEq}) and (\ref{Eq24}).
\end{proof}

If $(\phi,\psi)$ is a matrix factorization of a nonzero nonunit element $f$ in a domain $S$ and $u,v,z$ are variables on $S$, recall from Remark \ref{R3.8} that
\begin{equation*}
 (\phi, \psi)^{\maltese}:= ( \begin{bmatrix}  \phi & -vI \\  uI & \psi \end{bmatrix} , \begin{bmatrix}  \psi & vI \\  -uI & \phi \end{bmatrix}), \text{ and }(\phi, \psi)^{\sharp}:= ( \begin{bmatrix}  \phi & -zI \\  zI & \psi \end{bmatrix} ,
\begin{bmatrix}  \psi & zI \\  -zI & \phi \end{bmatrix})
\end{equation*}
Furthermore, $(\phi, \psi)^{\maltese}$   is a matrix factorization of $f+uv$ in $S[\![u,v]\!]$ (and in $S[u,v]$)
and $ (\phi, \psi)^{\sharp}$ is a matrix factorization of $f+z^2$ in $S[\![z]\!]$ (and in $S[\![z]\!]$). Using this notation and the notation in Definition \ref{D2.11} we can establish the following proposition.
\begin{proposition}\label{C4.19}
Let  $S=K[\![x_1,\dots,x_n]\!]$  and $f \in \mathfrak{m} \setminus \{0\}$ where   $\mathfrak{m}$ is the maximal ideal of $S$. Let $R=S/fS$, $R^{\bigstar}= S[\![u,v]\!]/(f+uv)$, and $ R^{\sharp}= S[\![z]\!]/(f+z^2)$. If $A=M_S(f,e)$ and $1\leq k \leq q-1$, then
\begin{enumerate}
  \item [(a)] $\sharp(\Cok_{S[\![z]\!]}(A^k, A^{q-k})^{\sharp},R^{\sharp}) = \sharp(\Cok_S(A^k),R)+\sharp(\Cok_S(A^{q-k}),R)$.
  \item [(b)] If $K$ be a field of prime characteristic $p > 2$, it follows that\\
  $\sharp(F_*^e(R^{\sharp}),R^{\sharp}) = \sharp(\Cok_S(A^{\frac{q-1}{2}}),R)+\sharp(\Cok_S(A^{\frac{q+1}{2}}),R)$ and hence \\
  $\sharp(F_*^e(R^{\sharp}),R^{\sharp})=\sharp(F_*^e(S/f^{\frac{q-1}{2}}S),R)+\sharp(F_*^e(S/f^{\frac{q+1}{2}}S),R)$.
  \item [(c)] $\sharp(\Cok_{S[\![u,v]\!]}(A^k, A^{q-k})^{\maltese},R^{\maltese})= \sharp(\Cok_S(A^k),R)+\sharp(\Cok_S(A^{q-k}),R)$.
  \item [(d)] $\sharp ( F_*^e(R^{\bigstar}),R^{\bigstar})= r_e + 2 \sum_{k=1}^{q-1} \sharp ( \Cok_S(A^k),R)$.
\end{enumerate}
\end{proposition}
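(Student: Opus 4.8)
The plan is to reduce all four assertions to Corollary \ref{C3.8} together with the explicit cokernel presentations of $F_*^e(R^{\sharp})$ and $F_*^e(R^{\bigstar})$ obtained in Propositions \ref{L4.13} and \ref{P21}. Write $A = M_S(f,e)$. The preliminary observation that makes everything run is that, by Propositions \ref{P4.3} and \ref{P5.5}, one has $A^j = M_S(f^j,e)$ for all $j \geq 1$ and the pair $(A^k, A^{q-k})$ is a matrix factorization of $f$ for every $0 \leq k \leq q-1$; moreover, since $S = K[\![x_1,\dots,x_n]\!]$ is a regular local ring and $f \in \mathfrak{m}\setminus\{0\}$, Proposition \ref{P.24} furnishes a decomposition $(A^k,A^{q-k}) = (\alpha,\beta)\oplus(f,1)^t\oplus(1,f)^r$ with $(\alpha,\beta)$ reduced, which is exactly what is needed to apply Corollary \ref{C3.8} to $(A^k,A^{q-k})$. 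Throughout I shall use the convention of Definition \ref{D23} that $\Cok_S(A^k,A^{q-k}) = \Cok_S(A^k)$ and $\Cok_S(A^{q-k},A^k) = \Cok_S(A^{q-k})$.

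Parts (a) and (c) are then immediate: one applies Corollary \ref{C3.8}(c) and Corollary \ref{C3.8}(b), respectively, to the matrix factorization $(A^k,A^{q-k})$ of $f$, recalling that $R^{\maltese} = R^{\bigstar}$ (Remark \ref{R3.8}(i)); in both cases the right-hand side reads off as $\sharp(\Cok_S(A^k),R) + \sharp(\Cok_S(A^{q-k}),R)$. For part (b), I would first observe that Proposition \ref{L4.13} presents $F_*^e(R^{\sharp})$ as the cokernel over $S[\![z]\!]$ of the block matrix with blocks $A^{(q-1)/2}$, $-zI$, $zI$, $A^{(q+1)/2}$, and that — since $\tfrac{q-1}{2}+\tfrac{q+1}{2}=q$ — this is precisely the first component of $(A^{(q-1)/2},A^{(q+1)/2})^{\sharp}$ in the sense of Remark \ref{R3.8}; note that $p>2$ forces $q \geq 3$, so both exponents lie in $\{1,\dots,q-1\}$. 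Thus part (a) with $k=\tfrac{q-1}{2}$ gives the first identity of (b), and the ``hence'' follows from Corollary \ref{C4.12}, which gives $F_*^e(S/f^{j}S) \cong \Cok_S(A^{j})$ as $R$-modules for $1 \leq j \leq q-1$, combined with the fact that $\sharp(-,R)$ is an isomorphism invariant.

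For part (d) I would begin from Proposition \ref{P21}, which writes $F_*^e(R^{\bigstar}) = (R^{\bigstar})^{r_e} \oplus \bigoplus_{k=1}^{q-1}\Cok_{S[\![u,v]\!]}B_k$ with $B_k$ the block matrix with blocks $A^k$, $-vI$, $uI$, $A^{q-k}$, that is, $\Cok_{S[\![u,v]\!]}B_k = \Cok_{S[\![u,v]\!]}(A^k,A^{q-k})^{\maltese}$. Using additivity of $\sharp(-,R^{\bigstar})$ over finite direct sums together with part (c) applied to each summand, the right-hand side becomes $r_e + \sum_{k=1}^{q-1}\bigl(\sharp(\Cok_S(A^k),R) + \sharp(\Cok_S(A^{q-k}),R)\bigr)$; reindexing the second sum by $k \mapsto q-k$ identifies it with the first, giving $r_e + 2\sum_{k=1}^{q-1}\sharp(\Cok_S(A^k),R)$.

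The computations involved are routine. The two points I would take care with are: the bookkeeping of signs and block positions when matching $(\,\cdot\,)^{\sharp}$ and $(\,\cdot\,)^{\maltese}$ to the matrices occurring in Propositions \ref{L4.13} and \ref{P21} (all the rings $R$, $R^{\sharp}$, $R^{\bigstar}$ are complete local, so Remark \ref{R3.8} and Corollary \ref{C3.8} apply verbatim), and the additivity of $\sharp$ over finite direct sums. The latter I would justify in one line: write each summand as $(R^{\bigstar})^{a}\oplus N$ with $N$ having no free direct summand (Discussion \ref{disc2.33}(b)), and note that by the Krull--Remak--Schmidt theorem (Discussion \ref{disc2.33}(a)) a finite direct sum of modules with no free direct summand again has no free direct summand, so the free ranks add. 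I do not anticipate a genuine obstacle here; the proposition is essentially a harvest of the matrix-factorization machinery developed in this chapter and in Chapter \ref{chapter:Matrix Factorization}.
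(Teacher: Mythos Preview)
Your approach is essentially the paper's. The one place the paper is more careful is in parts (a) and (c): since Proposition \ref{P.24} is stated only for \emph{nontrivial} matrix factorizations, the paper treats the trivial case of $(A^k,A^{q-k})$ separately---first noting that the pure forms $(f,1)^{r_e}$ and $(1,f)^{r_e}$ cannot arise (they would force $F_*^e(S/f^{q-k}S)=0$ or $F_*^e(S/f^kS)=0$), then handling the mixed trivial form $(f,1)^u\oplus(1,f)^v$ by a direct computation from Remark \ref{R3.8}---whereas you invoke Proposition \ref{P.24} without this case split; the gap is cosmetic, since in the trivial case the decomposition needed for Corollary \ref{C3.8} is immediate with empty reduced part.
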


\begin{proof}
Notice that $ (A^{k},A^{q-k})$ is a matrix factorization of $f$ (Proposition \ref{P5.5}).

(a) If $ (A^{k},A^{q-k}) \sim  (f,1)^{r_e} $ (or $ (A^{k},A^{q-k})\sim  (1,f)^{r_e} $) then $(F_*^e( S/f^{q-k}S)=\Cok_S(A^{q-k})= \{0\}$ (or $(F_*^e( S/f^{k}S )=\Cok_S(A^{k})= \{0\}$) which is impossible. As a result, if $ (A^{k},A^{q-k})$ is a trivial matrix factorization of $f$, then the only possible
case is that $   (A^{k},A^{q-k}) \sim  (f,1)^u \oplus (1,f)^v$ where $0 < u,v < r_e$ with $u+v = r_e$ ,  $ \sharp(\Cok_S(A^{k}),R)= u$, and $ \sharp(\Cok_S(A^{q-k}),R)= v$. Remark \ref{R3.8} (j),(k),(e) and (f) implies that
\begin{eqnarray*}
% \nonumber to remove numbering (before each equation)
  \Cok_{S[\![z]\!]}(A^k, A^{q-k})^{\sharp} &=& [\Cok_{S[\![z]\!]}(f, 1)^{\sharp}]^u\oplus[\Cok_{S[\![z]\!]}(1,f)^{\sharp}]^v  \\
   &=& [R^{\sharp}]^{u+v}.
\end{eqnarray*}
Therefore, if $ (A^{k},A^{q-k})$ is a trivial matrix factorization of $f$, it follows that
$$\sharp(\Cok_{S[\![z]\!]}(A^k, A^{q-k})^{\sharp},R^{\sharp}) = \sharp(\Cok_S(A^k),R)+\sharp(\Cok_S(A^{q-k}),R).$$

On the other hand,  if $ (A^{k},A^{q-k})$ is not trivial matrix factorization of $f$, it follows from Corollary \ref{C3.8} that
 \begin{equation*}
 \sharp(\Cok_{S[\![z]\!]}(A^k, A^{q-k})^{\sharp},R^{\sharp}) = \sharp(\Cok_S(A^k),R)+\sharp(\Cok_S(A^{q-k}),R).
 \end{equation*}

 (b) follows from the result (a) above, Proposition \ref{L4.13} and the fact that $\Cok_SA^k=\Cok_SM_S(f^k,e)=F_*^e(S/f^kS)$ for all $1\leq k \leq q-1$ (Corollary\ref{C4.12} and Proposition \ref{P4.3}(c)).

 (c) can be proved similarly to (a).

%We shall need the following corollary later
%\begin{corollary}\label{C4.19}
%Let  $S=K[\![x_1,\dots,x_n]\!]$ and let $\mathfrak{m}$ be the maximal ideal of $S$.  Let  $R=S/fS$ where $f \in \mathfrak{m} \setminus \{0\}$. Let  $R^{\bigstar}= S[\![u,v]\!]/(f+uv)$. If $A=M_S(f,e)$, then
%\begin{equation*}
 % \sharp ( F_*^e(R^{\bigstar}),R^{\bigstar})= r_e + 2 \sum_{k=1}^{q-1} \sharp ( \Cok_S(A^k),R)
 %\end{equation*}

%\end{corollary}

%\begin{proof}

(d) By Proposition \ref{P21}, it follows that
\begin{equation*}
  F_*^e(R^{\bigstar})=  (R^{\bigstar})^{r_e} \bigoplus \bigoplus_{j=1}^{q-1} \Cok_{S[\![u,v]\!]} \begin{bmatrix}  A^k & -vI \\ uI & A^{q-k}  \end{bmatrix}.
\end{equation*}

However, For each $1 \leq k \leq q-1 $ , we recall from Remark \ref{R3.8} (e) that

\begin{equation*}
  \Cok_{S[\![u,v]\!]}\begin{bmatrix}  A^k & -vI \\ uI & A^{q-k}  \end{bmatrix}=\Cok_{S[\![u,v]\!]}(A^k,A^{q-k})^{\maltese}.
\end{equation*}

Therefore, by Krull-Remak-Schmidt theorem (Discussion \ref{disc2.33}), the result (c) above  and the convention  that $\Cok_{S}(A^k,A^{q-k})=\Cok_{S}(A^k)$ it follows that

\begin{eqnarray*}
% \nonumber to remove numbering (before each equation)
   \sharp ( F_*^e(R^{\bigstar}),R^{\bigstar}) &=& r_e +  \sum_{k=1}^{q-1}\sharp (\Cok_{S[[u,v]]}(A^k,A^{q-k})^{\maltese}, R^{\bigstar}) \\
    &=&  r_e +  \sum_{k=1}^{q-1} \left[\sharp(\Cok_{S}(A^k,A^{q-k}),R)+ \sharp(\Cok_{S}(A^{q-k},A^k), R)\right] \\
    &=&  r_e + 2 \sum_{k=1}^{q-1} \sharp (\Cok_{S}(A^k,A^{q-k}),R)\\
    &=&  r_e + 2 \sum_{k=1}^{q-1} \sharp (\Cok_{S}(A^k),R).
\end{eqnarray*}
\end{proof}

\section{The ring $S[[y]]/(y^{p^d} +f)$ has finite F-representation type}
\label{section:The ring pd has finite Frepresentation type}
We  keep the same notation as in  Notation \ref{N4.1}. The following proposition can be obtained  as a special case from \cite[Theorem 3.10]{TS}. However, we provide   a different proof  that is based basically on the Theorem \ref{P5.11}.

\begin{theorem}\label{P5.1}
 If $d \in \mathbb{N}$ and $S:= K[\![x_1,...,x_n]\!]$,
   then $S[\![y]\!]/(y^{p^d}+f)$ has FFRT for any $f \in S$ and any prime integer $p>0$.
\end{theorem}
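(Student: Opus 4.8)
The plan is to exploit the presentation of $F_*^e(R)$ as the cokernel of the matrix factorization $M_S(f,e)$ given in Theorem \ref{P5.11}, together with the column-description of the matrix of relations over a polynomial extension worked out in Lemma \ref{L4.7} and Proposition \ref{EP}. Set $T=S[\![y]\!]$ and $R=T/(y^{p^d}+f)$. Since $S$ is $F$-finite and $[K:K^p]<\infty$, the ring $T$ is $F$-finite, hence $F_*^e(R)$ is a finitely generated $R$-module for every $e$. By Theorem \ref{P5.11}(b) applied to the element $g=y^{p^d}+f\in T$, we have $F_*^e(R)\cong \Cok_T\big(M_T(y^{p^d}+f,e)\big)$ as $R$-modules, so the whole problem reduces to showing that, as $e$ varies, the cokernels of the matrices $M_T(y^{p^d}+f,e)$ fall into finitely many isomorphism classes (up to direct sums).

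First I would analyze the matrix $M_T(y^{p^d}+f,e)$ using Proposition \ref{P4.3}(a): it equals $M_T(y^{p^d},e)+M_T(f,e)$. Here $f\in S\subseteq T$, so $M_T(f,e)$ is block-diagonal with $q^{\,?}$ copies of $A:=M_S(f,e)$ arranged according to the powers of $y$ in the chosen basis (this is the $d=0$-type block structure from Lemma \ref{L4.7} with the monomial $y^0$). The term $M_T(y^{p^d},e)$ is the matrix of multiplication by $y^{p^d}$; writing $q=p^e$, if $e\le d$ then $y^{p^d}=(y^{p^{d-e}})^q$ times nothing — more carefully, $p^d = q\cdot p^{d-e}$ when $e\le d$, so by Remark \ref{Ex4.3} $M_T(y^{p^d},e)=y^{p^{d-e}}I$ and the matrix is simply $y^{p^{d-e}}I+\bigoplus A$, whose cokernel is a direct sum of $q^{n}$ copies of $\Cok_S(A)=F_*^e(S/fS)$ base-changed appropriately — but this case is transient. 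The essential case is $e\ge d$: then I would write $p^e = p^d\cdot p^{e-d}$ and use $p^d<q$, so $y^{p^d}$ is a genuine polynomial in the "$y$-direction" of degree $p^d<q$, and Proposition \ref{EP} (or Lemma \ref{L4.7} with the monomial $y^{p^d}$) describes $M_T(y^{p^d},e)$ as a sparse block matrix over $M_{r_e'}(S')$ whose nonzero blocks are identity blocks and $y$-times-identity blocks, where $S'=K[\![x_1,\dots,x_n,y]\!]$'s relevant subring. Adding $M_T(f,e)$ then yields, after permuting basis vectors, a block matrix of the shape treated in Corollary \ref{C2.14}, Lemma \ref{L.16}, and Lemma \ref{1}: a "companion-type" band matrix with $A^{j}$-blocks and $y$-blocks in corner positions. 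Applying those reduction lemmas (row/column operations, which preserve the cokernel by Remark \ref{R18}(b)), I expect $\Cok_T(M_T(y^{p^d}+f,e))$ to decompose as a direct sum of copies of cokernels of matrices of the form $\begin{bmatrix} A^{k} & * \\ * & A^{q-k}\end{bmatrix}$ over $T$ together with trivial pieces, exactly as happened for $f+uv$ in Proposition \ref{P21} and for $f+z^2$ in Proposition \ref{L4.13}.

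The key observation that finishes the argument is that all the blocks appearing are powers $A^{k}=M_S(f^{k},e)$ with $0\le k\le q-1$ (by Proposition \ref{P4.3}(c)), and $\Cok_S(A^{k})=\Cok_S(M_S(f^{k},e))\cong F_*^e(S/f^kS)$ by Corollary \ref{C4.12}. So I must bound, independently of $e$, the isomorphism classes of the modules built from these. The clean way to do this is to note that $S/fS$ has FFRT (it is a quotient of the FFRT ring $S$ — more precisely, $F_*^e(S/fS)\cong\Cok_S M_S(f,e)$ is a direct summand consideration that, combined with $S$ being FFRT via Corollary \ref{L2.5}, shows $S/fS$ is FFRT), and the construction $M\mapsto M^{\maltese}$-like passage is controlled: iterating the reduction shows $F_*^e(R)$ is built from the finitely many indecomposable summands of $\bigoplus_{k} F_*^{e}(S/f^kS)$, which themselves range over a finite set because $S$ has FFRT and there are only finitely many values $k=0,1,\dots,p^d-1$ of genuine interest (for $k\ge p^d$ one reduces using $f^{p^d}$ periodicity of the construction, or rather one only ever needs $k<q$ and the relevant exponents collapse modulo the structure). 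Concretely, since $S$ is $F$-finite with FFRT by finitely many modules $N_1,\dots,N_s$, Theorem \ref{C4.6} gives $T=S[\![y]\!]$ has FFRT; then $R=T/(y^{p^d}+f)$ is a hypersurface quotient, and the cokernel presentation plus the structural reductions above express each $F_*^e(R)$ as a finite direct sum of summands drawn from a fixed finite list determined by the $N_i$ and the finitely many corner-block shapes.

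The main obstacle will be the bookkeeping in the block-reduction step: carefully tracking, for $e\ge d$, how the permutation of the basis $\{F_*^e(\lambda x_1^{a_1}\cdots x_n^{a_n}y^{b})\}$ groups the matrix $y^{p^d}I + (\text{block-diag } A)$ into the band form of Lemma \ref{L.16}/Corollary \ref{C2.14}, and then verifying that the resulting corner blocks are exactly $A^{k}$ for the right values of $k$ and that the off-diagonal entries are $y$ (not higher powers), so that Lemma \ref{1} and Corollary \ref{L.20} apply verbatim. A secondary subtlety is handling the small cases $e<d$ and confirming those cokernels are also among the finitely many modules (they are direct sums of $F_*^e(S/fS)$-type modules over $R$, which is fine). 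Once the block structure is pinned down, the FFRT conclusion is immediate from the finiteness of $\{F_*^e(S/f^kS): e\in\mathbb N, 0\le k<q\}$-summands, which follows from $S$ (hence $S/f^kS$ for each fixed $k$, and uniformly) having FFRT via Corollary \ref{L2.5} and the Krull--Remak--Schmidt machinery in Discussion \ref{disc2.33}.
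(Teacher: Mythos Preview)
Your setup is right up to the block reduction, but the reduction does not produce what you predict, and the finiteness argument you sketch does not work.

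For $e>d$ the matrix $M_{T}(y^{p^d}+f,e)$, after grouping the $y$-basis into blocks of size $p^d$, is a $p^{e-d}\times p^{e-d}$ band matrix of the shape in Lemma~\ref{1} with $b=M$ (the block-diagonal matrix with $p^d$ copies of $A=M_S(f,e)$) and corner entry $yN$. Lemma~\ref{1} therefore gives
\[
F_*^e(R)\ \cong\ \big(\Cok_{T}(yI+A^{p^{e-d}})\big)^{\oplus p^d},
\]
a single block of the form $yI+A^{p^{e-d}}$, not a sum of $2\times2$ corner blocks $\begin{bmatrix}A^k&*\\ *&A^{q-k}\end{bmatrix}$ as in the $f+uv$ or $f+z^2$ cases. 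So the analogy with Propositions~\ref{P21} and~\ref{L4.13} breaks down.

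More seriously, your finiteness step has a genuine gap. You assert that the summands of the $F_*^e(S/f^kS)$ range over a finite set ``because $S$ has FFRT'' and ``$S/fS$ has FFRT (it is a quotient of the FFRT ring $S$)''. Quotients of FFRT rings need not have FFRT, and the statement that the summands of $F_*^e(S/f^kS)$ for \emph{all} $e$ and all $1\le k<p^e$ lie in a fixed finite set is precisely the nontrivial characterization in Theorem~\ref{P30}, not a freely available fact. There is no ``$f^{p^d}$ periodicity'' to invoke: the exponent $p^{e-d}$ grows without bound, and nothing in your outline controls the isomorphism type of $\Cok_T(yI+A^{p^{e-d}})$ as $e\to\infty$.

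The paper closes this gap by an identity you are missing. One computes $F_*^e\big(T/(y^{p^e}+f^{p^{e-d}})\big)$ in two ways. On one hand it is $F_*^d\big(F_*^{e-d}(T/(y^{p^d}+f)^{p^{e-d}})\big)$; since $M_T\big((y^{p^d}+f)^{p^{e-d}},e-d\big)=(y^{p^d}+f)\tilde I$, the inner $F_*^{e-d}$ is just $R^{\oplus r_{e-d}}$, so the whole thing is $[F_*^d(R)]^{\oplus r_{e-d}}$. On the other hand, $M_T(y^{p^e}+f^{p^{e-d}},e)=(yI+A^{p^{e-d}})^{\oplus p^e}$, so it equals $[\Cok_T(yI+A^{p^{e-d}})]^{\oplus p^e}$. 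Krull--Remak--Schmidt then forces the indecomposable summands of $\Cok_T(yI+A^{p^{e-d}})$ to lie among the finitely many indecomposable summands of the \emph{fixed} module $F_*^d(R)$, independently of $e$. This comparison with $F_*^d(R)$ is the missing idea.
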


\begin{proof}
 Let   $A$ be the matrix  $M_S(f,e)$ in $M_{r_e}(S)$  where $ e > d$ and let $I$ be the identity matrix in $M_{r_e}(S)$.
Let $M$ and $N$ be the diagonal matrices of size $p^d \times p^d$ with entries in  $M_{r_e}(S)$  with $A$ and $I$   along the diagonals of $M$ and $N$ as follows:  $$M= \begin{bmatrix} A & & & & &  \\
                          & A & & & & \\
                          &   & A & & & \\
                          & &   & \ddots & & \\
                         & & & & A  \end{bmatrix}  $$ and $$N= \begin{bmatrix} I & & & & &  \\
                          & I & & & & \\
                          &   & I & & & \\
                          & &   & \ddots & & \\
                         & & & & I  \end{bmatrix}.  $$

                        Using Proposition \ref{EP} we write $M_{S[\![y]\!]}(y^{p^d}+f,e)$ as the following $p^{e-d} \times p^{e-d}$ matrix over the ring $M_{p^d}(M_{r_e}(S[\![y]\!]))$:
                          $$ M_{S[\![y]\!]}(y^{p^d}+f,e) = \begin{bmatrix} M & &  & & yN \\
                       N & M & & & & \\
                          & N & M & & & \\
                          & & \ddots & \ddots & & \\
                         & & &N& M   \end{bmatrix} . $$

Using Lemma  \ref{1}, and Theorem  \ref{P5.11} we see that
\begin{equation}\label{Eq5.14}
  F_*^e(S[\![y]\!]/(y^{p^d}+f)) \cong \Cok_{S[\![y]\!]}( yN+(-1)^{1+p^{e-d}}M^{p^{e-d}})\cong (\Cok_{S[\![y]\!]}(yI+A^{p^{e-d}}))^{\oplus p^d}.
\end{equation}
We aim to prove the  existence of finitely generated $S[\![y]\!]/(y^{p^d}+f) $-modules $M_1,...,M_s$  such that $\Cok_{S[\![y]\!]}(yI+A^{p^{e-d}})$ can be written as a direct sum with direct summands taken from $\{M_1,...,M_s\}$ for every $e$. Notice from Remark \ref{R2.29}(a) and Theorem  \ref{P5.11} that
\begin{eqnarray*}
 % \nonumber to remove numbering (before each equation)
 F_*^d[ F_*^{e-d}( S[\![y]\!]/(y^{p^d}+f)^{p^{e-d}}))] &\cong & F_*^e( S[\![y]\!]/(y^{p^e}+f^{p^{e-d}}))  \\
  &\cong &\Cok_{S[\![y]\!]}(M_{S[\![y]\!]}(y^{p^e}+f^{p^{e-d}},e)).
 \end{eqnarray*}
However, Proposition \ref{P4.3} and Proposition \ref{EP} imply that
$$ M_{S[\![y]\!]}(y^{p^e}+f^{p^{e-d}},e) = (yI+A^{p^{e-d}})^{\oplus p^e}.$$
As a result, we get
$$  F_*^d[ F_*^{e-d}( S[\![y]\!]/(y^{p^d}+f)^{p^{e-d}}))]  \cong  [ \Cok_{S[\![y]\!]}(yI+A^{p^{e-d}})]^{\oplus p^e}.$$

If $r_{e-d}=[K:K^p]^{e-d}p^{(e-d)(n+1)}$, let $\tilde{I}$ be the identity matrix in $M_{r_{e-d}}(S[\![y]\!])$.
By Theorem  \ref{P5.11} and Remark \ref{Ex4.3}, it follows that

\begin{eqnarray*}
 % \nonumber to remove numbering (before each equation)
F_*^{e-d}( S[\![y]\!]/(y^{p^d}+f)^{p^{e-d}}) &\cong & \Cok_{S[\![y]\!]}(M_{S[\![y]\!]}((y^{p^d}+f)^{p^{e-d}},e-d)  \\
 &\cong &\Cok_{S[\![y]\!]}((y^{p^d}+f)\tilde{I}) \\
   &\cong & [ S[\![y]\!]/(y^{p^d}+f)]^ {\oplus ^{r_{e-d}}}
 \end{eqnarray*}

                           and hence   $$  F_*^e( S[\![y]\!]/(y^{p^e}+f^{p^{e-d}}))=  [F_*^d(S[\![y]\!]/(y^{p^d}+f)]^ {\oplus ^{r_{e-d}}}.$$
                        This makes
                        \begin{equation}\label{Eq5.15}
                         [ \Cok_{S[\![y]\!]}(yI+A^{p^{e-d}})]^{\oplus p^e}\cong[F_*^d(S[[y]]/(y^{p^d}+f))]^ {\oplus ^{r_{e-d}}}
                        \end{equation}
                        as $S[\![y]\!]/(y^{p^d}+f))$-modules. Since  $ F_*^d(S[\![y]\!]/(y^{p^d}+f))$ can be written as a direct sum with direct summands taken from a finite set of  indecomposable finitely generated $S[\![y]\!]/(y^{p^d}+f))$-modules, say $M_1 , ... , M_s$, it follows from Krull-Remak-Schmidt theorem (Discussion \ref{disc2.33}) that $\Cok_{S[\![y]\!]}(yI+A^{p^{e-d}})$  is also a  direct sum with direct summands taken from $M_1 , ... , M_s$.
\end{proof}
                         %is finitely generated $S[[y]]$-module, it follows that  $ F_*^d(S[[y]]/(y^{p^d}+f))$ is a  direct sum with direct summands taken from a finite set of  indecomposable finitely generated $S[[y]]$-modules, say $M_1 , ... , M_s$ . By Krull-Remak-Schmidt theorem, the $S[[y]]$-module  Therefore, $ F_*^e(S[[y]]/(y^{p^d}+f))$ is a  direct sum with direct summands taken from $M_1 , ... , M_s$ as $S[[y]]$-module. Since $(y^{p^d}+f)F_*^d(S[[y]]/(y^{p^d}+f))=0$, we get that $M_1 , ... , M_s$ are all finitely generated $S[[y]]/(y^{p^d}+f)$-modules.    This shows that $S[[y]]/(y^{p^d}+f)$ has FFRT.

% Now note that
%$$(\Cok_{S[[y]]}(yI+A^{p^{e-d}}))=\Cok M(y^{p^e}+f^{p^{e-d}})\cong F_*^e ( S[\![ y ]\!] / (y^{p^e}+f^{p^{e-d}})) \cong $$
%$$F_*^d F_*^{e-d}  S[\![ y ]\!] / (y^{p^d}+f)^{p^{e-d}} \cong F_*^d \frac{F_*^{e-d}   S[\![ y ]\!] }{ (y^{p^d}+f) F_*^{e-d} S[\![ y ]\!] } $$
%Let  $I_{p^{e-d}}$ be the identity matrix in $M_{p^{e-d}}(S[[y]])$. Using by Proposition \ref{P4.11} and Remark \ref{Ex4.3} we write

%$$F_*^{e-d}( S[[y]]/(y^{p^d}+f)^{p^{e-d}}) \cong \Cok_{S[[y]]}(M_{S[[y]]}((y^{p^d}+f)^{p^{e-d}},e-d) \cong  \Cok_{S[[y]]}((y^{p^d}+f)I_{p^{e-d}})\cong $$
%$$ [ S[[y]]/(y^{p^d}+f)]^ {\oplus ^{p^{(e-d)(n+1)}}} $$
%Applying the functor $F_*^d(-)$ we see that
%$$F_*^e S[[y]]/(y^{p^d}+f)\cong \left(F_*^d S[[y]]/(y^{p^d}+f) \right)^{\oplus p^{(e-d)(n+1)}}$$

\section{When does the ring $S[[u,v]]/(f+uv)$ have finite F-representation type?}
\label{section:When does the ring fuv have finite F-representation type?}

We  keep the same notation as in Notation \ref{N4.1}. The purpose of this section is to provide a characterization of when the ring $S[\![u,v]\!]/(f+uv)$ has finite F-representation type. This characterization enables us to exhibit a class of rings in section \ref{section:Class of rings that have FFRT but not finite CM type}  that have FFRT but not finite CM type.

\begin{theorem}\label{P30}
Let $K$ be an algebraically closed field of prime characteristic $p > 2$ and $q=p^e$. Let  $S:= K[\![x_1,\dots,x_{n}]\!]$ and let $ \mathfrak{m}$ be the maximal ideal of $S$ and $f \in \mathfrak{m}^2 \setminus \{0\}$. Let $R=S/(f)$ and $R^{\bigstar}=S[\![u,v]\!]/(f+uv)$. Then $R^{\bigstar}=S[\![u,v]\!]/(f+uv)$ has FFRT over $R^{\bigstar}$ if and only if there exist indecomposable  $R$-modules $N_1,\dots,N_t$ such that $F_*^e(S/(f^k))$  is  a direct sums with direct summands taken from $R, N_1,\dots,N_t $ for every $e\in \mathbb{N}$ and  $1 \leq k < p^e $ .
\end{theorem}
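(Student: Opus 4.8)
The plan is to translate the FFRT property of $R^{\bigstar}$ into a statement about the modules $\Cok_S(A^k)$, where $A=M_S(f,e)$, by means of Proposition~\ref{P21}. Recall that Proposition~\ref{P21} gives, for each $e$,
$$ F_*^e(R^{\bigstar}) \cong (R^{\bigstar})^{r_e} \oplus \bigoplus_{k=1}^{q-1}\Cok_{S[\![u,v]\!]}\!\begin{bmatrix} A^k & -vI \\ uI & A^{q-k}\end{bmatrix} = (R^{\bigstar})^{r_e}\oplus\bigoplus_{k=1}^{q-1}\Cok_{S[\![u,v]\!]}(A^k,A^{q-k})^{\maltese}, $$
and Corollary~\ref{C4.12} identifies $\Cok_S(A^k)=\Cok_S(M_S(f^k,e))$ with $F_*^e(S/f^kS)$. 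So the whole problem is to understand how the operation $M\mapsto M^{\maltese}$ (on reduced matrix factorizations / stable MCM modules over $R$, extended to all MCM modules by sending free summands appropriately) interacts with the Krull--Remak--Schmidt decomposition. The key tool is Proposition~\ref{P28}: since $K$ is algebraically closed of characteristic $\neq 2$ and $S$ is complete, $M\mapsto M^{\maltese}$ is a bijection between isomorphism classes of indecomposable non-free MCM $R$-modules and indecomposable non-free MCM $R^{\bigstar}$-modules. Combined with Proposition~\ref{P3.15} (which says this correspondence is compatible with isomorphism of cokernels of reduced matrix factorizations) and Remark~\ref{R3.8}(h),(i) (which says $(\bigoplus\Cok)^{\maltese}=\bigoplus(\Cok)^{\maltese}$ and $R^{\maltese}=R^{\bigstar}$), one gets a clean dictionary.

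\textbf{Forward direction.} Suppose $R^{\bigstar}$ has FFRT, say by indecomposable modules $L_1,\dots,L_m$; we may assume each $L_i$ is indecomposable and MCM (since $F_*^e(R^{\bigstar})$ is MCM by Theorem~\ref{P5.11}, all its direct summands are MCM by Corollary~\ref{C2.30}). From the Proposition~\ref{P21} decomposition, every $\Cok_{S[\![u,v]\!]}(A^k,A^{q-k})^{\maltese}$ is a direct summand of $F_*^e(R^{\bigstar})$, hence is a direct sum of copies of the $L_i$. Now decompose $\Cok_S(A^k)$ over $R$ into indecomposables: by Proposition~\ref{P.24} and Proposition~\ref{C3.14}, $(A^k,A^{q-k})\sim (\alpha,\beta)\oplus(f,1)^{t_k}$ where $(\alpha,\beta)$ is reduced and further splits as a sum of reduced factorizations $(\phi_i,\psi_i)$ with $\Cok_S(\phi_i,\psi_i)$ non-free indecomposable MCM. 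Applying $(-)^{\maltese}$ and using Remark~\ref{R3.8}(f),(g),(h),(i), $\Cok_{S[\![u,v]\!]}(A^k,A^{q-k})^{\maltese}$ becomes $(R^{\bigstar})^{t_k}\oplus\bigoplus_i \Cok_S(\phi_i,\psi_i)^{\maltese}$, and each $\Cok_S(\phi_i,\psi_i)^{\maltese}$ is indecomposable non-free by Proposition~\ref{P28}. By Krull--Remak--Schmidt (Discussion~\ref{disc2.33}), the set of isomorphism classes of indecomposable non-free summands $\Cok_S(\phi_i,\psi_i)^{\maltese}$ arising over all $e$ and all $k$ is contained in $\{L_1,\dots,L_m\}$, hence finite. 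Pulling back through the bijection of Proposition~\ref{P28}, the set of isomorphism classes of the $\Cok_S(\phi_i,\psi_i)$ is finite; call representatives $N_1,\dots,N_t$. Then for every $e$ and every $1\le k<q$, $\Cok_S(A^k)=F_*^e(S/f^kS)$ is a direct sum of copies of $R=\Cok_S(f,1)$ and of $N_1,\dots,N_t$, as required.

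\textbf{Reverse direction.} Conversely, suppose such $N_1,\dots,N_t$ exist. For each $e$ and $1\le k<q$, write $F_*^e(S/f^kS)\cong R^{a}\oplus\bigoplus_j N_j^{b_j}$; we may assume the $N_j$ are non-free indecomposable MCM (a free summand $R$ in the list is harmless). By Proposition~\ref{A} each $N_j$ corresponds to a reduced matrix factorization, and by Remark~\ref{R3.8}(d) and Proposition~\ref{C3.14} the factorization $(A^k,A^{q-k})$ is equivalent to $(f,1)^{a}\oplus\bigoplus_j(\phi_j,\psi_j)^{b_j}$ with $\Cok_S(\phi_j,\psi_j)\cong N_j$. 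Applying $(-)^{\maltese}$ and Remark~\ref{R3.8}(f),(g),(h),(i) gives $\Cok_{S[\![u,v]\!]}(A^k,A^{q-k})^{\maltese}\cong (R^{\bigstar})^{a}\oplus\bigoplus_j (N_j^{\maltese})^{b_j}$, where I abbreviate $N_j^{\maltese}:=\Cok_S(\phi_j,\psi_j)^{\maltese}$; by Proposition~\ref{P28} these are indecomposable non-free MCM $R^{\bigstar}$-modules, and by Proposition~\ref{P3.15} the isomorphism class $N_j^{\maltese}$ depends only on $N_j$, not on the choice of $(\phi_j,\psi_j)$ or on $e,k$. Feeding this back into Proposition~\ref{P21},
$$ F_*^e(R^{\bigstar})\cong (R^{\bigstar})^{r_e}\oplus\bigoplus_{k=1}^{q-1}\Big[(R^{\bigstar})^{a_{e,k}}\oplus\bigoplus_{j=1}^{t}(N_j^{\maltese})^{b_{e,k,j}}\Big], $$
which is a direct sum with summands drawn from the finite list $R^{\bigstar},N_1^{\maltese},\dots,N_t^{\maltese}$. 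Hence $R^{\bigstar}$ has FFRT.

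\textbf{Main obstacle.} The delicate point is the bookkeeping of free versus non-free summands under $(-)^{\maltese}$: the operation is only a bijection on \emph{non-free} indecomposables (Proposition~\ref{P28}), and it sends the trivial factorizations $(f,1)$ and $(1,f)$ both to $R^{\bigstar}$ (Remark~\ref{R3.8}(i)), so one must carefully separate off the free parts of each $\Cok_S(A^k)$ before applying the correspondence and be sure that the statement of the theorem — which allows $R$ itself in the list — matches. I also need to confirm that $K$ algebraically closed forces $[K:K^p]=1$ so that Corollary~\ref{L2.5}(c) applies and $r_e=q^n$, and that the hypothesis $f\in\mathfrak{m}^2$ (needed for Proposition~\ref{P28}) is genuinely used; everything else is a routine assembly of the cited results together with Krull--Remak--Schmidt.
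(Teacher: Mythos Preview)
Your proposal is correct and follows essentially the same approach as the paper's proof: both directions hinge on Proposition~\ref{P21}, the decomposition of matrix factorizations via Proposition~\ref{P.24}, the Kn\"orrer-type bijection of Proposition~\ref{P28} (together with Proposition~\ref{P3.15}), and Krull--Remak--Schmidt. One small slip: in the forward direction you write $(A^k,A^{q-k})\sim (\alpha,\beta)\oplus(f,1)^{t_k}$, omitting the $(1,f)^{r_k}$ factor that Proposition~\ref{P.24} also produces; you clearly know it is there (your ``Main obstacle'' paragraph handles it), and since $(1,f)^{\maltese}$ also yields $R^{\bigstar}$ it does not affect the argument, but the stated equivalence of matrix factorizations should include it.
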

\begin{proof}
First, suppose that the ring $R^{\bigstar}=S[\![u,v]\!]/(f+uv)$ has FFRT over $R^{\bigstar}$ by $\{R^{\bigstar} , M_1, \dots , M_t , \}$ where $M_j$ is an indecomposable non-free MCM $R^{\bigstar}$-module for all $j$. Therefore, there exist a nonnegative integers $n_{(e)},n_{(e,1)},\ldots,n_{(e,t)}$ such that

\begin{equation*}
 F_*^e(S[\![u,v]\!]/(f+uv))=( R^{\bigstar})^{n_{(e)}}\bigoplus \bigoplus_{j=0}^{t}M_i^{n_{(e,j)}}.
\end{equation*}
 %Since $F_*^e(R^{\bigstar})$ is MCM $R^{\bigstar}$-module (by Proposition \ref{P4.11}), each  $M_j$ is a non-free indecomposable MCM $R^{\bigstar}$-module for every $j\in \{1,\dots,t \}$.
  By Proposition \ref{P28} and \ref{A},  it follows  that $M_j= \Cok_{S[\![u,v]\!]}(\alpha_j,\beta_j)^{\maltese}$ where $(\alpha_j,\beta_j)$ is a reduced matrix factorization of $f$ such that $\Cok_S(\alpha_j,\beta_j)$ is non-free indecomposable MCM $R$-module for all $j$. Let $A=M_S(f,e)$ and consequently by Proposition \ref{P4.3}  $A^k= (M_S(f,e))^k=M_S(f^k,e)$.  Recall from the  Proposition \ref{P21} and the notation in Remark \ref{R3.8} (e) that
  \begin{equation}\label{Eq12}
    F_*^e(S[\![u,v]\!]/(f+uv))= ( R^{\bigstar})^{r_e} \oplus \bigoplus \limits_{k=1}^{q-1}\Cok_{S[\![u,v]\!]}(A^k,A^{q-k})^{\maltese }.
  \end{equation}

  This makes
 \begin{equation}\label{E13}
  ( R^{\bigstar})^{n_{(e)}}\bigoplus \bigoplus_{j=0}^{t}M_i^{n_{(e,j)}} \cong( R^{\bigstar})^{r_e} \oplus \bigoplus \limits_{k=1}^{q-1}\Cok_{S[\![u,v]\!]}(A^k,A^{q-k})^{\maltese }.
 \end{equation}
 %Recall that $\Cok_{S[[u,v]]}(\begin{bmatrix}  A^k  & -vI \\  uI & A^{q-k} \end{bmatrix} )= \Cok_{S[[u,v]]}(A^k,A^{q-k})^{\maltese }$ for each $k$. Since $(A^k,A^{q-k})$ is a matrix factorization of $f$,
  If $(A^k,A^{q-k})$ is nontrivial matrix factorization of $f$, by Proposition \ref{P.24}, there exist  a reduced matrix factorization $(\phi_k , \psi_k)$ of $f$  and non-negative integers $t_k$ and $r_k$ such that $(A^k,A^{q-k})\sim (\phi_k , \psi_k) \oplus (f,1)^{t_k} \oplus (1,f)^{r_k}$. This gives  by Remark \ref{R3.8} (g), (h), and (i)  that  $\Cok_{S[[u,v]]}(A^k,A^{q-k})^{\maltese }= \Cok_{S[[u,v]]}(\phi_k , \psi_k)^{\maltese} \oplus [R^{\bigstar}]^{t_k+r_k} $.
   %Since $R^{\bigstar}$ has FFRT over $R^{\bigstar}$ and $R^{\bigstar}$  is a complete local ring ,by the equality \ref{E13} and the
  By the equation \ref{E13} and Krull-Remak-Schmidt theorem (Discussion \ref{disc2.33}),  there exist  non-negative integers $n_{(e,k,1)},\dots., n_{(e,k,t)}$ such that

\begin{eqnarray*}
% \nonumber to remove numbering (before each equation)
 \Cok_{S[\![u,v]\!]}(\phi_k , \psi_k)^{\maltese } &\cong& \bigoplus\limits_{j=1}^{t}M_j^{n_{(e,k,j)}} \\
    &\cong& \bigoplus\limits_{j=1}^{t}[\Cok_{S[\![u,v]\!]}(\alpha_j,\beta_j)^{\maltese}]^{ \oplus n_{(e,k,j)}}\\
   &\cong& \Cok_{S[\![u,v]\!]}[\bigoplus_{j=1}^{t}(\alpha_j,\beta_j)^{ \oplus n_{(e,k,j)}}]^{\maltese}.
\end{eqnarray*}

   Now, from Proposition \ref{P3.15} and Remark \ref{R3.8} (d) it follows that $$ \Cok_{S}(\phi_k , \psi_k) \cong \Cok_{S}[\bigoplus_{j=1}^{t}(\alpha_j,\beta_j)^{ \oplus n_{(e,k,j)}}] \cong \bigoplus_{j=1}^{t} N_j^{ \oplus n_{(e,k,j)}}$$ where $N_j$ denotes the non-free indecomposable MCM $R$-module $ \Cok_S(\alpha_j,\beta_j)$  for all $j \in \{1,\dots,t\}$.  Therefore,

\begin{eqnarray*}
% \nonumber to remove numbering (before each equation)
F_*^e(S/f^kS)  &\cong& \Cok_SM_S(f^k,e) \text{  (Theorem \ref{P5.11}) } \\
&=& \Cok_S(A^k) \text{  (Proposition \ref{P4.3} (c)) } \\
   &=& \Cok_S(A^k,A^{q-k}) \text{ (Definition \ref{D23})}  \\
    &=& \Cok_S[(\phi_k , \psi_k) \oplus (f,1)^{r_k} \oplus (1,f)^{t_k}] \\
    &=& R^{r_k}\oplus\bigoplus_{j=1}^{t}N_j^{ \oplus n_{(e,k,j)}}.
\end{eqnarray*}

However,  if $ (A,A^{q-1})\sim  (f,1)^{r_e} $ (or $ (A,A^{q-1})\sim  (1,f)^{r_e} $) then $(F_*^e( \frac{S}{f^{q-1}S})=\Cok_SA^{q-1}= \{0\}$ (or $(F_*^e( \frac{S}{fS})=\Cok_SA= \{0\}$) which is impossible. As a result, if $(A^k,A^{q-k})$ is a trivial matrix factorization of $f$, then the only possible case is that  $(A^k,A^{q-k})\sim (f,1)^{b}\oplus (1,f)^{c}$ where $0 < b,c < r_e$  with $b+c=r_e$. In this case, $ F_*^e(S/f^kS)= R^b$

     This shows that $F_*^e(S/(f^k))$, for every $e\in \mathbb{N}$ and  $1 \leq k<p^e $, is  a direct sums with direct summands taken from $\{ R , N_1,\dots,N_t\}$.

%With the convention that $ (\alpha , \beta )^{\oplus n}= (\alpha , \beta )\oplus \dots \oplus (\alpha , \beta ) $ when $(\alpha , \beta )$ appears $n$ times and by Remark \ref{R3.8} we obtain

%\begin{equation}\label{E34}
%  \Cok_{S[[u,v]]}(\phi_k, \psi_k)^{\maltese }\cong  \Cok_{S[[u,v]]}[\bigoplus_{j=1}^{t}(\alpha_j,\beta_j)^{ \oplus n_{(e,k,j)}}]^{\maltese}
%\end{equation}
% Since  $(\phi_k, \psi_k)$ and $ \bigoplus_{j=1}^{t}(\alpha_j,\beta_j)^{ \oplus n_{(e,k,j)}} $ are  reduced matrix factorizations of $f$ by Preposition \ref{P3.15} , Corollary \ref{C4.12}, and  the above equality, we conclude that $\Cok_S(\phi_k, \psi_k) \cong \bigoplus_{j=1}^{t}N_j^{ \oplus n_{(e,k,j)}}$ where $N_j= \Cok_S(\alpha_j,\beta_j)$ for all $j \in \{1,\dots,t\}$ . As a result,  it follows that
%\begin{equation}\label{E35}
%  \Cok_S(A^k,A^{q-k})=\Cok_S[(\phi_k , \psi_k) \oplus (f,1)^{r_k} \oplus (1,f)^{t_k}]=R^{r_k}\oplus\bigoplus_{j=1}^{t}N_j^{ \oplus n_{(e,k,j)}}
%\end{equation}
%for all $k \in \{1,\dots,q-1 \}$ . Since $ F_*^e(S/f^kS)=\Cok_S(A^k,A^{q-k})$ by Corollary \ref{C4.12}, we conclude that
%\begin{equation*}
%  F_*^e(S/f^kS)=R^{r_k}\oplus\bigoplus_{j=1}^{t}N_j^{ \oplus n_{(e,k,j)}}
%\end{equation*}
%for all $k \in \{1,\dots,q-1 \}$. This shows that $\{F_*^e(S/(f^k)) \, | \, 0\leq k<p^e \text{ and } e\in \mathbb{N}\}$ are direct sums with direct summands taken from a finite set of indecomposable $R$-modules. \\

Now suppose $F_*^e(S/(f^k))$  is  a direct sums with direct summands taken from indecomposable $R$-modules  $ N_1,\dots,N_t $ for every $e\in \mathbb{N}$ and  $1 \leq k<p^e $. Therefore, for each $1 \leq k \leq q-1$,  there exist  non-negative integers $n_{(e,k)},n_{(e,k,1)},\dots., n_{(e,k,t)}$ such that
\begin{equation}\label{E31}
 \Cok_S(A^k,A^{q-k}) \cong F_*^e(S/f^kS)\cong R^{\oplus n_{(e,k)}} \oplus \bigoplus\limits_{j=1}^{t}N_j^{\oplus n_{(e,k,j)}}  \text{ over }  R.
\end{equation}
Since $F_*^e(S/f^kS)$ is a MCM $R$-module  (by Corollary \ref{C4.12}), it follows that   $N_j$ is an indecomposable non-free MCM $R$-module for each $j \in \{1,\dots,t\}$ and hence
 by Proposition \ref{A} $N_j= \Cok_S(\alpha_j,\beta_j)$ for some reduced matrix factorization $(\alpha_j,\beta_j)$ for  all $j $. If  $M_j = \Cok_{S[\![u,v]\!]}(\alpha_j,\beta_j)^{\maltese}$, it follows that
 $$ \Cok_{S[\![u,v]\!]}(A^k,A^{q-k})^{\maltese}= (R^{\bigstar})^{n_{(e,k)}} \oplus \bigoplus\limits_{j=1}^{t}M_j^{n_{(e,k,j)}} $$ and hence by the  Equation \ref{Eq12}  we conclude  that the ring $R^{\bigstar}=S[\![u,v]\!]/(f+uv)$ has FFRT by $\{ R^{\bigstar},M_1,\dots, M_t\}$.
\end{proof}

 %.  Recall that $F_*^e(S/f^kS)=\Cok_S(A^k,A^{q-k})$  where $A=M_S(f,s)$ and $(A^k,A^{q-k})$ is a matrix factorization of $f$ for all $k \in \{1,\dots, q-1 \}$. By Proposition \ref{P3.15} and Remark \ref{R3.8}, we get that
%\begin{equation*}
%\Cok_{S[[u,v]]}(\begin{bmatrix}  A^k  & -vI \\  uI & A^{q-k} \end{bmatrix}) =\Cok_S(A^k,A^{q-k})^{\maltese}= (R^{\bigstar})^{n_{(e,k)}} \bigoplus\limits_{j=1}^{t}M_j^{n_{(e,k,j)}}
%\end{equation*}
%Since
%\begin{equation*}
%  F_*^e(S[[u,v]]/(f+uv)) \cong( R^{\bigstar})^{q^n} \bigoplus \bigoplus \limits_{k=1}^{q-1}\Cok_{S[[u,v]]}(\begin{bmatrix}  A^k  & -vI \\  uI & A^{q-k} \end{bmatrix} )
 %\end{equation*}
% we obtain that $R^{\bigstar}=S[[u,v]]/(f+uv)$ has FFRT by $M_1,\dots, M_t$ and $R^{\bigstar}$

The following result is a direct application of the above proposition.
\begin{corollary}
Let $K$ be an algebraically closed field of prime characteristic $p > 2$ and $q=p^e$. Let  $S:= K[\![x_1,\dots,x_{n}]\!]$ and let $ \mathfrak{m}$ be the maximal ideal of $S$ and $f \in \mathfrak{m}^2 \setminus \{0\}$. Let $R=S/(f)$ and $R^{\bigstar}=S[\![u,v]\!]/(f+uv)$. If $R^{\bigstar}=S[\![u,v]\!]/(f+uv)$ has FFRT over $R^{\bigstar}$, then  $S/f^kS$ has  FFRT over $S/f^kS$ for every positive integer $k$.

\end{corollary}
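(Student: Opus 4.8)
The plan is to deduce this corollary directly from Theorem \ref{P30} together with its proof. Suppose $R^{\bigstar}=S[\![u,v]\!]/(f+uv)$ has FFRT over $R^{\bigstar}$. By the forward direction of Theorem \ref{P30}, there exist indecomposable $R$-modules $N_1,\dots,N_t$ such that for every $e\in\mathbb{N}$ and every $1\le k<p^e$, the $R$-module $F_*^e(S/(f^k))$ is a direct sum with direct summands taken from $R,N_1,\dots,N_t$. The goal is to turn this single fixed list $\{R,N_1,\dots,N_t\}$ into a finite list of $S/f^kS$-modules witnessing that $S/f^kS$ has FFRT over itself, for each fixed $k$.

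First I would reduce to the case $k<p^e$ for all relevant $e$: fix a positive integer $k$. For any $e'\in\mathbb{N}$ we want to control $F_*^{e'}(S/f^kS)$. Choose $e$ large enough that $k<p^e$ and write $q=p^e$; then by Corollary \ref{C4.12} we have $F_*^{e}(S/f^kS)\cong\Cok_S(M_S(f^k,e))=\Cok_S(A^k)$ where $A=M_S(f,e)$, and by the hypothesis this is a direct sum of copies of $R,N_1,\dots,N_t$ as an $R$-module. But since $(f+f^kS)F_*^e(S/f^kS)=0$, each indecomposable summand $N_j$ appearing is in fact annihilated by $f$, hence is naturally an $R=S/fS$-module — which it already is. The subtlety is that we need the decomposition to hold \emph{over} $S/f^kS$, not merely over $R=S/fS$, and we need it for \emph{all} Frobenius powers $e'$, not just those $e'$ with $k<p^{e'}$.

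To handle arbitrary $e'$, I would use the semigroup trick used repeatedly in the excerpt (e.g. in the proof of Theorem \ref{P5.1}): write $F_*^{e'}(-)=F_*^{e'-e}(F_*^e(-))$ for a convenient fixed $e$ with $k<p^e$, and observe $F_*^e(S/f^kS)\cong\Cok_S(M_S(f^k,e))$ while applying $F_*^{e'-e}$ and using Remark \ref{R2.29}(a) and Proposition \ref{P4.3}. Concretely, $F_*^{e'}(S/f^{k}S)$ relates to $F_*^{e'}$ applied to a module of the form $\Cok$ of a matrix built from $M_S(f^k,e)$; expressing $F_*^{e'-e}$ of a power of $S/f^kS$ via Theorem \ref{P5.11} and Remark \ref{Ex4.3} shows (after a Krull--Remak--Schmidt comparison, Discussion \ref{disc2.33}) that $F_*^{e'}(S/f^kS)$ is a direct sum of direct summands of $F_*^{e}(S/f^kS)$. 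Since the latter decomposes into finitely many indecomposables over $S/f^kS$ — call them $L_1,\dots,L_r$, which are obtained by decomposing the $R$-modules $R,N_1,\dots,N_t$ as $S/f^kS$-modules and noting each $R$-module is automatically an $S/f^kS$-module via the surjection $S/f^kS\twoheadrightarrow S/fS=R$ — Krull--Remak--Schmidt forces $F_*^{e'}(S/f^kS)$ to be a direct sum with summands among $L_1,\dots,L_r$. That gives FFRT of $S/f^kS$ over $S/f^kS$.

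The main obstacle I anticipate is the passage from "$e$ with $k<p^e$" to "all $e'$": one must be careful that $F_*^{e'-e}$ applied to a module that is a finite direct sum of $S/f^kS$-modules stays a finite direct sum of finitely many isomorphism types, and this is exactly where one needs that $S/f^kS$ is a complete local (hence Krull--Remak--Schmidt) ring — which holds since $S=K[\![x_1,\dots,x_n]\!]$ — and that the set of indecomposable summands of the modules $F_*^d(L_i)$ for all $d$ is finite. In fact the cleanest route is: fix any $e_0$ with $k<p^{e_0}$, let $\mathcal{L}$ be the (finite) set of indecomposable $S/f^kS$-summands of $F_*^{e_0}(S/f^kS)$, and prove by induction on $d$ that every indecomposable summand of $F_*^{e_0+d}(S/f^kS)$ lies in the set of indecomposable summands of $\bigoplus_{L\in\mathcal L}F_*^d(L)$ — but these are all summands of $F_*^{e_0+d}$ applied to a sum of copies of $S/f^kS$, which by the same $k<p^{e_0}$ computation and Theorem \ref{P5.11} is governed by $\Cok_S(M_S(f^k,e_0+d))=\Cok_S(A_0^{k})$ with $A_0=M_S(f,e_0+d)$, again a sum of copies of $R,N_1,\dots,N_t$ over $R$. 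So the finite list $\{$indecomposable $S/f^kS$-summands of $R,N_1,\dots,N_t\}$ works for all $e'\ge e_0$, and the finitely many remaining $e'<e_0$ contribute only finitely many more modules. This completes the argument.
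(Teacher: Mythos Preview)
Your final paragraph arrives at the correct argument, and it is essentially the paper's proof; but the route you take to get there is far more convoluted than necessary, and the middle ``semigroup trick'' paragraph is both unneeded and not quite right.

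The two ``subtleties'' you raise are non-issues that the paper dispatches in one line each. For the first: from the proof of Theorem~\ref{P30}, each $N_j=\Cok_S(\alpha_j,\beta_j)$ for a reduced matrix factorization of $f$, so $fN_j=0$ and hence $f^kN_j=0$; thus $N_j$ (and likewise $R=S/fS$) is already an $S/f^kS$-module. Since the $S/f^kS$-action and the $R$-action on any $S$-module annihilated by $f$ are both the $S$-action, an $R$-module direct sum decomposition \emph{is} an $S/f^kS$-module decomposition---there is nothing to check. For the second: Theorem~\ref{P30} gives the decomposition of $F_*^e(S/f^kS)$ into $\{R,N_1,\dots,N_t\}$ for \emph{every} $e$ with $k<p^e$, not just one such $e$. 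So once you fix $e_0$ with $k<p^{e_0}$, the same finite list works for all $e\ge e_0$, and the finitely many $e<e_0$ contribute only finitely many more modules. That is the whole proof.

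Your attempt in the third paragraph to relate $F_*^{e'}(S/f^kS)$ to $F_*^e(S/f^kS)$ via $F_*^{e'-e}$ and then claim that the former is a direct sum of summands of the latter is not justified and not needed: you are essentially trying to prove that $\{R,N_1,\dots,N_t\}$ forms a FFRT system, which is stronger than what is required and not what Theorem~\ref{P30} gives you. You recover from this in the last paragraph by observing that $k<p^{e_0+d}$ for all $d\ge 0$ so Theorem~\ref{P30} applies directly---but then the induction and the semigroup manipulation were never needed in the first place.
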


\begin{proof}
Let $k$ be a positive integer and let $e_0$ be a positive integer such that $k < p^{e_0}$. If $R^{\bigstar}=S[\![u,v]\!]/(f+uv)$ has FFRT over $R^{\bigstar}$, from Theorem \ref{P30}, there exist finitely generated $R$-modules $N_1,\dots,N_t$ such that $F_*^e(S/(f^k))$ for each $e \geq e_0$  is a  direct sums with direct summands taken from the finite  set $\{R,N_1,\dots,N_t\}$. Notice from the proof of Theorem \ref{P30} that $N_j=\Cok_S(\alpha_j,\beta_j)$ for some reduced matrix factorization $(\alpha_j,\beta_j)$ of $f$ for  all $1 \leq j \leq t $. As a result, $f\Cok_S(\alpha_j,\beta_j)=0$ and hence $f^k\Cok_S(\alpha_j,\beta_j)=0$ for every  positive integer $k$. This makes $N_j$ a  module over $S/f^kS$. Therefore,$F_*^e(S/(f^k))$ for each $e \geq e_0$  is a  direct sums with direct summands taken from the  finite   set $\{R,N_1,\dots,N_t\}$ of the $S/f^kS$-modules. This is enough to show that $S/f^kS$ has  FFRT over $S/f^kS$ for every positive integer $k$.
\end{proof}
%\begin{proof}
%Let  $e_0$ be a positive integer such that $ p^{e_0} > k$. By the above proposition, there exist finitely generated $R$-modules $N_1,\dots,N_t$ such that $F_*^e(S/(f^k))$ for each $e \geq e_0$  is a  direct sums with direct summands taken from the finite  set $\{R,N_1,\dots,N_t\}$. Notice from the proof of the above proposition that $N_j=\Cok_S(\alpha_j,\beta_j)$ for some reduced matrix factorization $(\alpha_j,\beta_j)$ for  all $j \in \{1,\dots,t\}$. As a result, $f\Cok_S(\alpha_j,\beta_j)=0$ and hence $f^k\Cok_S(\alpha_j,\beta_j)=0$ for every  positive integer $k$. This makes $N_j$ a $S/f^kS$-module for each $k$. Therefore,$F_*^e(S/(f^k))$ for each $e \geq e_0$  is a  direct sums with direct summands taken from the  finite   set $\{R,N_1,\dots,N_t\}$ of the $S/f^kS$-modules. This is enough to show that $S/f^kS$ has  FFRT over $S/f^kS$ for every positive integer $k$.
%\end{proof}

The above corollary implies evidently the following.
\begin{corollary}

Let $K$ be an algebraically closed field of prime characteristic $p > 2$ and $q=p^e$. Let  $S:= K[\![x_1,\dots,x_{n}]\!]$ and let $ \mathfrak{m}$ be the maximal ideal of $S$ and $f \in \mathfrak{m}^2 \setminus \{0\}$. Let $R=S/(f)$ and $R^{\bigstar}=S[\![u,v]\!]/(f+uv)$. If $S/f^kS$ does not have  FFRT over $S/f^kS$ for some positive integer $k$, then $R^{\bigstar}$ does not have FFRT. In particular, if $R$ does not have  FFRT , then $R^{\bigstar}$ does not have FFRT.

\end{corollary}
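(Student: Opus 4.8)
The plan is to simply contrapose the preceding corollary, so essentially no new work is needed. Recall that the preceding corollary asserts: under the present hypotheses (so $K$ algebraically closed of characteristic $p>2$, $S=K[\![x_1,\dots,x_n]\!]$, $f\in\mathfrak m^2\setminus\{0\}$, $R=S/fS$, $R^{\bigstar}=S[\![u,v]\!]/(f+uv)$), if $R^{\bigstar}$ has FFRT over $R^{\bigstar}$ then $S/f^kS$ has FFRT over $S/f^kS$ for \emph{every} positive integer $k$. Logically this is an implication of the form ``$P \Rightarrow (\forall k)\,Q(k)$'', whose contrapositive is ``$(\exists k)\,\neg Q(k) \Rightarrow \neg P$''.

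First I would fix a positive integer $k$ for which $S/f^kS$ does \emph{not} have FFRT over $S/f^kS$ (such a $k$ exists by hypothesis). Suppose, for contradiction, that $R^{\bigstar}=S[\![u,v]\!]/(f+uv)$ does have FFRT over $R^{\bigstar}$. Then the preceding corollary applies verbatim and yields that $S/f^jS$ has FFRT over $S/f^jS$ for every positive integer $j$; in particular for $j=k$, contradicting the choice of $k$. Hence $R^{\bigstar}$ does not have FFRT, which is the first assertion.

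For the ``in particular'' clause, observe that $R=S/fS=S/f^1S$, so the hypothesis ``$R$ does not have FFRT'' is exactly the special case $k=1$ of ``$S/f^kS$ does not have FFRT over $S/f^kS$''; applying the first assertion with $k=1$ then gives that $R^{\bigstar}$ does not have FFRT. There is no real obstacle here: the only thing to be careful about is that the quantifier in the preceding corollary is over \emph{all} $k$, so a single failure at one value of $k$ suffices to block FFRT of $R^{\bigstar}$, and that $R$ coincides with $S/f^kS$ at $k=1$. No appeal to the matrix-factorization machinery (Theorem \ref{P5.11}, Proposition \ref{P21}, Theorem \ref{P30}) is needed beyond what is already packaged inside the preceding corollary.
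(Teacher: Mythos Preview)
Your proposal is correct and matches the paper's approach exactly: the paper simply states that ``the above corollary implies evidently the following'' without giving any further argument, and your contrapositive reasoning is precisely what is intended.
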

An easy induction gives the following result.

\begin{corollary}
Let $K$ be an algebraically closed field of prime characteristic $p > 2$ and $q=p^e$. Let  $S:= K[\![x_1,\dots,x_{n}]\!]$ and let $ \mathfrak{m}$ be the maximal ideal of $S$,  $f \in \mathfrak{m}^2 \setminus \{0\}$ and let $R=S/(f)$. If $R$ does not have  FFRT, then the ring  $$S[\![u_1,v_1,u_2,v_2,\dots,u_t,v_t]\!]/(f+u_1v_1+u_2v_2+\dots+u_tv_t)$$ does not have FFRT for all $t\in \mathbb{N}$.
\end{corollary}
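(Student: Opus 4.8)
The plan is to argue by induction on $t$, using the immediately preceding corollary (the case $t=1$) both as the base case and as the sole tool driving the inductive step.

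First I would fix convenient notation. For $t \geq 1$ set $S_t := S[\![u_1,v_1,\dots,u_t,v_t]\!] = K[\![x_1,\dots,x_n,u_1,v_1,\dots,u_t,v_t]\!]$ and $g_t := f + u_1v_1 + \dots + u_tv_t \in S_t$, so the ring under consideration is $A_t := S_t/(g_t)$. The key observation, to be checked at each stage, is that $(S_t, g_t, A_t)$ again satisfies the standing hypotheses of the preceding corollary: $S_t$ is a formal power series ring in finitely many variables over the same algebraically closed field $K$ of characteristic $p>2$; its maximal ideal $\mathfrak{m}_t$ contains $f$ (because $f \in \mathfrak{m}^2 \subseteq \mathfrak{m}_t^2$) and each product $u_iv_i$ lies in $\mathfrak{m}_t^2$, whence $g_t \in \mathfrak{m}_t^2$; and $g_t \neq 0$ since already $f \neq 0$.

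The base case $t=1$ is exactly the preceding corollary: since $R = S/(f)$ does not have FFRT, $A_1 = S[\![u_1,v_1]\!]/(f+u_1v_1)$ does not have FFRT. For the inductive step, assume $A_t$ does not have FFRT. Writing $S_{t+1} = S_t[\![u_{t+1},v_{t+1}]\!]$ and $g_{t+1} = g_t + u_{t+1}v_{t+1}$, we have $A_{t+1} = S_t[\![u_{t+1},v_{t+1}]\!]/(g_t + u_{t+1}v_{t+1})$. Now apply the preceding corollary with $S_t$ playing the role of $S$ and $g_t$ the role of $f$ — this is legitimate by the observations of the previous paragraph together with the inductive hypothesis that $S_t/(g_t) = A_t$ does not have FFRT — to conclude that $A_{t+1}$ does not have FFRT. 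This completes the induction.

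The argument is purely formal, so there is no genuine obstacle; the only point requiring attention is the bookkeeping verification that, after adjoining each new pair of variables, the triple $(S_t, g_t, A_t)$ still meets the hypotheses of the preceding corollary — in particular that $S_t$ remains a power series ring over the same algebraically closed field and that $g_t$ is a nonzero element of $\mathfrak{m}_t^2$ — which follows immediately from $f \in \mathfrak{m}^2 \setminus \{0\}$ and the fact that each $u_iv_i \in \mathfrak{m}_t^2$.
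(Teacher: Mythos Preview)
Your proposal is correct and matches the paper's approach exactly: the paper states only that ``an easy induction gives the following result,'' and your write-up supplies precisely that induction, including the verification that $(S_t, g_t)$ continues to satisfy the hypotheses of the preceding corollary.
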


%%%%%%%%%%%%%%%%%%%%%%%%%%%%%%%%%%%%%%%%%%%%%%%%%%%%%%%%%%%%%%%%%%%%%
\section{Class of rings that have FFRT but not finite CM type}
\label{section:Class of rings that have FFRT but not finite CM type}

We  keep the same notation as in Notation \ref{N4.1}.  Recall that every F-finite  local ring $(R,\mathfrak{m})$ of prime characteristic that has finite CM representation type has also FFRT (section \ref{section:Definition and examples}).   The main result of this section is to provide a class of rings that have FFRT but not finite CM representation type Theorem \ref{P7.10}.
\begin{lemma}\label{L31}
If $ 0 \leq d,k \leq q-1  $,  then
\begin{enumerate}
  \item [(a)]$ d k = nq + s$ where $0 \leq n \leq  d-1 $ and $  0 \leq s \leq q-1 $.
  \item [(b)]For any $   0 \leq \beta \leq  q-1  $, $ d k  + \beta = cq + t $ where $ 0 \leq c \leq d $ and $  0 \leq t \leq q-1  $.
  \item [(c)] Fix $c,d,k \in \mathbb{Z}_{+}$ such that $0 \leq d,k \leq q-1  $ and $ 0 \leq c \leq d $. Then there exists an $ \alpha \in \{0,\ldots, q-1\}$ such that $\alpha = qc-dk+ s$ for some $ s \in \{0,\ldots, q-1\}$ if and only if $|qc-dk|< q$. Furthermore, If $|qc-dk|< q$, there exist $q-|qc-dk|$ values of $\alpha $ such that $\alpha = qc-dk+ s$ for some $ s \in \{0,\dots, q-1\}$.
\end{enumerate}

\end{lemma}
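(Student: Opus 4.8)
This is an elementary lemma about Euclidean division, and the plan is to treat the three parts essentially as three separate calculations with integers. For part (a), I would simply invoke the division algorithm: write $dk = nq+s$ with $0\le s\le q-1$ being the remainder and $n=\lfloor dk/q\rfloor$ the quotient. The only content is the bound $0\le n\le d-1$, which follows from $0\le dk\le (q-1)k\le (q-1)d < qd$, so $n=\lfloor dk/q\rfloor \le d-1$; non-negativity of $n$ is clear since $dk\ge 0$. For part (b), again apply the division algorithm to $dk+\beta$ to get $dk+\beta = cq+t$ with $0\le t\le q-1$ and $c=\lfloor (dk+\beta)/q\rfloor$. The bound $0\le c\le d$ comes from $0\le dk+\beta \le (q-1)d + (q-1) = (q-1)(d+1) < q(d+1)$, hence $c\le d$; alternatively, using part (a), $dk+\beta = nq+s+\beta$ with $n\le d-1$ and $s+\beta \le 2(q-1) < 2q$, so the carry adds at most $1$ to $n$, giving $c\le n+1\le d$.

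For part (c), fix $c,d,k$ with $0\le d,k\le q-1$ and $0\le c\le d$, and set $N := qc-dk$ (an integer, possibly negative). The claim is: there exist $\alpha,s\in\{0,\dots,q-1\}$ with $\alpha = N + s$ if and only if $|N| < q$; and in that case the number of valid $\alpha$ is $q-|N|$. The forward direction is immediate: if $\alpha = N+s$ with $\alpha\in\{0,\dots,q-1\}$ and $s\in\{0,\dots,q-1\}$, then $N = \alpha - s$ lies in $\{-(q-1),\dots,q-1\}$, so $|N|\le q-1 < q$. For the converse and the count, I would split into the cases $N\ge 0$ and $N<0$. If $0\le N\le q-1$, then for each $s$ with $0\le s\le q-1-N$ we get $\alpha = N+s\in\{N,\dots,q-1\}\subseteq\{0,\dots,q-1\}$, and these are exactly the admissible pairs; that is $q-N$ choices of $\alpha$, i.e. $q-|N|$. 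If $-(q-1)\le N\le -1$, write $N=-M$ with $1\le M\le q-1$; then $\alpha = s-M\in\{0,\dots,q-1\}$ forces $M\le s\le q-1$, giving $s\in\{M,\dots,q-1\}$ and $\alpha\in\{0,\dots,q-1-M\}$, which is $q-M = q-|N|$ choices. Combining the two cases yields the stated count.

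I do not anticipate a genuine obstacle here; the only things to be careful about are (i) keeping the quotient/remainder bounds tight enough that the quotient lands in the asserted range (this is where the hypotheses $d,k\le q-1$ are used), and (ii) in part (c), being precise that each admissible $\alpha$ arises from a \emph{unique} $s$ (namely $s=\alpha-N$), so that counting $\alpha$'s and counting pairs $(\alpha,s)$ coincide — this is what makes ``$q-|N|$ values of $\alpha$'' literally correct rather than an overcount. The whole argument is a short case analysis on the sign of $qc-dk$ together with elementary inequalities, so I would present it compactly: state the division-algorithm fact once, derive the bounds in (a) and (b) from $dk\le (q-1)^2 < q^2$-type estimates, and then do the two-case count for (c).
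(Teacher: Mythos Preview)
Your proposal is correct and follows essentially the same route as the paper: division algorithm plus a direct bound on the quotient for (a) and (b), and a sign-based case split on $N=qc-dk$ for (c), with the same count $q-|N|$. One small slip to fix: in (a) your chain $dk\le (q-1)k\le (q-1)d$ uses $k\le d$, which is not assumed; just write $dk\le d(q-1)<dq$ directly (using $k\le q-1$), which is what the paper's contradiction argument also encodes.
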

\begin{proof}
(a) By Division Algorithm,  $ d k = nq + s $ for some  $ n \in \mathbb{N} $ and $ 0 \leq s \leq q-1 $. If $ n \geq d $ , we get  $ nq \geq dq > dk = nq + s \geq nq $ which is a contradiction. This shows that $ 0 \leq n \leq d-1 $. \\
(b) From the above result, we get that  $ d k = nq + s$ where $  0 \leq n \leq d-1  $ and $  0 \leq s \leq q-1 $. Let $  0 \leq \beta \leq q-1  $. If $ \beta + s  < q $, we get $ d k  + \beta = cq + t $ where $ c =n \in \{ 0 , ... , d-1 \} $ and $ t = \beta +s   \in \{ 0 , ... , q-1 \} $. Now, suppose that  $ \beta + s  \geq q $. We notice that $ 0 \leq \beta + s -q   \leq q -1$. Therefore $ d k  + \beta = cq + t $ where $ c = n+1  \in \{ 1 , ... , d \} $ and $ t = \beta + s -q   \in \{ 0 , ... , q-1 \} $.\\
(c) First, if there is $ \alpha \in \{0,\ldots, q-1\}$ such that $\alpha = qc-dk+ s$ for some $ s \in \{0,\ldots, q-1\}$, then   $|qc-dk|=|\alpha -s|< q$ (as $ \alpha,s \in \{0,\ldots, q-1\}$). Now let $u = qc-dk$ and suppose that $|u| < q$. If $ 0 \leq u < q$, we can choose $\alpha \in \{u, u+1,\dots,q-1 \}$. On the other hand, if    $ -q < u  < 0$, then $\alpha$ can be taken from $\{q-1+u , q-2+u ,\dots,0\}$. In both cases,   $\alpha$ can be chosen by $q-|c q - kd |$ ways.
\end{proof}

If $\alpha = (\alpha_1, \ldots , \alpha_n) \in \mathbb{Z_{+}}^n $, we write $ x^\alpha = x_1^{\alpha_1}\dots x_n^{\alpha_n}$ where $x_1,\dots,x_n$ are different variables.
\begin{proposition}\label{L4.25}
 Let  $f = x_1^{d_1}x_2^{d_2} \dots x_n^{d_n} $ be a monomial in $S$    where  $d_j \in \mathbb{Z_{+}}$ for each $j$. Let $\Gamma = \{(\alpha_1, \ldots , \alpha_n) \in \mathbb{Z_{+}}^n \, | \, 0 \leq \alpha_j\leq d_j \text{ for all  } 1\leq j \leq n \}$ , $d=(d_1,\ldots,d_n)$, and let $e$ be a positive integer such that $q=p^e > \max \{ d_1 , \dots , d_n \}+1$. If $A=M_S(f,e)$, then for each $1 \leq k \leq q-1$ the matrix $A^k=M_S(f^k,e)$ is equivalent to diagonal matrix, $D$,  of size   $r_e \times r_e$ in which the diagonal entries are of the form $x^c$  where $c \in \Gamma$. Furthermore, if $c=(c_1,\dots,c_n) \in \Gamma $ and
\begin{equation*}
   \eta_k(c_j)= \begin{cases}q- | c_jq - kd_j |  \text { if } | c_jq - kd_j | < q \\
0 \text{ otherwise }
\end{cases}
\end{equation*}
 then
\begin{equation*}
  \Cok_S(A^q,A^{q-k}) = \bigoplus_{c \in \Gamma } \left[ \Cok_S(x^c, x^{d-c})\right]^{\oplus \eta_k(c)}
\end{equation*}
where $ \eta_k(c) = [K: K^q]\prod_{j=1}^n \eta_k(c_j) $ and $(x^c, x^{d-c})$ is the $1 \times 1$ matrix factorization of $f$ with the convention that $M^{\oplus0}=\{0\}$ for any module $M$.
\end{proposition}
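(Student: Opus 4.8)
The plan is to compute $M_S(f^k,e)$ explicitly for a monomial $f = x^d = x_1^{d_1}\cdots x_n^{d_n}$ by tracking what the Frobenius map does to the standard basis $\Delta_e = \{\lambda x^a : \lambda \in \Lambda_e,\ 0 \le a_i \le q-1\}$ of $F_*^e(S)$. First I would observe that $f^k = x^{kd}$, so for a basis vector $F_*^e(\lambda x^a)$ we have $F_*^e(x^{kd}\cdot \lambda x^a) = F_*^e(\lambda x^{a+kd})$. Writing each coordinate $a_j + kd_j = q b_j + a_j'$ with $0 \le a_j' \le q-1$ (division algorithm), we get $F_*^e(\lambda x^{a+kd}) = x^b \cdot F_*^e(\lambda x^{a'})$ where $b = (b_1,\dots,b_n)$. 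Since $k \le q-1$ and $d_j \le q-2$, Lemma \ref{L31}(a)-(b) controls the carries $b_j$: they lie in a bounded range, and crucially the map $a \mapsto a'$ on $\{0,\dots,q-1\}^n$ (for fixed $k$) is a bijection. Hence $M_S(f^k,e)$, after permuting basis vectors so that $F_*^e(\lambda x^a)$ is matched with $F_*^e(\lambda x^{a'})$, becomes a diagonal matrix whose diagonal entries are the monomials $x^b$ — this is the assertion that $A^k \sim D$ with diagonal entries $x^c$, $c \in \Gamma$; one needs to check $b_j \le d_j$, which follows because $a_j + kd_j \le (q-1) + (q-1)d_j < q(d_j+1)$ so $b_j \le d_j$.

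Next I would pin down, for a fixed $c = (c_1,\dots,c_n) \in \Gamma$, the multiplicity with which the $1\times 1$ factorization piece $(x^c, x^{d-c})$ appears. The entry $x^c$ on the diagonal arises from exactly those basis vectors $F_*^e(\lambda x^a)$ for which the carry in coordinate $j$ equals $c_j$, i.e. $a_j + kd_j = qc_j + a_j'$ for some $0 \le a_j' \le q-1$. By Lemma \ref{L31}(c), for each $j$ the number of valid $a_j \in \{0,\dots,q-1\}$ is exactly $q - |c_jq - kd_j|$ when $|c_jq - kd_j| < q$ and $0$ otherwise — that is, $\eta_k(c_j)$. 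Multiplying over $j = 1,\dots,n$ and over the $[K:K^q]$ choices of $\lambda \in \Lambda_e$ (noting $[K:K^q] = [K:K^p]^e$, and that $\lambda$ plays no role in the carry computation), the total multiplicity of the diagonal block $x^c$ is $\eta_k(c) = [K:K^q]\prod_{j=1}^n \eta_k(c_j)$. Then I would invoke Remark \ref{R18}(a),(b): the cokernel of a block-diagonal (here genuinely diagonal up to equivalence) matrix is the direct sum of the cokernels of the blocks, and equivalent matrices have isomorphic cokernels. Since $(x^c, x^{d-c})$ is visibly a matrix factorization of $f = x^d$ (as $x^c \cdot x^{d-c} = x^d = f$), Definition \ref{D23} gives $\Cok_S(A^k, A^{q-k}) = \Cok_S(A^k) = \bigoplus_{c\in\Gamma}[\Cok_S(x^c,x^{d-c})]^{\oplus \eta_k(c)}$, which is the claimed formula (the statement writes $A^q$ where $A^k$ is meant, since $A^q = fI$ trivially).

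The main obstacle I anticipate is the bookkeeping in the second step: making the count $\eta_k(c_j)$ rigorous requires carefully reconciling the two descriptions of the carry — "$a_j + kd_j = qc_j + a_j'$ has a solution $a_j'\in\{0,\dots,q-1\}$ with $a_j\in\{0,\dots,q-1\}$" — with the inequality $|qc_j - kd_j| < q$, and then confirming the count is exactly $q - |qc_j - kd_j|$. Lemma \ref{L31}(c) is tailored for precisely this, so the work is really just applying it coordinatewise and checking the edge cases ($c_j = 0$, $c_j = d_j$, $k$ near $q-1$) don't break the bijection $a\mapsto a'$. A secondary technical point is justifying that the permutation of basis vectors realizing $A^k \sim D$ is the same for the domain and codomain copies of $F_*^e(S)$ — but since $a \mapsto a'$ is a bijection of index sets, conjugating by the corresponding permutation matrix does the job, and this is exactly an equivalence $A^k = U D V$ with $U, V$ permutation (hence invertible) matrices. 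Everything else is the routine Frobenius-and-division-algorithm computation already rehearsed in Propositions \ref{P4.3} and \ref{EP}.
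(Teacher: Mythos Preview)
Your proposal is correct and follows essentially the same approach as the paper: both compute $F_*^e(\lambda x^a \cdot f^k)$ via the division algorithm, use Lemma~\ref{L31} to bound the carries by $d_j$ and to count the number of $a_j$ producing a given carry $c_j$, and conclude that $M_S(f^k,e)$ is equivalent to a diagonal matrix with the stated multiplicities. Your phrasing in terms of the bijection $a\mapsto a'$ is a slightly cleaner way of saying what the paper establishes by separately checking that each row and each column of $M_S(f^k,e)$ contains exactly one nonzero entry; and you correctly flag the typo $A^q$ for $A^k$ in the displayed formula.
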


\begin{proof}
Choose $e \in \mathbb{N}$ such that $q=p^e > \max \{ d_1 , \dots , d_n \}+1$ and let  $1\leq k \leq q-1 $. If $j=\lambda x_1^{\beta_1}\dots x_n^{\beta_n}\in \Delta_e$, we get $F_*^e(jf^k)=F_*^e(\lambda x_1^{kd_1+\beta_1}\dots x_n^{kd_n+\beta_n})$. Since $d_j , k \in \{ 0 , \dots , q-1 \} $, Lemma \ref{L31} implies that  there exist $ 0 \leq c_{i} \leq d_i  $ and $ 0 \leq u_{i}\leq q-1 $ for each $1 \leq i \leq n$ such that   $ d_i k  + \beta_{i} = c_{i}q + u_{i} $ and hence $ F_*^e(jf^k) = x_1^{c_{1}}\dots x_n^{c_{n}}F_*^e(\lambda x_1^{u_{1}}\dots x_n^{u_{n}}) $.
Therefore each  column of   $M_S(f^k,e)$ contains only one non-zero element of the form $ x_1^{c_{1}}\dots x_n^{c_{n}} $ where $  0 \leq c_{i}\leq  d  $ for all $1 \leq i \leq n$. Notice that a row in $M_S(f^k,e)$  will contain two elements of the form $ x_1^{c_{1}}\dots x_n^{c_{n}} $ and $ x_1^{l_{1}}\dots x_n^{l_{n}} $ where $  0 \leq l_{j} ,c_{i}\leq  d  $ for all $1 \leq i \leq n$ if there exist   $\mu x_1^{\beta_1}\dots x_n^{\beta_n},\gamma x_1^{\sigma_1}\dots x_n^{\sigma_n}, \lambda x_1^{u_{1}}\dots x_n^{u_{n}}\in  \Delta_e$ such that
\begin{equation*}
  F_*^e((\mu x_1^{\beta_1}\dots x_n^{\beta_n})( x_1^{kd_1}\dots x_n^{kd_n})) = x_1^{c_{1}}\dots x_n^{c_{n}}F_*^e(\lambda x_1^{u_{1}}\dots x_n^{u_{n}}) \text{ and }
\end{equation*}
\begin{equation*}
  F_*^e((\gamma x_1^{\sigma_1}\dots x_n^{\sigma_n})( x_1^{kd_1}\dots x_n^{kd_n})) = x_1^{l_{1}}\dots x_n^{l_{n}}F_*^e(\lambda x_1^{u_{1}}\dots x_n^{u_{n}}).
\end{equation*}
This makes $\mu= \lambda= \gamma $, $\beta_i+kd_i= qc_i+u_i$, and $\sigma_i+kd_i= ql_i+u_i$ for all $1 \leq i \leq n$.
 Accordingly, $\beta_i- \sigma_i = q(c_i-l_i)$ for all $1 \leq i \leq n$. Since $0\leq \beta_i, \sigma_i\leq q-1$ and $0\leq c_i, l_i\leq d \leq  q-1$ for all $1 \leq i \leq n$, it follows that $\beta_i= \sigma_i$ and   $c_i=l_i$ and for all $1 \leq i \leq n$. This also shows  that each  row of   $M_S(f^k,e)$ contains only one non-zero element of the form $ x_1^{c_{1}}\dots x_n^{c_{n}} $ where $  0 \leq c_{i}\leq  d  $ for all $1 \leq i \leq n$. Since each  column and row  of   $M_S(f^k,e)$ contains only one non-zero element of the form $ x_1^{c_{1}}\dots x_n^{c_{n}} $ where $  0 \leq c_{i}\leq  d  $ for all $1 \leq i \leq n$,  using the row and column operations, the matrix $M_S(f^k,e)$ is equivalent to  a diagonal matrix, $D$,  of size   $r_e \times r_e$ in which the diagonal entries are of the form $ x_1^{c_1}\dots x_n^{c_n} $ where $  0 \leq c_{i}\leq  d_i  $ for all $1 \leq i \leq n$.  Now fix $c=(c_1,\dots,c_n) \in \Gamma$ and let $\eta(c)$ stand for  how many times $x^c$ appears as an element in the diagonal of $D$. It is obvious that $\eta(c)$  is exactly the same as the number of the $n$-tuples $(\alpha_1,\dots,\alpha_n)$ with $ 0 \leq \alpha_j \leq q-1 $   satisfying that
\begin{equation}\label{EEE2}
 F_*^e(\lambda x_1^{kd_1+\alpha_1}\dots x_n^{kd_n+\alpha_n}) = x_1^{c_1}\dots x_n^{c_n}F_*^e(\lambda x_1^{s_1}\dots x_n^{s_n})
\end{equation}
for some $ s_1,..,s_n \in \{0,\dots,q-1 \}$ for all $\lambda \in \Lambda_e$. Notice that  an $n$-tuple $(\alpha_1,\dots,\alpha_n)$ with $ 0 \leq \alpha_i \leq q-1 $  will satisfy (\ref{EEE2}) if and only if  $\alpha_i= c_iq - kd_i + s_i$  for some $0 \leq s_i \leq q-1 $  for all $1\leq i \leq n$. As a result, by Lemma\ref{L31},  there exists an  $n$-tuples $(\alpha_1,\dots,\alpha_n)\in \mathbb{Z}^n$ with $ 0 \leq \alpha_i \leq q-1 $ that   satisfies (\ref{EEE2}) if and only if $|c_iq - kd_i|  < q$  for all $1\leq i \leq n$. Furthermore, by Lemma\ref{L31}, for each $i\in \{1,\dots,n\}$, if $|c_iq - kd_i|  < q$, then there exist  $q-|c_iq - kd_i|$ values of $\alpha_i $ such that $\alpha_i = qc_i-d_ik+ s_i$ for some $ s_i \in \{0,\dots, q-1\}$.
Set
\begin{equation*}
   \eta_k(c_j)= \begin{cases}q- |c_iq - kd_i | \text { if } | c_iq - kd_i | < q \\
0 \text{ otherwise }
\end{cases}
\end{equation*}
 Thus we get that $ \eta_k(c) = [K: K^q]\prod_{j=1}^n \eta_k(c_i) $ and consequently we have
\begin{equation*}
 \Cok_S(A^q,A^{q-k}) = \bigoplus_{c \in \Gamma } \left[ \Cok_S(x^c, x^{d-c})\right]^{\oplus \eta_k(c)}
\end{equation*}
where $(x^c, x^{d-c})$ is the $1 \times 1$ matrix factorization of $f$ with the convention that $M^{\oplus0}=\{0\}$ for any module $M$.
\end{proof}
\begin{corollary}\label{C32}
Let $K$ be an algebraically closed   field of prime characteristic $p > 2$ and $q=p^e$. Let  $S:= K[\![x_1,\dots,x_{n}]\!]$, $f =x_1^{d_1}x_2^{d_2} \dots x_n^{d_n} $ where  $d_j \in \mathbb{N}$ for each $j$, and $d=(d_1,\ldots,d_n)$. Then $R^{\bigstar}=S[\![u,v]\!]/(f+uv)$ has FFRT over $R^{\bigstar}$. Furthermore, for every $e \in \mathbf{N}$ with $q=p^e > \max \{ d_1 , \dots , d_n \}+1$, $F_*^e(R^{\bigstar})$ has the following decomposition:
\begin{equation*}
 F_*^e(R^{\bigstar})= (R^{\bigstar})^{r_e} \bigoplus \bigoplus_{k=1}^{q-1}\left[\bigoplus_{c \in \Gamma } \left[\Cok_{S[\![u,v]\!]}(x^c,x^{d-c})^{\maltese}\right]^{\oplus \eta_k(c)} \right]
\end{equation*}
where $\eta_k(c)$ and $\Gamma$ as in the above Proposition.
\end{corollary}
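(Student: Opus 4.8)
The plan is to derive Corollary \ref{C32} as a direct application of Theorem \ref{P30} together with the explicit computation in Proposition \ref{L4.25}. First I would fix $e\in\mathbb{N}$ and write $A=M_S(f,e)$, so that by Proposition \ref{P4.3}(c) we have $A^k=M_S(f^k,e)$, and by Theorem \ref{P5.11} (applied to $f^k$, cf.\ Corollary \ref{C4.12}) $\Cok_S(A^k)\cong F_*^e(S/f^kS)$. For $q=p^e>\max\{d_1,\dots,d_n\}+1$, Proposition \ref{L4.25} gives
\begin{equation*}
 F_*^e(S/f^kS)\cong\Cok_S(A^k,A^{q-k})\cong\bigoplus_{c\in\Gamma}\bigl[\Cok_S(x^c,x^{d-c})\bigr]^{\oplus\eta_k(c)},
\end{equation*}
a direct sum of the finitely many modules $\{\Cok_S(x^c,x^{d-c})\mid c\in\Gamma\}$, each of which is either $R=S/fS$ (when $c=0$ or $c=d$, since $\Cok_S(f,1)=R$ and $\Cok_S(1,f)=0$) or an indecomposable MCM $R$-module $N_c$ (a $1\times1$ reduced matrix factorization of $f$ gives an indecomposable cokernel). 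The hypotheses of Theorem \ref{P30} ($K$ algebraically closed, $p>2$, $f\in\mathfrak{m}^2\setminus\{0\}$) hold, so the existence of such a finite list $N_1,\dots,N_t$ of indecomposable $R$-modules covering all $F_*^e(S/f^kS)$ for all $e$ and $1\le k<p^e$ immediately yields that $R^{\bigstar}=S[\![u,v]\!]/(f+uv)$ has FFRT.

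One subtlety I would need to address is that Proposition \ref{L4.25} only describes $F_*^e(S/f^kS)$ for those $e$ with $q=p^e>\max\{d_i\}+1$; for the finitely many small values of $e$ (and the corresponding $k$) I would simply invoke Remark \ref{C2.9} to note that each $F_*^e(S/f^kS)$ is a finitely generated $R$-module and hence (by Discussion \ref{disc2.33}(a) and Corollary \ref{C2.30}) a finite direct sum of indecomposable MCM $R$-modules; adjoining these finitely many indecomposables to the list $\{N_c\}_{c\in\Gamma}$ produces the required finite family $\{R,N_1,\dots,N_t\}$. Then Theorem \ref{P30} applies verbatim, giving the FFRT conclusion for $R^{\bigstar}$.

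For the explicit decomposition of $F_*^e(R^{\bigstar})$, I would substitute the formula of Proposition \ref{L4.25} into the decomposition from Proposition \ref{P21}, namely
\begin{equation*}
 F_*^e(R^{\bigstar})=(R^{\bigstar})^{r_e}\bigoplus\bigoplus_{k=1}^{q-1}\Cok_{S[\![u,v]\!]}(A^k,A^{q-k})^{\maltese}.
\end{equation*}
The key step is that the operation $(\phi,\psi)\mapsto(\phi,\psi)^{\maltese}$ on cokernels respects direct sums: by Remark \ref{R3.8}(g),(h), $[\bigoplus_{c\in\Gamma}\Cok_S(x^c,x^{d-c})^{\oplus\eta_k(c)}]^{\maltese}=\bigoplus_{c\in\Gamma}[\Cok_{S[\![u,v]\!]}(x^c,x^{d-c})^{\maltese}]^{\oplus\eta_k(c)}$, and the matrix factorization $(A^k,A^{q-k})$ being equivalent (via row/column operations from Proposition \ref{L4.25}) to $\bigoplus_{c\in\Gamma}(x^c,x^{d-c})^{\oplus\eta_k(c)}$ means their $\maltese$-twists have isomorphic cokernels by Remark \ref{R3.8}(f). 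Assembling these for $k=1,\dots,q-1$ gives precisely
\begin{equation*}
 F_*^e(R^{\bigstar})=(R^{\bigstar})^{r_e}\bigoplus\bigoplus_{k=1}^{q-1}\Bigl[\bigoplus_{c\in\Gamma}\bigl[\Cok_{S[\![u,v]\!]}(x^c,x^{d-c})^{\maltese}\bigr]^{\oplus\eta_k(c)}\Bigr].
\end{equation*}
The main obstacle, such as it is, is bookkeeping: keeping the multiplicities $\eta_k(c)$ straight through the equivalence of matrix factorizations and verifying that the $\maltese$-construction indeed commutes with the direct-sum decomposition (which is exactly the content of Remark \ref{R3.8}(g),(h)); there is no deep new idea required beyond combining the cited results.
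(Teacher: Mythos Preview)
Your proposal is correct and follows essentially the same approach as the paper's proof: both use Proposition~\ref{L4.25} to decompose $F_*^e(S/f^kS)$ for large $e$, handle the finitely many small $e$ by adjoining the corresponding indecomposables to the list, then invoke Theorem~\ref{P30} for the FFRT conclusion, and finally substitute the diagonal decomposition into Proposition~\ref{P21} using Remark~\ref{R3.8}(f),(g),(h) to obtain the explicit formula for $F_*^e(R^{\bigstar})$. Your treatment is in fact slightly more explicit than the paper's about why the equivalence $(A^k,A^{q-k})\sim\bigoplus_{c\in\Gamma}(x^c,x^{d-c})^{\oplus\eta_k(c)}$ holds at the level of matrix factorizations (not just matrices) and why it passes through $(-)^{\maltese}$, but this is elaboration rather than a different route.
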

\begin{proof}
Let $e \in \mathbb{N}$ with $q=p^e > \max \{ d_1 , \dots , d_n \}+1$ and let  $1 \leq k \leq q-1 $. Let $\Gamma $ and $ \eta_k(c)$ be as in the above Proposition. If $A=M_S(f,e)$,   it follows that
\begin{equation*}
  F_*^e(S/f^k) \cong \Cok_S(A^k,A^{q-k}) \cong \bigoplus_{c \in \Gamma } \left[ \Cok_S(x^c, x^{d-c})\right]^{\oplus \eta_k(c)}.
\end{equation*}
 If $ \mathfrak{M}= \{\Cok_S(x^c, x^{d-c})\, | \, c \in \Gamma \}\cup \{ F^j(S/f^i)\, |\, p^j \leq \max \{ d_1 , \dots , d_n \}  \text{ and }   0 \leq i \leq p^j\} $,
then $F_*^e(S/(f^k))$  is  a direct sums with direct summands taken from the finite set $\mathfrak{M}$ for every $e\in \mathbb{N}$ and  $1 \leq k<p^e $. By Theorem \ref{P30}  $R^{\bigstar}$ has FFRT.

Furthermore, we can describe explicitly the direct summands of $F_*^e(R^{\bigstar})$. Indeed, if $\hat{\Gamma}:= \{ c \in \Gamma \, | \, \eta_k(c)> 0 \text{ and } c \notin \{d,0\} \} $, it follows that  $$(A^k,A^{q-k}) \sim \bigoplus_{c \in \hat{\Gamma} } (x^c,x^{d-c})^{\oplus \eta_k(c)}\bigoplus (x^d,1)^{\oplus \eta_k(d)}\bigoplus (1,x^d)^{\oplus \eta_k(0)} $$ where $ \eta_k(d)$ (respectively  $ \eta_k(0)$) denotes how many times $x^d$ (respectively $1$) appears in $A^k$.
Recall  by Remark \ref{R3.8} that  $( A^k, A^{q-k} )^{\maltese}$ is a matrix factorization of $f+uv$ and
$$(A^k,A^{q-k})^{\maltese} \sim \bigoplus_{c \in \hat{\Gamma} } \left[(x^c,x^{d-c})^{\maltese}\right]^{\oplus \eta_k(c)}\bigoplus \left[(x^d,1)^{\maltese}\right]^{\oplus \eta_k(d)}\bigoplus \left[(1,x^d)^{\maltese}\right]^{\oplus \eta_k(0)} $$
Therefore
\begin{eqnarray*}
% \nonumber to remove numbering (before each equation)
 \Cok_{S[\![u,v]\!]}(A^k,A^{q-k})^{\maltese}   &=& \bigoplus_{c \in \hat{\Gamma} } \left[\Cok_{S[\![u,v]\!]}(x^c,x^{d-c})^{\maltese}\right]^{\oplus \eta_k(c)}\\
 & & \bigoplus \left[\Cok_{S[\![u,v]\!]}(x^d,1)^{\maltese}\right]^{\oplus \eta_k(d)}  \\
& & \bigoplus \left[\Cok_{S[\![u,v]\!]}(1,x^d)^{\maltese}\right]^{\oplus \eta_k(0)}.
\end{eqnarray*}

By  Proposition  \ref{P21},  the above equation, and the convention that $M^{\oplus 0}=\{0\}$, we can write
\begin{equation*}
 F_*^e(S[\![u,v]\!]/(f+uv)) = (R^{\bigstar})^{q^n} \bigoplus \bigoplus_{k=1}^{q-1}\left[\bigoplus_{c \in \Gamma } \left[\Cok_{S[\![u,v]\!]}(x^c,x^{d-c})^{\maltese}\right]^{\oplus \eta_k(c)} \right].
\end{equation*}
\end{proof}

We   benefit from the proof of Proposition \ref{L4.25} above when we compute the $F$-signature in the next chapter.

The following theorem provides an example of rings that have FFRT but not finite CM type.

\begin{theorem}\label{P7.10}
Let $K$ be an infinite algebraically closed field with $\charact (K) > 2$,
 and let $S = K[\![x_1, \dots, x_d]\!]$ where $d > 2$.
 If  $ f \in S$ is a monomial of degree grater than $3$ and  $R^{\bigstar}=S[\![u,v]\!]/(f+uv)$,  then $R^{\bigstar}$ has FFRT but it does not have finite CM representation type.
\end{theorem}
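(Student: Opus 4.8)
The plan is to combine the FFRT conclusion from Corollary~\ref{C32} with a classification result on finite CM type. First I would observe that $R^{\bigstar}=S[\![u,v]\!]/(f+uv)$ is a hypersurface, and indeed it is $T/(g)$ where $T=S[\![u,v]\!]=K[\![x_1,\dots,x_d,u,v]\!]$ is a complete regular local ring and $g=f+uv\in\mathfrak{n}^2$ (here $\mathfrak{n}$ denotes the maximal ideal of $T$), since $\deg f>3>1$ forces $f\in\mathfrak{n}^2$ and $uv\in\mathfrak{n}^2$. Because $f$ is a monomial in $x_1,\dots,x_d$, the ring $R^{\bigstar}$ is $F$-finite (it is a quotient of the $F$-finite ring $T$ by Remark after Definition~\ref{D2.6}), so Corollary~\ref{C32} applies and gives that $R^{\bigstar}$ has FFRT. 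This disposes of the first half of the statement.

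For the second half — that $R^{\bigstar}$ does \emph{not} have finite CM type — I would argue by contradiction using Propositions~\ref{F3}, \ref{F2}, and \ref{F1}. Suppose $R^{\bigstar}$ has finite CM type. Since $R^{\bigstar}=T/(g)$ with $(T,\mathfrak{n})$ a regular local ring and $0\neq g\in\mathfrak{n}^2$, Proposition~\ref{F3} tells us $R^{\bigstar}$ is a simple singularity. Next, $\dim R^{\bigstar}=\dim T-1=(d+2)-1=d+1>2>1$, and $K$ is an infinite field, so Proposition~\ref{F2} forces $e(R^{\bigstar})\le 3$. On the other hand, by Proposition~\ref{F1} the multiplicity $e(R^{\bigstar})$ equals the largest integer $t$ with $g=f+uv\in\mathfrak{n}^t$. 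Since $f$ is a monomial in $x_1,\dots,x_d$ of degree $m:=\deg f>3$ and $uv$ has $\mathfrak{n}$-order exactly $2$, the order of $g=f+uv$ is $\min(m,2)=2$; so naively $e(R^{\bigstar})=2$, which would \emph{not} contradict Proposition~\ref{F2}. This is the main obstacle: the bound $e\le 3$ alone is not enough, so I need a sharper obstruction to finite CM type.

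To get the sharper obstruction I would appeal directly to the FFRT-versus-finite-CM-type dichotomy developed in the chapter, rather than to multiplicity. The key point is that if $R=S/(f)$ does not have FFRT, then $R^{\bigstar}$ does not have FFRT (the Corollary following Theorem~\ref{P30}); contrapositively, and combined with the structure theory, one shows instead that finite CM type of $R^{\bigstar}$ would force finite CM type — hence in particular FFRT with a very restricted module list — on $R=S/(f)$ itself, via Proposition~\ref{P28}, which gives a bijection $M\mapsto M^{\maltese}$ between indecomposable non-free MCM modules over $R$ and over $R^{\bigstar}$. Thus $R^{\bigstar}$ has finite CM type if and only if $R=S/(f)$ has finite CM type. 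So it suffices to show that $R=S/(f)$, for $f$ a monomial in $x_1,\dots,x_d$ of degree $>3$ with $d>2$, does \emph{not} have finite CM type. For this I would again use Proposition~\ref{F3}: if $R=S/(f)$ had finite CM type it would be a simple singularity, and since $\dim R=d-1>1$ and $K$ is infinite, Proposition~\ref{F2} gives $e(R)\le 3$; but by Proposition~\ref{F1}, $e(R)$ is the largest $t$ with $f\in\mathfrak{m}^t$, which is exactly $\deg f>3$, a contradiction. Therefore $R$ does not have finite CM type, hence neither does $R^{\bigstar}$.

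\textbf{Summary of steps.} (1) Check $R^{\bigstar}$ is $F$-finite and apply Corollary~\ref{C32} to get FFRT. (2) Write $R^{\bigstar}=T/(f+uv)$ with $T$ complete regular local, $f+uv\in\mathfrak{n}^2$. (3) Use Proposition~\ref{P28} (and the chapter's matrix-factorization correspondence) to reduce ``$R^{\bigstar}$ has finite CM type'' to ``$R=S/(f)$ has finite CM type''. (4) Apply Proposition~\ref{F3} to see a finite-CM-type $R$ must be a simple singularity, then Proposition~\ref{F2} (using $\dim R=d-1>1$ and $K$ infinite) to get $e(R)\le3$, then Proposition~\ref{F1} to compute $e(R)=\deg f>3$, a contradiction. (5) Conclude $R^{\bigstar}$ has FFRT but not finite CM type. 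The delicate point to get right is step~(3): one must make sure the bijection of Proposition~\ref{P28} genuinely transports \emph{finiteness} of the set of indecomposable non-free MCM modules in both directions, which it does since $M\mapsto M^{\maltese}$ is a bijection on isomorphism classes and $R^{\bigstar}$, $R$ each have exactly one indecomposable \emph{free} MCM module; alternatively one may bypass (3) entirely and observe that $\dim R^{\bigstar}=d+1>1$ together with a direct multiplicity computation on a suitably chosen hyperplane section already yields $e\ge 4$ along that section, contradicting Proposition~\ref{F2} applied after reduction.
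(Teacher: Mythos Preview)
Your proof is correct and follows the same route as the paper: use Corollary~\ref{C32} for FFRT, then show $R=S/(f)$ fails finite CM type via Propositions~\ref{F1}, \ref{F2}, \ref{F3} (multiplicity $=\deg f>3$ contradicts $e(R)\le 3$), and transport this failure to $R^{\bigstar}$ by the bijection of Proposition~\ref{P28}. The paper's argument is just your steps (1) and (3)--(5); your exploratory paragraph computing $e(R^{\bigstar})=2$ and the final ``alternatively'' remark are unnecessary detours, but the core proof is identical.
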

\begin{proof}
Let $t$ be the degree of the monomial $f$ and let $\mathfrak{m}$ be the maximal ideal of $S$. Clearly, $t$ is the largest natural number satisfying $ f \in \mathfrak{m}^t - \mathfrak{m}^{t+1}$ and consequently the multiplicity $e(R)$ of the ring $R$ is  $e(R)=t$ (Proposition \ref{F1}). Since $e(R)=t > 3$,  it follows from Proposition \ref{F2} that $R$ is not a simple singularity.  Therefore, by  Proposition \ref{F3} $R$ does not have finite CM type. Consequently,  by Proposition \ref{P28}, $R^{\bigstar}$ does not have finite CM type as well. However,  Corollary \ref{C32} implies that  $R^{\bigstar}$ has FFRT.
\end{proof}

\section{$S/I$ has FFRT when $I$ is a monomial ideal}
\label{section:Qutient by   a monomial ideal}

We  keep the same notation as in in Notation \ref{N4.1}.

If $x_1,...,x_n$ are variables, a monomial
in $x_1, . . . , x_n $ is an element of the form $x_1^{r_1}...x_n^{r_n}$ where $r_1,...,r_n\in \mathbb{Z}_{+}$. If $R = A[x_1, . . . , x_n]$ (or $R = A[\![x_1, . . . , x_n]\!]$) where $A$ is a nonzero commutative ring with identity. A monomial ideal in $R$ is an ideal
of $R$ that can be generated by monomials in $x_1, . . . , x_n$.

We need the following Proposition in order to prove Proposition \ref{P5.24}.

\begin{proposition}\label{P4.19}
Let $f_1,...,f_t$ be nonzero and non unite elements in $S$ and let $R= S/( f_1,...,f_t )S$. If $[ M_S(f_1,e)... M_S(f_t,e)]$ is the $r_e \times tr_e$ matrix over $S$ whose columns are the columns of the matrices $M_S(f_1,e),..., M_S(f_t,e)$ respectively, then :\\
1) $F_*^e(R)$ is isomorphic to $\Cok_S[ M_S(f_1,e)... M_S(f_t,e)]$ as $S$ -modules.\\
2) $F_*^e(R)$ is isomorphic to $\Cok_S[ M_S(f_1,e)... M_S(f_t,e)]$ as $R$ -modules.\\

\end{proposition}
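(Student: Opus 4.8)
The approach mirrors the proof of Theorem \ref{P5.11}, but with several generators instead of one. First I would fix the ideal $J=(f_1,\dots,f_t)S$ and observe that $F_*^e(J)$ is exactly the $S$-submodule of $F_*^e(S)$ generated by the set $\{F_*^e(jf_i)\mid j\in\Delta_e,\ 1\le i\le t\}$; this is because $F_*^e(-)$ is an exact functor (Remark \ref{R2.29}(h)) so $F_*^e(J)$ is the image of $F_*^e$ applied to the surjection $S^{t}\twoheadrightarrow J$ sending the standard basis to $(f_1,\dots,f_t)$, and each generator of $F_*^e(S)$ as an $S$-module is $F_*^e(j)$ for $j\in\Delta_e$. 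Next I would set up the surjection $\phi:F_*^e(S)\to F_*^e(R)$, $\phi(F_*^e(g))=F_*^e(g+J)$, which is an $S$-linear map (indeed $R$-linear, since $fF_*^e(R)=0$ forces $\Cok$ to be an $R$-module exactly as in Remark \ref{C3.2}(a)) with kernel precisely $F_*^e(J)$, again by exactness of $F_*^e$ applied to $0\to J\to S\to R\to 0$.

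The second step is to realise $F_*^e(J)$ as the image of a concrete $S$-linear map out of a free module. I would define $\Psi:\big(F_*^e(S)\big)^{\oplus t}\to F_*^e(S)$ by $\Psi(F_*^e(h_1),\dots,F_*^e(h_t))=\sum_{i=1}^t F_*^e(h_i f_i)$. This is $S$-linear, and its image is the $S$-module generated by all $F_*^e(j f_i)$, i.e. $F_*^e(J)$. Hence we get an exact sequence
\begin{equation*}
\big(F_*^e(S)\big)^{\oplus t}\xrightarrow{\ \Psi\ }F_*^e(S)\xrightarrow{\ \phi\ }F_*^e(R)\to 0 .
\end{equation*}
Now I would compute the matrix of $\Psi$ with respect to the free basis $\{F_*^e(j)\mid j\in\Delta_e\}$ of $F_*^e(S)$ on the target and the corresponding basis of the $t$-fold direct sum on the source. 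The $i$-th block of $\Psi$ sends $F_*^e(j)$ (in the $i$-th copy) to $F_*^e(j f_i)=\bigoplus_{k\in\Delta_e}(f_i)_{(k,j)}F_*^e(k)$, where $M_S(f_i,e)=[(f_i)_{(k,j)}]$ by Discussion \ref{D4.2}; so the matrix of $\Psi$ is exactly the horizontal concatenation $[\,M_S(f_1,e)\ \dots\ M_S(f_t,e)\,]$, an $r_e\times tr_e$ matrix. Therefore $F_*^e(R)\cong\Cok_S[\,M_S(f_1,e)\ \dots\ M_S(f_t,e)\,]$ as $S$-modules, which is (1). For (2), I would note that $\phi$ is in fact $R$-linear: since $(f_1,\dots,f_t)F_*^e(R)=0$ (because $J F_*^e(R)=F_*^e(J^{[q]}R)=0$ as $J^{[q]}\subseteq J$ annihilates $R$, cf. Remark \ref{R2.29}(d)), the cokernel carries a natural $R$-module structure compatible with the $S$-structure, and the same exact sequence read over $R$ gives the isomorphism as $R$-modules.

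\textbf{Main obstacle.} There is no deep obstacle here; the only genuine point needing care is the identification of the kernel of $\phi$ and of the image of $\Psi$ with $F_*^e(J)$, which rests on the exactness of the Frobenius pushforward functor (Remark \ref{R2.29}(f),(h)) together with the fact that $F_*^e$ of a surjection of free modules is again described by applying $F_*^e$ entrywise. One should also be mildly careful that the concatenated matrix represents a map \emph{out of} a free module of rank $tr_e$ into one of rank $r_e$ (not a square matrix factorization), so the only structure being used is $\Cok$, not any factorization identity; unlike Theorem \ref{P5.11} there is no appeal to Proposition \ref{P5.5}. Once these bookkeeping points are settled, both (1) and (2) follow immediately.
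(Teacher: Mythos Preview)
Your proposal is correct and follows essentially the same approach as the paper's own proof: both construct the surjection $\phi:F_*^e(S)\to F_*^e(R)$ with kernel $F_*^e(J)$, define the same map $\Psi$ from $(F_*^e(S))^{\oplus t}$, and identify its matrix in the chosen basis as the horizontal concatenation $[\,M_S(f_1,e)\ \dots\ M_S(f_t,e)\,]$. Your treatment of part (2) via $JF_*^e(R)=0$ and $J\Cok_S[\cdots]=0$ also matches the paper's argument.
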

\begin{proof}
 Let $I$ be the ideal $( f_1,...,f_t )S$. Since $ \{ F_*^e(j) | j \in \Delta_e \} $ is a basis of $F_*^e(S)$ as free $S$-module, $F_*^e(R)$ is  generated  by  $ \{ F_*^e(j + I) \,| \,j \in \Delta_e \} $ as $S$-module. For every $ F_*^e(g) \in F_*^e(S)$, define $ \phi( F_*^e(g)) = F_*^e(g + I)$. Then $\phi: F_*^e(S) \longrightarrow F_*^e(R)$ is a surjective homomorphism of $S$-modules. For every $1 \leq k \leq t$ recall that  $M_S(f_k,e)=[f^{(k)}_{(i,j)}]$, where $f^{(k)}_{(i,j)}$,  indexed by $i,j \in \Delta_e$, satisfies that $F_*^e(jf_k)= \bigoplus_{i\in\Delta_e}f^{(k)}_{(i,j)}F_*^e(i)$. Now, define the $S$-module homomorphism
 $ \psi : F_*^e(S)^{\oplus t} \rightarrow F_*^e(S)$ by
\begin{equation*}
  \psi [(F_*^e(g_1) , \ldots , F_*^e(g_t) )] = F_*^e(g_1f_1)+ \ldots +F_*^e(g_tf_t)
\end{equation*}
for all $(F_*^e(g_1) , \ldots , F_*^e(g_t) )\in F_*^e(S)^{\oplus t} $. Since $\Ima \psi = \Ker \phi= F_*^e(I)$, we have an exact sequence $F_*^e(S)^{\oplus t}\xrightarrow{\psi}F_*^e(S)\xrightarrow{\phi}F_*^e(R) \xrightarrow{} 0$. For every $j \in \Delta_e $ and $1 \leq k \leq t$ define $j^{(k)}$ to be the element in $ F_*^e(S)^{\oplus t} $ whose  $k$th coordinate is $F_*^e(j)$ and zero elsewhere and let $\Omega_e^{(k)}=\{ j^{(k)} \,|\, j \in \Delta_e\}$. Since $ \{ F_*^e(j)\, |\, j \in \Delta_e \} $ is a basis of $F_*^e(S)$ as free $S$-module, it follows that $\Omega_e=\Omega_e^{(1)}\cup...\cup \Omega_e^{(t)} $ is a basis for $ F_*^e(S)^{\oplus t}$ as free $S$-module. Notice for each $j \in \Delta_e$ and  $1 \leq k \leq t$ that $\psi(j^{(k)})= F_*^e(jf_k)= \bigoplus_{i\in\Delta_e}f^{(k)}_{(i,j)}F_*^e(i)$
and hence the matrix $[ M_S(f_1,e)... M_S(f_t,e)]$
represents the map $\psi$ on the given free-bases (Remark \ref{Rem 2.9}). This proves that $F_*^e(R)$ is isomorphic $\Cok_S[ M_S(f_1,e)... M_S(f_t,e)]$ as $S$ -modules.

2) Since $IF_*^e(R) =0$ and $I \Cok_S[ M_S(f_1,e)... M_S(f_t,e)]=I\Cok \psi=0$, it follows that   $F_*^e(R)$ is isomorphic $\Cok_S[ M_S(f_1,e)... M_S(f_t,e)]$ as $R$ -modules.
\end{proof}

\begin{proposition}\label{P5.24}
 Let $S$ denote the ring $K[x_1,...,x_n]$ or the ring $K[\![x_1,...,x_n]\!]$. For each $1 \leq j \leq t$, let $ f_j = x_1^{d_{(1,j)}} \ldots x_n^{d_{(n,j)}}$  where $d_{(i,j)} \in \mathbb{Z}_{+}$ for all $i$ and $j$ and set
  $$ \mathcal{G}_j = \{x_1^{m_{(1,j)}} \ldots x_n^{m_{(n,j)}} \, | \, 0 \leq m_{(i,j)}\leq d_{(i,j)}, 1 \leq i \leq n \} .$$
  Let $I$ be the monomial ideal $I=(f_1,...,f_n)$ and let $\mathfrak{J}$ be the set of all ideals $J=(g_1,...,g_t)$ where $g_j \in \mathcal{G}_j  $ for all $1 \leq j \leq t$. If $R=S/I$, then
   $$F_*^e(R)=\bigoplus_{J \in \mathfrak{J}}[S/J]^{\oplus \alpha_e(J)} \text{ where } \alpha_e(J) \in \mathbb{Z}_{+}$$
with the convention that $ [S/J]^{\oplus \alpha_e(J)} = \{0\}$ when $ \alpha_e(J)=0$.

\end{proposition}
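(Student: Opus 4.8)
The plan is to reduce the computation of $F_*^e(R)$ for $R=S/I$ to the single-monomial analysis already carried out in the proof of Proposition \ref{L4.25}, using Proposition \ref{P4.19} to handle the several generators simultaneously. First I would invoke Proposition \ref{P4.19}: for $R=S/(f_1,\dots,f_t)$ we have $F_*^e(R)\cong \Cok_S[\,M_S(f_1,e)\,\cdots\,M_S(f_t,e)\,]$, where the columns of this $r_e\times tr_e$ matrix are exactly the columns of the individual matrices of relations. So it suffices to understand the $S$-module presented by the concatenated matrix $B:=[\,M_S(f_1,e)\,\cdots\,M_S(f_t,e)\,]$.

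Next, I would analyze the columns of $B$ basis-element by basis-element. Fix $e$ large is \emph{not} needed here (unlike Proposition \ref{L4.25}, which took $q>\max d_i+1$) because now we do not need each $M_S(f_j,e)$ to be equivalent to a diagonal matrix; we only need to read off each column. For a fixed $j=\lambda x_1^{\beta_1}\cdots x_n^{\beta_n}\in\Delta_e$ and a fixed generator $f_\ell=x_1^{d_{(1,\ell)}}\cdots x_n^{d_{(n,\ell)}}$, we have $F_*^e(jf_\ell)=F_*^e(\lambda x_1^{d_{(1,\ell)}+\beta_1}\cdots x_n^{d_{(n,\ell)}+\beta_n})$; writing $d_{(i,\ell)}+\beta_i = q c_i + u_i$ with $0\le u_i\le q-1$ (Division Algorithm) gives $F_*^e(jf_\ell)= x_1^{c_1}\cdots x_n^{c_n}\,F_*^e(\lambda x_1^{u_1}\cdots x_n^{u_n})$, so the column of $M_S(f_\ell,e)$ indexed by $j$ has a single nonzero entry $x^{c}$ with $0\le c_i\le d_{(i,\ell)}$ (the bound on $c_i$ follows as in Lemma \ref{L31}(a), since $\beta_i\le q-1$). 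Thus $B$ is a matrix in which each column has exactly one nonzero entry, each of monomial form. Grouping the columns of $B$ by which basis element $F_*^e(\mu)$ of the \emph{target} $F_*^e(S)$ they hit, the cokernel splits as a direct sum over the $r_e$ target basis elements: $\Cok_S(B)\cong\bigoplus_{\mu\in\Delta_e}\Cok_S(B_\mu)$, where $B_\mu$ is a $1\times(\text{something})$ row whose entries are the monomials $g_\ell(\mu,j)$ coming from those columns that land on $F_*^e(\mu)$. But the cokernel of a $1\times m$ row $[\,h_1\ \cdots\ h_m\,]$ over $S$ is just $S/(h_1,\dots,h_m)$.

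Finally I would identify which ideals $(h_1,\dots,h_m)$ arise. For a fixed target basis element $F_*^e(\mu)$ with $\mu=\lambda x_1^{u_1}\cdots x_n^{u_n}$, and for each generator $f_\ell$, there is (at most) one source basis element $j$ with $F_*^e(jf_\ell)\in S\cdot F_*^e(\mu)$; tracing the congruences $d_{(i,\ell)}+\beta_i\equiv u_i \pmod q$ shows the contributing entry is $x_1^{c_{(1,\ell)}}\cdots x_n^{c_{(n,\ell)}}$ with $c_{(i,\ell)}=\lceil (d_{(i,\ell)}+\beta_i)/q\rceil$ determined by $\mu$ and $\ell$, and one checks $0\le c_{(i,\ell)}\le d_{(i,\ell)}$, i.e. $x^{c_{(\cdot,\ell)}}\in\mathcal{G}_\ell$. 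Hence the ideal attached to $\mu$ is of the form $J=(g_1,\dots,g_t)$ with $g_\ell\in\mathcal{G}_\ell$, i.e. $J\in\mathfrak{J}$, and $\Cok_S(B_\mu)\cong S/J$. Collecting, $F_*^e(R)\cong\bigoplus_{\mu\in\Delta_e}S/J_\mu=\bigoplus_{J\in\mathfrak{J}}(S/J)^{\oplus\alpha_e(J)}$ where $\alpha_e(J)$ counts the basis elements $\mu$ yielding the ideal $J$ (with $\alpha_e(J)=0$ meaning $J$ does not occur), and since every $S/J$ with $J\in\mathfrak{J}$ is a quotient of $S/I$ (as $I\subseteq J$), this exhibits $F_*^e(R)$ as a direct sum of $R$-modules drawn from the finite set $\{S/J : J\in\mathfrak{J}\}$. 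The main obstacle is purely bookkeeping: organizing the $tr_e$ columns of $B$ by their target basis vector and verifying carefully that the exponent vectors $c_{(\cdot,\ell)}$ that show up always lie in the prescribed boxes $\prod_i[0,d_{(i,\ell)}]$ and that no column contributes a unit (which cannot happen since each $f_\ell$ is a nonunit, so each $c_{(\cdot,\ell)}$ is nonzero whenever $\beta$ forces a carry — more precisely one argues $x^{c_{(\cdot,\ell)}}$ is a genuine element of $\mathcal G_\ell$). No largeness-of-$e$ hypothesis is needed; the only subtlety beyond Proposition \ref{L4.25} is coordinating the several generators, which Proposition \ref{P4.19} packages cleanly.

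\begin{remark}
One then records immediately the corollary that $S/I$ has FFRT by the finite family $\{S/J : J\in\mathfrak{J}\}$, which is the statement of Theorem \ref{P5.25}.
\end{remark}
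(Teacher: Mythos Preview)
Your proof is correct and follows essentially the same route as the paper: both invoke Proposition~\ref{P4.19}, observe that each column of the concatenated matrix $[\,M_S(f_1,e)\cdots M_S(f_t,e)\,]$ has a single monomial entry lying in the appropriate $\mathcal{G}_\ell$, and then decompose the cokernel row-by-row (the paper phrases this as row/column permutations to block-diagonal form) into quotients $S/J$ with $J\in\mathfrak{J}$. One small sharpening: your ``(at most) one source basis element'' is in fact \emph{exactly} one---each row of $M_S(f_\ell,e)$ also has a single nonzero entry, as noted in the proof of Proposition~\ref{L4.25}---and this is what guarantees that the ideal attached to each target basis vector has one generator from every $\mathcal{G}_\ell$ and hence lies in $\mathfrak{J}$.
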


\begin{proof}
For every $1\leq j \leq t$, notice that  the proof of  Proposition  \ref{L4.25}  shows that each  column and each  row of  $M_S(f_j,e)$ contains only one none zero element of the set $\mathcal{G}_j $.  Therefore, if $A=[ M_S(f_1,e)... M_S(f_t,e)]$, then each  row  of $A$ contains only $t$ none zero elements and each column contains only one none zero element such that all of them belong to  $\mathcal{G}_1 \cup ... \cup \mathcal{G}_t$. Let $\Upsilon$ be the set of all $1 \times t$ matrix of the form $\left[
                                                 \begin{array}{ccc}
                                                   g_1 & \ldots & g_t \\
                                                 \end{array}
                                               \right] $ with $g_j \in \mathcal{G}_j$ for all $1\leq j \leq t$. Using the row and column operations,   the matrix $A$ is equivalent to an $r_e \times tr_e$ matrix  of the form
\begin{equation}\label{Eq4.39}
  A \sim \left[
           \begin{array}{ccccc}
             A_1 &   &   &   &   \\
               &   & \ddots &   &   \\
               &   &   &   & A_{r_e} \\
           \end{array}
         \right]
\end{equation}

where $A_i \in \Upsilon$ for all $ i \in \{1,...,r_e\}$.
Notice for every $ i \in \{1,...,r_e\}$ that $\Cok_SA_i= S/J$ for some $J \in \mathfrak{J}$  and that the Proposition \ref{P4.19} implies  that $ F_*^e(R)=\Cok_SA=\bigoplus_{i=0}^{r_e}\Cok_SA_i$ where $A_i \in \Upsilon$ .
Therefore, we can write  $$F_*^e(R)=\bigoplus_{J \in \mathfrak{J}}[S/J]^{\oplus \alpha_e(J)} \text{ where } \alpha_e(J) \in \mathbb{Z}_{+}$$
with the convention that $ [S/J]^{\oplus \alpha_e(J)} = \{0\}$ when $ \alpha_e(J)=0$. Since $\Omega$ is finite set, the set $\Upsilon$ is also a finite set.
\end{proof}

Proposition \ref{P5.24} implies the following result. However, the following result can be obtained from   \cite[Example 1.3 (v)]{TT} but the above proposition provides another proof.

 \begin{theorem}\label{P5.25}
Let $S$ denote the ring $K[x_1,...,x_n]$ or the ring $K[\![x_1,...,x_n]\!]$.  Let $I$ be a monomial ideal in $S$  generated by the monomials $f_1,...,f_t$. If $R=S/I$, then $R$ has Finite F-representation type on $R$.
\end{theorem}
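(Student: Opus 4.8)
The plan is to deduce Theorem \ref{P5.25} directly from Proposition \ref{P5.24}, since that proposition already does essentially all the work. First I would recall the setup: $I=(f_1,\dots,f_t)$ is a monomial ideal, so each $f_j=x_1^{d_{(1,j)}}\cdots x_n^{d_{(n,j)}}$ is a monomial, and $R=S/I$. By Proposition \ref{P5.24} we have, for every positive integer $e$,
\begin{equation*}
F_*^e(R)=\bigoplus_{J\in\mathfrak{J}}\left[S/J\right]^{\oplus\alpha_e(J)},
\end{equation*}
where $\mathfrak{J}$ is the (finite) set of ideals $J=(g_1,\dots,g_t)$ with $g_j\in\mathcal{G}_j$, and each $\alpha_e(J)\in\mathbb{Z}_{+}$. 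The key observation is that the indexing set $\mathfrak{J}$ does \emph{not} depend on $e$: it is determined once and for all by the exponent data $d_{(i,j)}$. So the finite family $\{S/J\,|\,J\in\mathfrak{J}\}$ is a single fixed finite list of $S$-modules that works simultaneously for all $e$.

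Next I would check that each $S/J$ is naturally an $R$-module, so that the decomposition above is a decomposition of $R$-modules and not merely of $S$-modules. For this, note that every $g_j\in\mathcal{G}_j$ divides $f_j$ (by definition of $\mathcal{G}_j$, since $0\le m_{(i,j)}\le d_{(i,j)}$), hence $f_j\in(g_1,\dots,g_t)=J$ for each $j$, and therefore $I=(f_1,\dots,f_t)\subseteq J$. Consequently $J\supseteq I$ forces $I\cdot(S/J)=0$, so $S/J$ is an $S/I=R$-module in the usual way, and it is finitely generated over $R$ since it is cyclic. Thus $\{S/J\,|\,J\in\mathfrak{J}\}$ is a finite collection of finitely generated $R$-modules, and $F_*^e(R)$ is, for every $e$, isomorphic as an $R$-module to a finite direct sum of copies of these modules. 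By Definition \ref{D1}, this is exactly the statement that $R$ has finite F-representation type over $R$, completing the proof.

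The only genuine points requiring attention — neither of which is a serious obstacle — are, first, that Proposition \ref{P5.24} is stated as an isomorphism and one should be explicit that it is an $R$-module isomorphism (which follows from the last line of the proof of Proposition \ref{P4.19}, where $I$ annihilates both $F_*^e(R)$ and the relevant cokernel); and second, that $\mathfrak{J}$ is finite, which is immediate because each $\mathcal{G}_j$ is finite (it has $\prod_{i=1}^{n}(d_{(i,j)}+1)$ elements) and $\mathfrak{J}$ is a quotient of the finite product $\mathcal{G}_1\times\cdots\times\mathcal{G}_t$. With these remarks in place the theorem is a one-line corollary, so I expect no substantive difficulty; the real content lives in Proposition \ref{P5.24} and, behind it, in the structure of the matrix of relations $M_S(f_j,e)$ for a monomial $f_j$ established in the proof of Proposition \ref{L4.25}.
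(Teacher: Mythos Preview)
Your proposal is correct and follows exactly the same approach as the paper, which simply states that Proposition~\ref{P5.24} implies the result. In fact you supply more detail than the paper does (the finiteness of $\mathfrak{J}$, the inclusion $I\subseteq J$ making each $S/J$ an $R$-module), so there is nothing to add.
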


%%%%%%%%%%%%%%%%%%%%%%%%%%%%%%%%%%%%%%%%%%%%%%%%%%%%%%%%%%%%%%%%%%%%%%%%%%%%%%%%%%
%%%%%%%%%%%%%%%%%%%%%%%%%%%%%%%%%%%%%%%%%%%%%%%%%%%%%%%%%%%%%%%%%%%%%%%%%%%%%%%%%

\chapter{F-signature of specific hypersurfaces}
\label{chapter:F-signature of specific hypersurfaces}

Recall from Remark \ref{C2.10} that if $R$ is an $F$-finite local ring,  for every $e \in \mathbb{N}$ there exist a unique nonnegative integer $a_e$ and an $R$-module $M_e$ that has no free direct summand  such that $F_*^e(R)=R^{a_e}\oplus M_e$ and $a_e(R)=\sharp (F_*^e(R),R)=a_e$. Now we are ready to define the $F$-signature as it appears in \cite{KT} as follows.

\begin{definition}\label{D6.1}
\emph{Let  $(R,\mathfrak{m},K)$ be a $d$-dimensional  $F$-finite Noetherian local ring of prime characteristic
$p$. If $[K:K^p] $  is the dimension of $K$ as $K^p$-vector space and  $\alpha (R) =\log_p[K:K^p]$, then the $F$-signature of $R$,  denoted  $\mathbb{S}(R)$, is defined as
\begin{equation*}
\mathbb{S}(R)=\lim_{e\rightarrow \infty} \frac{a_e(R)}{p^{e(d+\alpha (R))}}.
\end{equation*}
}
\end{definition}

\begin{proposition}\label{R6.2}
 If $(R,\mathfrak{m},K)$ is as above, then $\mathbb{S}(R)= \mathbb{S}(\hat{R})$ where $\hat{R}$ is the $\mathfrak{m}$-adic completion of $R$.
\end{proposition}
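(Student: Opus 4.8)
The plan is to show that the sequence $\{a_e(R)\}_{e\in\mathbb{N}}$ governing the $F$-signature of $R$ coincides with the corresponding sequence $\{a_e(\widehat{R})\}_{e\in\mathbb{N}}$ for the $\mathfrak{m}$-adic completion, after which the two limits defining $\mathbb{S}(R)$ and $\mathbb{S}(\widehat{R})$ agree term by term (note that $\dim R = \dim\widehat{R}=d$, the residue fields are the same $K$, so $\alpha(R)=\alpha(\widehat{R})$, and the normalizing denominators $p^{e(d+\alpha(R))}$ match). So the whole statement reduces to the claim that $\sharp(F_*^e(R),R)=\sharp(F_*^e(\widehat{R}),\widehat{R})$ for every $e$.

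First I would recall that $\widehat{R}$ is the $\mathfrak{m}$-adic completion and that, since $R$ is $F$-finite, $F_*^e(R)$ is a finitely generated $R$-module (Remark \ref{C2.9}), so the decomposition $F_*^e(R)\cong R^{a_e}\oplus M_e$ with $M_e$ having no free direct summand is available (Remark \ref{C2.10}), and similarly over the complete local ring $\widehat{R}$ we have $F_*^e(\widehat{R})\cong \widehat{R}^{b_e}\oplus N_e$ with $N_e$ free-summand-free. The key input is Proposition \ref{L2.4}: since $\mathfrak{m}$ is a finitely generated ideal, $F_*^e(\widehat{R}_{\mathfrak{m}})\cong\widehat{F_*^e(R)}_{\mathfrak{m}}$ as $\widehat{R}$-modules; combining this with Theorem \ref{thm 2.21}(a) we get
\begin{equation*}
F_*^e(\widehat{R})\cong \widehat{F_*^e(R)}_{\mathfrak{m}}\cong F_*^e(R)\otimes_R\widehat{R}\cong \widehat{R}^{\,a_e}\oplus \widehat{(M_e)}_{\mathfrak{m}}.
\end{equation*}
Now I would invoke Proposition \ref{Pro2.22}(a): an $R$-module $M$ is a direct summand of $R^{n}$ (in particular, has a free direct summand) if and only if $\widehat{M}_{\mathfrak{m}}$ has the analogous property over $\widehat{R}$. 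Since $M_e$ has no free direct summand, neither does $\widehat{(M_e)}_{\mathfrak{m}}$. Therefore $F_*^e(\widehat{R})\cong \widehat{R}^{\,a_e}\oplus \widehat{(M_e)}_{\mathfrak{m}}$ exhibits $F_*^e(\widehat{R})$ as a free module of rank $a_e$ plus a module with no free summand; by the uniqueness of this decomposition over the complete (hence Krull--Remak--Schmidt) local ring $\widehat{R}$ (Discussion \ref{disc2.33}(b)), we conclude $b_e=a_e$, i.e. $a_e(\widehat{R})=a_e(R)$ for all $e$.

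Finally, plugging $a_e(\widehat{R})=a_e(R)$ into Definition \ref{D6.1} gives
\begin{equation*}
\mathbb{S}(\widehat{R})=\lim_{e\to\infty}\frac{a_e(\widehat{R})}{p^{e(d+\alpha(\widehat{R}))}}=\lim_{e\to\infty}\frac{a_e(R)}{p^{e(d+\alpha(R))}}=\mathbb{S}(R),
\end{equation*}
using that $\dim\widehat{R}=\dim R=d$ and that $\widehat{R}$ has the same residue field $K$ so $\alpha(\widehat{R})=\alpha(R)$; the existence of the limit on either side (guaranteed by \cite{KT}) then forces equality. The only mildly delicate point is making sure Proposition \ref{L2.4} applies with $I=\mathfrak{m}$ — which it does, since $\mathfrak{m}$ is finitely generated in the Noetherian ring $R$ — and that $\widehat{(M_e)}_{\mathfrak{m}}$ is still finitely generated over $\widehat{R}$ so that Krull--Remak--Schmidt is legitimately invoked; both are routine given the results already assembled in the excerpt, so I do not anticipate a genuine obstacle here.
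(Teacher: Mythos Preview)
Your proof is correct and follows essentially the same route as the paper: decompose $F_*^e(R)=R^{a_e}\oplus M_e$, complete to obtain $F_*^e(\widehat{R})\cong\widehat{R}^{a_e}\oplus\widehat{M_e}$, and use Proposition~\ref{Pro2.22}(a) to see that $\widehat{M_e}$ has no free summand, forcing $a_e(\widehat{R})=a_e(R)$. If anything, you are more explicit than the paper in citing Proposition~\ref{L2.4} and Theorem~\ref{thm 2.21}(a) to justify the isomorphism $F_*^e(\widehat{R})\cong\widehat{F_*^e(R)}$, which the paper simply asserts.
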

\begin{proof}
  If $L$ denotes the residue field of the local ring $(\hat{R}, \hat{\mathfrak{m}})$, then $L$ is isomorphic to $\hat{K}$ where $\hat{K}$ is the $\mathfrak{m}$-adic completion of $K$. Since  $\hat{K}$ is isomorphic to $K$, it follows that $\alpha (R) =\alpha (\hat{R})$.
     %Notice that $[K:K^p]=[F_*^1(K):K]$ and  $[L:L^p]=[F_*^1(L):L]$. If $c=[K:K^p]=[F_*^1(K):K]$, we get that $F_*^1(K)=K^{\oplus c}$. According to Theorem \ref{thm 2.21} and  Proposition \ref{L2.4}, we get that
  %\begin{equation*}
  %  F_*^1(L)=F_*^1(K)\otimes_R \hat{R}=K^{\oplus c}\otimes_R \hat{R}=L^{\oplus c}.
  %\end{equation*}
  %This shows that $[F_*^1(L):L]=c= [F_*^1(K):K]$ and consequently
  It is well known that $\dim R= \dim \hat{R}$ \cite[Corollary 10.2.2]{BCA}. Now, if $a_e(R)=a_e$, we can write $F_*^e(R)=R^{a_e}\oplus M_e$ where $M_e$ is an $R$-module that has no free direct summand. As a result, we get $F_*^e(\hat{R})=\hat{R}^{a_e}\oplus \hat{M_e}$. However, if $\hat{R}$ is a direct summand of $\hat{M_e}$, it follows from Proposition \ref{Pro2.22} (a) that $R$ is a direct summand of $M_e$ which is  a contradiction.  Therefore, $a_e(\hat{R})=a_e(R)$ and consequently $\mathbb{S}(R)= \mathbb{S}(\hat{R})$.
\end{proof}
\begin{remark}\label{R6.3}
  If $R=\bigoplus_{n=0}^{\infty}R_n$ is a graded ring with $R_0$
 equals to a field $K$ of  characteristic $p> 0$, then for every $e \in \mathbb{N}$ there exist a unique nonnegative integer $a_e$ and an $R$-module $M_e$ that has no free direct summand  such that $F_*^e(R)=R^{a_e}\oplus M_e$. If $a_e(R)=a_e$ and $\alpha (R) =\log_p[K:K^p]$ where $[K:K^p] $  is the dimension of $K$ as $K^p$-vector space, then the limit $\mathbb{S}(R)=\lim_{e\rightarrow \infty} \frac{a_e(R)}{p^{e(d+\alpha (R))}}$ is well-defined  \cite[Lemma 6.6]{MVK}. Furthermore, if $ \mathfrak{m}$ is the homogenous maximal ideal of $R$, then $a_e(R)=a_e(R_{\mathfrak{m}})$ and $\mathbb{S}(R)=\mathbb{S}(R_{\mathfrak{m}})$ \cite[Lemma 6.6]{MVK}.
 According to Proposition \ref{R6.2}, $\mathbb{S}(R_{\mathfrak{m}})= \mathbb{S}(\widehat{R_{\mathfrak{m}}})$ where $\widehat{R_{\mathfrak{m}}}$ is the $\mathfrak{m}R_{\mathfrak{m}}$-adic completion of the ring $R_{\mathfrak{m}}$. Since $\widehat{R_{\mathfrak{m}}}$ is isomorphic to $\hat{R}$ where $\hat{R}$ is the $\mathfrak{m}$-adic completion of $R$ \cite[Section 22]{AK}, it follows that $\mathbb{S}(R)=\mathbb{S}(\hat{R})$.
\end{remark}

In this chapter, we will compute the $F$-signature of some hypersurfaces.

\section{The F-signature of $ \frac{S[\![u,v]\!]}{f+uv}$ when $f$ is a monomial}
\label{section:The F-signature of uv is a monomial}

We will keep the same notation as in Notation \ref{N4.1} unless otherwise stated. \\

\begin{notation}\label{N1}
\emph{Let $\Delta = \{1,\dots,n \} $ and let  $d, d_1,\dots,d_n$ be real numbers.  For every $1\leq s \leq n-1$, define
\begin{equation*}
  W^{(n)}_s = \sum_{j_1,\dots,j_s \in \Delta } [(d-d_{j_1})\dots(d- d_{j_s})(\prod_{j \in \Delta \setminus \{j_1,\dots,j_s\}}d_j)]
\end{equation*}
\begin{equation*}
W^{(n)}_n= \prod_{i=1}^{n}(d-d_{i}) \text{  and  }  W^{(n)}_0= \prod_{i=1}^{n}d_{i}.
\end{equation*}
}
\end{notation}
For example, if $d, d_1,d_2,d_3,d_4$ are real numbers, we get that $W^{(4)}_0=d_1 d_2 d_3 d_4$
\begin{eqnarray*}
% \nonumber to remove numbering (before each equation)
  W^{(4)}_1 &=&(d-d_1)d_2d_3d_4 + (d-d_2)d_1d_3d_4+(d-d_3)d_1d_2d_4 \\
   &+& (d-d_4)d_1d_2d_3
\end{eqnarray*}

\begin{eqnarray*}
% \nonumber to remove numbering (before each equation)
  W^{(4)}_2 &=& (d-d_1)(d-d_2)d_3d_4+(d-d_1)(d-d_3)d_2d_4+ (d-d_1)(d-d_4)d_2d_3 \\
    &+& (d-d_2)(d-d_3)d_1d_4 + (d-d_2)(d-d_4)d_1d_3
\end{eqnarray*}

\begin{eqnarray*}
% \nonumber to remove numbering (before each equation)
  W^{(4)}_3 &=&(d-d_2)(d-d_3)(d-d_4)d_1+ (d-d_1)(d-d_3)(d-d_4)d_2 \\
    &+&(d-d_1)(d-d_2)(d-d_4)d_3+(d-d_1)(d-d_2)(d-d_3)d_4
\end{eqnarray*}
and $W^{(4)}_4=(d-d_1)(d- d_2)(d- d_3)(d- d_4).$

According to the above notation,  we can observe the following remark.
\begin{remark}\label{R5.3}
Let  $d, d_1,\dots,d_n$ be real numbers where $n\geq 1$ and let $W^{(n)}_j$ be defined on  $d, d_1,\dots,d_n$ as in \ref{N1}. If $d_{n+1}$ is a real number, then  $W^{(n+1)}_j$  is defined on  $d, d_1,\dots,d_n,d_{n+1}$  for all $ 1 \leq j \leq n $ as follows: $$W^{(n+1)}_j= d_{n+1} W^{(n)}_j+(d-d_{n+1}) W^{(n)}_{j-1}.$$
Furthermore, $W^{(n+1)}_0=d_{n+1}W^{(n)}_0$ and $W^{(n+1)}_{n+1}=(d-d_{n+1})W^{(n)}_n$.
\end{remark}

%\begin{remark}\label{R5.3}
%If $n\geq 2$, then  $W^{(n)}_j= (d-d_n)W^{(n-1)}_{j-1} + d_nW^{(n-1)}_{j}$ for every $ 1 \leq j \leq n-1 $
%\end{remark}

 The following lemma is needed to prove Proposition \ref{P8.4}.
 \begin{lemma}\label{L5.4}
  If $r$ , $q$, $d_j$ and $u_j$ are real numbers for all $1 \leq j \leq n$, then

  \begin{equation*}
    \prod_{j=1}^{n}(d_jr + \frac{q(d-d_j)}{d}+ u_j) = \sum_{j=0}^{n} \frac{q^j}{d^j}W^{(n)}_jr^{n-j}+ \sum_{c=0}^{n-1}g^{(n)}_c(q)r^{c}
  \end{equation*}

 where $g^{(n)}_c(q)$ is a polynomial in $q$ of degree $n-1-c$ for all $0\leq c \leq n-1$.
 \end{lemma}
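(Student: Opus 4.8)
The plan is to prove this purely algebraically by induction on $n$, since the quantities $W_j^{(n)}$ are defined recursively in Remark \ref{R5.3} and the left-hand product also admits an obvious recursion on the number of factors. The base case $n=1$ is a direct check: the single factor is $d_1 r + \frac{q(d-d_1)}{d} + u_1$, which equals $W_0^{(1)} r + \frac{q}{d}W_1^{(1)} + g_0^{(1)}(q)$ with $W_0^{(1)}=d_1$, $W_1^{(1)}=d-d_1$, and $g_0^{(1)}(q)=u_1$, a polynomial in $q$ of degree $0 = n-1-c$ with $c=0$.

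For the inductive step, assume the identity holds for $n$. I would multiply both sides by the new factor $\bigl(d_{n+1}r + \frac{q(d-d_{n+1})}{d} + u_{n+1}\bigr)$ and collect terms according to the power of $r$. Writing $a := d_{n+1}$, $b := \frac{q(d-d_{n+1})}{d}$, $c_0 := u_{n+1}$, the product becomes
\begin{equation*}
\Bigl(\sum_{j=0}^{n}\frac{q^j}{d^j}W_j^{(n)}r^{n-j} + \sum_{c=0}^{n-1}g_c^{(n)}(q)r^c\Bigr)\bigl(ar + b + c_0\bigr).
\end{equation*}
The top-degree terms $ar \cdot \frac{q^j}{d^j}W_j^{(n)}r^{n-j}$ and $b\cdot\frac{q^j}{d^j}W_j^{(n)}r^{n-j}$ combine, using $b = \frac{q}{d}(d-d_{n+1})$, into
\begin{equation*}
\sum_{j=0}^{n+1}\frac{q^j}{d^j}\bigl(d_{n+1}W_j^{(n)} + (d-d_{n+1})W_{j-1}^{(n)}\bigr)r^{n+1-j},
\end{equation*}
with the conventions $W_{-1}^{(n)}=W_{n+1}^{(n)}=0$; by Remark \ref{R5.3} the bracketed coefficient is exactly $W_j^{(n+1)}$, which handles the boundary cases $j=0$ and $j=n+1$ as well. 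Everything else — the contributions $c_0 r\cdot(\text{top sum})$, $(a r + b + c_0)\cdot\sum_c g_c^{(n)}(q)r^c$ — produces terms of $r$-degree at most $n$, and I would check that the coefficient of $r^c$ for $0\le c\le n$ is a polynomial in $q$ of degree at most $n-c$. This requires care at $c=n$: the only term of $r$-degree $n$ not yet absorbed is $c_0 r \cdot \frac{q^0}{d^0}W_0^{(n)}r^{n} $... that is degree $n+1$, so actually the leftover degree-$n$ terms come from $c_0\cdot\frac{q^j}{d^j}W_j^{(n)}r^{n-j}$ with $j=0$ giving $u_{n+1}W_0^{(n)}r^n$ (degree $0$ in $q$, and $n-c = 0$), plus $a r\cdot g_{n-1}^{(n)}(q)r^{n-1}$ and $b\cdot g_{n-1}^{(n)}(q)r^{n-1}$, whose $q$-degrees are $n-1-(n-1)=0$ and $1+0=1=n-c$ respectively. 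So $g_n^{(n+1)}(q)$ has degree $\le 1 = (n+1)-1-n$, as needed.

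The main obstacle, and the only genuinely delicate bookkeeping, is tracking the $q$-degrees of the new "error" polynomials $g_c^{(n+1)}(q)$ and confirming each is $\le (n+1)-1-c$. The worst case is the term $b\cdot g_c^{(n)}(q)r^c = \frac{q(d-d_{n+1})}{d}g_c^{(n)}(q)r^c$, which multiplies a degree-$(n-1-c)$ polynomial by $q$, giving degree $n-c = (n+1)-1-c$ — exactly on the boundary, so the bound is tight and I must be sure nothing pushes it over. The contribution $c_0\cdot\frac{q^{c+1}}{d^{c+1}}W_{c+1}^{(n)}r^{n-c-1}$ lands in degree $r^{n-1-c}$, wait — I would need to reindex carefully so that each power of $r$ collects the right pieces. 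I expect the cleanest presentation is: prove the identity with the coefficient of $r^{n+1-j}$ ($0\le j\le n+1$) equal to $\frac{q^j}{d^j}W_j^{(n+1)}$ plus lower-order-in-$q$ corrections, verify the leading coefficient via Remark \ref{R5.3}, and then observe that all remaining terms have strictly smaller total weight so they can be gathered into the $g_c^{(n+1)}(q)r^c$ with the claimed degree bound. No external results beyond Remark \ref{R5.3} and Notation \ref{N1} are needed.
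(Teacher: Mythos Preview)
Your approach is exactly the paper's: induction on $n$, multiply by the new factor, use Remark \ref{R5.3} to identify the $ar$- and $b$-contributions from the leading sum as $\sum_{j=0}^{n+1}\frac{q^j}{d^j}W_j^{(n+1)}r^{n+1-j}$, and absorb the rest into new polynomials $g_c^{(n+1)}(q)$ with the right $q$-degree. One small slip to fix: in your $c=n$ check, the term $b\cdot g_{n-1}^{(n)}(q)r^{n-1}$ has $r$-degree $n-1$, not $n$, so it does not contribute to $g_n^{(n+1)}(q)$; the only leftover contributions at $r^n$ are $u_{n+1}W_0^{(n)}$ and $d_{n+1}g_{n-1}^{(n)}(q)$, both of $q$-degree $0=(n+1)-1-n$, which is what you need (your stated bound ``$\le 1$'' is both the wrong target and comes from that misplaced term).
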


 \begin{proof}
 By induction on $n$, we will prove this lemma. It is clear when $n=1$. The induction hypothesis implies that
% \begin{eqnarray*}
 % \nonumber to remove numbering (before each equation)
 % \prod_{j=1}^{n+1}(d_jr + \frac{q(d-d_j)}{d}+ u_j) &=& \prod_{j=1}^{n}(d_jr + \frac{q(d-d_j)}{d}+ u_j)(d_{n+1}r + \frac{q(d-d_{n+1})}{d}+ u_{n+1}) \\
 %   &=& W^{(n)}_0d_{n+1}r^{n+1} + \frac{q}{d}d_{n+1}W^{(n)}_1r^{n}+ \frac{q^2}{d^2}d_{n+1}W^{(n)}_2r^{n-2}+\ldots \\
  %  &+& \frac{q^{n-1}}{d^{n-1}}d_{n+1}W^{(n)}_{n-1}r^{2}+ \frac{q^n}{d^n}d_{n+1}W^{(n)}_nr +\sum_{c=0}^{n-1}d_{n+1}g^{(n)}_c(q)r^{c+1}\\
  %  &+&  \frac{q(d-d_{n+1})}{d}W^{(n)}_0r^n + \frac{q^{2}}{d^{2}}(d-d_{n+1})W^{(n)}_1r^{n-1}+  \\
  %  &+& \frac{q^3}{d^3}(d-d_{n+1})W^{(n)}_2r^{n-2}+\ldots + \frac{q^{n}}{d^{n}}(d-d_{n+1})W^{(n)}_{n-1}r + \\
  %  &+&  \frac{q^{n+1}}{d^{n+1}}(d-d_{n+1})W^{(n)}_n +\sum_{c=0}^{n-1}\frac{q(d-d_{n+1})}{d}g^{(n)}_c(q)r^{c} \\
  %  &+& u_{n+1}W^{(n)}_0r^n + u_{n+1}\frac{q}{d}W^{(n)}_1r^{n-1}+ u_{n+1}\frac{q^2}{d^2}W^{(n)}_2r^{n-2}+\ldots \\
  %  &+& u_{n+1}\frac{q^{n-1}}{d^{n-1}}W^{(n)}_{n-1}r+ u_{n+1}\frac{q^n}{d^n}W^{(n)}_n +\sum_{c=0}^{n-1}u_{n+1}g^{(n)}_c(q)r^{c}
 %\end{eqnarray*}

 \begin{eqnarray*}
 % \nonumber to remove numbering (before each equation)
   \prod_{j=1}^{n+1}(d_jr + \frac{q(d-d_j)}{d}+ u_j) &=& (d_{n+1}r + \frac{q(d-d_{n+1})}{d}+ u_{n+1})\prod_{j=1}^{n}(d_jr + \frac{q(d-d_j)}{d}+ u_j) \\
   &=& (d_{n+1}r + \frac{q(d-d_{n+1})}{d}+ u_{n+1})(\sum_{j=0}^{n} \frac{q^j}{d^j}W^{(n)}_jr^{n-j}+ \\
   & & \sum_{c=0}^{n-1}g^{(n)}_c(q)r^{c})\\
     &=& A+B+C
 \end{eqnarray*}
 where
 \begin{eqnarray*}
 % \nonumber to remove numbering (before each equation)
     A &=&  d_{n+1}r (\sum_{j=0}^{n} \frac{q^j}{d^j}W^{(n)}_jr^{n-j}+ \sum_{c=0}^{n-1}g^{(n)}_c(q)r^{c})\\
      &=&  \sum_{j=0}^{n}d_{n+1} \frac{q^j}{d^j}W^{(n)}_jr^{n-j+1}+\sum_{c=0}^{n-1}d_{n+1}g^{(n)}_c(q)r^{c+1}\\
    B &=& \frac{q(d-d_{n+1})}{d}(\sum_{j=0}^{n} \frac{q^j}{d^j}W^{(n)}_jr^{n-j}+ \sum_{c=0}^{n-1}g^{(n)}_c(q)r^{c})\\
      &=& \sum_{j=0}^{n}(d-d_{n+1}) \frac{q^{j+1}}{d^{j+1}}W^{(n)}_jr^{n-j} +\sum_{c=0}^{n-1}\frac{q(d-d_{n+1})}{d}g^{(n)}_c(q)r^{c}\\
    C &=&  u_{n+1}  (\sum_{j=0}^{n} \frac{q^j}{d^j}W^{(n)}_jr^{n-j}+ \sum_{c=0}^{n-1}g^{(n)}_c(q)r^{c})\\
     &=&  \sum_{j=0}^{n} u_{n+1}\frac{q^j}{d^j}W^{(n)}_jr^{n-j}+\sum_{c=0}^{n-1}u_{n+1}g^{(n)}_c(q)r^{c}.
 \end{eqnarray*}

 Write $A=A_1+A_2$ where
 \begin{eqnarray*}
 % \nonumber to remove numbering (before each equation)
   A_1 &=& \sum_{j=0}^{n}d_{n+1} \frac{q^j}{d^j}W^{(n)}_jr^{n-j+1} \text{ and } \\
   A_2 &=& \sum_{c=0}^{n-1}d_{n+1}g^{(n)}_c(q)r^{c+1}
 \end{eqnarray*}
 and write $B=B_1+B_2$ where
 \begin{eqnarray*}
 % \nonumber to remove numbering (before each equation)
   B_1 &=& \sum_{j=0}^{n}(d-d_{n+1}) \frac{q^{j+1}}{d^{j+1}}W^{(n)}_jr^{n-j} \text{ and } \\
   B_2 &=& \sum_{c=0}^{n-1}\frac{q(d-d_{n+1})}{d}g^{(n)}_c(q)r^{c}.
 \end{eqnarray*}

  Notice that
 \begin{eqnarray*}
% \nonumber to remove numbering (before each equation)
 A_1+B_1&=& \sum_{j=0}^{n}d_{n+1} \frac{q^j}{d^j}W^{(n)}_jr^{n-j+1} + \sum_{j=0}^{n}(d-d_{n+1}) \frac{q^{j+1}}{d^{j+1}}W^{(n)}_jr^{n-j} \\
  &=& d_{n+1}W^{(n)}_0r^{n+1}+ \sum_{j=1}^{n}d_{n+1} \frac{q^j}{d^j}W^{(n)}_jr^{n-j+1}\\
   & & +\sum_{j=0}^{n-1}(d-d_{n+1}) \frac{q^{j+1}}{d^{j+1}}W^{(n)}_jr^{n-j}+ (d-d_{n+1}) \frac{q^{n+1}}{d^{n+1}}W^{(n)}_n \\
   &=& d_{n+1}W^{(n)}_0r^{n+1}+ \sum_{j=1}^{n}d_{n+1} \frac{q^j}{d^j}W^{(n)}_jr^{n-j+1}\\
   & & +\sum_{j=1}^{n}(d-d_{n+1}) \frac{q^{j}}{d^{j}}W^{(n)}_{j-1}r^{n-j+1}+ (d-d_{n+1}) \frac{q^{n+1}}{d^{n+1}}W^{(n)}_n \\
   &=& d_{n+1}W^{(n)}_0r^{n+1}+ \sum_{j=1}^{n}\frac{q^j}{d^j}[d_{n+1} W^{(n)}_j+(d-d_{n+1}) W^{(n)}_{j-1}]r^{n-j+1} \\
   & & +(d-d_{n+1}) \frac{q^{n+1}}{d^{n+1}}W^{(n)}_n.
\end{eqnarray*}

 Now apply Remark \ref{R5.3} to get that
\begin{equation}\label{E.8.14}
 A_1+B_1= \sum_{j=0}^{n+1} \frac{q^j}{d^j}W^{(n+1)}_jr^{n+1-j}.
\end{equation}

Now define
 $$g^{(n+1)}_n(q)=d_{n+1}g^{(n)}_{n-1}(q)+u_{n+1}W^{(n)}_0,$$
$$g^{(n+1)}_0(q)=\frac{q}{d}(d-d_{n+1})g^{(n)}_0(q)+u_{n+1}\frac{q^n}{d^n}W^{(n)}_n+u_{n+1}g^{(n)}_0(q),$$
and
\begin{equation*}
 g^{(n+1)}_i(q)= d_{n+1}g^{(n)}_{i-1}(q)+ \frac{q(d-d_{n+1})}{d}g^{(n)}_{i}(q)+ u_{n+1}\frac{q^{n-i}}{d^{n-i}}W^{(n)}_{n-i}+u_{n+1}g^{(n)}_{i}(q)
\end{equation*}
 for every $1\leq i \leq n-1 $.

  Since $g^{(n)}_i(q)$ is a polynomial in $q$ of degree $n-1-i$ for all $0\leq i \leq n-1$, it follows from the above definitions that $g^{(n+1)}_i(q)$ is a polynomial
   in $q$ of degree $n-i$ for all $0\leq i \leq n $.

Notice that
\begin{eqnarray*}
% \nonumber to remove numbering (before each equation)
   A_2+ B_2 + C  &=& \sum_{c=0}^{n-1}d_{n+1}g^{(n)}_c(q)r^{c+1} + \sum_{c=0}^{n-1}\frac{q(d-d_{n+1})}{d}g^{(n)}_c(q)r^{c} \\
    & &  + \sum_{j=0}^{n} u_{n+1}\frac{q^j}{d^j}W^{(n)}_jr^{n-j}+\sum_{c=0}^{n-1}u_{n+1}g^{(n)}_c(q)r^{c} \\
    &=& \sum_{c=0}^{n-2}d_{n+1}g^{(n)}_c(q)r^{c+1}+d_{n+1}g^{(n)}_{n-1}(q)r^{n}\\
    & & +\frac{q(d-d_{n+1})}{d}g^{(n)}_0(q)+ \sum_{c=1}^{n-1}\frac{q(d-d_{n+1})}{d}g^{(n)}_c(q)r^{c} \\
    & & + u_{n+1}W^{(n)}_0r^{n}+ \sum_{j=1}^{n-1} u_{n+1}\frac{q^j}{d^j}W^{(n)}_jr^{n-j}+ u_{n+1}\frac{q^n}{d^n}W^{(n)}_n\\
    & & + u_{n+1}g^{(n)}_0(q)+\sum_{c=1}^{n-1}u_{n+1}g^{(n)}_c(q)r^{c}\\
    &=&[\frac{q}{d}(d-d_{n+1})g^{(n)}_0(q)+u_{n+1}\frac{q^n}{d^n}W^{(n)}_n+u_{n+1}g^{(n)}_0(q)] \\
    & & +[\sum_{c=0}^{n-2}d_{n+1}g^{(n)}_c(q)r^{c+1} + \sum_{c=1}^{n-1}\frac{q(d-d_{n+1})}{d}g^{(n)}_c(q)r^{c} \\
    & &  + \sum_{j=1}^{n-1} u_{n+1}\frac{q^j}{d^j}W^{(n)}_jr^{n-j}+\sum_{c=1}^{n-1}u_{n+1}g^{(n)}_c(q)r^{c}] \\
    & & + [d_{n+1}g^{(n)}_{n-1}(q)r^n+u_{n+1}W^{(n)}_0r^n].
\end{eqnarray*}
 As a result, we can write
\begin{eqnarray*}
  A_2+ B_2 + C   &=&[\frac{q}{d}(d-d_{n+1})g^{(n)}_0(q)+u_{n+1}\frac{q^n}{d^n}W^{(n)}_n+u_{n+1}g^{(n)}_0(q)] \\
    & & +[\sum_{i=1}^{n-1}d_{n+1}g^{(n)}_{i-1}(q)r^{i} + \sum_{i=1}^{n-1}\frac{q(d-d_{n+1})}{d}g^{(n)}_i(q)r^{i} \\
    & &  + \sum_{i=1}^{n-1} u_{n+1}\frac{q^{n-i}}{d^{n-i}}W^{(n)}_{n-i}r^{i}+\sum_{i=1}^{n-1}u_{n+1}g^{(n)}_i(q)r^{i}] \\
    & & + [d_{n+1}g^{(n)}_{n-1}(q)r^n+u_{n+1}W^{(n)}_0r^n]\\
    &=&[\frac{q}{d}(d-d_{n+1})g^{(n)}_0(q)+u_{n+1}\frac{q^n}{d^n}W^{(n)}_n+u_{n+1}g^{(n)}_0(q)] \\
    & & +\sum_{i=1}^{n-1}[d_{n+1}g^{(n)}_{i-1}(q)  +   \frac{q(d-d_{n+1})}{d}g^{(n)}_i(q)  \\
    & &  +    u_{n+1}\frac{q^{n-i}}{d^{n-i}}W^{(n)}_{n-i} + u_{n+1}g^{(n)}_i(q)]r^{i} \\
    & & + [d_{n+1}g^{(n)}_{n-1}(q)+u_{n+1}W^{(n)}_0]r^n\\
   &=& \sum_{i=0}^{n}g^{(n+1)}_i(q)r^{i}
\end{eqnarray*}

 and hence
 \begin{eqnarray*}
 % \nonumber to remove numbering (before each equation)
   \prod_{j=1}^{n+1}(d_jr + \frac{q(d-d_j)}{d}+ u_j) &=& A+B+C =  (A_1+B_1)+ (A_2+B_2+C) \\
   &=&   \sum_{j=0}^{n+1} \frac{q^j}{d^j}W^{(n+1)}_jr^{n+1-j}+ \sum_{c=0}^{n}g^{(n+1)}_c(q)r^{c}.
 \end{eqnarray*}
 \end{proof}

%\begin{notation}\label{N1}
%Let $\Delta = \{1,\dots,n \} $ and let  $d_1,\dots,d_n$ be positive integers  For every $s \in \{1,\dots,n-1\}$, let $\Delta(j_1,\dots,j_s)= \Delta - \{j_1,\dots,j_s\}$.

%Define
%\begin{equation*}
 % W_s = \sum_{j_1,\dots,j_s \in \Delta } [(d-d_{j_1})\dots(d- d_{j_s})(\prod_{j \in \Delta(j_1,\dots,j_s)}d_j)]
%\end{equation*}
%For example, if $d_1,d_2,d_3,d_4$ are positive integers and $d= \max \{d_1,d_2,d_3,d_4\}$, we get that
%\begin{eqnarray*}
% \nonumber to remove numbering (before each equation)
 % W_1 &=&(d-d_1)d_2d_3d_4 + (d-d_2)d_1d_3d_4+(d-d_3)d_1d_2d_4 \\
 %  &+& (d-d_4)d_1d_2d_4
%\end{eqnarray*}

%\begin{eqnarray*}
% \nonumber to remove numbering (before each equation)
 % W_2 &=& (d-d_1)(d-d_2)d_3d_4+(d-d_1)(d-d_3)d_2d_4+ (d-d_1)(d-d_4)d_2d_3 \\
 %   &+& (d-d_2)(d-d_3)d_1d_4 + (d-d_2)(d-d_4)d_1d_3
%\end{eqnarray*}

%\begin{eqnarray*}
% \nonumber to remove numbering (before each equation)
 % W_3 &=&(d-d_2)(d-d_3)(d-d_4)d_1+ (d-d_1)(d-d_3)(d-d_4)d_2 \\
  %  &+&(d-d_1)(d-d_2)(d-d_4)d_3+(d-d_1)(d-d_2)(d-d_3)d_4
%\end{eqnarray*}

%\end{notation}

\begin{theorem}\label{P8.4}
 Let  $f= x_1^{d_1}\dots x_n^{d_n}$ be a monomial in $S=K[\![x_1,\dots,x_n]\!]$ where $d_j  $ is a positive  integer for each $1\leq j \leq n $. If $d = \max \{d_1,\dots, d_n \}$ and $R^{\bigstar}=S[\![u,v]\!]/(f+uv)$, then  the F-signature of $R^{\bigstar}$ is given by  \\
 \begin{equation}
 \mathbb{S}(R^{\bigstar})=\frac{2}{d^{n+1}}\left[\frac{d_1d_2\dots d_n}{n+1} + \frac{W^{(n)}_1}{n}+\dots+\frac{W^{(n)}_s}{n-s+1}+\dots+\frac{W^{(n)}_{n-1}}{2}\right]
\end{equation}
 where $W^{(n)}_1,\dots,W^{(n)}_{n-1}$ are defined as in Notation \ref{N1}.

 % If $d = \max \{d_1,\dots, d_n \}$ and  $\Delta = \{1,\dots,n \} $.  For every $s \in \{1,\dots,n-1\}$, set  $\Delta(j_1,\dots,j_s)= \Delta \diagdown \{j_1,\dots,j_s\}$ and define
%\begin{equation*}
%  W^{(n)}_s = \sum_{j_1,\dots,j_s \in \Delta } [(d-d_{j_1})\dots(d-d_{j_s})(\prod_{j \in \Delta(j_1,\dots,j_s)}d_j)]
%\end{equation*} If  $R^{\bigstar}=S[[u,v]]/(f+uv)$, then  the F-signature of $R^{\bigstar}$ is given by  \\
%\begin{equation}
 %S(R^{\bigstar})=\frac{2}{d^{n+1}}\left[\frac{d_1d_2\dots d_n}{n+1} + \frac{W^{(n)}_1}{n}+\dots+\frac{W^{(n)}_s}{n-s+1}+\dots+\frac{W^{(n)}_{n-1}}{2}\right]
%\end{equation}

\end{theorem}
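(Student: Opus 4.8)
The plan is to compute $\mathbb{S}(R^{\bigstar})$ by combining the decomposition of $F_*^e(R^{\bigstar})$ from Corollary \ref{C32} with a careful count of how many summands of that decomposition are free, and then passing to the limit defining the $F$-signature. By Remark \ref{R6.3} (or Proposition \ref{R6.2}) it suffices to work with the complete local ring $R^{\bigstar}$, and by Definition \ref{D6.1} we need to understand $a_e(R^{\bigstar}) = \sharp(F_*^e(R^{\bigstar}),R^{\bigstar})$ for $e$ large. Fix $e$ with $q=p^e > \max\{d_1,\dots,d_n\}+1$ and set $d = \max\{d_1,\dots,d_n\}$. Proposition \ref{C4.19}(d) gives
\begin{equation*}
  a_e(R^{\bigstar}) = r_e + 2\sum_{k=1}^{q-1}\sharp(\Cok_S(A^k),R),
\end{equation*}
where $A = M_S(f,e)$, so the whole problem reduces to counting, for each $k$, the number of trivial $(x^d,1)$-type summands in the diagonalization of $A^k = M_S(f^k,e)$ provided by Proposition \ref{L4.25}.

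The first main step is to extract from the proof of Proposition \ref{L4.25} the exact count $\eta_k(d)$ of how many times the unit-cokernel factor $(x^{d_1}\cdots x_n^{d_n},1) = (x^d\text{-type},1)$ — i.e. the factor with $c = d := (d_1,\dots,d_n)$ — appears as a diagonal entry of $A^k$; here $\Cok_S(x^{d_1\cdots}\!,1) = R$ is the only free summand, since $\Cok_S(x^c,x^{d-c})$ is a non-free MCM $R$-module whenever $0 \neq c \neq d$, and $\Cok_S(1,x^{d_1\cdots}) = 0$. From Proposition \ref{L4.25}, $\eta_k(d) = [K:K^q]\prod_{j=1}^n \eta_k(d_j)$ where $\eta_k(d_j) = q - |d_j q - k d_j|$ when $|d_j q - k d_j| < q$ and $0$ otherwise, i.e. $\eta_k(d_j) = q - d_j|q-k|$ when $d_j|q-k| < q$. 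Writing $k = q - \ell$ for $1 \le \ell \le q-1$, this becomes $\eta_k(d_j) = q - d_j\ell$, which is positive precisely when $\ell < q/d_j$. So $\sharp(\Cok_S(A^k),R)$ equals $[K:K^q]\prod_{j=1}^n(q-d_j\ell)$ when $\ell < q/d_j$ for all $j$ (equivalently $\ell < q/d$), and we must also track the analogous contribution $\eta_k(0)$ which produces zero modules; the free rank contributed is exactly $[K:K^q]\prod_{j=1}^n(q - d_j\ell)$ for $1 \le \ell \le \lceil q/d\rceil - 1$. Thus
\begin{equation*}
  a_e(R^{\bigstar}) = r_e + 2[K:K^q]\sum_{\ell=1}^{\lfloor (q-1)/d\rfloor}\ \prod_{j=1}^n (q - d_j\ell).
\end{equation*}

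The second main step is the asymptotic analysis. We have $\alpha(R^{\bigstar}) = \log_p[K:K^p]$, $\dim R^{\bigstar} = n+1$, and $[K:K^q] = [K:K^p]^e = p^{e\alpha}= q^{\alpha}$, while $r_e = [K:K^p]^e q^n = q^{n+\alpha}$; so $r_e/q^{n+1+\alpha} \to 0$ and the $r_e$ term is negligible. For the sum, write $\prod_{j=1}^n(q - d_j\ell) = \prod_{j=1}^n d_j(q/d_j - \ell)$; this is where Lemma \ref{L5.4} and Notation \ref{N1} enter. Setting $r$ proportional to the running index, the sum $\sum_{\ell}\prod_j(q-d_j\ell)$ over $1 \le \ell \le \lfloor (q-1)/d\rfloor$ is, up to lower-order terms in $q$, a Riemann sum: dividing by $q^{n+1}$ and letting $e\to\infty$, $\frac{1}{q}\sum_{\ell}\prod_j(1 - d_j\ell/q) \to \int_0^{1/d}\prod_{j=1}^n(1-d_jt)\,dt$. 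The polynomial identity in Lemma \ref{L5.4} (with $u_j = 0$, and reading off the $W^{(n)}_j$ as the coefficients) is precisely what lets us expand $\prod_{j=1}^n(1-d_jt)$ — or rather the homogenized $\prod(q - d_j\ell)$ — so that the sum evaluates term-by-term; integrating $\sum_{j=0}^n (\text{const})\,t^{?}$ and collecting gives $\int_0^{1/d}\prod_{j=1}^n(1-d_jt)\,dt = \frac{1}{d^{n+1}}\big[\frac{d_1\cdots d_n}{n+1} + \frac{W^{(n)}_1}{n} + \cdots + \frac{W^{(n)}_{n-1}}{2}\big]$ after a change of variable $t = s/d$ and matching with Notation \ref{N1}. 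Multiplying by the factor $2$ and by $q^\alpha/q^\alpha = 1$ (the $[K:K^q]$ cancels against part of $q^{n+1+\alpha}$) yields
\begin{equation*}
  \mathbb{S}(R^{\bigstar}) = \lim_{e\to\infty}\frac{a_e(R^{\bigstar})}{p^{e(n+1+\alpha)}} = \frac{2}{d^{n+1}}\left[\frac{d_1 d_2 \cdots d_n}{n+1} + \frac{W^{(n)}_1}{n} + \cdots + \frac{W^{(n)}_{n-1}}{2}\right].
\end{equation*}

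The step I expect to be the main obstacle is the rigorous control of the Riemann-sum passage: one must show that the error between $\sum_{\ell}\prod_j(q-d_j\ell)$ and the corresponding integral is $o(q^{n+1})$, uniformly handling the floor function $\lfloor(q-1)/d\rfloor$ in the upper limit of summation and the lower-order polynomial terms $g^{(n)}_c(q)$ from Lemma \ref{L5.4} (each of degree $\le n-1$ in $q$, hence contributing $O(q^n) = o(q^{n+1})$). A clean way to do this is to apply Lemma \ref{L5.4} directly to the finite sum, using the standard formulas for $\sum_{\ell=1}^{N}\ell^{m}$ as polynomials in $N$ of degree $m+1$, so that $\sum_{\ell}\prod_j(q-d_j\ell)$ is itself a polynomial in $q$ whose top-degree ($q^{n+1}$) coefficient is exactly $\frac{1}{d^{n+1}}$ times the bracketed expression; then the limit is immediate and no analytic estimate is needed. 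I would also double-check the boundary cases $c = d$ and $c = 0$ in Proposition \ref{L4.25} to be sure only $c=d$ contributes free summands and that the factor of $2$ in Proposition \ref{C4.19}(d) is not double-counting the $\ell$ versus $q-\ell$ symmetry.
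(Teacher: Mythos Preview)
Your proposal is correct and follows essentially the same route as the paper's proof: you invoke Proposition \ref{C4.19}(d), use the diagonalization from Proposition \ref{L4.25} to count free summands as $[K:K^q]\prod_j(q-d_j\ell)$ with $\ell=q-k$, and then evaluate the sum via Lemma \ref{L5.4} together with the power-sum (Faulhaber) formulas --- exactly what the paper does, up to your reindexing $\ell=q-k$ in place of the paper's shift $k=r+q-\lfloor q/d\rfloor$. Your floor expression $\lfloor(q-1)/d\rfloor$ correctly handles both the $d\mid q$ and $d\nmid q$ cases that the paper treats separately, and your final remark that the polynomial/Faulhaber route avoids any analytic Riemann-sum estimate is precisely the method the paper employs.
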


\begin{proof}
 Let $R=S/fS$ and  $R^{\bigstar}=S[\![u,v]\!]/(f+uv)$. Set $[K:K^p]=b$ and recall from Notation \ref{N4.1} that $\Lambda_e$ is the basis of $K$ as $K^q$-vector space where $q=p^e$.  We know from Proposition \ref{C4.19} (d)  that
 \begin{equation}\label{E1}
  \sharp ( F_*^e(R^{\bigstar}),R^{\bigstar})= r_e + 2 \sum_{k=1}^{q-1} \sharp ( \Cok_S(A^k),R)
 \end{equation}
 where $r_e=b^eq^n$ ,  $A=M_S(f,e)$ and $A^k=M_S(f^k,e)$. Since $f^k$ is a monomial, it follows from Proposition \ref{L4.25} that  the matrix $A^k=M_S(f^k,e)$ is equivalent to a diagonal matrix $D$ whose diagonal entries are taken from the set $\{x_1^{u_1}\dots x_n^{u_n} |0\leq u_j\leq d_j  \text{  for   all } 1\leq j \leq n  \} $.
This makes $\Cok_S(A^k)= \Cok_S(D)$ and consequently the number $\sharp ( \Cok_S(A^k),R) $ is exactly the same as the number of the $n$-tuples $(\alpha_1,\dots,\alpha_n)$ with $0 \leq \alpha_j \leq q-1 $  satisfying that
\begin{equation}\label{EE2}
 F_*^e(\lambda x_1^{kd_1+\alpha_1}\dots x_n^{kd_n+\alpha_n}) = x_1^{d_1}\dots x_n^{d_n}F_*^e(\lambda x_1^{s_1}\dots x_n^{s_n})
\end{equation}
  where $ s_1,..,s_n \in \{0,\dots,q-1 \}$ for all $\lambda \in \Lambda_e$. However, an $n$-tuple $(\alpha_1,\dots,\alpha_n)$ with $ 0\leq \alpha_j \leq q-1 $ will satisfy  (\ref{EE2}) if and only if  $\alpha_j= d_j(q-k) + s_j$  for some $ 0\leq s_j \leq q-1 $  for each $ 1 \leq j \leq n $. As a result,  there exists  $n$-tuples $(\alpha_1,\dots,\alpha_n)\in \mathbb{Z}^n$ with $ 0\leq \alpha_j \leq q-1 $ satisfying  (\ref{EE2}) if and only if $d_j(q-k) < q$  for all $ 1 \leq j \leq n $.
Set  $ N_j(k):= \{ \alpha_j \in \mathbb{Z} \, | \, d_j(q-k)\leq \alpha_j < q \}$ for all $ 1 \leq j \leq n $.  Therefore,
\begin{equation}\label{E18}
  \sharp ( \Cok_S(A^k),R)= b^e|N_1(k)||N_2(k)|\dots|N_n(k)|
\end{equation}
where
\begin{equation*}
  |N_j(k)|=\left\{
             \begin{array}{ll}
               q-d_jq+d_jk, & \hbox{ if } d_j(q-k) < q \\
               0, & \hbox{ otherwise}
             \end{array}
           \right.
\end{equation*}

Let  $d = \max \{d_1,\dots, d_n \}$. Notice that
%, then $\sharp ( \Cok_S(A^k),R)\neq 0 $ if and only if $ N_j(k)\neq 0$ for all $ 1 \leq j \leq n $ if and only if $\frac{q(d-1)}{d} < k$.
\begin{eqnarray*}
 %\nonumber to remove numbering (before each equation)
\sharp ( \Cok_S(A^k),R)\neq 0 & \Leftrightarrow &  |N_j(k)|\neq 0 \text{ for all } j \in \{ 1,\dots,n \}   \\
   & \Leftrightarrow & d_j(q-k) < q \text{ for all }j \in \{ 1,\dots,n \}\\
  & \Leftrightarrow & d(q-k) < q \\
   & \Leftrightarrow & \frac{q(d-1)}{d} < k.\\
\end{eqnarray*}
Therefore,
\begin{equation}\label{EE18}
  \sharp ( \Cok_S(A^k),R) = b^e\prod_{j=1}^{n}(q-d_jq+d_jk) \text{ whenever } k> \frac{q(d-1)}{d}.
\end{equation}

Let $q=du+t$ where $t\in \{0,..,d-1 \}$. If $t \neq 0$, then one can  verify that
\begin{equation}\label{EE19}
 \frac{q(d-1)}{d} < q-\frac{q-t}{d} < \frac{q(d-1)}{d}+1.
\end{equation}
Therefore, \\ $\sharp ( \Cok_S(A^k),R)\neq 0$ if and only if $ k \in \{q-\frac{q-t}{d}+r \, | \, r \in \{0,\dots,\frac{q-t}{d}-1\} \}.$

 %it follows by \ref{EE19} that
%\begin{eqnarray*}
% \nonumber to remove numbering (before each equation)
%\sharp ( Cok_S(A^k),R)\neq 0&\Leftrightarrow& \frac{q(d-1)}{d} < k \\
%&\Leftrightarrow& k \in \{ q-\frac{q-t}{d},\dots,q-1 \}\\
%&\Leftrightarrow& k \in \{q-\frac{q-t}{d}+r \, | \, r \in \{0,\dots,\frac{q-t}{d}-1\} \} \\
%\end{eqnarray*}
However, if $t=0$,  it follows that $\frac{q(d-1)}{d}= q- \frac{q}{d}\in \mathbb{Z}$ and consequently

\begin{equation}\label{EEE19}
 \sharp ( \Cok_S(A^k),R)\neq 0 \Leftrightarrow k \in \{q-\frac{q}{d}+r \, | \, r \in \{1,\dots,\frac{q}{d}-1\} \}.
\end{equation}

%$\sharp ( \Cok_S(A^k),R)\neq 0$ if and only if $k \in \{q-\frac{q}{d}+r \, | \, r \in \{1,\dots,\frac{q}{d}-1\} \}$
%\begin{equation*}
%\sharp ( Cok_S(A^k),R)\neq 0 \Leftrightarrow   k \in \{q-\frac{q}{d}+r \, | \, r \in \{1,\dots,\frac{q}{d}-1\} \}
%\end{equation*}
%First assume that $t \neq 0$ and consequently $k \in \{q-\frac{q-t}{d}+r \, | \, r \in \{0,\dots,\frac{q-t}{d}-1\} \}$.

Assume now that  $t\neq 0$. This implies that
 \begin{eqnarray*}
 % \nonumber to remove numbering (before each equation)
   \sum_{k=1}^{q-1} \sharp ( \Cok_S(A^k),R) &=& b^e \sum_{k=q-\frac{q-t}{d}}^{q-1}\prod_{j=1}^{n}(q-d_jq+d_jk) \text{ (Use \ref{EE18} and \ref{EEE19} })\\
   &=& b^e \sum_{r=0}^{\frac{q-t}{d}-1}\prod_{j=1}^{n}(q-d_jq+d_j(r+q-\frac{q-t}{d})) \\
   &=& b^e\sum_{r=0}^{\frac{q-t}{d}-1}\prod_{j=1}^{n}(d_jr + \frac{q(d-d_j)}{d}+ \frac{d_jt}{d}).\\
 \end{eqnarray*}

%Our task now is to expand $\prod_{j=1}^{n}(d_jr + \frac{q(d-d_j)}{d}+ \frac{d_jt}{d})$ in terms of $r$ so that we can apply Faulhaber's formula \cite{JR} below   and take the limit  in order to compute    the F-signature.

%Let $\Delta = \{1,\dots,n \} $.  For every $s \in \{1,\dots,n-1\}$, let $\Delta(j_1,\dots,j_s)= \Delta \diagdown \{j_1,\dots,j_s\}$.

%Define
%\begin{equation*}
%  W_s = \sum_{j_1,\dots,j_s \in \Delta } [(d-d_{j_1})\dots(d-d_{j_s})(\prod_{j \in \Delta(j_1,\dots,j_s)}d_j)]
%\end{equation*}

Recall from Lemma \ref{L5.4} that

\begin{equation}\label{E61}
 \prod_{j=1}^{n}(d_jr + \frac{q(d-d_j)}{d}+ \frac{d_jt}{d})= \sum_{j=0}^{n} \frac{q^j}{d^j}W^{(n)}_jr^{n-j} +\sum_{c=0}^{n-1}g^{(n)}_c(q)r^{c}
\end{equation}

where  $g^{(n)}_c(q)$ is a polynomial in $q$ of degree $n-1-c$ for all $0 \leq c \leq n-1$.
Set $ \delta =\frac{q-t}{d}-1$.  By Faulhaber's formula \cite{CG}, if $s$ is a positive integer, we get the following polynomial in $\delta$ of degree $s+1$
\begin{equation}\label{E4}
 \sum_{r=1}^{\delta} r^s = \frac{1}{s+1}\sum_{j=0}^s(-1)^j\binom{s+1}{j}B_j\delta^{s+1-j}
\end{equation}
where $B_j$ are  Bernoulli numbers, $B_0=1$ and  $B_1= \frac{-1}{2}$. This makes
\begin{equation}\label{E62}
 \sum_{r=0}^{\delta} r^s=\frac{q^{s+1} }{(s+1)d^{s+1}}+V_s(q)
\end{equation}
where $V_s(q)$ is a polynomial of degree $s$ in $q$.
%Therefore,
%\begin{equation}\label{E5}
%\lim_{e\rightarrow \infty }\frac{1}{p^{e(n+1)}}\sum_{r=1}^{\delta} r^s = \begin{cases} \frac{1}{(n+1)d^{n+1}} & \text{if  } s=n \\ 0 & \text{if  } s<n  \end{cases}
%\end{equation}
 From Faulhaber's formula and the  equations (\ref{E61}), and  (\ref{E62}),   we get that
 \begin{eqnarray*}
 % \nonumber to remove numbering (before each equation)
   \sum_{k=1}^{q-1} \sharp ( \Cok_S(A^k),R) &=& b^e\sum_{r=0}^{\delta}[\sum_{j=0}^{n} \frac{q^j}{d^j}W^{(n)}_jr^{n-j} +\sum_{c=0}^{n-1}g^{(n)}_c(q)r^{c}] \\
    &=& b^e[\sum_{j=0}^{n} \frac{q^j}{d^j}W^{(n)}_j\sum_{r=0}^{\delta}r^{n-j} +\sum_{c=0}^{n-1}g^{(n)}_c(q)\sum_{r=0}^{\delta}r^{c}]\\
    &=&b^e[\sum_{j=0}^{n} \frac{q^j}{d^j}W^{(n)}_j(\frac{q^{n-j+1} }{(n-j+1)d^{n-j+1}}+V_{n-j}(q)) \\
     & & + \sum_{c=0}^{n-1}g^{(n)}_c(q)(\frac{q^{c+1} }{(c+1)d^{c+1}}+V_c(q))]  \\
     &=& \frac{b^eq^{n+1}}{d^{n+1}}\sum_{j=0}^{n}\frac{W^{(n)}_j}{n-j+1}+ b^e[\sum_{j=0}^{n}\frac{q^j}{d^j}W^{(n)}_jV_{n-j}(q) \\
     & & +\sum_{c=0}^{n-1}g^{(n)}_c(q)\frac{q^{c+1} }{(c+1)d^{c+1}}+\sum_{c=0}^{n-1}g^{(n)}_c(q)V_c(q)].
 \end{eqnarray*}
Since $ \sum_{j=0}^{n}\frac{q^j}{d^j}W^{(n)}_jV_{n-j}(q)$ and $\sum_{c=0}^{n-1}g^{(n)}_c(q)\frac{q^{c+1} }{(c+1)d^{c+1}}+\sum_{c=0}^{n-1}g^{(n)}_c(q)V_c(q)$ are polynomials in $q=p^e$ of degree $n$ and $n-1$ respectively, it follows that

$$ \lim_{e\rightarrow \infty }\frac{1}{b^ep^{e(n+1)}}b^e[\sum_{j=0}^{n}V_{n-j}(q)+\sum_{c=0}^{n-1}g^{(n)}_c(q)\frac{q^{c+1} }{(c+1)d^{c+1}}+\sum_{c=0}^{n-1}g^{(n)}_c(q)V_c(q)]=0.$$
Therefore
\begin{eqnarray*}
% \nonumber to remove numbering (before each equation)
\lim_{e\rightarrow \infty }\frac{1}{b^ep^{e(n+1)}}\sum_{k=1}^{q-1} \sharp ( \Cok_S(A^k),R) &=& \frac{1}{d^{n+1}}\sum_{j=0}^{n}\frac{W^{(n)}_j}{n-j+1}.
\end{eqnarray*}
By  the equation (\ref{E1}) and the above equation we conclude that the F-signature of the ring $R^{\bigstar}$ is given by
\begin{equation}\label{E64}
 \mathbb{S}(R^{\bigstar})=\frac{2}{d^{n+1}}\Big[\frac{d_1d_2\dots d_n}{n+1} + \frac{W^{(n)}_1}{n}+\dots+\frac{W^{(n)}_s}{n-s+1}+\dots+\frac{W^{(n)}_{n-1}}{2} \Big].
\end{equation}
Second if  $q=du $, then  $\frac{q(d-1)}{d}= q- \frac{q}{d}\in \mathbb{Z}$ and consequently
\begin{equation*}
\sharp ( \Cok_S(A^k),R)\neq 0 \Leftrightarrow   k \in \{q-\frac{q}{d}+r \, | \, r \in \{1,\dots,\frac{q}{d}-1\} \}.
\end{equation*}
 Therefore
\begin{eqnarray*}
 % \nonumber to remove numbering (before each equation)
   \sum_{k=1}^{q-1} \sharp ( \Cok_S(A^k),R) &=& b^e\sum_{k=q-\frac{q}{d}+1}^{q-1}\prod_{j=1}^{n}(q-d_jq+d_jk)\\
   &=& b^e \sum_{r=0}^{\frac{q}{d}-2}\prod_{j=1}^{n}(q-d_jq+d_j(r+q-\frac{q}{d}+1)) \\
   &=& b^e \sum_{r=0}^{\frac{q}{d}-1}\prod_{j=1}^{n}(d_jr + \frac{q(d-d_j)}{d}+d_j).\\
 \end{eqnarray*}

By an argument similar to the above argument, we conclude the same result that

\begin{equation}\label{E64}
 \mathbb{S}(R^{\bigstar})=\frac{2}{d^{n+1}}\Big[\frac{d_1d_2\dots d_n}{n+1} + \frac{W^{(n)}_1}{n}+\dots+\frac{W^{(n)}_s}{n-s+1}+\dots+\frac{W^{(n)}_{n-1}}{2}\Big].
\end{equation}
\end{proof}

\begin{remark}
Let $K$ be a perfect field of positive characteristic $p$ and let $R=\frac{K[\![x_1,x_2,u,v]\!]}{(x_1x_2-uv)}$.
Applying Theorem \ref{P8.4} gives  that $\mathbb{S}(R)=\frac{2}{3}$.

 Let  $r,s\geq 2$ be integers. If $A$ is  the Segre product of the polynomial rings $K[x_1,...,x_r]$ and $K[y_1,...,y_s]$, i.e.,
$R$ is the subring of $ K[x_1,...,x_r, y_1,...,y_s]$ generated over $K$ be the monomials $x_iy_j$
for $1 \leq i \leq r$ and $1 \leq j \leq s$, it is well-known that $A$ is isomorphic to the determinantal ring
obtained by killing the size two minors of an $r \times s$ matrix of indeterminates, and that the
dimension of the ring $A$ is $d =r+s-1$. A.Singh in \cite[Example 7]{AS} shows that
$$ \mathbb{S}(A)= \frac{1}{d!}\sum_{i=0}^{s}(-1)^{i}\binom{d+1}{i}(s-i)^d $$
As a result, the $F$-signature of the determinantal ring $\frac{K[x_1,x_2,u,v]}{(x_1x_2-uv)}$   is  $\mathbb{S}(\frac{K[x_1,x_2,u,v]}{(x_1x_2-uv)})=\frac{2}{3}$ and consequently by Remark \ref{R6.3} we get also that $\mathbb{S}(\frac{K[\![x_1,x_2,u,v]\!]}{(x_1x_2-uv)})=\frac{2}{3}$.
\end{remark}

\begin{remark}
Let $K$ be a perfect field of positive characteristic $p$ and let $R=\frac{K[\![x,u,v]\!]}{(x^d-uv)}$.
According to  Theorem \ref{P8.4}, we get  that $\mathbb{S}(R)=\frac{1}{d}$.
However, we can conclude  that $\mathbb{S}(R)=\frac{1}{d}$ from the first case of  \cite[Example 8]{HL}.
\end{remark}

\section{The $F$-signature of $S/fS$ and $S[\![y]\!]/fS[\![y]\!]$ are the same}
\label{section:The Fsignatures are the same}
We  keep the same notation as in Notation \ref{N4.1}.
\begin{proposition}
Let  $S=K[\![x_1,...,x_n]\!]$. If  $f$ is a nonunit nonzero element of $S$ and $y$ is a new variable on $S$, then
the $F$-signature of $S/fS$ and $S[\![y]\!]/fS[\![y]\!]$ are the same and consequently the $F$-signature of $S/fS$ and $S[\![y_1,....,y_m]\!]/fS[\![y_1,...,y_m]\!]$ are the same for every positive integer $m$.
\end{proposition}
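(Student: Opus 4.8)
The plan is to reduce the computation of the $F$-signature of $A := S[\![y]\!]/fS[\![y]\!]$ to that of $R := S/fS$ by comparing the $\sharp$-numbers $\sharp(F_*^e(A),A)$ and $\sharp(F_*^e(R),R)$ for every $e$, and then dividing by the appropriate power of $p$ and taking the limit. Writing $q = p^e$, $b = [K:K^p]$, and $d = \dim R$ (so $\dim A = d+1$), the $F$-signature of $R$ is $\mathbb{S}(R) = \lim_{e\to\infty} \sharp(F_*^e(R),R)/(b^e q^{d+\alpha})$ where $\alpha = \alpha(S)$ in the sense of Definition \ref{D6.1}, and similarly $\mathbb{S}(A) = \lim_{e\to\infty} \sharp(F_*^e(A),A)/(b^e q^{d+1+\alpha})$, since the residue field of $A$ is the same $K$. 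So it suffices to prove the exact relation
\begin{equation}
\sharp(F_*^e(A),A) = q\cdot \sharp(F_*^e(R),R) \quad\text{for every } e \geq 1,
\end{equation}
after which dividing by $b^e q^{d+1+\alpha}$ and letting $e\to\infty$ gives $\mathbb{S}(A) = \mathbb{S}(R)$; the statement for $S[\![y_1,\dots,y_m]\!]/fS[\![y_1,\dots,y_m]\!]$ then follows by an immediate induction on $m$, treating $S[\![y_1,\dots,y_{m-1}]\!]$ as the base ring (note $f$ remains a nonzero nonunit in it).

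The key step is to relate the matrix factorization $M_L(f,e)$ over $L := S[\![y]\!]$ to $A := M_S(f,e)$ over $S$. Since $f \in S$ involves no $y$, it is the ``$d=0$'' case of Proposition \ref{EP} (with $g_0 = f$, $g_1 = \dots = 0$): the matrix $M_L(f,e)$ is the block-diagonal $q\times q$ matrix over $M_{r_e}(S[\![y]\!])$ with $A$ repeated $q$ times down the diagonal. Hence by Theorem \ref{P5.11}, $F_*^e(A) = F_*^e(L/fL) \cong \Cok_L(M_L(f,e)) \cong \big(\Cok_L(A)\big)^{\oplus q}$, where here $\Cok_L(A)$ means the cokernel of $A$ regarded as acting on $L^{r_e}$. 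Now $\Cok_L(A) \cong \Cok_S(A)\otimes_S L \cong \Cok_S(A)[\![y]\!]$, i.e. $\Cok_L(A) \cong F_*^e(R)[\![y]\!] \cong F_*^e(R)\otimes_R A$ as an $A$-module (using $A = R[\![y]\!] = R\otimes_S L$). Therefore $F_*^e(A) \cong \big(F_*^e(R)\otimes_R A\big)^{\oplus q}$ as $A$-modules.

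It remains to compute $\sharp(-,A)$ of this module, which amounts to showing $\sharp\big(N\otimes_R A, A\big) = \sharp(N,R)$ for any finitely generated $R$-module $N$ (applied to $N = F_*^e(R)$, then multiplying by $q$). Writing $N \cong R^{a}\oplus N'$ with $N'$ having no free direct summand and $a = \sharp(N,R)$, we get $N\otimes_R A \cong A^{a}\oplus (N'\otimes_R A)$, so the claim reduces to showing $N'\otimes_R A = N'[\![y]\!]$ has no free $A$-direct summand. The cleanest route is via completion and Krull--Remak--Schmidt: passing to $\widehat{A} = \widehat{R}[\![y]\!]$ (completing at the maximal ideal) and using that $\sharp$ is unchanged under $\mathfrak{m}$-adic completion (Discussion \ref{disc2.33}(b) together with Proposition \ref{Pro2.22}), one must check that if $\widehat{R}[\![y]\!]$ were a direct summand of $\widehat{N'}[\![y]\!]$ then $\widehat{R}$ would be a direct summand of $\widehat{N'}$ --- this follows by reducing modulo $y$, since $\widehat{R}[\![y]\!]/(y) = \widehat{R}$ and $\widehat{N'}[\![y]\!]/(y) = \widehat{N'}$, and a split injection $\widehat{R}[\![y]\!]\hookrightarrow \widehat{N'}[\![y]\!]$ remains split after the base change $-\otimes_{\widehat{R}[\![y]\!]}\widehat{R}$. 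I expect this last point --- verifying that no free summand is created or destroyed by the extension $R \rightsquigarrow R[\![y]\!]$ --- to be the main obstacle requiring care, though it is really a standard faithfully-flat/Nakayama argument; everything else is bookkeeping with the block structure of $M_L(f,e)$ and the definition of the $F$-signature.
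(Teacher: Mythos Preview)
Your proof is correct and follows the same overall strategy as the paper: both observe that $M_{S[\![y]\!]}(f,e)$ is block-diagonal with $q$ copies of $A = M_S(f,e)$ (the $d=0$ case of Proposition~\ref{EP}), deduce $F_*^e(B) \cong (\Cok_{S[\![y]\!]} A)^{\oplus q}$, and establish the exact relation $\sharp(F_*^e(B),B) = q\,\sharp(F_*^e(R),R)$, from which equality of $F$-signatures follows.

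The difference is in how the ``no new free summand'' step is handled. The paper stays inside its matrix-factorization framework: it writes $(A,A^{q-1}) \sim (\phi,\psi)\oplus(f,1)^u\oplus(1,f)^v$ with $(\phi,\psi)$ reduced over $S$ (Proposition~\ref{P.24}), observes that $(\phi,\psi)$ remains a reduced matrix factorization of $f$ over $S[\![y]\!]$, and invokes Proposition~\ref{A} to conclude that $\Cok_{S[\![y]\!]}(\phi)$ is a stable $B$-module. Your route is more directly module-theoretic: a free $R[\![y]\!]$-summand of $N'[\![y]\!]$ would, after reducing modulo $y$, yield a free $R$-summand of $N'$, contradiction. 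Your argument is a bit more general (it works for any finitely generated $N'$, not only cokernels of matrix factorizations) and sidesteps the reduced-factorization machinery; the paper's version has the virtue of staying in the language already developed. One minor remark: since $S = K[\![x_1,\dots,x_n]\!]$, both $R$ and $B = R[\![y]\!]$ are already complete local rings, so your passage to completions is unnecessary, though harmless.
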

\begin{proof}
Let $R=S/fS$ and $B=S[\![y]\!]/fS[\![y]\!]$. For any $e$ and $q=p^e$,  recall that if $A=M_S(f,e)$, then  $F_*^e(S/fS)= \Cok_S(A)$ (Theorem \ref{P5.11}) and $M_{S[[y]]}(f,e)$ is a $q \times q$ matrix over the ring $M_{r_e}(S[\![y]\!])$ (Proposition \ref{EP}) that is given by

\begin{equation}\label{E10}
M_{S[[y]]}(f,e)=\left[
                     \begin{array}{ccc}
                       A &   &   \\
                         &\ddots &  \\
                        &  & A \\
                     \end{array}
                   \right].
\end{equation}

Recall from Proposition \ref{P5.5} that $(A,A^{q-1})$ is a matrix factorization of $f$. If $(A,A^{q-1})$ is a nontrivial matrix factorization of $f$,  it follows   from Proposition \ref{P.24}  that $(A,A^{q-1})$ can be represented uniquely up to equivalence as
$$ (A,A^{q-1})\sim (\phi,\psi)\oplus (f,1)^u \oplus (1,f)^v$$
where $(\phi, \psi)$ is a reduced matrix factorization of $f$ over $S$ (and hence over $S[\![y]\!]$), $u = \sharp(\Cok_S(A),R)$ and $v = \sharp(\Cok_S(A^{q-1}),R)$ (see Corollary \ref{C3.8}).
 In another words, $A$ is equivalent to the matrix $\left[
  \begin{array}{cc}
  \phi &   \\
    & fI_u \\
    \end{array}
    \right]$ where $I_u$ is the $u \times u$ identity matrix. Since $(\phi, \psi)$ is also a reduced matrix factorization  of $f$ in $S[\![y]\!]$, it follows from Proposition \ref{A} that $\Cok_{S[\![y]\!]}(\phi)$ is stable $B$-module and $\Cok_{S[\![y]\!]}\left[\begin{array}{cc}
  \phi &   \\
    & fI_u \\
    \end{array}
    \right] = B^u \oplus \Cok_{S[\![y]\!]}(\phi)$. Using this result and the relation \ref{E10} we conclude that $ F_*^e(B)=B^{qu}\oplus [\Cok_{S[\![y]\!]}(\phi)]^{\oplus q}$ where $[\Cok_{S[\![y]\!]}(\phi)]^{\oplus q}$ is stable $B$-module. This shows that
\begin{equation}
\sharp(F_*^e(B),B)=q \sharp(F_*^e(R),R).
\end{equation}
However, if $ (A,A^{q-1})\sim  (f,1)^{r_e} $ (or $ (A,A^{q-1})\sim  (1,f)^{r_e} $) then we obtain  that $(F_*^e( \frac{S}{f^{q-1}S})=\Cok_SA^{q-1}= \{0\}$ (or $(F_*^e( \frac{S}{fS})=\Cok_SA= \{0\}$) which is impossible. As a result, if $(A,A^{q-1})$ is a trivial matrix factorization of f, then the only possible
case is that $ (A,A^{q-1})\sim  (f,1)^u \oplus (1,f)^v$ where $0 < u,v < r_e$ with $u+v = r_e$ and consequently  $\sharp(F_*^e(R),R)= \sharp(\Cok_S(A),R)= u$ . In this case, $A$ is equivalent to the matrix $\left[
  \begin{array}{cc}
  I_v &   \\
    & fI_u \\
    \end{array}
    \right]$ where $I_u$ (respectively $I_v$) is the $u \times u$ (respectively $v \times v$) identity matrix of the ring $M_{r_e}(S)$. It follows from the relation \ref{E10} that

\begin{equation}\label{E11}
\sharp(F_*^e(B),B)=q \sharp(F_*^e(R),R).
\end{equation}

Notice that $\alpha (B) =\log_p[K:K^p]=\alpha (R)$. Therefore,

\begin{equation*}
  \mathbb{S}(B)=\lim_{q\rightarrow \infty} \frac{\sharp(F_*^e(B),B)}{p^{e(n+\alpha (B))}}=\lim_{q\rightarrow \infty}\frac{p^e \sharp(F_*^e(R),R)}{p^{e(n+\alpha (B))}}=\lim_{q\rightarrow \infty}\frac{ \sharp(F_*^e(R),R)}{p^{e(n-1+\alpha (R))}}=\mathbb{S}(R).
\end{equation*}

\end{proof}

\section{The F-signature of $\frac{S[\![z]\!]}{(f+z^2)}$ when $f$ is a monomial}
\label{section:The Fsignature of z when  is a monomial}

We will keep the same notation as in Notation \ref{N4.1} unless otherwise stated. \\

\begin{theorem} \label{Thm6.9}
 Let $f= x_1^{d_1}\dots x_n^{d_n}$ be a monomial in $S=K[\![x_1,\dots,x_n]\!]$ where $d_j $ is a positive  integer for each $1 \leq j \leq n$ and $K$ is a field of prime characteristic $p>2$ with $[K:K^p] < \infty $. Let $R=S/fS$ and $R^{\sharp}=S[\![z]\!]/(f+z^2)$.  It follows that:  \\
1) If $ d_j=1 $ for each $1 \leq j \leq n$, then $ \mathbb{S}(S[\![z]\!]/(f+z^2))= \frac{1}{2^{n-1}}$. \\
2) If $d = \max \{d_1,\dots, d_n \}\geq 2$, then $ \mathbb{S}(S[\![z]\!]/(f+z^2))= 0$.
\end{theorem}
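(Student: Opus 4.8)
The plan is to read off the number of free summands of $F_*^e(R^{\sharp})$ from Proposition~\ref{C4.19}(b) and then feed in the explicit count of free summands of $\Cok_S(A^k)$ that was already carried out inside the proof of Theorem~\ref{P8.4}. Write $R=S/fS$, $A=M_S(f,e)$, $q=p^e$ and $b=[K:K^p]$. Since $p>2$, both $\tfrac{q-1}{2}$ and $\tfrac{q+1}{2}$ are integers, so Proposition~\ref{C4.19}(b) gives
\[
\sharp\bigl(F_*^e(R^{\sharp}),R^{\sharp}\bigr)=\sharp\bigl(\Cok_S(A^{\frac{q-1}{2}}),R\bigr)+\sharp\bigl(\Cok_S(A^{\frac{q+1}{2}}),R\bigr).
\]
From the computation in the proof of Theorem~\ref{P8.4} (which rests on Proposition~\ref{L4.25}), for $1\le k\le q-1$ one has $\sharp(\Cok_S(A^k),R)=b^{e}\prod_{j=1}^{n}|N_j(k)|$, where $|N_j(k)|=q-d_jq+d_jk$ if $d_j(q-k)<q$ and $|N_j(k)|=0$ otherwise. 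I will also note at the outset that $R^{\sharp}$ is a hypersurface quotient of the $(n+1)$-dimensional domain $S[\![z]\!]$, hence $\dim R^{\sharp}=n$, and that its residue field is $K$, so $\alpha(R^{\sharp})=\log_p b$ and the normalising factor in Definition~\ref{D6.1} is $p^{e(\dim R^{\sharp}+\alpha(R^{\sharp}))}=b^{e}q^{n}$.

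For part (1), with $d_1=\dots=d_n=1$ every factor $|N_j(k)|$ equals $k$ for $1\le k\le q-1$, so
\[
\sharp\bigl(F_*^e(R^{\sharp}),R^{\sharp}\bigr)=b^{e}\Bigl[\Bigl(\tfrac{q-1}{2}\Bigr)^{n}+\Bigl(\tfrac{q+1}{2}\Bigr)^{n}\Bigr],
\]
and dividing by $b^{e}q^{n}$ and letting $e\to\infty$ gives $\mathbb{S}(R^{\sharp})=\tfrac{1}{2^{n}}+\tfrac{1}{2^{n}}=\tfrac{1}{2^{n-1}}$.

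For part (2), let $d=\max\{d_1,\dots,d_n\}\ge 2$ and fix an index $j_0$ with $d_{j_0}=d$. Taking $k=\tfrac{q-1}{2}$, the factor at $j_0$ is governed by $d_{j_0}\bigl(q-k\bigr)=\tfrac{d(q+1)}{2}\ge q+1>q$, so $|N_{j_0}(\tfrac{q-1}{2})|=0$ and therefore $\sharp(\Cok_S(A^{\frac{q-1}{2}}),R)=0$ for all $e$. Taking $k=\tfrac{q+1}{2}$, the factor at $j_0$ is governed by $d_{j_0}(q-k)=\tfrac{d(q-1)}{2}$: if $d\ge 3$ then $\tfrac{d(q-1)}{2}\ge\tfrac{3(q-1)}{2}\ge q$ for every $q\ge 3$ (and $q=p^e\ge 3$ since $p\ge 3$), so this term also vanishes and $\mathbb{S}(R^{\sharp})=0$; if $d=2$, then $|N_j(\tfrac{q+1}{2})|=1$ when $d_j=2$ and $|N_j(\tfrac{q+1}{2})|=\tfrac{q+1}{2}$ when $d_j=1$, so $\sharp(\Cok_S(A^{\frac{q+1}{2}}),R)=b^{e}\bigl(\tfrac{q+1}{2}\bigr)^{m}$ with $m=\#\{j:d_j=1\}\le n-1$ (at least one $d_j$ equals $2$), and dividing by $b^{e}q^{n}$ forces the limit to be $0$ because $m<n$. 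Collecting the three subcases yields $\mathbb{S}(R^{\sharp})=0$. I do not anticipate a serious obstacle: the main point is organising the subcases ($d\ge 3$; $d=2$ with some $d_j=1$; all $d_j=2$) and checking the elementary inequalities hold for all $e\ge1$; the rest is bookkeeping with a count already available from Theorem~\ref{P8.4}.
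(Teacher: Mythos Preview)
Your proposal is correct and follows essentially the same approach as the paper: both invoke Proposition~\ref{C4.19}(b) to reduce to $\sharp(\Cok_S(A^{(q\pm1)/2}),R)$, then import the factor count $b^e\prod_j|N_j(k)|$ from the proof of Theorem~\ref{P8.4}, and finish by the same case analysis on $d$. The only cosmetic difference is that in the $d=2$ subcase you compute the count exactly as $b^e\bigl(\tfrac{q+1}{2}\bigr)^m$ with $m\le n-1$, whereas the paper is content with the upper bound $b^e\bigl(\tfrac{q+1}{2}\bigr)^{n-1}$; either way the limit is zero.
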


\begin{proof}
Set $[K:K^p]=b$ and recall from Notation \ref{N4.1} that $\Lambda_e$ is the basis of $K$ as $K^q$-vector space. We know  by  Proposition \ref{L4.13} that
\begin{equation*}
 F_*^e(R^{\sharp})= \Cok_{S[\![z]\!]}\left[
                                              \begin{array}{cc}
                                                A^{\frac{q-1}{2}} & -zI \\
                                                zI &  A^{\frac{q+1}{2}} \\
                                              \end{array}
                                            \right]  \text{  where } A=M_S(f,e).
\end{equation*}
Recall  from Proposition \ref{C4.19} (b) that
 \begin{equation}\label{E12}
  \sharp(F_*^e(R^{\sharp}),R^{\sharp})= \sharp(\Cok_S(A^{\frac{q-1}{2}}),R) + \sharp(\Cok_S(A^{\frac{q+1}{2}}),R).
 \end{equation}
Now, let $k\in \{\frac{q-1}{2}, \frac{q+1}{2}\}$ and
%Since $f^k$ is a monomial, Proposition \ref{P4.3} and  Lemma \ref{L4.25} imply that  the matrix $A^k=M_S(f^k,e)$ is equivalent to a diagonal matrix $D$ whose diagonal entries are taken from the set $\{x_1^{u_1}\dots x_n^{u_n} | u_j \in \{0,\dots, d_j \} \text{  for   all } 1 \leq j \leq n.$
%This makes $\Cok_S(A^k)= \Cok_S(D)$ and consequently the number $\sharp ( \Cok_S(A^k),R) $ is exactly the same as the number of the $n$-tuples $(\alpha_1,\dots,\alpha_n)$ with $ 0\leq \alpha_j\leq q-1 $  satisfying that
%\begin{equation}\label{E2}
% F_*^e(\lambda x_1^{kd_1+\alpha_1}\dots x_n^{kd_n+\alpha_n}) = x_1^{d_1}\dots x_n^{d_n}F_*^e(\lambda x_1^{s_1}\dots x_n^{s_n})
%\end{equation}
%where $0 \leq s_j \leq q-1 $ for all $\lambda \in \Lambda_e$.
%However, an $n$-tuple $(\alpha_1,\dots,\alpha_n)$ with $ 0\leq \alpha_j\leq q-1 $ will satisfy the equation \ref{E2} if and only if  $\alpha_j= d_j(q-k) + s_j$  for some $s_j \in \{0,\dots,q-1 \}$  for each $1 \leq j \leq n$. As a result,  an  $n$-tuples $(\alpha_1,\dots,\alpha_n)\in \mathbb{Z}^n$ will satisfy the equation \ref{E2} if and only if
%\begin{equation*}
% d_j(q-k)\leq \alpha_j < q \text{ for all } 1\leq j \leq n
%\end{equation*}
set  $ N_j(k):= \{ \alpha_j \in \mathbb{Z} \, | \, d_j(q-k)\leq \alpha_j < q \}$ for all $1\leq j \leq n $. Using the same argument that was previously used in the proof of Proposition \ref{P8.4}, it follows that
\begin{equation}\label{Equ17}
  \sharp ( \Cok_S(A^k),R)= b^e|N_1(k)||N_2(k)|\dots|N_n(k)|
\end{equation}
and
\begin{equation}\label{Equ18}
  \sharp ( \Cok_S(A^k),R)= b^e\prod_{j=1}^{n}(q-d_jq+d_jk) \text{ whenever } k> \frac{q(d-1)}{d}.
\end{equation}
%Since  $|N_j(k)|= q-d_jq+d_jk$  for all $1\leq j \leq n $, it follows that
%\begin{equation}\label{EE19}
 % \sharp ( \Cok_S(A^k),R) = b^e\prod_{j=1}^{n}(q-d_jq+d_jk)
%\end{equation}
Now if $d_1=d_2=\dots=d_n=1$, it follows from equation  (\ref{Equ18}) that $ \sharp ( \Cok_S(A^k),R)= b^ek^n$ for $k\in \{\frac{q-1}{2}, \frac{q+1}{2}\}.$  Therefore, Equation (\ref{E12}) implies that
\begin{equation}\label{EE15}
\sharp(F_*^e(R^{\sharp}),R^{\sharp})=b^e[ ( \frac{q-1}{2})^n + ( \frac{q+1}{2})^n]
\end{equation}
and consequently
\begin{equation*}
  \mathbb{S}(R^{\sharp})= \lim_{e\rightarrow \infty } \sharp(F_*^e(R^{\sharp}),R^{\sharp})/b^ep^{en} = \frac{1}{2^{n-1}}.
\end{equation*}
Now let  $d_i= \max \{d_1,\dots,d_n \}$  for some $1\leq i \leq n $.
First assume that $d_i=2 $.  If $k=\frac{q-1}{2}$ , it follows that $d_i(q-k) > q $ and consequently $|N_i(k)|=0 .$ The equation (\ref{Equ17}) implies $\sharp ( \Cok_S(A^k),R)=0$. When $k=\frac{q+1}{2}$ , we get that  $d_i(q-k)=q-1$ and consequently $N_i(k)= \{q-1 \} $ which makes $|N_i(k)|=1 $. Notice that when $k=\frac{q+1}{2}$ and $d_j=1$ , it follows that $|N_j(k)|= \frac{q+1}{2} $. As a result, if $k=\frac{q+1}{2}$, we conclude that
\begin{equation*}
 \sharp ( \Cok_S(A^k),R)=b^e |N_1(k)||N_2(k)|\dots|N_n(k)| \leq b^e (\frac{q+1}{2})^{n-1}.
\end{equation*}
Therefore,

\begin{equation*}
  \sharp(F_*^e(R^{\sharp}),R^{\sharp})= \sharp(\Cok_S(A^{\frac{q-1}{2}}),R) + \sharp(\Cok_S(A^{\frac{q+1}{2}}),R) \leq b^e(\frac{q+1}{2})^{n-1}.
\end{equation*}

As a result,

\begin{equation*}
  \mathbb{S}(R^{\sharp})= \lim_{e\rightarrow \infty } \sharp(F_*^e(R^{\sharp}),R^{\sharp})/b^ep^{en} = 0.
\end{equation*}

Second  assume that $d_i > 2 $. In this case, for every $k\in \{\frac{q-1}{2}, \frac{q+1}{2}\}$, it follows that $d_i(q-k)> q$ and consequently $|N_i(k)|= 0 $. Therefore

\begin{equation*}
  \sharp(F_*^e(R^{\sharp}),R^{\sharp})= \sharp(\Cok_S(A^{\frac{q-1}{2}}),R) + \sharp(\Cok_S(A^{\frac{q+1}{2}}),R) = 0
\end{equation*}
 and consequently

 \begin{equation*}
  \mathbb{S}(R^{\sharp})= \lim_{e\rightarrow \infty } \sharp(F_*^e(R^{\sharp}),R^{\sharp})/b^ep^{en} = 0.
\end{equation*}
\end{proof}
\begin{remark}\label{R6.10}
  Let $A=K[u_1,...,u_n]$ be the  polynomial ring in the indeterminates $u_1,...,u_n$ on the field $K$  and let  $GL(n,K)$ be  the  group of all invertible $n \times n$ matrices over $K$. Suppose that  $G$ is a finite subgroup of  $GL(n,K)$ and   let $|G|$ denote the order of $G$ which is the number of elements of $G$. An element $g \in GL(n,K)$ is called a pseudo-reflection if the rank of the matrix $g-I_n$ is one where $I_n$ is the $n\times n$ identity matrix over $K$.  For every $g=[g_{ij}] \in G$ and $f=f(u_1,...,u_n)\in A$, let $g(f)=f(v_1,...,v_n)$ where $v_i=\sum_{j=1}^{n}g_{ij}u_j$. A polynomial  $f \in A$ is invariant under $G$ if $g(f)=f$ for all $g\in G$. Notice that a polynomial $f$ is invariant under $G$ if and only if its homogeneous components are invariant under $G$ \cite[Chapter 7]{IVA}.  The set of all invariant polynomials is denoted $A^G$, this means that
 \begin{equation*}
    A^G=\{f \in S \,|\, g(f)=f \text{ for all } g \in G \}
  \end{equation*}
 It is well known  that $A^G$ is a graded subring of $A$ that is called the invariant subring of $A$ by $G$.

  If $K$ is a field of characteristic $p>0$,  $A=K[u_1,...,u_n]$  and $G$ is a finite subgroup of  $GL(n,K)$ such that $|G|$ is a unit in $K$ and $G$ has no pseudo-reflection, then the $F$-signature of the invariant subring  $A^G$ is given by $ \mathbb{S}(A^G)=\frac{1}{|G|}$ \cite[Remark 2.3]{Y2} and \cite[Theorem 4.2]{WY}.
\end{remark}
I would like to thank H.Brenner and T.Bridgland who suggested using invariant theory for providing another proof of the first result of    Theorem \ref{Thm6.9}.
\begin{remark}
We can use Remark \ref{R6.10} to   prove the first result of    Theorem \ref{Thm6.9} as follows.

 Let $K$ be a field of  characteristic $p>2$, and let $A=K[u_1,...,u_n]$. Suppose that  $G$ is the subgroup of $GL(n,K)$ consisting of all diagonal matrices whose diagonal entries are all taken from $\{1,-1\}$  with determinant equal to one. If $g \in G$, then $g$ has an even number of diagonal elements that are $-1$. This makes  $G$ have  no pseudo-reflection.  Furthermore, if $H$ is the subgroup of $GL(n,K)$ consisting of all diagonal matrices whose diagonal entries are all taken from $\{1,-1\}$, then $|H|=2^{n}$ and $G$ is a finite subgroup of $H$ such that  $H\setminus G$ is the subset of $H$ consisting of all diagonal matrices whose diagonal entries are all taken from $\{1,-1\}$  with determinant equal to $-1$.  We can define a bijection between $G$ and $H\setminus G$ by sending $g\in G$ to $\tilde{g}\in H \setminus G$  where $\tilde{g}$ is obtained from $g$ by changing the sign of the first diagonal element of $g$. This makes $|G|=|H \setminus G|$. Since $|G|+|H \setminus G|= |H| = 2^n$, we get that $|G|=2^{n-1}$. Now, clearly the monomials $u_1^2,...,u_n^2$ and $u_1...u_n$ are invariant under $G$ and hence $K[u_1^2,...,u_n^2,u_1...u_n]\subseteq A^G$. If $f=u_1^{t_1}...u_n^{t_n}$ is a monomial, then $f=(u_1^{2d_1}...u_n^{2d_n})(u_1^{e_1}...u_n^{e_n})$ where $t_j=2d_j+e_j$ and $e_j \in \{0,1\}$  for all $j=1,...,n$. Therefore, $f$ is invariant under $G$ if and only if  $e_j=1$  for all $j=1,...,n$ or $e_j=0$  for all $j=1,...,n$. This shows that a monomial $f=u_1^{t_1}...u_n^{t_n}$ is invariant under $G$ if and only if $f \in K[u_1^2,...,u_n^2,u_1...u_n]$.   Now, if $F\in A$ is a homogenous polynomial of degree $d$, then $F=f_1+...+f_r$ where $f_j$ is a monomial of degree $d$ for every $j=1,...,r$. Since $g(F)= b_1f_1+...+b_rf_r$ where $b_j\in \{-1,1\}$ for all $j \in \{1,...,r\}$ and $g\in G$, we get that $F$ is invariant under $G$ if and only if $f_j$ is invariant under $G$ for all $j=1,...,r$.  Therefore, $F$ is invariant under $G$ if and only if $F \in K[u_1^2,...,u_n^2,u_1...u_n]$. This shows that  $K[u_1^2,...,u_n^2,u_1...u_n]=A^G$ and consequently  Remark \ref{R6.10} gives that   $$ \mathbb{S}(K[u_1^2,...,u_n^2,u_1...u_n])=\frac{1}{2^{n-1}}.$$ Now, if we define a ring homomorphism $\phi:K[x_1,...,x_n,z]\rightarrow K[u_1,...,u_n]$ by $\phi(x_j)=u_j^2$ for all $j=1,...,n$ and $\phi(z)=u_1...u_n$, we get that
 \begin{equation*}
   \frac{K[x_1,...,x_n,z]}{(x_1...x_n-z^2)} \cong K[u_1^2,...,u_n^2,u_1...u_n] \text{ as rings. }
 \end{equation*}
 Therefore,
 \begin{equation*}
   \mathbb{S}(\frac{K[x_1,...,x_n,z]}{(x_1...x_n-z^2)})=\frac{1}{2^{n-1}}.
 \end{equation*}
 Now, we can use Remark \ref{R6.3} to conclude that
 \begin{equation*}
   \mathbb{S}(\frac{K[\![x_1,...,x_n,z]\!]}{(x_1...x_n-z^2)})=\frac{1}{2^{n-1}}.
 \end{equation*}
 \end{remark}

\begin{remark}
Notice that when $d = \max \{d_1,\dots, d_n \} > 2$, we can prove that $\mathbb{S}(R^{\sharp})=0$ using Fedder's Criteria (Proposition \ref{P2.13}). Indeed, let $\mathfrak{m}$ be the maximal ideal of $S[\![z]\!]$ and let $R^{\sharp}=S[\![z]\!]/(f+z^2)$. If $d = \max \{d_1,\dots, d_n \} > 2$, then $(f+z^2)^{q-1}\in \mathfrak{m}^{[q]}$ which makes, by Fedder's Criteria,   $\sharp(F_*^e(R^{\sharp}),R^{\sharp})=0$ for all $e \in \mathbb{Z}^{+}$. This means clearly that
\begin{equation*}
 \mathbb{S}(R^{\sharp})= \lim_{e\rightarrow \infty } \sharp(F_*^e(R^{\sharp}),R^{\sharp})/b^ep^{en} = 0.
\end{equation*}
\end{remark}

\section{The F-signature of the  ring $S[\![y]\!]/(y^{p^d} +f)$ }
\label{section:The Fsignature of pd}

We will keep the same notation as in Notation \ref{N4.1} unless otherwise stated. \\
\begin{proposition}
 Let $\mathfrak{m}$ be the maximal ideal of  the ring $S=K[\![x_1,...,x_n]\!]$ where $K$ is a field of positive  prime characteristic $p $ with $[K:K^p] = b<\infty $  and let $f$ be a nonzero  element in $\mathfrak{m}$ .
 If $d \in \mathbb{Z}^{+}$ and $\Re=S[\![y]\!]/(y^{p^d}+f)$ ,
   then $\mathbb{S}(\Re)=\frac{\sharp(F_*^d(\Re),\Re)}{b^dp^{nd}}$  .

\end{proposition}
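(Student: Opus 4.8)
The plan is to read off the $F$-signature directly from the module isomorphisms already obtained inside the proof of Theorem~\ref{P5.1}. There, writing $A=M_S(f,e)$ for a fixed $e>d$, it is shown that
\begin{equation*}
F_*^e(\Re)\;\cong\;\big(\Cok_{S[\![y]\!]}(yI+A^{p^{e-d}})\big)^{\oplus p^{d}}
\end{equation*}
and
\begin{equation*}
\big(\Cok_{S[\![y]\!]}(yI+A^{p^{e-d}})\big)^{\oplus p^{e}}\;\cong\;F_*^d(\Re)^{\oplus r_{e-d}},
\qquad r_{e-d}=b^{e-d}p^{(e-d)(n+1)}.
\end{equation*}
Taking the $p^{e}$-fold direct sum of the first isomorphism and substituting the second gives, for every $e>d$,
\begin{equation*}
F_*^e(\Re)^{\oplus p^{e}}\;\cong\;F_*^d(\Re)^{\oplus p^{d}r_{e-d}}.
\end{equation*}

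Before extracting $\sharp$ from this, I would record the basic features of $\Re=S[\![y]\!]/(y^{p^{d}}+f)$: it is the quotient of the complete regular local ring $S[\![y]\!]$ by the nonzero, hence nonzerodivisor, element $y^{p^{d}}+f$, so $\Re$ is a complete Noetherian local ring with residue field $K$, it is $F$-finite (hence each $F_*^{e}(\Re)$ is a finitely generated $\Re$-module by Remark~\ref{C2.9}), $\dim\Re=n$, and $\alpha(\Re)=\log_{p}[K:K^{p}]=\log_{p}b$, i.e. $p^{\alpha(\Re)}=b$. The other ingredient is that $\sharp(-,\Re)$ is additive over finite direct sums: if $M\cong\Re^{a}\oplus N$ with $N$ having no free direct summand, then $M^{\oplus k}\cong\Re^{ak}\oplus N^{\oplus k}$, and by the Krull--Remak--Schmidt theorem on the complete local ring $\Re$ (Discussion~\ref{disc2.33}) the module $N^{\oplus k}$ has no free direct summand either, so $\sharp(M^{\oplus k},\Re)=k\,\sharp(M,\Re)$.

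Applying this additivity to the displayed isomorphism yields $p^{e}\,\sharp(F_*^e(\Re),\Re)=p^{d}r_{e-d}\,\sharp(F_*^d(\Re),\Re)$, and after substituting $r_{e-d}$ and simplifying the exponent of $p$ (one checks $d+(e-d)(n+1)-e=(e-d)n$) this becomes
\begin{equation*}
\sharp(F_*^e(\Re),\Re)=b^{e-d}p^{(e-d)n}\,\sharp(F_*^d(\Re),\Re)\qquad(e>d).
\end{equation*}
Plugging into Definition~\ref{D6.1} and using $p^{e\alpha(\Re)}=b^{e}$,
\begin{equation*}
\mathbb{S}(\Re)=\lim_{e\to\infty}\frac{\sharp(F_*^e(\Re),\Re)}{p^{e(n+\alpha(\Re))}}
=\lim_{e\to\infty}\frac{b^{e-d}p^{(e-d)n}\,\sharp(F_*^d(\Re),\Re)}{p^{en}b^{e}}
=\frac{\sharp(F_*^d(\Re),\Re)}{b^{d}p^{nd}};
\end{equation*}
the terms of the sequence are in fact all equal, so the limit is trivial.

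I do not expect a genuine obstacle here: essentially all the work was done in Theorem~\ref{P5.1}, and what remains is the additivity of $\sharp(-,\Re)$ over direct sums (which just needs $\Re$ complete local, so Krull--Remak--Schmidt applies) together with the bookkeeping of exponents. The only thing to be slightly careful about is to quote the isomorphisms from the proof of Theorem~\ref{P5.1} with ambient ring $S[\![y]\!]$, so that $r_{e-d}=b^{e-d}p^{(e-d)(n+1)}$ correctly records $n+1$ variables, rather than the $r_e$ attached to $S$.
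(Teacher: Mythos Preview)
Your proof is correct and follows essentially the same approach as the paper: both invoke the two module isomorphisms from the proof of Theorem~\ref{P5.1}, use additivity of $\sharp(-,\Re)$ under direct sums (via Krull--Remak--Schmidt on the complete local ring $\Re$) to obtain $\sharp(F_*^e(\Re),\Re)=b^{e-d}p^{(e-d)n}\sharp(F_*^d(\Re),\Re)$ for $e>d$, and then read off the constant limit. The only cosmetic difference is that the paper records $\sharp(W,\Re)$ for $W=\Cok_{S[\![y]\!]}(yI+A^{p^{e-d}})$ from each isomorphism separately and then eliminates it, whereas you first combine the isomorphisms into $F_*^e(\Re)^{\oplus p^e}\cong F_*^d(\Re)^{\oplus p^{d}r_{e-d}}$ and then apply $\sharp$ once.
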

\begin{proof}
Let $[K:K^p] = b $ and let $e \in \mathbb{Z}$ such that $e > d$  . If $A=M_S(f,e)$,   equations (\ref{Eq5.14}) and (\ref{Eq5.15}) in   the proof of Theorem \ref{P5.1} show that
 \begin{equation}\label{E16}
 F_*^e(\Re)= \Cok_{S[\![y]\!]}(M_{S[\![y]\!]}(y^{p^d}+f,e))=(\Cok_{S[\![y]\!]}(yI+A^{p^{e-d}}))^{\oplus p^d}
 \end{equation}
 and
 \begin{equation}\label{E17}
   [ \Cok_{S[\![y]\!]}(yI+A^{p^{e-d}})]^{\oplus p^e}= [F_*^d( \Re)]^ {\oplus ^{b^{e-d}p^{(n+1)(e-d)}}}.
 \end{equation}
If $W=\Cok_{S[\![y]\!]}(yI+A^{p^{e-d}})$,  by  equations \ref{E16}, \ref{E17} we get that
\begin{equation*}
  \sharp(F_*^e(\Re),\Re)=p^d \sharp(W,\Re) \text{ and }   p^e \sharp(W,\Re)=b^{e-d}p^{(n+1)(e-d)}\sharp(F_*^d(\Re),\Re).
\end{equation*}
This makes
\begin{equation*}
\sharp(F_*^e(\Re),\Re)=b^{e-d}p^{n(e-d)}\sharp(F_*^d(\Re),\Re)
\end{equation*}
and consequently
\begin{equation*}
   \mathbb{S}(\Re)=\lim_{e\rightarrow \infty } \frac{\sharp(F_*^e(\Re),\Re)}{b^ep^{en}} =\lim_{e\rightarrow \infty } \frac{b^{e-d}p^{n(e-d)}\sharp(F_*^d(\Re),\Re)}{b^ep^{en}}=\frac{\sharp(F_*^d(\Re),\Re)}{b^dp^{nd}}.
 \end{equation*}
\end{proof}

%%%%%%%%%%%%%%%%%%%%%%%%%%%%%%%%%%%%%%%%%%%%%%%%%%%%%%%%%%%References%%%%%%%%%%%%%%%%%%%%%%%%%%%%%%%%%%%%%%%%%%%%%%%
%%%%%%%%%%%%%%%%%%%%%%%%%%%%%%%%%%%%%%%%%%%%%%%%%%%%%%%%%%%%
%%%%%%%%%%%%%%%%%%%%%%%%%%%%%%%%%%%%%%%%%%%%%%%%%%%%%%%%%%%%

%%%%%%%%%%%%%%%%%%%%%%%%%%%%%%%%%%%%%%%%%%%%%%%%%%%%%%%%%%%%
\end{document}